\newcommand{\codenumber}[1]{#1}
\newcommand{\resultnumber}[1]{\textcolor{magenta}{#1}}
\DeclareMathOperator{\real}{Re}
\DeclareMathOperator{\imag}{Im}
\DeclareMathOperator{\midint}{mid}
\newtheorem{theorem}{Theorem}[section]
\newtheorem{lemma}[theorem]{Lemma}
\newtheorem{corollary}[theorem]{Corollary}
\newtheorem{proposition}[theorem]{Proposition}
\newtheorem{definition}[theorem]{Definition}
\newtheorem{remark}[theorem]{Remark}
\newcommand{\Li}{\mathrm{Li}}
\newcommand{\inter}[1]{\bm{#1}}
\title{Monotonicity of the first Dirichlet eigenvalue of regular polygons}
\date{\today}
 \author{Joel Dahne, Javier G\'omez-Serrano, Joana Pech-Alberich}
\begin{document}

\maketitle
\begin{abstract}
In this paper we prove that the first Dirichlet eigenvalue $\lambda_1^N$ of an $N$-sided regular polygon of fixed area is a monotonically decreasing function of $N$ for all $N \geq 3$, as well as the monotonicity of the quotients $\displaystyle \frac{\lambda_1^{N}}{\lambda_1^{N+1}}$. This settles a conjecture of Antunes--Freitas from 2006~\cite[Conjecture 3.2]{Antunes-Freitas:new-bounds-dirichlet-eigenvalue}.
\end{abstract}

\section{Introduction}

\subsection{Historical background}

Let $\Omega$ be a bounded planar domain, and let $\lambda_i$ be the eigenvalues of the Dirichlet Laplacian, namely the numbers  $0 < \lambda_1 < \lambda_2 \leq \lambda_3 \leq \ldots$ that satisfy
\begin{equation}
  \label{laplacian-pde}
  \begin{split}
    -\Delta u_k & = \lambda_k u_k  \text{ in } \Omega,\\
    u_k & = 0  \text{ on } \partial \Omega.
  \end{split}
\end{equation}

This paper is concerned with the dependence of $\lambda_1$ with $N$, whenever $\Omega$ is the regular $N$-sided polygon $\mathcal{P}_N$ of area $\pi$, proving properties that hold uniformly \textit{for all values of $N$}. Broadly speaking, we will prove that both $\lambda_1(\mathcal{P}_N)$ and $\frac{\lambda_1(\mathcal{P}_N)}{\lambda_1(\mathcal{P}_{N+1})}$ are decreasing functions of $N$ (see Theorem~\ref{main_thm} below for a more precise statement).

This is a conjecture first outlined by Antunes--Freitas some 20 years ago~\cite[Conjecture 3.2]{Antunes-Freitas:new-bounds-dirichlet-eigenvalue} where the authors produced numerical evidence backing it up, but it has gotten a lot of momentum lately, appearing in several surveys such as the one by Henrot~\cite[Conjecture 6.10]{Henrot:book-shape-optimization} and in lists of Open problems at AIM~\cite[Question 2.3.3]{Aim:open-problems-shape-optimization}. Nitsch~\cite[Theorem 2]{Nitsch:first-dirichlet-eigenvalue-regular-polygons} proved the weaker bound \begin{align*}
\lambda_1(\mathcal{P}_{N+1}) < \lambda_1(\mathcal{P}_N) \frac{\cos\left(\frac{\pi}{N}\right)}{\cos\left(\frac{\pi}{N+1}\right)},
\end{align*}
but with the polygons normalized by the same circumradius $r$. We remark that this normalization is not scale invariant. Renormalizing by area (which is scale invariant), this yields

\begin{align*}
\lambda_1(\mathcal{P}_{N+1}) < \lambda_1(\mathcal{P}_N)\left( \frac{\cos\left(\frac{\pi}{N}\right)}{\cos\left(\frac{\pi}{N+1}\right)} \frac{\sin\left(\frac{2\pi}{N+1}\right)}{\sin\left(\frac{2\pi}{N}\right)}\frac{N+1}{N}\right) = \lambda_1(\mathcal{P}_N)\left( \frac{\sin\left(\frac{\pi}{N+1}\right)}{\sin\left(\frac{\pi}{N}\right)}\frac{N+1}{N}\right),
\end{align*}
where the factor in brackets is bigger than $1$ and thus falls short of the main conjecture.

A related conjecture is due to P\'olya--Szeg\"o~\cite{Polya-Szego:isoperimetric-inequalities-book}, where they assert that among $N$-sided polygons of the same area, the regular one is the one minimizing the first eigenvalue. In particular, the P\'olya--Szeg\"o conjecture implies the first part of Theorem~\ref{main_thm}. P\'olya and Szeg\"o themselves proved their conjecture for $N=3,4$ using Steiner symmetrization methods, which are doomed to fail for higher $N$. In recent years, some progress has been made. Bogosel--Bucur reduced the problem  to a finite (though very big) calculation~\cite{Bogosel-Bucur:polygonal-faber-krahn} and then proved the local minimality in the cases of $N= 5,6$ in~\cite{Bogosel-Bucur:polygonal-faber-krahn-validated}. Indrei~\cite{Indrei:first-eigenvalue-polygons}
found, in the case of $N$ sufficiently large, a manifold on which the regular polygon minimizes $\lambda_1$.

Attempts to prove a weaker version of the Antunes-Freitas conjecture have been centered around establishing it \textit{for sufficiently large $N$}, focusing on obtaining precise asymptotics of $\lambda_1$ (and more generally of $\lambda_k$) as $N \to \infty$. In particular, efforts have been made to produce a series expansion of the form

\begin{equation}
\frac{\lambda_k(\mathcal{P}_N)}{\lambda_k(D)} =
1+\frac{C_{k,1}}{N}+\frac{C_{k,2}}{N^2}+\frac{C_{k,3}}{N^3}+\dots\,
\end{equation}

We briefly review the history of obtaining the coefficients $C_{k,n}$. The first computation of any expansion dates to Molinari~\cite{Molinari:ground-state-regular-polygonal-billiards}, where he numerically computed an expansion with respect to a hidden parameter in the conformal mapping from the disk to the regular polygon. Later, Grinfeld--Strang~\cite{Grinfeld-Strang:laplacian-eigenvalues-polygon,Grinfeld-Strang:laplace-eigenvalues-regular-polygons-series} computed the first four terms of such an expansion $C_{1,i}, i=1,\ldots, 4$ using Calculus of Moving Surfaces (CMS): a continuous transformation deforming the polygon into the disk and performing shape derivative computations. Boady, in his thesis~\cite{Boady:calculus-moving-surfaces-thesis}, computed two more terms using CMS: $C_{1,5}$ and $C_{1,6}$. Jones~\cite{Jones:fundamental-laplacian-eigenvalue-regular-polygon-dirichlet} proposed
an expression for the next two coefficients $C_{1,7}, C_{1,8}$ using high precision numerics and an LLL method to establish linear (integer) dependence relations between different quantities. For other numerical calculations, as well as other expansions (star-shaped domains, Neumann boundary conditions, etc.) see~\cite{Berghaus-Jones-Monien-Radchenko:computation-eigenvalues-2d-shapes}.

A big breakthrough came with the work of Berghaus--Georgiev--Monien--Radchenko~\cite{Berghaus-Georgiev-Monien-Radchenko:dirichlet-eigenvalues-regular-polygons}, where they provided an explicit expression for $C_{1,k}, k =1 \ldots 14$ and an algorithm to compute any $C_{n,k}$, as well as expansions of the eigenfunctions as well. Those expansions involve multiple zeta values (MZVs). Moreover, they also proved Theorem~\ref{main_thm} in the case of sufficiently large $N$. A drawback of their method is that such $N$ is not quantified and it could potentially be very big. One of the key results of our paper is to give quantitative bounds on it and prove separately the result for the remaining, finite number of values using a computer-assisted approach.

The main theorem of this paper is the following:

\begin{theorem} \label{main_thm} Let $\mathcal{P}_N$ be the regular $N$-polygon of area $\pi$ with $N \geq 3$ sides. Then
    \begin{equation*}
        \lambda_1(\mathcal{P}_3) > \lambda_1(\mathcal{P}_4) > \dots > \lambda_1(\mathcal{P}_N) > \lambda_1(\mathcal{P}_{N+1}) > \dots > \lambda_1(\mathbb{D}) \, ,
    \end{equation*}
    where $\mathbb{D}$ is the unit disk. That is, the first Dirichlet eigenvalues are monotonically decreasing in $N$.

    Furthermore, let $q_N = \frac{\lambda_1(\mathcal{P}_N)}{\lambda_1(\mathcal{P}_{N+1})}$. Then it also holds
    \begin{equation*}
       q_3 > q_4 > \dots > q_N > q_{N+1} > \dots > 1 \, .
    \end{equation*}
\end{theorem}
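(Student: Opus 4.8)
The plan is to combine a precise asymptotic analysis for large $N$ with a rigorous computer-assisted verification for the finitely many small values that remain. The starting point is the expansion
\[
\frac{\lambda_1(\mathcal{P}_N)}{\lambda_1(\mathbb{D})} = 1 + \frac{C_{1,1}}{N} + \frac{C_{1,2}}{N^2} + \cdots,
\]
together with the explicit coefficients $C_{1,k}$ and the algorithm of Berghaus--Georgiev--Monien--Radchenko. The first task is to turn their qualitative ``sufficiently large $N$'' statement into an explicit threshold $N_0$: I would truncate the series at some fixed order $K$, control the tail $\sum_{k > K} C_{1,k} N^{-k}$ by a rigorous bound on the growth of the MZV-valued coefficients (a geometric-type majorant in $1/N$ should suffice once $N$ exceeds the radius of convergence), and then check that the resulting enclosure of $\lambda_1(\mathcal{P}_N)$ is strictly decreasing in $N$ and that $q_N$ is strictly decreasing and $>1$ for all $N \geq N_0$. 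Concretely, since $C_{1,1} < 0$, the leading behaviour already gives monotonic decrease; the delicate point is that $q_N - q_{N+1}$ is a difference of quantities each tending to $1$, so one must expand $q_N$ itself to sufficiently high order in $1/N$ and show the first nonvanishing coefficient of $q_N - q_{N+1}$ has the right sign, with an effective remainder bound.

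For the remaining range $3 \leq N < N_0$ — a finite list — the plan is a validated numerical enclosure of each $\lambda_1(\mathcal{P}_N)$. Here I would use the method of particular solutions (expansions in Bessel functions $J_{\nu}$ adapted to the corner angles of $\mathcal{P}_N$, which capture the singular behaviour of the eigenfunction at the vertices), combined with interval arithmetic and an a posteriori validation inequality (of Fox--Henrici--Moler / Plum type, or a rigorous version of the method of particular solutions) that converts an approximate eigenpair with small boundary defect into a certified two-sided enclosure $\lambda_1(\mathcal{P}_N) \in [\underline{\lambda}_N, \overline{\lambda}_N]$. One then simply checks the finitely many strict inequalities
\[
\overline{\lambda}_{N+1} < \underline{\lambda}_N, \qquad \frac{\underline{\lambda}_N}{\overline{\lambda}_{N+1}} > \frac{\overline{\lambda}_{N+1}}{\underline{\lambda}_{N+2}}, \qquad \frac{\underline{\lambda}_N}{\overline{\lambda}_{N+1}} > 1,
\]
and patches the finite range to the asymptotic range by verifying $\overline{\lambda}_{N_0} < \underline{\lambda}_{N_0 - 1}$ and the corresponding quotient inequality at the seam. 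The limit statement $\lambda_1(\mathcal{P}_N) > \lambda_1(\mathbb{D})$ and $q_N > 1$ for all $N$ follows from the sign of $C_{1,1}$ in the tail range and from the explicit enclosures below $N_0$.

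The main obstacle, I expect, is making the large-$N$ part fully effective: one needs an honest, rigorous bound on $N_0$, which requires controlling both the size of the MZV coefficients $C_{1,k}$ (they are given by a recursion, so bounding them amounts to bounding the recursion) and the analyticity domain of the expansion in $1/N$, and then carrying enough terms that the remainder is smaller than the (small, since it scales like a negative power of $N$) gap $q_N - q_{N+1}$ one is trying to detect. A secondary obstacle is engineering the validated eigenvalue enclosures sharply enough near the seam $N \approx N_0$: if $N_0$ is large the polygons are nearly circular and the eigenvalues are extremely close together, so the enclosures $[\underline{\lambda}_N, \overline{\lambda}_N]$ must be tight, which stresses both the number of basis functions and the conditioning of the interval linear algebra. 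Everything else — the qualitative direction of each inequality — is already dictated by $C_{1,1} < 0$ and is not expected to cause trouble.
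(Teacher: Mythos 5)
Your plan for the small-$N$ range matches the paper quite closely: both use the method of particular solutions (Bessel expansions at the corners plus an interior expansion) to produce an approximate eigenpair, and both then certify it with the Fox--Henrici--Moler a posteriori bound, reducing the claim to finitely many strict inequalities between validated enclosures. The observation that one must also bound $\lambda_1(\mathcal{P}_{N_0+1})$ to close the $q_N$ chain at the seam is correct and is exactly what is done.

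For the large-$N$ range, however, your proposal takes a genuinely different route, and it has a gap you should be aware of. You propose to truncate the $1/N$ series for $\lambda_1(\mathcal{P}_N)/\lambda_1(\mathbb{D})$ and to control the tail $\sum_{k>K} C_{1,k} N^{-k}$ by a geometric majorant ``once $N$ exceeds the radius of convergence.'' This presupposes two things that are not established: (i) that the Berghaus--Georgiev--Monien--Radchenko expansion is a convergent power series in $1/N$ rather than merely an asymptotic one, and (ii) that one can rigorously bound the recursion for the MZV-valued coefficients $C_{1,k}$ well enough to produce a usable majorant. Neither is available; in fact the need to circumvent precisely this difficulty is what the paper identifies as its main contribution for large $N$. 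The paper instead never tries to sum (or even bound) the coefficient series. It takes a \emph{fixed finite-order} $\lambda_{app}(N)$ (correct to $O(N^{-5})$), builds the corresponding approximate eigen\emph{function} $u_{app}$ from the BGMR ansatz via the Schwarz--Christoffel map and multiple polylogarithms (Definition~\ref{def:u_app-lambda_app}), and then applies the very same Fox--Henrici--Moler lemma that you invoke for small $N$, this time \emph{uniformly in $N \geq N_0$}. Because $u_{app}$ satisfies $\Delta u_{app} + \lambda_{app} u_{app}=0$ exactly by construction, the entire error is captured by the boundary defect and the $L^2$-norm, both of which can be bounded by explicit, $N$-uniform constants (Lemmas~\ref{lemma:upper-bound-on-the-defect_in-the-boundary} and Corollary~\ref{cor:lower-bound-for-L2-norm-k(N)}). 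The resulting $\hat{\varepsilon}(N)=O(N^{-6})$ is then compared against the $O(N^{-4})$ gaps in $\lambda_{app}$ and the $O(N^{-5})$ gaps in the corresponding quotient, with the positivity of the leading Taylor coefficients in $\nu=1/N$ checked rigorously (Lemma~\ref{lemma:final-inequalities}). One also needs the comparison with $\lambda_2(\mathbb{D}_{R_\text{outer}})$ to conclude that the eigenvalue certified by the a posteriori bound is indeed $\lambda_1$, a step your proposal does not address for the large-$N$ regime. If you want to salvage your plan, you would have to replace ``bound the tail of the eigenvalue series'' with ``bound the defect of the approximate eigenfunction,'' which is exactly the paper's move.
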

\begin{proof}
 The proof will follow from Proposition~\ref{prop:N_large}, which proves the case $N \geq N_0 = \codenumber{64}$ and Proposition~\ref{prop:N_small} which proves the case $N < N_0 = \codenumber{64}$.
\end{proof}

\subsection{Computational aspects of the proof}
Some of the main parts of the proof of Theorem~\ref{main_thm} relies
on computer-assisted arguments. Lately, these types of techniques have
gotten very popular and successful. The main paradigm is to substitute
floating point numbers on a computer by rigorous bounds, which are
then propagated through every operation that the computer performs
taking into account any error made throughout the process. In our
concrete case, we will use the computer to bound functions, in several
settings:
\begin{itemize}
\item When getting explicit bounds of the error terms that appear in
  the asymptotic calculations of Proposition~\ref{prop:N_large}.
  Computation of these bounds are split into several smaller lemmas,
  the main ones being
  Lemma~\ref{lemma:bounds_and_splitting:first_term_J0:upper-bounds-for-terms-from-g-function}
  and~\ref{lemma:bounds-I_k_l}.
\item When computing explicit enclosures of the eigenvalues
  \(\lambda_{1}(\mathcal{P}_{N})\) for small values of \(N\) in
  Lemma~\ref{prop:N_small} in Section~\ref{sec:the-small-N-case}.
\end{itemize}
In many of these cases we have to explicitly compute bounds for various
functions on finite intervals. We are hence interested in computing
enclosures of expressions of the form
\begin{equation*}
  \min_{x \in I} f(x) \quad \text{or} \quad \max_{x \in I} f(x),
\end{equation*}
for different kinds of functions \(f\). Enclosing such extrema is a
standard problem for computer-assisted proofs and we discuss the
details of our approach in Appendix~\ref{sec:enclosing-extrema}. The
functions we encounter are typically given by combinations of various
special functions and integrals. For most of these special functions
there are existing implementations that we make use of, see
e.g.~\cite{Johansson2019}, for others we provide our own
implementations, see Appendix~\ref{sec:polylogs}. Rigorous
computations of integrals is also a well studied problem, and we make
use of the approach from~\cite{Johansson2018numerical}, with more
details given in Appendix~\ref{sec:rigorous-integration}.

In the context of spectral problems, similar techniques were applied by G\'omez--Serrano-Orriols~\cite{GomezSerrano-Orriols:negative-hearing-shape-triangle}, Dahne--G\'omez-Serrano-Hou~\cite{Dahne-GomezSerrano-Hou:counterexample-payne}, Dahne--Salvy~\cite{Dahne-Salvy:enclosures-eigenvalues}, Nigam--Siudeja--Young~\cite{Nigam-Siudeja-Young:fem-schiffer-pentagon}, Filonov--Levitin--Polterovich--Sher~\cite{Filonov-Levitin-Polterovich-Sher:inequalities-polya-aharonov-bohm,Filonov-Levitin-Polterovich-Sher:polya-conjecture-balls}, Cao-Labora--Fern\'andez~\cite{CaoLabora-Fernandez:contractible-schiffer-counterexample} or Bogosel--Bucur~\cite{Bogosel-Bucur:polygonal-faber-krahn-validated}. We refer to the book~\cite{Tucker:validated-numerics-book} for an introduction to validated
numerics, and to the survey~\cite{GomezSerrano:survey-cap-in-pde} and the recent
book~\cite{Nakao-Plum-Watanabe:cap-for-pde-book} for a more specific
treatment of computer-assisted proofs in PDE.

Understanding a posteriori stability results of eigenpairs is one of the fundamental problems in spectral analysis and has been extensively studied~\cite{Barnett-Hassell:quasi-ortogonality-dirichlet-eigenvalues,Behnke-Goerisch:inclusions-eigenvalues,Bramble-Payne:upper-lower-bounds-elliptic-equations,Fox-Henrici-Moler:approximations-bounds-eigenvalues,Kato:upper-lower-bounds-eigenvalues,Lehmann:optimale-eigenwerte,Moler-Payne:bounds-eigenvalues,Still:computable-bounds-eigenvalues,Weinberger:error-bounds-rayleigh-ritz}.
The overarching theme of these results is that if one can find
$(\lambda_\text{app},u_\text{app})$ satisfying the equation up to an
error bounded by $\delta$, then one can show that there is a true
eigenpair $(\lambda,u)$ at a distance $C\delta^k$. One disadvantage of such analysis is that it does not recover information about the position of $\lambda$ within the spectrum, and one has to resort to a combination of other methods (such as rigorous finite elements) to obtain that piece of information. Nonetheless, in the problem at hand we can devise pen and paper estimates with lower bounds that are good enough for our purposes.

In order to find accurate approximations of the eigenvalues and
eigenfunctions in the small \(N\) case we make use of the Method of
Particular Solutions (MPS). This method was introduced by Fox, Henrici
and Moler~\cite{Fox-Henrici-Moler:approximations-bounds-eigenvalues}
and has been later adapted by many authors
(see~\cite{Antunes-Valtchev:mfs-corners-cracks,Read-Sneddon-Bode:series-method-mps,Golberg-Chen:mfs-survey,Fairweather-Karageorghis:mfs-survey,Betcke:generalized-svd-mps}
as a sample, and the thorough
review~\cite{Betcke-Trefethen:method-particular-solutions}). The main
idea is to consider a set of functions that solve the eigenvalue
problem without boundary conditions as a basis, and to write the
solution of the problem with boundary conditions as a linear
combination of them, solving for the coefficients that minimize the
error on the boundary. The choice of functions plays an important role
in the convergence rate of the method. Typically, the choices have
been rational
functions~\cite{Hochman-Leviatan-White:rational-function-laplacian} or
products of Bessel functions and trigonometric polynomials centered at
certain points, more recent examples include the use of so-called
\textit{lightning charges} introduced by Gopal and
Trefethen~\cite{Gopal-Trefethen:new-laplace-solver-pnas,Gopal-Trefethen:laplace-solver-detailed}.
We stress that these methods produce accurate approximations, but
there is no explicit control of the error with respect to the true
solution. This is handled a posteriori with a perturbative analysis of
the approximations.

\subsection{Structure of the paper}

Section~\ref{sec:the-large-N-case} will be devoted to prove Proposition~\ref{prop:N_large} (the large $N$ case), whereas Section~\ref{sec:the-small-N-case} will prove Proposition~\ref{prop:N_small} (the small $N$ case). Appendix~\ref{app:aux_lemmas} contains auxiliary numerical lemmas, Appendix~\ref{app:implementation} provides technical details of the computer-assisted parts of the proof and Appendix~\ref{app:bounds-for-K4} includes computer-assisted bounds and expansions of the residue function $K_4(N,z,t)$ introduced in Section~\ref{sec:the-large-N-case}. The full code for the computer-assisted computations is available at~\cite{SpectralRegularPolygon.jl}. Throughout the paper we use the color  \resultnumber{magenta} to indicate numbers that come from computational results.

\subsection{Acknowledgements}
JGS has been partially supported by the MICINN (Spain) research grant number PID2021–125021NA–I00,  by NSF under Grants
DMS-2245017, DMS-2247537 and DMS-2434314, by the AGAUR project 2021-SGR-0087 (Catalunya) and by a Simons Fellowship. JPA has been partially supported by NSF under Grant DMS-2247537, as well as by CFIS Excellence Grants (for studies at UPC and the Mobility Program), particularly by Fundaci\'o Privada Mir-Puig, CFIS partners, and donors of the crowdfunding program. JPA would also like to wholeheartedly thank Brown University for hosting her during part of the development of this project.
This material is based upon work supported by a grant from the Institute for Advanced Study School of
Mathematics. We thank Bogdan Georgiev and Danylo Radchenko for many useful conversations.

\section{The large $N$ case}
\label{sec:the-large-N-case}

This section is devoted to prove the following proposition:

\begin{proposition}
  \label{prop:N_large}
  For \(N_{0} = \codenumber{64}\) we have
  \begin{equation*}
    \lambda_1(\mathcal{P}_{N_0}) > \lambda_1(\mathcal{P}_{N_0+1}) >
    \dots > \lambda_1(\mathcal{P}_N) > \lambda_1(\mathcal{P}_{N+1}) >
    \dots > \lambda_1(\mathbb{D}) \, .
  \end{equation*}
  Furthermore, for \(q_N\) as in Theorem~\ref{main_thm}, it also holds
  that
  \begin{equation*}
    q_{N_0} > q_{N_0+1} > \dots > q_N > q_{N+1} > \dots > 1 \, .
  \end{equation*}
\end{proposition}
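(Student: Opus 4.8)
The plan is to combine the asymptotic analysis of $\lambda_1(\mathcal P_N)$ carried out in this section — which, building on the $1/N$-expansion of Berghaus--Georgiev--Monien--Radchenko, produces an \emph{exact} identity of the form
\[
\lambda_1(\mathcal P_N) = A(N) + \mathcal E(N), \qquad A(N) = \lambda_1(\mathbb D)\Bigl(1 + \tfrac{C_{1,1}}{N} + \dots + \tfrac{C_{1,m}}{N^m}\Bigr),
\]
with $A(N)$ an explicit truncation of the series and $\mathcal E(N)$ a remainder written through the residue function $K_4(N,z,t)$ — with the rigorous, uniform-in-$N$ bounds on $K_4$ supplied by Appendix~\ref{app:bounds-for-K4}. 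Both $A$ and (via its integral representation) $\mathcal E$ extend to a $C^2$ function $G(N)$ on $[N_0,\infty)$ with $G(N)\to\lambda_1(\mathbb D)$ as $N\to\infty$; if only a discrete statement is available one argues instead with $G$ evaluated at $N,\,N+1,\,N+2$. By the Faber--Krahn inequality $\lambda_1(\mathcal P_N)>\lambda_1(\mathbb D)$ (or directly from the explicit value of the first nonzero coefficient $C_{1,k_0}$), so $C_{1,k_0}>0$, and consequently the explicit quantities $A(N)-\lambda_1(\mathbb D)$, $A'(N)$, $A''(N)$ and $A''(N)A(N)-A'(N)^2$ each have an unambiguous sign for all large $N$. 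The entire proof is the bookkeeping required to show those signs survive the addition of $\mathcal E$, down to $N_0=64$.

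First I would record the reductions. The chain $\lambda_1(\mathcal P_{N_0})>\dots>\lambda_1(\mathbb D)$ is equivalent to $G$ being strictly decreasing on $[N_0,\infty)$, i.e.\ $G'<0$. For the quotients, $q_N>1$ is literally $\lambda_1(\mathcal P_N)>\lambda_1(\mathcal P_{N+1})$, hence already covered; and $q_N>q_{N+1}$ is equivalent (cross-multiplying and taking logarithms) to the discrete log-convexity $\log G(N)+\log G(N+2)>2\log G(N+1)$, for which it suffices that $(\log G)''>0$, i.e.\ $G''(N)G(N)>G'(N)^2$, on $[N_0,\infty)$ — indeed then $\log G(N)+\log G(N+2)-2\log G(N+1)=\int_0^1\!\int_0^1(\log G)''(N+s+t)\,ds\,dt>0$. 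So Proposition~\ref{prop:N_large} reduces to the two pointwise inequalities
\[
G'(N)<0 \qquad\text{and}\qquad G''(N)\,G(N)>G'(N)^2 \qquad\text{for all } N\ge N_0=64 .
\]

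For each of these I would write $G=A+\mathcal E$ and estimate the pieces. For $G'<0$: the explicit $A'(N)=\lambda_1(\mathbb D)\sum_{k=1}^m(-k)C_{1,k}N^{-k-1}$ is negative of order $N^{-k_0-1}$ with a known leading constant, and bounding it away from $0$ on $[N_0,\infty)$ is an elementary monotone (pen-and-paper) estimate; then $|\mathcal E'(N)|\le C\,N^{-m-2}$ follows from the $K_4$ bounds by differentiating under the integral sign, so taking the truncation order $m$ large enough (the coefficients $C_{1,k}$ are available up to $k=14$) gives $|\mathcal E'|\ll|A'|$ for $N\ge 64$ and hence $G'<0$. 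For $G''G>(G')^2$: expand
\[
G''G-(G')^2=\bigl(A''A-(A')^2\bigr)+\bigl(A''\mathcal E+\mathcal E''A-2A'\mathcal E'\bigr)+\bigl(\mathcal E''\mathcal E-(\mathcal E')^2\bigr);
\]
the main term $A''A-(A')^2$ equals $\lambda_1(\mathbb D)^2\,k_0(k_0+1)C_{1,k_0}\,N^{-k_0-2}$ to leading order (the subtracted $(A')^2$ being of the strictly smaller order $N^{-2k_0-2}$), hence positive with an explicit lower bound on $[N_0,\infty)$ that one verifies by hand after clearing denominators; and every term involving $\mathcal E,\mathcal E',\mathcal E''$ is smaller than $N^{-k_0-2}$ by a power of $N$ as soon as $m>k_0$, using bounds $|\mathcal E^{(j)}(N)|\le C_j N^{-m-1-j}$ drawn from Appendix~\ref{app:bounds-for-K4}. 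Summing the resulting inequalities over integers $N\ge 64$ and passing to the limit closes both chains.

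The main obstacle is the second inequality $G''G>(G')^2$: because it governs a second-difference-type quantity, the genuinely positive leading behavior surfaces only after cancellation, at the small size $N^{-k_0-2}$, so one must push the asymptotic expansion far enough that $\mathcal E$ \emph{together with its first and second derivatives} is provably negligible against this small main term, \emph{uniformly all the way down to $N=64$} rather than merely "for $N$ large". Making the remainder bounds sharp and uniform in $N$ is exactly the job of the computer-assisted estimates on $K_4$ in Appendix~\ref{app:bounds-for-K4}; the remaining ingredient is the routine-but-careful check that the explicit rational functions $A'(N)$ and $A''(N)A(N)-A'(N)^2$ keep the right sign with a quantitative margin on $[64,\infty)$, which amounts to verifying finitely many polynomial inequalities.
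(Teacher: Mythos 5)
Your proposal captures the correct high-level intuition — that one wants to show a decreasing-and-log-convex-type behavior in $N$, and that the $(\log G)''>0$ criterion cleanly implies $q_N>q_{N+1}$ — but the mechanism you propose for controlling the remainder would not work, and it is genuinely different from what the paper does.

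The key difficulty is that you write $\lambda_1(\mathcal P_N)=A(N)+\mathcal E(N)$ and then bound $\mathcal E'(N)$ and $\mathcal E''(N)$ by ``differentiating under the integral sign'' using the $K_4$ bounds. But the Fox--Henrici--Moler lemma (Lemma~\ref{thm:FoxHenriciMoler}) only delivers a pointwise inclusion $\lambda_1(\mathcal P_N)\in[\lambda_{app}(N)/(1+\hat\varepsilon(N)),\ \lambda_{app}(N)/(1-\hat\varepsilon(N))]$ at each integer $N$; the quantity $\mathcal E(N)=\lambda_1(\mathcal P_N)-A(N)$ has no integral (or even smooth) representation you could differentiate, and the $K_4$ bounds control the boundary defect of $u_{app}$, not how the true eigenvalue varies with $N$. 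There is also no natural $C^2$ extension of $N\mapsto\lambda_1(\mathcal P_N)$ to non-integer $N$ to make $G'$ and $G''$ meaningful. The paper sidesteps this entirely: it proves the discrete inequalities $\lambda_{\mathrm{inf}}^{(N)}>\lambda_{\mathrm{sup}}^{(N+1)}$ and $q_{\mathrm{inf}}^{(N)}>q_{\mathrm{sup}}^{(N+1)}$ directly on the explicit bracketing functions $\lambda_{\mathrm{inf/sup}}^{(N)}=\lambda_{app}(N)/(1\pm\hat\varepsilon(N))$. Since $\lambda_{app}$ and $\hat\varepsilon$ (built from $\varepsilon(N)$ and $\eta(N)$ in \eqref{eq:varepsilon-N}, \eqref{eq:eta-N}) are explicit smooth functions of $\nu=1/N$, one can pass to $p_1(\nu), p_2(\nu)$, use Taylor's theorem with Lagrange remainder to extract the leading orders $\nu^4$ and $\nu^5$, and verify by rigorous interval computation that $p_1^{(4)}$ and (on a subinterval) $p_2^{(5)}$ have a fixed sign — exactly the content of Lemma~\ref{lemma:final-inequalities}. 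The derivatives that are taken are of the auxiliary explicit quantities, never of the eigenvalue itself.

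You also omit an essential step: the a posteriori lemma only asserts that \emph{some} eigenvalue lies near $\lambda_{app}$, so one must additionally show that this eigenvalue is $\lambda_1(\mathcal P_N)$ and not a higher one. The paper does this by the domain-monotonicity chain
\begin{equation*}
\frac{\lambda_{app}(N)}{1-\hat\varepsilon(N)}<\lambda_2\bigl(\mathbb D_{R_{\text{outer}}}\bigr)\leq\lambda_2(\mathcal P_N),
\end{equation*}
using $\mathcal P_N\subsetneq\mathbb D_{R_{\text{outer}}}$. Without this you cannot conclude that the enclosures you build bracket $\lambda_1$. A minor additional point: the paper deliberately truncates $\lambda_{app}$ at two nontrivial terms rather than taking $m$ large, because higher truncations would force ever more delicate computer-assisted error bounds; the sharpness required to beat the $O(N^{-5})$ size of $q_N-q_{N+1}$ is obtained not by adding terms but by the $\nu$-Taylor argument on the explicit brackets.
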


We defer the proof to the end of the section and start proving its main ingredients. The first one is the following a posteriori Lemma:
\begin{lemma}
  \label{thm:FoxHenriciMoler}
  \cite{Fox-Henrici-Moler:approximations-bounds-eigenvalues,
    Moler-Payne:bounds-eigenvalues} Let \(\Omega\subset\mathbb{R}^n\)
  be bounded. Let \(\lambda_{app}\) and \(u_{app}\) be an
  approximate eigenvalue and eigenfunction---that is, they satisfy
  \(\Delta u_{app}+\lambda_{app} u_{app}=0\) in \(\Omega\) but
  not necessarily \(u_{app} = 0\) on~\(\partial\Omega\). Define
  \begin{equation}\label{mu:bound}
    \mu = \frac{\sqrt{|\Omega|}\sup_{x \in \partial \Omega}|u_{app}(x)|}{\|u_{app}\|_2}.
  \end{equation}
  where \(|\Omega|\) is the area of the domain. Then there exists an
  eigenvalue \(\lambda\) such that
  \begin{equation}
    \label{eq:eigenvalue-bound}
    \frac{|\lambda_{app} - \lambda|}{\lambda} \leq \mu.
  \end{equation}
\end{lemma}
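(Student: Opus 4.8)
The plan is to prove this a posteriori estimate by reducing it to the classical perturbation bound for the compact self-adjoint operator $T = (-\Delta)^{-1}$ acting on $L^{2}(\Omega)$ with homogeneous Dirichlet conditions, whose spectrum is $\{0\}\cup\{1/\lambda_{k}\}_{k\geq 1}$, the $\lambda_{k}$ being exactly the Dirichlet eigenvalues of $\Omega$. The point is that $u_{app}$ is \emph{almost} an eigenvector of $T$ with eigenvalue $1/\lambda_{app}$, and the discrepancy is entirely due to the failure of the boundary condition; moreover that discrepancy is governed by the harmonic extension of the boundary trace of $u_{app}$, whose size is controlled by $\sup_{\partial\Omega}|u_{app}|$ through the maximum principle.

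Concretely, I would first set $g = u_{app}|_{\partial\Omega}$ and let $h$ be the harmonic extension of $g$ into $\Omega$, so that $\Delta h = 0$ in $\Omega$ and $h = g$ on $\partial\Omega$. Since $-\Delta u_{app} = \lambda_{app}u_{app}$ while $-\Delta h = 0$, the function $u_{app}-h$ vanishes on $\partial\Omega$, lies in $H_{0}^{1}(\Omega)$, and satisfies $-\Delta(u_{app}-h) = \lambda_{app}u_{app}$. By definition of $T$ this is precisely
\[
  T u_{app} \;=\; \frac{1}{\lambda_{app}}\,u_{app} \;-\; \frac{1}{\lambda_{app}}\,h ,
\]
so $u_{app}$ is an approximate eigenvector of $T$ for the value $1/\lambda_{app}$ with residual $r = -h/\lambda_{app}$.

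Next I would invoke the elementary spectral bound: for self-adjoint $T$ and any $x\neq 0$, the spectral theorem gives $\|(T-\alpha)x\|^{2} = \int (t-\alpha)^{2}\,d\langle E_{t}x,x\rangle \geq \operatorname{dist}(\alpha,\sigma(T))^{2}\,\|x\|^{2}$, hence $\operatorname{dist}(\alpha,\sigma(T)) \leq \|(T-\alpha)x\|/\|x\|$. With $x = u_{app}$ and $\alpha = 1/\lambda_{app}$ this yields $\operatorname{dist}(1/\lambda_{app},\sigma(T)) \leq \|h\|_{2}/(\lambda_{app}\|u_{app}\|_{2})$. The maximum principle gives $\|h\|_{\infty}\leq \|g\|_{\infty} = \sup_{\partial\Omega}|u_{app}|$, so $\|h\|_{2}\leq \sqrt{|\Omega|}\,\sup_{\partial\Omega}|u_{app}|$ and the right-hand side is at most $\mu/\lambda_{app}$. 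Provided $\mu<1$ the nearest point of $\sigma(T)$ cannot be the accumulation point $0$ (that would force $1/\lambda_{app}\leq\mu/\lambda_{app}$), so it is some genuine eigenvalue $1/\lambda$ with $|1/\lambda_{app}-1/\lambda|\leq \mu/\lambda_{app}$; multiplying through by $\lambda_{app}$ gives $|\lambda_{app}-\lambda|/\lambda\leq\mu$, as claimed.

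The only place where any care is needed — and hence the "main obstacle", such as it is — is the functional-analytic groundwork: that $T=(-\Delta)^{-1}$ is a well-defined compact self-adjoint operator on $L^{2}(\Omega)$ with spectrum $\{0\}\cup\{1/\lambda_{k}\}$, that the harmonic extension $h$ exists and satisfies the maximum principle, and that $u_{app}-h\in H_{0}^{1}(\Omega)$. For the disk and the regular polygons (all Lipschitz domains, with $u_{app}$ smooth up to the boundary in our applications) these facts are classical. One should also record the harmless caveat that $\lambda$ is identified with a true eigenvalue only when $\mu<1$, which is automatic in every use of the lemma since otherwise the estimate is vacuous.
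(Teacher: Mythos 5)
The paper gives no proof of this lemma: it is stated with a citation to Fox--Henrici--Moler and Moler--Payne and simply used as a black box. Your reconstruction is correct and is in fact precisely the Moler--Payne argument from the cited reference: pass to the compact self-adjoint inverse $T=(-\Delta_D)^{-1}$, subtract the harmonic extension $h$ of the boundary trace so that $u_{app}-h\in H_0^1$ and $(T-1/\lambda_{app})u_{app}=-h/\lambda_{app}$, apply the spectral-distance bound $\operatorname{dist}(\alpha,\sigma(T))\le\|(T-\alpha)x\|/\|x\|$, control $\|h\|_2$ by the maximum principle, rule out the accumulation point $0$ when $\mu<1$, and translate back to the eigenvalue scale. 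The only tiny imprecision is your remark that the case $\mu\ge 1$ is ``vacuous''; it is not vacuous, but it does hold trivially (pick any eigenvalue $\lambda\ge\lambda_{app}$, which exists since the spectrum is unbounded, and note $(\lambda-\lambda_{app})/\lambda<1\le\mu$), so the conclusion is unaffected.
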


Our main goal is now to construct the approximate eigenpair $(u_{app},\lambda_{app})$ in a way that yields accurate enough bounds for $N \geq N_0$. We will first describe the construction, then (upper) bound $\|u_{app}\|_{L^\infty(\partial \mathcal{P}_n)}$ and (lower) bound $\|u_{app}\|_2$. For simplicity, we will sometimes drop the dependence on $N$ from the notation whenever it is clear. We will also denote $\lambda^{(N)} = \lambda_1(\mathcal{P}_N)$ and $\lambda =  \lambda_1(\mathbb{D}) = j_{0,1}^2$, where $j_ {0,1}$ is the first zero of the Bessel function $J_0(r)$.

First, let us introduce the  Schwarz-Christoffel map and its properties, since they will be used throughout the paper. The Schwarz-Christoffel map $f_N: \mathbb{D} \to \mathcal{P}_N$ maps the unit disk $\mathbb{D}$ conformally onto $\mathcal{P}_N$, as well as their respective boundaries. It is defined by:
\begin{equation}
\label{eq:Schwarz-Christoffel_map}
    f_N(z) =  c_N z \; _2F_1 \left(\frac{2}{N}, \frac{1}{N}, 1 + \frac{1}{N}; z^N\right) = c_N \int_0^z \frac{d\zeta}{(1-\zeta^N)^{2/N}} \, , \quad z \in \mathbb{D},
\end{equation}
where
\begin{equation}\label{eq:c_N}
    c_N = \sqrt{\dfrac{\Gamma(1-\frac{1}{N})^2 \Gamma(1+\frac{2}{N})}{ \Gamma(1+\frac{1}{N})^2 \Gamma(1-\frac{2}{N}) }}
\end{equation}
in order to ensure that $\text{Area}(\mathcal{P}_N) = \pi$ and $_2F_1$ is the hypergeometric function given by
\begin{equation*}
    \,_2F_1(a,b,c;z) = \sum_{n \geq 0} \frac{(a)_n (b)_n}{(c)_n} \frac{z^n}{n!} \, , \quad |z| < 1 \, ,
\end{equation*}
where $(a)_n = a(a+1)\cdots (a+n-1)$ is the rising Pochhammer symbol. To simplify notation we will write
\begin{equation}\label{eq:FNz_integral}
    F_N(z) = \; _2F_1 \left(\frac{2}{N}, \frac{1}{N}, 1 + \frac{1}{N}; z\right) = 1 + \frac{1}{N} \int_0^1 t^{1/N} \left( (1-tz)^{-2/N} - 1 \right) \frac{dt}{t} \, ,
\end{equation}
the last equality following from integral representation of the hypergeometric function. Moreover, by expanding in powers of $1/N$, we have
\begin{equation}
\label{eq:FNz_expansion_powers_1/N}
    F_N(z) = 1 + \sum_{n=2}^{\infty} S_n(z) \frac{1}{N^n} \, ,
\end{equation}
where $S_n(z) = \sum_{j=1}^{n-1} (-1)^{j-1} 2^{n-j} S_{j,n-j}(z) $ with $S_{n,p}(z)$ being the Nielsen generalized polylogarithm function~\cite{Kolbig1986}.

We also present here the multiple polylogarithm functions, which will take a central part in some of the expressions in the paper. For $m_1,\dots,m_k$ positive integers, the multiple polylogarithm $\Li_{m_1,\dots,m_k}(z)$ is defined for $|z|<1$ by the series expansion
\begin{equation}
    \label{eq:multiple-polylogs-series}
    \Li_{m_1,\dots,m_k}(z) = \sum_{0<n_1<n_2<\dots<n_k} \frac{z^{n_k}}{n_1^{m_1} n_2^{m_2} \cdots n_r^{m_k}} .
\end{equation}
Moreover, they can be expressed by the set of recurrent differential equations
\begin{equation}
  \label{eq:multiple-polylogs-differential-equations}
  \begin{split}
    z \frac{d}{dz}\Li_{m_1,\dots,m_k}(z) &= \Li_{m_1,\dots, m_{k-1}, m_k-1}(z) \quad \text{if}\quad m_k \geq 2 \, , \\
    (1-z) \frac{d}{dz}\Li_{m_1,\dots,m_{k-1},1}(z) &= \Li_{m_1,\dots, m_{k-1}}(z) \quad \text{if}\quad k \geq 2 \, ,
  \end{split}
\end{equation}
together with $\Li_1(z) = -\log(1-z)$ and initial conditions $\Li_{m_1,\dots,m_k}(0) = 0$ for all $m_i\in\mathbb{Z}^{+}$, $i=1,\dots,k$, and $k\in\mathbb{Z}^+$.

We borrow the expansion of the approximate solution\footnote{There is a typo in the definition of $V_4(z)$ in~\cite{Berghaus-Georgiev-Monien-Radchenko:dirichlet-eigenvalues-regular-polygons}. We thank the authors for pointing this out to us.} from~\cite{Berghaus-Georgiev-Monien-Radchenko:dirichlet-eigenvalues-regular-polygons} and construct an approximate solution in the following way:

\begin{definition}
\label{def:u_app-lambda_app}
Let $\lambda_{app}$ be
\begin{equation}
\label{eq:expansion_lambda_N}
    \lambda_{app} = \lambda \left (  1 + \frac{4 \zeta(3)}{N^3} + \frac{(12 - 2\lambda) \zeta(5)}{N^5}\right ),
\end{equation}
where $\lambda = \lambda_1(\mathbb{D})$, and let $u_{app}:\mathcal{P}_N \to \mathbb{R}$ in $\mathcal{C}^2(\mathcal{P}_N) \cap \mathcal{C}(\overline{\mathcal{P}_N})$ be a dyhedrally symmetric function satisfying $\Delta u_{app} + \lambda_{app} u_{app} = 0$ in $\mathcal{P}_N$. Following~\cite[p. 58]{Vekua:methods-elliptic-equations}, we can write $u_{app}$ as
    \begin{equation}
    \label{eq:definition-of-u_app}
        u_{app}(x) = a_0 J_0(\sqrt{\lambda_{app}} \, |x|) + \text{Re} \int_0^x U(t) J_0 \left (\sqrt{\lambda_{app} \bar{x} (x-t)} \right ) dt \, ,
    \end{equation}
    with
    \begin{equation*}
        a_0 = \frac{c_N}{\lambda^{1/2} J_1(\lambda^{1/2})} \, \text{ and } \,
        U(f_N(z^{1/N}))  = \frac{V(z) (1-z)^{2/N}}{z^{1/N}} = \frac{V(z) c_N}{z^{1/N} f_N'(z^{1/N}) }  \, , \quad z \in \mathbb{D} \, ,
    \end{equation*}
    where $f_N$ is the Schwarz-Christoffel map and
    \begin{equation*}
        V(z) = \frac{V_1(z)}{N} + \frac{V_2(z)}{N^2} + \frac{V_3(z)}{N^3} + \frac{V_4(z)}{N^4}.
    \end{equation*}
    The functions $V_j : \mathbb{D} \to \mathbb{C}$ are holomorphic
    and $V_j(0) = 0$ for $j = 1,2,3,4.$ Their expressions can be seen
    below:
    \begin{equation}\label{eq:Vs}
      \begin{split}
        V_1(z) &= 2\Li_{1}(z) \, , \\
        V_2(z) &= \left( \frac{\lambda}{2} - 2 \right) \Li_2(z) + 4\Li_{1,1}(z) \, , \\
        V_3(z) &= \left( \frac{\lambda^2}{16} - \lambda + 2 \right) \Li_3(z) + (3\lambda - 12) \Li_{1,2}(z) + (\lambda - 4) \Li_{2,1}(z) + 8\Li_{1,1,1}(z) \\
        V_4(z) &= \left( \frac{\lambda^3}{192} - \frac{\lambda^2}{8} - \frac{\lambda}{2} - 2 \right) \Li_4(z) + \left( \frac{\lambda^2}{8} - 2\lambda + 4 \right)\Li_{3,1}(z) + \left( \frac{\lambda^2}{4} -4\lambda + 12 \right) \Li_{2,2}(z) \\
               & + \left( \frac{5\lambda^2}{8} - 8\lambda + 28 \right) \Li_{1,3}(z) + (2\lambda - 8) \Li_{2,1,1}(z) + (6\lambda - 24)\Li_{1,2,1}(z) + (14\lambda - 56)\Li_{1,1,2}(z) \\
               & + 16\Li_{1,1,1,1}(z) + 2 \lambda\zeta(3)\Li_1(z)
      \end{split}
    \end{equation}
    where $\Li_s(z) = \sum_{n>0} \dfrac{z^n}{n^s}$ is the polylog
    function, and the multiple polylogarithms \(\Li_{1,1}\),
    \(\Li_{1,2}\), \(\Li_{1,3}\), \(\Li_{2,1}\), \(\Li_{2,2}\),
    \(\Li_{3,1}\), \(\Li_{1,1,1}\), \(\Li_{1,2,1}\), \(\Li_{2,1,1}\)
    and \(\Li_{1,1,1,1}\) are given
    by~\eqref{eq:multiple-polylogs-series} and described in more
    detail in Appendix~\ref{sec:polylogs}.
\end{definition}

\begin{remark}
Even though the asymptotic expansion in~\cite{Berghaus-Georgiev-Monien-Radchenko:dirichlet-eigenvalues-regular-polygons} contains more terms, heuristically, it is enough to take $(\lambda_{app},u_{app})$ correct up to one order in $1/N$ less than the desired error for $|\lambda^{(N)} - \lambda_{app}|$. For instance, taking $(\lambda_{app},u_{app})$ correct up to $O(N^{-4})$ will lead to $O(N^{-5})$ errors, which are much smaller than the $O(N^{-4})$ distance between $\lambda^{(N)}$ and $\lambda^{(N-1)}$ for sufficiently large $N$.
Making the approximation as simple as possible will lead to easier estimates of the different errors arising throughout the proof.
However, when analyzing the $q_N$ ratio, we need to expand further since $|q_N - q_{N-1}| = O(N^{-5})$, needing an approximation which is correct up to $O(N^{-5})$.
This is, for example, why we are taking terms up to order $N^{-5}$ in $\lambda_{app}$.
\end{remark}

In order to be able to evaluate more efficiently and extract cancellations, we perform the change of variables $v_{app}(z) = u_{app}(f_N(z^{1/N}))$ using the Schwarz-Christoffel map. We obtain that
\begin{equation}
\label{eq:candidate-on-disk-v_app}
    v_{app}(z) = a_0 J_0(\rho^{1/2} |z|^{1/N} |F_N(z)|)  +  \frac{c_N}{N} \text{Re} \int_0^z V(t) K(z,t) \frac{dt}{t} \, ,
\end{equation}
where $\rho = c_N^2 \lambda_{app}$ and
\begin{equation}
  \label{eq:definition-of-K(z,t)}
  K(z,t) = J_0 \left(\rho^{1/2} |z|^{1/N} \sqrt{F_N(\bar{z}) \left( F_N(z) - (t/z)^{1/N} F_N(t) \right)}\right) \, .
\end{equation}

In order to apply Lemma \ref{thm:FoxHenriciMoler}, we want to bound the quantities $\| u_{app} \|_{L^{\infty}(\partial \mathcal{P}_N)} = \|v_{app}\|_{L^{\infty}(\partial \mathbb{D})}$ and $\|u_{app}\|_{L^2(\mathcal{P}_N)}$.
Section~\ref{subsec:upper_bound_boundary} is devoted to computing an upper bound $\varepsilon(N)$ of the former, and  Section~\ref{subsec:lower_bounds_on_L2_norm} to deriving a lower bound $\eta(N)$ for the latter.

\subsection{Upper bounds on the defect $\| u_{app}\|_{L^{\infty}(\partial \mathcal{P}_N)}$}
\label{subsec:upper_bound_boundary}

In this subsection we want to compute an upper bound $\varepsilon(N)$ for $\| u_{app}\|_{\partial \mathcal{P}_N}$. Changing variables into $v_{app}$ we need to bound:
\begin{align}
    \sup_{z \in \partial \mathbb{D}} |v_{app}(z)|
     & = \sup_{|z| = 1} \left | a_0 J_0 \left ( \rho^{1/2}  |F_N(z)| \right )  + \frac{c_N}{N} \text{Re} \int_0^{z} V(s) K(z,s) \frac{ds}{s} \right |.
\label{eq:supreme_for_which_we_want_an_upper_bound}
\end{align}
We will bound this supremum after extracting the relevant cancellations. We expect the supremum to be $O(N^{-6})$.

Let us focus on the first part: $a_0 J_0 \left ( \rho^{1/2} |F_N(z)| \right )$ with $\rho = c_N^2 \lambda_{app}$.
Let $g(w) = J_0(w\sqrt{\lambda})$. Thus, $J_0\left(c_N \sqrt{\lambda_{app}} |F_N(z)| \right) = g\left(\dfrac{\sqrt{\lambda_{app}}}{\sqrt{\lambda}} c_N |F_N(z)| \right)$. Expanding \(g\) up to second order around $\dfrac{\sqrt{\lambda_{app}}}{\sqrt{\lambda}} c_N|F_N(z)| = 1$ gives us
\begin{equation}
    \label{expansion_g}g\left(\dfrac{\sqrt{\lambda_{app}}}{\sqrt{\lambda}} c_N |F_N(z)| \right) = g(1) + g'(1)\left(\dfrac{\sqrt{\lambda_{app}}}{\sqrt{\lambda}} c_N |F_N(z)|  -  1\right) + g''(1)\frac{1}{2} \left(\dfrac{\sqrt{\lambda_{app}}}{\sqrt{\lambda}} c_N |F_N(z)|  - 1\right)^2 + E_3(N,z) \, ,
\end{equation}
where $g(1) = J_0(\sqrt{\lambda}) = 0$ given that $\lambda = j_{0,1}^2$,
and for the error term we have the bound
\begin{equation}\label{eq:E_3-bound}
  |E_3(N,z)| \leq \frac{1}{6}||g'''||_{L^{\infty}([0, \infty))}
  \left|\dfrac{\sqrt{\lambda_{app}}}{\sqrt{\lambda}} c_N |F_N(z)| - 1\right|^3.
\end{equation}

\begin{lemma}
\label{lemma:bounds_and_splitting:first-term-J0:expansion-of-argument-inside-g-function}
    We have the following expansion up to order $O(N^{-6})$:
    \begin{equation*}
        \frac{\sqrt{\lambda_{app}}}{\sqrt{\lambda}} c_N |F_N(z)| - 1 = \frac{1}{N^2} b_2(z) + \frac{1}{N^3} b_3(z) + \frac{1}{N^4} b_4(z) + \frac{1}{N^5} b_5(z) + \frac{1}{N^6}T_6(N,z) \, .
    \end{equation*}
    Furthermore, we may also write
    \begin{equation*}
        \begin{split}
        \frac{\sqrt{\lambda_{app}}}{\sqrt{\lambda}} c_N |F_N(z)| - 1 & = \frac{1}{N^2} T_2(N,z) \\
        & = \frac{1}{N^2} b_2(z) + \frac{1}{N^3} b_3(z) + \frac{1}{N^4}T_4(N,z) \, .
        \end{split}
    \end{equation*}
\end{lemma}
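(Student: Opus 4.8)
The plan is to establish the claimed expansion by a direct Taylor-in-$1/N$ analysis of each of the three factors $\sqrt{\lambda_{app}}/\sqrt{\lambda}$, $c_N$, and $|F_N(z)|$, multiplying the results and tracking remainders with explicit integral or Lagrange-form bounds. First I would treat $\sqrt{\lambda_{app}}/\sqrt{\lambda}$: from~\eqref{eq:expansion_lambda_N} we have $\lambda_{app}/\lambda = 1 + 4\zeta(3)N^{-3} + (12-2\lambda)\zeta(5)N^{-5}$, so $\sqrt{\lambda_{app}/\lambda} = \sqrt{1+w}$ with $w = O(N^{-3})$; expanding $\sqrt{1+w} = 1 + \tfrac12 w - \tfrac18 w^2 + \dots$ gives a polynomial in $1/N$ plus a remainder that is $O(N^{-6})$ (since $w^2 = O(N^{-6})$ and the $\tfrac12 w$ term contributes exactly $2\zeta(3)N^{-3}$ and $(6-\lambda)\zeta(5)N^{-5}$). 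This factor is $z$-independent and contributes only to the $N^{-3}$ and $N^{-5}$ coefficients; its remainder is bounded by a fixed multiple of $N^{-6}$.

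Next I would expand $c_N$. Using~\eqref{eq:c_N}, write $\log c_N = \tfrac12\big(2\log\Gamma(1-\tfrac1N) - 2\log\Gamma(1+\tfrac1N) + \log\Gamma(1+\tfrac2N) - \log\Gamma(1-\tfrac2N)\big)$ and insert the Taylor series $\log\Gamma(1+x) = -\gamma x + \sum_{k\ge2}(-1)^k \zeta(k) x^k/k$. The odd-$\gamma$ and even-$\zeta$ terms cancel in the antisymmetric combination, leaving $\log c_N = \sum_{k\ge1} a_k N^{-(2k+1)}$ for explicit constants $a_k$ built from $\zeta(3),\zeta(5),\dots$; in particular the leading term is $-2\zeta(3)N^{-3}$, which will cancel against the $+2\zeta(3)N^{-3}$ coming from $\sqrt{\lambda_{app}/\lambda}$ — this is one of the cancellations the authors refer to. Exponentiating and truncating gives $c_N = 1 + c^{(3)}N^{-3} + c^{(4)}N^{-4} + c^{(5)}N^{-5} + O(N^{-6})$ (with $c^{(4)}=0$ by parity, but it is harmless to carry it), the remainder again controlled by the tail of an absolutely convergent series for $|1/N|$ small, i.e.\ $N \ge N_0$.

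Then I would expand $|F_N(z)|$. From~\eqref{eq:FNz_expansion_powers_1/N}, $F_N(z) = 1 + \sum_{n\ge2} S_n(z) N^{-n}$ with $S_n$ given by Nielsen polylogarithms, which are holomorphic on $\mathbb{D}$ and, more importantly, extend continuously and are bounded on $\overline{\mathbb{D}}$ with at most a $\log$-type singularity — so on $\partial\mathbb{D}$ one has uniform bounds $|S_n(z)| \le M_n$ with $\sum M_n r^n < \infty$ for $r$ slightly larger than $1/N_0$. Writing $F_N(z) = 1 + w(z)$ with $w(z) = O(N^{-2})$ uniformly, I get $|F_N(z)| = |1+w| = \sqrt{1 + 2\real w + |w|^2}$; applying $\sqrt{1+s}$-expansion (with $s = 2\real w + |w|^2 = O(N^{-2})$) and collecting powers of $1/N$ yields the coefficients: the $N^{-2}$ term is $\real S_2(z)$, the $N^{-3}$ term is $\real S_3(z)$, and the $N^{-4},N^{-5}$ terms combine $\real S_4,\real S_5$ with quadratic contributions like $\tfrac12|S_2|^2 - \tfrac12(\real S_2)^2$, etc. The remainder is $O(N^{-6})$ uniformly in $z\in\partial\mathbb{D}$.

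Finally I would multiply the three expansions, $\big(1 + (\text{ord }N^{-3})\big)\big(1 + (\text{ord }N^{-3})\big)\big(1 + (\text{ord }N^{-2})\big)$, and collect terms through order $N^{-5}$, defining $b_2(z),b_3(z),b_4(z),b_5(z)$ as the resulting coefficients (e.g.\ $b_2(z) = \real S_2(z)$, and $b_3(z)$ picks up $\real S_3(z)$ together with the $N^{-3}$ constants $2\zeta(3) - 2\zeta(3) = 0$ from the first two factors, so in fact $b_3(z)=\real S_3(z)$), and $N^{-6}T_6(N,z)$ as everything left over; since each factor's remainder is uniformly $O(N^{-6})$ and the partial products are uniformly bounded, $T_6(N,z)$ is uniformly bounded on $\partial\mathbb{D}$ for $N\ge N_0$. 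The secondary decompositions $T_2(N,z) = N^2(\cdots)$ and the $b_2,b_3,T_4$ version are then immediate: just regroup the same expansion, putting everything of order $\ge N^{-4}$ into $N^{-4}T_4(N,z)$. The one genuine subtlety — and the step I expect to be the main obstacle — is making the bounds on the Nielsen polylogarithms $S_n(z)$ and on the tail $\sum_{n>5} S_n(z) N^{-n}$ fully rigorous and uniform up to the boundary $|z|=1$, where these functions have (integrable) logarithmic singularities at $z=1$; this is exactly the kind of estimate deferred to the computer-assisted machinery and to Appendix~\ref{sec:polylogs}, and it is where one must be careful that the $O(N^{-6})$ claim holds with an \emph{explicit} constant valid for all $N \ge N_0$ rather than merely asymptotically.
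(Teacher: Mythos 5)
Your proposal is correct and follows essentially the same route as the paper's proof: expand $\sqrt{\lambda_{app}/\lambda}$, $c_N$ and $|F_N(z)|$ in powers of $1/N$ via~\eqref{eq:expansion_lambda_N}, \eqref{eq:c_N} and~\eqref{eq:FNz_expansion_powers_1/N}, multiply, collect coefficients (giving $b_2=\real S_2$, $b_3=\real S_3$, $b_4=\tfrac{(\imag S_2)^2}{2}+\real S_4$, $b_5=\imag S_2\,\imag S_3+\real S_5-\lambda\zeta(5)$), and define $T_2,T_4,T_6$ as the leftover residuals exactly as in~\eqref{eq:T_246}. The paper's proof is in fact even terser — it simply states the $b_i$ from~\eqref{eq:bs} and defines the $T$'s by subtraction — and, as you correctly flag, defers all quantitative uniform bounds on $T_2,T_4,T_6$ (and the polylogarithm estimates near $|z|=1$) to the computer-assisted Lemma~\ref{lemma:bounds_and_splitting:first_term_J0:upper-bounds-for-terms-from-g-function} and Appendix~\ref{sec:polylogs}, so they are not part of this lemma's burden.
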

\begin{proof}
    We use the expansion of $F_N(z)$ in powers of $1/N$ given by \eqref{eq:FNz_expansion_powers_1/N}, namely, $F_N(z) =  1 + \sum_{n=2}^{\infty} S_n(z) \frac{1}{N^n}$.
    By expanding $\frac{\sqrt{\lambda_{app}}}{\sqrt{\lambda}} c_N |F_N(z)| - 1$ up to order $O(N^{-5})$ we obtain that
    \begin{equation}\label{eq:bs}
      \begin{split}
        b_2(z) &= \real S_2(z),\\
        b_3(z) &= \real S_3(z),\\
        b_4(z) &= \frac{(\imag S_2(z))^2}{2} + \real S_4(z),\\
        b_5(z) &= \imag S_2(z) \cdot \imag S_3(z) + \real S_5(z) - \lambda \zeta(5).
      \end{split}
    \end{equation}
    The rest of the terms $T_2(N,z), T_4(N,z)$ and $T_6(N,z)$ are the error terms left from subtracting the corresponding $b_i(z)$ from the original expression, that is,
    \begin{equation}\label{eq:T_246}
      \begin{split}
        T_2(N,z) &= \left ( \frac{\sqrt{\lambda_{app}}}{\sqrt{\lambda}} c_N |F_N(z)| - 1 \right ) N^2 \, , \\
        T_4(N,z) &=  \left ( \frac{\sqrt{\lambda_{app}}}{\sqrt{\lambda}} c_N |F_N(z)| - 1  - \frac{b_2(z)}{N^2} - \frac{b_3(z)}{N^3} \right ) N^4 \\
        T_6(N,z) &=  \left ( \frac{\sqrt{\lambda_{app}}}{\sqrt{\lambda}} c_N |F_N(z)| - 1  - \frac{b_2(z)}{N^2} - \frac{b_3(z)}{N^3} - \frac{b_4(z)}{N^4} - \frac{b_5(z)}{N^5} \right ) N^6 \, .
      \end{split}
    \end{equation}
\end{proof}

Depending on which order of $\frac{1}{N}$ is needed, we use these expressions interchangeably. Substituting into \eqref{expansion_g}:
\begin{align*}
    g\left(\dfrac{\sqrt{\lambda_{app}}}{\sqrt{\lambda}} c_N |F_N(z)|\right)  & = g'(1)\left(\dfrac{\sqrt{\lambda_{app}}}{\sqrt{\lambda}} c_N |F_N(z)| - 1\right) + g''(1)\frac{1}{2} \left(\dfrac{\sqrt{\lambda_{app}}}{\sqrt{\lambda}} c_N |F_N(z)| - 1\right)^2 + E_3(N,z) \\
    & = g'(1) \left(\frac{1}{N^2}b_2(z) + \frac{1}{N^3}b_3(z) + \frac{1}{N^4}b_4(z) + \frac{1}{N^5}b_5(z) + \frac{1}{N^6}T_6(N,z) \right) \\
    & \quad + \frac{g''(1)}{2} \left(\frac{1}{N^2} b_2(z) + \frac{1}{N^3} b_3(z) + \frac{1}{N^4}T_4(N,z)\right)^2 + E_3(N,z) \\
    & =  g'(1) \left(\frac{1}{N^2}b_2(z) + \frac{1}{N^3}b_3(z) + \frac{1}{N^4}b_4(z) + \frac{1}{N^5}b_5(z) + \frac{1}{N^6}T_6(N,z) \right) + \frac{g''(1)}{2} \left( \frac{1}{N^4}b^2_2(z) \right. \\
    & \quad \left. + \frac{1}{N^6}b_3^2(z) + \frac{1}{N^8}T_4^2(N,z) + \frac{2}{N^5}b_2(z)b_3(z) + \frac{2}{N^6}b_2(z)T_4(N,z) + \frac{2}{N^7}b_3(z)T_4(N,z) \right)  + E_3(N,z) \\
    & =  \frac{1}{N^2} \left( g'(1)b_2(z) \right)
      +  \frac{1}{N^3} \left( g'(1)b_3(z) \right)
      +  \frac{1}{N^4} \left( g'(1)b_4(z) + \frac{g''(1)}{2} b_2^2(z) \right) \\
     & \quad  +  \frac{1}{N^5} \left( g'(1)b_5(z) + g''(1) b_2(z)b_3(z) \right) \\
    & \quad + \frac{1}{N^6}\left( g'(1) T_6(N,z) + \frac{g''(1)}{2}b_3^2(z) + g''(1) b_2(z)T_4(N,z) \right)
    + E_3(N,z) \\
    & \quad + \frac{g''(1)}{N^7}b_3(z)T_4(N,z) + \frac{g''(1)}{2 N^8}T_4^2(N,z) \, .
\end{align*}
Here, the first two lines in the last expression are the main terms, which will cancel with the corresponding terms from the second summand of
\eqref{eq:supreme_for_which_we_want_an_upper_bound}, and the last two lines are the error terms. Let us denote
\begin{align*}
    E_{J_0}(N, z) = \frac{1}{N^6}\left( g'(1) T_6(N,z) + \frac{g''(1)}{2}b_3^2(z) + g''(1) b_2(z)T_4(N,z) \right) + E_3(N,z) + \frac{g''(1)}{N^7}b_3(z)T_4(N,z) + \frac{g''(1)}{2 N^8}T_4^2(N,z) ,
\end{align*}
where $|E_3(N,z)| \leq \frac{1}{6} \|g'''\|_\infty \frac{\|T_2(N,\cdot)\|_\infty}{N^6}$.

We now provide bounds on $E_{J_0}$.

\begin{lemma}
  \label{lemma:bounds_and_splitting:first_term_J0:upper-bounds-for-terms-from-g-function}
  Let \(|z| = 1\), we have the bounds
  \begin{equation*}
    |b_{2}(z)| \leq C_{b,2},\
    |b_{3}(z)| \leq C_{b,3},\
    |b_{4}(z)| \leq C_{b,4},\
    |b_{5}(z)| \leq C_{b,5}
  \end{equation*}
  with
  \begin{equation*}
    C_{b,2} = \codenumber{3.5},\
    C_{b,3} = \codenumber{2.5},\
    C_{b,4} = \codenumber{10},\
    C_{b,5} = \codenumber{11}.
  \end{equation*}
  Moreover, for \(N \geq N_0 = \codenumber{64}\) we also have
  \begin{equation*}
    |T_{2}(N,z)| \leq C_{T,2},\
    |T_{4}(N,z)| \leq C_{T,4},\
    |T_{6}(N,z)| \leq C_{T,6}
  \end{equation*}
  with
  \begin{equation*}
    C_{T,2} = \codenumber{4},\
    C_{T,4} = \codenumber{11},\
    C_{T,6} = \codenumber{50}.
  \end{equation*}
\end{lemma}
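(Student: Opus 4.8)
Since the Taylor coefficients of each $S_n$ are real we have $S_n(\bar z) = \overline{S_n(z)}$, so every $b_k(e^{i\theta})$ and every $T_j(N,e^{i\theta})$ is an even function of $\theta$, and it suffices to work on $\theta\in[0,\pi]$. Next, the three bounds on the $T_j$ are not independent: rewriting~\eqref{eq:T_246} gives the identities
\begin{align*}
  T_4(N,z) &= b_4(z) + \frac{b_5(z)}{N} + \frac{T_6(N,z)}{N^2}, \\
  T_2(N,z) &= b_2(z) + \frac{b_3(z)}{N} + \frac{b_4(z)}{N^2} + \frac{b_5(z)}{N^3} + \frac{T_6(N,z)}{N^4},
\end{align*}
so once the $C_{b,k}$ and $C_{T,6}$ are established, the bounds $|T_4(N,z)|\le \codenumber{10}+\codenumber{11}/N+\codenumber{50}/N^2<\codenumber{11}$ and similarly $|T_2(N,z)|<\codenumber{4}$ follow for $N\ge N_0=\codenumber{64}$ by the triangle inequality. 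It thus remains to prove the $b_k$-bounds and the bound $|T_6(N,z)|\le\codenumber{50}$.

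\textbf{The bounds on $b_2,\dots,b_5$.} These functions are independent of $N$ and, through $S_n(z)=\sum_{j=1}^{n-1}(-1)^{j-1}2^{n-j}S_{j,n-j}(z)$, are finite linear combinations of products of Nielsen generalized polylogarithms (equivalently, of the multiple polylogarithms of Appendix~\ref{sec:polylogs}) together with the constant $\lambda\zeta(5)$. For $b_2$ this gives the closed form $b_2(e^{i\theta})=2\real\Li_2(e^{i\theta})=2\sum_{n\ge1}\cos(n\theta)/n^2$, whose absolute value is bounded by $2\zeta(2)=\pi^2/3<\codenumber{3.5}$. For $b_3,b_4,b_5$ the triangle inequality over the constituent polylogarithms overshoots the claimed constants — there is substantial cancellation in the alternating sum defining the $S_n$ (for instance $b_3(1)=4S_{1,2}(1)-2\Li_3(1)=2\zeta(3)\approx 2.40$ is close to $C_{b,3}$) — so instead I would enclose $\sup_{\theta\in[0,\pi]}|b_k(e^{i\theta})|$ directly, evaluating the constituent $\Li_s$ and multiple polylogarithms on $\partial\mathbb{D}$ with the rigorous implementations of Appendix~\ref{sec:polylogs} and feeding the result into the extremum-enclosing routine of Appendix~\ref{sec:enclosing-extrema}; the only mildly singular constituents (powers of $\log(1-z)$ entering the $S_{1,p}$) are integrable and their combinations extend continuously to $z=1$, so a standard subdivision of $[0,\pi]$ with a controlled modulus of continuity near $\theta=0$ suffices. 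This produces the constants $C_{b,2},\dots,C_{b,5}$.

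\textbf{The bound on $T_6$.} Set $A_N:=\frac{\sqrt{\lambda_{app}}}{\sqrt\lambda}c_N$ and $w_N(z):=F_N(z)-1$, so that $\frac{\sqrt{\lambda_{app}}}{\sqrt\lambda}c_N|F_N(z)|-1=A_N|1+w_N(z)|-1$. Two rigorous expansions feed the argument. First, expanding $t^{1/N}$ and $(1-tz)^{-2/N}$ in powers of $1/N$ inside the integral representation~\eqref{eq:FNz_integral} gives, for any truncation order $m$, $w_N(z)=\sum_{n=2}^{m}S_n(z)/N^n+R_m(N,z)$ with $\|R_m(N,\cdot)\|_{L^\infty(\partial\mathbb{D})}\le C_m/N^{m+1}$ for $N\ge N_0$; the geometric-type tail bound behind this, $\sup_{|z|\le1}|S_n(z)|\le\sum_{j=1}^{n-1}2^{n-j}S_{j,n-j}(1)$, follows from the nonnegativity of the Taylor coefficients of the Nielsen polylogarithms together with elementary estimates of $S_{j,p}(1)$ by $\zeta$-values. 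Second, the series $\log\Gamma(1+x)=-\gamma x+\sum_{k\ge2}\frac{(-1)^k\zeta(k)}{k}x^k$ yields $\log A_N^2=\log c_N^2+\log(\lambda_{app}/\lambda)$ with $\log c_N^2=\sum_{k\ge3,\ k\text{ odd}}\frac{(4-2^{k+1})\zeta(k)}{kN^k}$, hence an explicit $1/N$-expansion of $A_N$ (with $A_N=1-\lambda\zeta(5)/N^5+O(N^{-6})$, the $-\lambda\zeta(5)$ accounting for the corresponding term in $b_5$) whose remainder is controlled by a convergent geometric series for $N\ge N_0$. Substituting both expansions into $A_N|1+w_N|-1$, expanding $|1+w_N|=1+\real w_N+\tfrac12(\imag w_N)^2+\cdots$, and collecting powers of $1/N$ reproduces the coefficients $b_2,\dots,b_5$ of Lemma~\ref{lemma:bounds_and_splitting:first-term-J0:expansion-of-argument-inside-g-function} and exhibits $T_6(N,z)/N^6$ (for a suitable finite truncation order $m$) as an explicit sum — the tail $\sum_{n>m}S_n(z)/N^n$ from $R_m$, the higher-order terms of the $|1+w_N|$-expansion, the remainder of the $A_N$-expansion, and cross terms — in which every contribution beyond order $6$ carries a genuine $N^{-1}$-decaying prefactor, leaving a leading contribution $b_6(z):=\lim_{N\to\infty}T_6(N,z)$ that is again a combination of (multiple) polylogarithms. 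Bounding the finitely many $z$-dependent coefficients up to order $n=m$ on $\partial\mathbb{D}$ with the machinery of Appendices~\ref{sec:polylogs}--\ref{sec:enclosing-extrema}, and the tails $n>m$ by the geometric estimate, then delivers $|T_6(N,z)|\le\codenumber{50}$ uniformly for $N\ge\codenumber{64}$.

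\textbf{Main obstacle.} The difficulty is entirely quantitative: the constants $\codenumber{11}$ and $\codenumber{50}$ are modest, so the remainder and tail estimates must be tight and, above all, \emph{uniform in $N\ge 64$}. In particular the crude bound $\sup_{|z|\le1}|S_n(z)|\lesssim 2^{n-1}\zeta(n)$ already overspends the $T_6$ budget at $n=6$, so the truncation order $m$ must be chosen large enough that the coefficients $b_6,b_7,\dots$ are bounded by their true suprema on the circle (much smaller, thanks to cancellation) rather than by the triangle inequality. Thus the whole argument ultimately rests on having rigorous, sufficiently sharp enclosures of the (multiple) polylogarithms on $\partial\mathbb{D}$, including near the singular point $z=1$ — precisely the content of Appendices~\ref{sec:polylogs} and~\ref{sec:enclosing-extrema}.
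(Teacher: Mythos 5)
Your decomposition agrees with the paper's: the reduction of the $T_2$ and $T_4$ bounds to those on $b_2,\dots,b_5$ and $T_6$ via the triangle inequality at $N_0 = \codenumber{64}$, the restriction to $\theta\in[0,\pi]$ by conjugate symmetry, and the treatment of the $b_k$'s by rigorous polylogarithm evaluation fed to the extremum-enclosing routine of Appendix~\ref{sec:enclosing-extrema} are all exactly what the paper does. (Your closed form $b_2(e^{i\theta})=2\real\Li_2(e^{i\theta})$, with maximum modulus $\pi^2/3\approx 3.29$ at $\theta=0$, is a pleasant pen-and-paper alternative for that one coefficient; the paper simply encloses it numerically.)

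Where you genuinely diverge is the bound on $T_6$. The paper's route (Appendix~\ref{sec:taylor-models}) is to compute a degree-$5$ Taylor model in the variable $\nu = 1/N$ of $\frac{\sqrt{\lambda_{app}}}{\sqrt\lambda}\,c_N\,|F_N(z)|$ on $[0,1/N_0]$, whose remainder interval is, by definition, an enclosure of $T_6(N,z)$ valid for all $N\ge N_0$; the Taylor model of $F_N$ itself is built by enclosing the sixth $\nu$-derivative through its integral representation (Appendix~\ref{sec:taylor-model-F_N}), and Taylor-model arithmetic then handles the compositions with $c_N$, $\sqrt{\lambda_{app}/\lambda}$ and $|\cdot|=\sqrt{(\cdot)\overline{(\cdot)}}$ automatically. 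Your alternative — truncate the $1/N$-series of $w_N$, $A_N$ and $|1+w_N|$ by hand, enclose the finitely many $z$-dependent coefficients, and control the tails geometrically via $\sup_{|z|\le1}|S_n|\le 2^{n-1}\zeta(n)+\cdots$ — is the same underlying mathematics but with a different remainder bound (a tail of the full series rather than a Lagrange form built on the sixth derivative), and it requires tracking all cross terms of the three compositions explicitly. This is in principle workable, and you correctly flag the quantitative pitfalls, but the bookkeeping it demands is precisely what the paper's use of Taylor models is designed to mechanize; both are computer-assisted and your version does not gain in rigor or elementariness, so the paper's is the cleaner choice in practice.
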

\begin{proof}
  Let us start by noting that for \(T_{2}\) and \(T_{4}\) we have,
  assuming the above bounds for the \(b\)'s and \(T_{6}\),
  \begin{equation*}
    |T_2(N,z)| \leq
    |b_{2}(z)|
    + \frac{|b_{3}(z)|}{N_{0}}
    + \frac{|b_{4}(z)|}{N_{0}^{2}}
    + \frac{|b_{5}(z)|}{N_{0}^{3}}
    + \frac{|T_{6}(N,z)|}{N_{0}^{4}}
    \leq
    C_{b,2}
    + \frac{C_{b,3}}{N_{0}}
    + \frac{C_{b,4}}{N_{0}^{2}}
    + \frac{C_{b,5}}{N_{0}^{3}}
    + \frac{C_{T,6}}{N_{0}^{4}}
    \leq C_{T,2}
  \end{equation*}
  and
  \begin{equation*}
    |T_4(N,z)| \leq
    |b_{4}(z)| +
    \frac{|b_{5}(z)|}{N_{0}} +
    \frac{|T_{6}(N,z)|}{N_{0}^{2}}
    \leq
    C_{b,4} +
    \frac{C_{b,5}}{N_{0}} +
    \frac{C_{T,6}}{N_{0}^{2}}
    \leq C_{T,4}.
  \end{equation*}
  It therefore suffices to verify the bounds for the \(b\)'s and
  \(T_{6}\).

  Since the functions are all conjugate symmetric it suffices to bound
  them for \(z = e^{i\theta}\) with \(\theta \in [0, \pi]\). We make
  use of the algorithm discussed in
  Appendix~\ref{sec:enclosing-extrema} for enclosing the maximum on
  this interval. For the \(b\)'s this is straightforward using the
  expressions in terms of \(S_{n}\)'s given by \eqref{eq:bs}. The
  procedure to compute enclosures of the \(S_{n}\)'s is discussed in
  Appendix~\ref{sec:polylogs}.

  For the \(b\)'s this approach gives us the following enclosures for
  the maximum values:
  \begin{align*}
    \max_{\theta \in [0, \pi]} |b_{2}(e^{i\theta})| \in \resultnumber{[3.29 \pm 1.28 \cdot 10^{-3}]}, &\quad
    \max_{\theta \in [0, \pi]} |b_{3}(e^{i\theta})| \in \resultnumber{[2.41 \pm 6.82 \cdot 10^{-3}]},\\
    \max_{\theta \in [0, \pi]} |b_{4}(e^{i\theta})| \in \resultnumber{[9.8 \pm 0.0707]}, & \quad
    \max_{\theta \in [0, \pi]} |b_{5}(e^{i\theta})| \in \resultnumber{[10.80 \pm 8.35 \cdot 10^{-3}]},
  \end{align*}
  which immediately implies the required bounds.

  To compute enclosures for \(T_{6}\) we make use of Taylor models. We
  refer to Appendix~\ref{sec:taylor-models} for more details about
  Taylor models and how to compute \(T_{6}\) using them. With this we
  get the following enclosure
  \begin{equation*}
    \max_{\theta \in [0, \pi]} |T_{6}(N,e^{i\theta})| \in \resultnumber{[25.49, 45.91]},
  \end{equation*}
  valid for $N \geq N_0$. This again implies the required bound.
\end{proof}

\begin{lemma}
\label{lemma:g_function_derivatives}
    For $g(w) = J_0(w\sqrt{\lambda})$, we have that
    \begin{align*}
        g'(1) &= - \sqrt{\lambda} J_1(\sqrt{\lambda}) \, , \\
        g''(1) &= \frac{1}{2} J_2(\sqrt{\lambda}) \lambda \, ,
    \end{align*}
    and in particular $g'(1) = -g''(1)$. Moreover, we have the bound
    \begin{equation*}
      \|g'''\|_{L^{\infty}([0, \infty))} \leq C_{g'''},
    \end{equation*}
    with \(C_{g'''} = \codenumber{6.1}\).
\end{lemma}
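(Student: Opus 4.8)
The plan is to differentiate $g(w) = J_0(w\sqrt{\lambda})$ directly and rewrite everything via the standard Bessel recurrences $2J_\nu'(x) = J_{\nu-1}(x) - J_{\nu+1}(x)$ and $J_{\nu-1}(x) + J_{\nu+1}(x) = \tfrac{2\nu}{x}J_\nu(x)$, together with the defining relation $\lambda = j_{0,1}^2$, i.e.\ $J_0(\sqrt{\lambda}) = 0$. By the chain rule and $J_0' = -J_1$ we get $g'(w) = -\sqrt{\lambda}\,J_1(w\sqrt{\lambda})$, hence $g'(1) = -\sqrt{\lambda}\,J_1(\sqrt{\lambda})$. Differentiating once more and using $2J_1'(x) = J_0(x) - J_2(x)$ gives $g''(w) = -\tfrac{\lambda}{2}\big(J_0(w\sqrt{\lambda}) - J_2(w\sqrt{\lambda})\big)$; evaluating at $w = 1$ and using $J_0(\sqrt{\lambda}) = 0$ yields $g''(1) = \tfrac{\lambda}{2}J_2(\sqrt{\lambda})$. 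The identity $g'(1) = -g''(1)$ then follows from the three-term recurrence with $\nu = 1$ at $x = \sqrt{\lambda}$: since $J_0(\sqrt{\lambda}) = 0$ it reads $J_2(\sqrt{\lambda}) = \tfrac{2}{\sqrt{\lambda}}J_1(\sqrt{\lambda})$, so $\tfrac{\lambda}{2}J_2(\sqrt{\lambda}) = \sqrt{\lambda}\,J_1(\sqrt{\lambda}) = -g'(1)$.

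For the third derivative, differentiating $g''$ once more and using $2J_2'(x) = J_1(x) - J_3(x)$ yields the closed form
\[
  g'''(w) = \lambda^{3/2}\left(\tfrac{3}{4}J_1(w\sqrt{\lambda}) - \tfrac{1}{4}J_3(w\sqrt{\lambda})\right) = -\lambda^{3/2}\,J_1''(w\sqrt{\lambda}), \qquad w \geq 0.
\]
It therefore suffices to bound $h(x) := \tfrac{3}{4}J_1(x) - \tfrac{1}{4}J_3(x)$ uniformly on $[0,\infty)$ and check that $\lambda^{3/2}\sup_{x \geq 0}|h(x)| \leq C_{g'''} = \codenumber{6.1}$. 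We split the half-line into a compact piece $[0,R]$ and a tail $[R,\infty)$. On $[0,R]$ the supremum of $|h|$ is enclosed with the algorithm of Appendix~\ref{sec:enclosing-extrema}, the functions $J_1,J_3$ being available through the implementations referenced in the text; the maximum is attained near the first maximum of $J_1$ and is close to $0.41$. On the tail we invoke the classical decay estimate $|J_n(x)| \leq \sqrt{2/(\pi x)}$ for $x > 0$, which gives $|h(x)| \leq \sqrt{2/(\pi R)}$ there; choosing $R$ a modest constant (e.g.\ $R = 4$) makes the tail bound smaller than the bound on $[0,R]$. Multiplying through by $\lambda^{3/2} = j_{0,1}^3 \approx 13.91$ gives $\|g'''\|_{L^\infty([0,\infty))} \leq \codenumber{6.1}$.

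The only genuinely delicate point is this last estimate: since $\lambda^{3/2}$ is already near $14$, the bound on $|h|$ must be comfortably below $6.1/\lambda^{3/2} \approx 0.439$, and $\sup|h| \approx 0.41$ leaves only a small margin. This forces some care with the rigorous enclosure on $[0,R]$ (sufficient subdivision in the extrema algorithm) and with the choice of $R$ in the tail bound, but no conceptual obstacle arises beyond routine interval-arithmetic bookkeeping; the first three identities are entirely elementary once the Bessel recurrences and $J_0(\sqrt{\lambda}) = 0$ are in hand.
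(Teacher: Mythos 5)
The derivative identities and the $g'(1) = -g''(1)$ argument match the paper's approach exactly and are correct. One cosmetic remark: your formula $g'''(w) = \tfrac{1}{4}\lambda^{3/2}\big(3J_1(w\sqrt{\lambda}) - J_3(w\sqrt{\lambda})\big)$ actually has the correct sign, while the paper writes the leading sign as negative; this discrepancy is immaterial since only $|g'''|$ is used.

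The tail estimate, however, has a genuine gap. The inequality $|J_n(x)| \leq \sqrt{2/(\pi x)}$ is \emph{not} a valid pointwise bound for $n \geq 1$. The modulus $M_\nu(x) = \sqrt{J_\nu(x)^2 + Y_\nu(x)^2}$ satisfies $x\,M_\nu(x)^2 \geq 2/\pi$ for all $x > 0$ whenever $\nu \geq 1/2$ (equality only in the limit $x \to \infty$), and $|J_\nu|$ touches its envelope $M_\nu$ near the zeros of $Y_\nu$. So $|J_\nu(x)|$ genuinely exceeds $\sqrt{2/(\pi x)}$ at those points. Concretely, $J_3(5) \approx 0.3648$ while $\sqrt{2/(5\pi)} \approx 0.3568$; even $J_1$ overshoots slightly near $x \approx 5.33$. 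With $R = 4$ your tail region contains exactly such points, so the step $|h(x)| \leq \sqrt{2/(\pi R)}$ on $[R,\infty)$ is not justified. This is why the paper instead uses the Landau-type uniform bound $|J_\nu(x)| \leq 0.7858\, x^{-1/3}$ (valid for all $\nu \geq 0$, $x > 0$), and places the split at $w > 5$, i.e.\ $x > 5\sqrt{\lambda} \approx 12$, where that bound gives ample room. A repair of your approach would either switch to the $x^{-1/3}$ bound or invoke a proven amplitude bound such as Watson's $M_\nu(x)^2 \leq 2/\big(\pi\sqrt{x^2 - \nu^2}\big)$ for $x > \nu$ and take $R$ correspondingly larger.

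A quantitative caution: the paper's enclosure gives $\max_{w \in [0,5]} |g'''(w)| \approx 6.069$, so $\sup|h| \approx 6.069/\lambda^{3/2} \approx 0.436$, not $\approx 0.41$ as you estimated. The final bound $C_{g'''} = 6.1$ therefore holds with a much thinner margin than your sketch suggests, which makes the choice of $R$ and the correctness of the tail bound more delicate rather than less.
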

\begin{proof}
  Given that $\lambda = j_{0,1}^2$, $J_0'(x) = -J_1(x)$ and
  $J_k'(x) = \frac{1}{2}(J_{k-1}(x) - J_{k+1}(x))$ for $k \geq 1$, we
  get
  \begin{align*}
    g'(w) &= - \sqrt{\lambda} J_1(w \sqrt{\lambda}) \, , \quad
    & g'(1) &= - \sqrt{\lambda} J_1(\sqrt{\lambda}) \, , \\
    g''(w) &= - \frac{1}{2} \lambda (J_0(w \sqrt{\lambda}) - J_2(w \sqrt{\lambda})) \, , \quad
    & g''(1) &= \frac{1}{2} J_2(\sqrt{\lambda}) \lambda \, , \\
    g'''(w) &= - \frac{1}{4} \lambda^{3/2} \left( 3 J_1(w \sqrt{\lambda}) - J_3(w \sqrt{\lambda})\right) \,  . &
  \end{align*}
  Applying the recurrence identity,
  $J_{\nu}(z) = \frac{2 (\nu+1)}{z} J_{\nu+1}(z) - J_{\nu+2}(z)$, of
  the Bessel functions at $z=\sqrt{\lambda}$ gives us that
  $g'(1) = -g''(1)$.

  The bound for \(\|g'''\|_{L^{\infty}([0, \infty))}\) is
  computer-assisted. We split the interval into two parts, \([0, 5]\)
  and \((5, \infty)\). For $w \in [0, 5]$ we make use of the algorithm
  discussed in Appendix~\ref{sec:enclosing-extrema} for enclosing the
  maximum, giving us:
  \begin{equation*}
    \max_{w \in [0, 5]} |g'''(w)| \in \resultnumber{[6.06924 \pm 3.39 \cdot 10^{-6}]} < C_{g'''}.
  \end{equation*}
  For $w \in (5, \infty)$ we use the bound
  \begin{equation*}
    |J_{\nu}(x)| \leq 0.7858 x^{-1 / 3},
  \end{equation*}
  from~\cite{fungrim-7f3485}, giving
  \begin{equation*}
    |g'''(w)| \leq 0.7858\lambda^{4 / 3}w^{-1 / 3}
    \leq 0.7858\lambda^{4 / 3}5^{-1 / 3}
    \in \resultnumber{[4.7703 \pm 8.71 \cdot 10^{-6}]} < C_{g'''}.
  \end{equation*}
\end{proof}

Combining the bounds from
Lemmas~\ref{lemma:bounds_and_splitting:first_term_J0:upper-bounds-for-terms-from-g-function}
and~\ref{lemma:g_function_derivatives} and Equation~\eqref{eq:E_3-bound}, we can get a bound on
$E_{J_0}$:

\begin{corollary}
  \label{cor:bounds_and_splitting:first_term_J0:error-bound-for-g-function}
  Let \(N \geq N_{0} = \codenumber{64}\), then for \(|z| = 1\) we have
  the bound
  \begin{align*}
    |E_{J_0}(z)|
    & \leq \frac{1}{N^6}\left( |g'(1)|\|T_6(N,\cdot)\|_{\infty}
      + \frac{|g''(1)|}{2}\|b_3(z)\|_{\infty}^2
      + |g''(1)|\|b_2(z)\|_{\infty}\|T_4(N,\cdot)\|_{\infty}
      + \frac{1}{6}\| T_2(N,\cdot)\|^3_{\infty}\| g''' \|_{\infty}\right.\\
    &\quad \left. + \frac{|g''(1)|}{N}\|b_3(z)\|_{\infty} \|T_4(N,\cdot)\|_{\infty}
      + \frac{|g''(1)|}{2 N^2}\|T_4(N,\cdot)\|_{\infty}^2 \right)\\
    &
      \leq  \frac{1}{N^6} \left(
      |g'(1)|C_{T,6}
      + \frac{|g''(1)|}{2}C_{b,3}^{2}
      + |g''(1)|C_{b,2}C_{T,4}
      + \frac{1}{6}C_{T,2}^{3}C_{g'''}
      + \frac{|g''(1)|}{N}C_{b,3}C_{T,4}
      + \frac{|g''(1)|}{2 N^2}C_{T,4}^{2}
      \right).
  \end{align*}
\end{corollary}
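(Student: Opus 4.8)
The plan is to read off the estimate from the definition of $E_{J_0}(N,z)$ by a single application of the triangle inequality, substituting the uniform bounds already proved. Recall
\begin{align*}
  E_{J_0}(N,z) &= \frac{1}{N^6}\left( g'(1) T_6(N,z) + \frac{g''(1)}{2}b_3^2(z) + g''(1) b_2(z)T_4(N,z) \right) \\
  &\quad + E_3(N,z) + \frac{g''(1)}{N^7}b_3(z)T_4(N,z) + \frac{g''(1)}{2 N^8}T_4^2(N,z) \, .
\end{align*}
First I would split $|E_{J_0}(N,z)|$ into its six contributions. The three terms on the first line are controlled pointwise by $|g'(1)|\,|T_6(N,z)|$, $\tfrac12|g''(1)|\,|b_3(z)|^2$ and $|g''(1)|\,|b_2(z)|\,|T_4(N,z)|$. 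For the $E_3$ term I would go back to~\eqref{eq:E_3-bound} and combine it with the identity $\frac{\sqrt{\lambda_{app}}}{\sqrt{\lambda}}c_N|F_N(z)|-1=\frac{1}{N^2}T_2(N,z)$ from Lemma~\ref{lemma:bounds_and_splitting:first-term-J0:expansion-of-argument-inside-g-function}, which gives $|E_3(N,z)|\le\frac{1}{6}\|g'''\|_{L^\infty([0,\infty))}\,\frac{|T_2(N,z)|^3}{N^6}$ — note the \emph{cube}, which is what produces the $\tfrac16 C_{T,2}^3 C_{g'''}$ term on the right.

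The only bookkeeping step is that the last two contributions carry the extra powers $N^{-7}$ and $N^{-8}$; using $N\ge N_0$ I would write $N^{-7}=N^{-6}N^{-1}$ and $N^{-8}=N^{-6}N^{-2}$ and factor $N^{-6}$ out of all six pieces, absorbing every residual $z$-dependence into a supremum over $|z|=1$ (equivalently $\theta\in[0,\pi]$, by conjugate symmetry). This is precisely the first displayed inequality of the corollary. For the second, I would plug in $|b_i(z)|\le C_{b,i}$ for $i=2,3,4$ and $|T_j(N,z)|\le C_{T,j}$ for $j=2,4,6$ (the latter valid for $N\ge N_0$) from Lemma~\ref{lemma:bounds_and_splitting:first_term_J0:upper-bounds-for-terms-from-g-function}, together with $\|g'''\|_{L^\infty([0,\infty))}\le C_{g'''}$ from Lemma~\ref{lemma:g_function_derivatives}, keeping $|g'(1)|$ and $|g''(1)|$ (which in fact coincide, by the same lemma) as explicit constants.

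There is no genuine analytic obstacle: the statement is a consolidation of the three preceding results via the triangle inequality. The points that require care are purely organizational — making sure each constant is invoked with the correct hypotheses (the $b_i$-bounds hold for every $N$, whereas the $T_j$-bounds and the enclosure of $T_6$ need $N\ge N_0$), and making sure the $E_3$ term enters through the \emph{cube} $\|T_2(N,\cdot)\|_{\infty}^3$ rather than through $\|T_2(N,\cdot)\|_{\infty}$. Keeping the $N^{-7}$ and $N^{-8}$ tails inside the $N^{-6}$-bracket rather than discarding them costs nothing and makes the final bound uniform for all $N\ge N_0$, as needed downstream.
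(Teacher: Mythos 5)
Your proposal is correct and takes essentially the same approach as the paper, which treats the corollary as a direct consequence of the triangle inequality applied to the definition of $E_{J_0}$ combined with the bounds from Lemmas~\ref{lemma:bounds_and_splitting:first_term_J0:upper-bounds-for-terms-from-g-function} and~\ref{lemma:g_function_derivatives} and Equation~\eqref{eq:E_3-bound}. You even correctly flag the cube $\|T_2(N,\cdot)\|_\infty^3$ in the $E_3$ contribution, which is implied by substituting $\frac{\sqrt{\lambda_{app}}}{\sqrt{\lambda}}c_N|F_N(z)|-1=\frac{T_2(N,z)}{N^2}$ into \eqref{eq:E_3-bound} even though the paper's intermediate display for $|E_3(N,z)|$ omits the exponent.
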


We now proceed to extract the main terms and the error terms from the second summand of \eqref{eq:supreme_for_which_we_want_an_upper_bound}.
As mentioned earlier, it is enough to expand terms until the error is of order $O(N^{-6})$. Note that there is an additional factor $1/N$ in front of the integral and the function $V(z)$ itself also contains a $1/N$ factor. Therefore, we can begin by expanding the kernel $K(z,t)$ up to order $O(N^{-4  })$:

\begin{lemma}
\label{lemma:exact-expression-for-K-up-to-order_-3}
    For $|z|=1$, we write $K(z,t) = d_0(z,t) + \frac{1}{N}d_1(z,t) + \frac{1}{N^2}d_2(z,t) + \frac{1}{N^3}d_3(z,t) + \frac{1}{N^4}K_4(N,z,t)$. Then, the exact expressions for the terms up to order $O(N^{-3})$ are given by
    \begin{equation}
      \begin{split}
        d_0(z,t) &= 1 \, , \\
        d_1(z,t) &= \frac{\lambda  \log(\frac{t}{z}) }{4} \, , \\
        d_2(z,t) &= \frac{1}{64}\lambda^2 \log^2\left(\frac{t}{z}\right) + \frac{1}{8} \lambda \left( \log^2\left(\frac{t}{z}\right) +2 S_2(t) - 2S_2(z) \right) \\
        d_3(z,t) &= \frac{\lambda^3 \log^3\left(\frac{t}{z}\right)}{2304} + \frac{1}{64} \lambda^2 \log\left(\frac{t}{z}\right) \left( \log^2\left(\frac{t}{z}\right) + 2 S_2(t) - 2 S_2(z) \right) \\
                 & + \frac{1}{24} \lambda \left( \log^3\left(\frac{t}{z}\right) + 6 \log\left(\frac{t}{z}\right) S_2(t) + 6 \log\left(\frac{t}{z}\right)S_2(\bar{z}) + 6S_3(t) - 6 S_3(z) \right) \, .
      \end{split}
    \end{equation}
\end{lemma}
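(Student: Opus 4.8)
The plan is to expand everything in \eqref{eq:definition-of-K(z,t)} that depends on $N$ — the prefactor $\rho^{1/2}$, the factor $(t/z)^{1/N}$, and the functions $F_N$ — as power series in $1/N$, substitute into the Bessel function, and collect. Since $J_0$ is even, the branch of the square root is irrelevant, and using $|z|^{1/N} = 1$ for $|z| = 1$ one may write
\[
  K(z,t) = \sum_{m \geq 0} \frac{(-1)^m}{(m!)^2 4^m}\,P^m = 1 - \tfrac14 P + \tfrac1{64}P^2 - \tfrac1{2304}P^3 + \sum_{m \geq 4}\frac{(-1)^m}{(m!)^2 4^m}P^m,
\]
where $P = P(N,z,t) := \rho\, F_N(\bar z)\bigl(F_N(z) - (t/z)^{1/N}F_N(t)\bigr)$ with $\rho = c_N^2 \lambda_{app}$. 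The terms $d_0,\dots,d_3$ are then read off as the coefficients of $N^0, N^{-1}, N^{-2}, N^{-3}$ in this substitution, and $K_4$ is \emph{defined} to be $N^4$ times the remainder $K - d_0 - d_1/N - d_2/N^2 - d_3/N^3$.

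I would first record the ingredients, writing $L := \log(t/z)$. By \eqref{eq:FNz_expansion_powers_1/N}, $F_N(w) = 1 + S_2(w)N^{-2} + S_3(w)N^{-3} + O(N^{-4})$ for $w \in \{z, t, \bar z\}$; also $(t/z)^{1/N} = e^{L/N} = 1 + LN^{-1} + \tfrac12 L^2 N^{-2} + \tfrac16 L^3 N^{-3} + O(N^{-4})$; and $\lambda_{app} = \lambda(1 + 4\zeta(3)N^{-3} + O(N^{-5}))$ by \eqref{eq:expansion_lambda_N}. For $c_N^2$ I would plug $\log\Gamma(1+x) = -\gamma x + \sum_{k \geq 2}\tfrac{(-1)^k \zeta(k)}{k}x^k$ into \eqref{eq:c_N}: the four $\gamma$-terms cancel, the even-$k$ terms cancel, and what remains is $\log c_N^2 = -4\zeta(3)N^{-3} - 12\zeta(5)N^{-5} - \cdots$, so $c_N^2 = 1 + O(N^{-3})$ and hence $\rho = \lambda + O(N^{-3})$. (In fact the $N^{-3}$ terms of $c_N^2$ and of $\lambda_{app}$ cancel, giving $\rho = \lambda + O(N^{-5})$ — this is exactly why the term $4\zeta(3)/N^3$ sits in $\lambda_{app}$ — but here $\rho = \lambda + O(N^{-3})$ is enough, since $P$ carries an overall factor of size $O(N^{-1})$, so any $O(N^{-3})$ error in $\rho$ only feeds into $K_4$.)

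Multiplying out, $F_N(z) - (t/z)^{1/N}F_N(t) = -LN^{-1} + N^{-2}\bigl(S_2(z) - \tfrac12 L^2 - S_2(t)\bigr) + N^{-3}\bigl(S_3(z) - \tfrac16 L^3 - L S_2(t) - S_3(t)\bigr) + O(N^{-4})$; multiplying this by $F_N(\bar z) = 1 + S_2(\bar z)N^{-2} + O(N^{-3})$ contributes the extra term $-L S_2(\bar z)N^{-3}$ (this is what produces the asymmetric $S_2(\bar z)$ term in $d_3$); and multiplying by $\rho$ only puts in the overall factor $\lambda$ up to $O(N^{-4})$. This gives $P = \lambda\bigl(-LN^{-1} + B_2 N^{-2} + B_3 N^{-3}\bigr) + O(N^{-4})$ with $B_2 = S_2(z) - \tfrac12 L^2 - S_2(t)$ and $B_3 = S_3(z) - \tfrac16 L^3 - L S_2(t) - S_3(t) - L S_2(\bar z)$. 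Since $P = O(N^{-1})$, the terms $P^m$ with $m \geq 4$ enter only at order $N^{-4}$, so I only need $P$ itself, $P^2 = \lambda^2 L^2 N^{-2} - 2\lambda^2 L B_2 N^{-3} + O(N^{-4})$, and $P^3 = -\lambda^3 L^3 N^{-3} + O(N^{-4})$. Substituting into $K = 1 - \tfrac14 P + \tfrac1{64}P^2 - \tfrac1{2304}P^3 + O(N^{-4})$ and sorting by powers of $N$ gives $d_0 = 1$, $d_1 = \tfrac14 \lambda L$, $d_2 = \tfrac1{64}\lambda^2 L^2 + \tfrac18 \lambda(L^2 + 2S_2(t) - 2S_2(z))$, and, collecting the three order-$N^{-3}$ contributions (from $-\tfrac14 P$, from $\tfrac1{64}P^2$, and from $-\tfrac1{2304}P^3$) and simplifying, precisely the claimed $d_3$.

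The proof has no conceptual content beyond this; the only things to watch are keeping the several simultaneous $1/N$-expansions aligned — especially the role of $F_N(\bar z)$ versus $F_N(z)$ — and the slightly fiddly Gamma-function computation showing that $\rho$ carries no $N^{-1}$ or $N^{-2}$ correction. Uniform control of the remainder $K_4(N,z,t)$ on the relevant range of $(z,t)$, which legitimizes the "$O(N^{-3})$" in the statement, is not needed for the explicit formulas here and is carried out separately in Appendix~\ref{app:bounds-for-K4}.
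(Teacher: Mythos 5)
Your proof is correct and takes essentially the same approach as the paper: expand each $N$-dependent ingredient ($\rho$, $F_N$, $(t/z)^{1/N}$) in powers of $1/N$, substitute into the Taylor expansion of $J_0$ at the origin, and collect coefficients up to order $N^{-3}$, noting that the remainder term and any $O(N^{-4})$ errors feed only into $K_4$. The paper states the ingredient expansions and the $\rho$ cancellation without carrying out the log-Gamma computation or the final collection of terms, so your write-up is somewhat more explicit, but it is not a different route.
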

\begin{proof}
Given the definition of $K(z,t)$ from \eqref{eq:definition-of-K(z,t)} and using the condition $|z|=1$, we expand the Taylor series of the Bessel function at the origin:
\begin{multline*}
    K(z,t) = J_0 \left(\rho^{1/2} \sqrt{F_N(\bar{z}) \left( F_N(z) - (t/z)^{1/N} F_N(t) \right)}\right) \\
    = \sum_{n=0}^{3} \left(-\frac{\rho}{4}\right)^n \frac{F_N(\bar{z})^n (F_N(z) -(t/z)^{1/N} F_N(t) )^n}{(n!)^2} + \frac{J_0^{(8)}(\xi)}{8!} \left(\rho  F_N(\bar{z}) \left(F_N(z) -\left(\frac{t}{z}\right)^{1/N} F_N(t) \right) \right)^{4} \, ,
\end{multline*}
with $\xi \in \left(0, \rho^{1/2} \sqrt{F_N(\bar{z}) \left( F_N(z) - (t/z)^{1/N} F_N(t) \right)} \right)$.
The dependency on $N$ of the functions that appear in the $K(z,t)$ expansion is the following:
$$\rho = c_N^2 \lambda_{app} =\lambda \left( 1 + \frac{-2\lambda^2\zeta(5)}{N^5} + O(N^{-6}) \right) \, ,$$
$$F_N(z) = 1 + \sum_{n=2}^{\infty} \frac{S_n(z)}{N^n} = 1 + \frac{S_2(z)}{N^2} + \frac{S_3(z)}{N^3} + O(N^{-4})  \, ,$$
$$\left(\frac{t}{z}\right)^{1/N} = \sum_{n\geq0} \frac{\log^n(t/z)}{n!} \frac{1}{N^n} = 1 + \frac{\log(t/z)}{N} + \frac{\log^2(t/z)}{2N^2} + \frac{\log^3(t/z)}{6N^3} + O(N^{-4})  \, .$$
Note that the residue of $K(z,t)$ in the Taylor expression is of order $O(N^{-4})$, so expanding the functions in the finite sum up to order $O(N^{-3})$ is enough to compute the terms $d_0(z,t), d_1(z,t), d_2(z,t)$ and $d_3(z,t)$, which result in the ones from the statement.
Note that the construction of $c_N$ and $\lambda_{app}$ causes the $N^{-3}$ term in the expansion of $\rho$ to be canceled, and some $N^{-5}$ terms to be partially canceled.
\end{proof}

Let us expand up to order $O(N^{-5})$ the multiplication of $V(t)$ and the kernel $K(z,t)$ from the integrand.

\begin{align*}
    V(t) K(z,t)
    & = \left( \frac{V_1(t)}{N} + \frac{V_2(t)}{N^2} + \frac{V_3(t)}{N^3} + \frac{V_4(t)}{N^4} \right) \left( d_0(z,t) + \frac{1}{N}d_1(z,t) + \frac{1}{N^2}d_2(z,t) + \frac{1}{N^3}d_3(z,t) + \frac{1}{N^4}K_4(N,z,t) \right) \\
    & = \frac{1}{N} d_0(z,t) V_1(t)
    + \frac{1}{N^2} \left(  d_0(z,t) V_2(t) + d_1(z,t) V_1(t) \right) \\
    & \quad + \frac{1}{N^3} \left( d_0(z,t) V_3(t) + d_1(z,t) V_2(t) + d_2(z,t) V_1(t) \right) \\
    & \quad + \frac{1}{N^4} \left( d_0(z,t) V_4(t) + d_1(z,t) V_3(t) + d_2(z,t) V_2(t) + d_3(z,t) V_1(t) \right) \\
    & \quad + \frac{1}{N^5} \Big(  d_1(z,t) V_4(t) + d_2(z,t) V_3(t) + d_3(z,t) V_2(t) + K_4(N,z,t) V_1(t)  \\
    &  \qquad + \frac{1}{N} \left(  d_2(z,t) V_4(t) + d_3(z,t) V_3(t) + K_4(N,z,t) V_2(t) \right) \\
    & \qquad + \frac{1}{N^2} \left(  d_3(z,t) V_4(t) + K_4(N,z,t) V_3(t) \right) + \frac{1}{N^3} \left(  K_4(N,z,t) V_4(t) \right) \Big) \, .
\end{align*}
Integrating term by term, let us define
\begin{align*}
    G_2(z) & = \text{Re} \int_0^z \left( \frac{1}{t} d_0(z,t) V_1(t) \right) dt \, , \\
    G_3(z) & = \text{Re} \int_0^z \left( \frac{1}{t} (d_0(z,t) V_2(t) + d_1(z,t) V_1(t)) \right) dt \, , \\
    G_4(z) & = \text{Re} \int_0^z \left( \frac{1}{t} ( d_0(z,t) V_3(t) + d_1(z,t) V_2(t) + d_2(z,t) V_1(t) ) \right) dt \, , \\
    G_5(z) & = \text{Re} \int_0^z \left( \frac{1}{t} ( d_0(z,t) V_4(t) + d_1(z,t) V_3(t) + d_2(z,t) V_2(t) + d_3(z,t) V_1(t) ) \right) dt \, ,
\end{align*}
and the error term being
\begin{align*}
    E_K(N, z) & = \frac{1}{N^6} \text{Re} \int_0^z \frac{1}{t} \Big(  d_1(z,t) V_4(t) + d_2(z,t) V_3(t) + d_3(z,t) V_2(t) + K_4(N,z,t) V_1(t) \\
    & \quad\quad\quad\quad + \frac{1}{N} \left(  d_2(z,t) V_4(t) + d_3(z,t) V_3(t) + K_4(N,z,t) V_2(t) \right) \\
    & \quad\quad\quad\quad + \frac{1}{N^2} \left(  d_3(z,t) V_4(t) + K_4(N,z,t) V_3(t) \right) \\
    & \quad\quad\quad\quad + \frac{1}{N^3} \left(  K_4(N,z,t) V_4(t) \right) \Big) dt \, .
\end{align*}

We have the following bounds:

\begin{lemma}
  \label{lemma:bounds-I_k_l}
  Let $|z| =1$, then
  \begin{equation*}
    \left|\real \int_0^z  \frac{1}{t} d_k(z,t) V_l(t)  dt \right| \leq C_{I,k,l}
  \end{equation*}
  for \(C_{I,k,l}\) given by
  \begin{equation*}
    C_{I,1,4} = \codenumber{15},\quad
    C_{I,2,3} = \codenumber{10},\quad
    C_{I,2,4} = \codenumber{1000},\quad
    C_{I,3,2} = \codenumber{35},\quad
    C_{I,3,3} = \codenumber{500} \text{ and }
    C_{I,3,4} = \codenumber{1000}.
  \end{equation*}
  Moreover, for \(N \geq N_{0} = \codenumber{64}\) we also have
  \begin{equation*}
    \left|\real \int_0^z  \frac{1}{t} K_4(z,t) V_l(t) dt \right| \leq C_{I,K,l}
  \end{equation*}
  for
  \begin{equation*}
    C_{I,K,1} = \codenumber{40},\quad
    C_{I,K,2} = \codenumber{500},\quad
    C_{I,K,3} = \codenumber{1000} \text{ and }
    C_{I,K,4} = \codenumber{5000}.
  \end{equation*}
\end{lemma}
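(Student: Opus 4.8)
The plan is to bound each of the integrals $\real\int_0^z t^{-1} d_k(z,t)V_l(t)\,dt$ (and the $K_4$ analogues) along the boundary $|z|=1$ by reducing to a one-dimensional optimization problem over the parametrization $z = e^{i\theta}$, $\theta\in[0,\pi]$ (using conjugate symmetry), and then invoking the rigorous extremum-enclosure machinery of Appendix~\ref{sec:enclosing-extrema} together with the rigorous integration scheme of Appendix~\ref{sec:rigorous-integration}. The integrands are explicit: each $d_k(z,t)$ is a polynomial in $\log(t/z)$ with coefficients built from $\lambda$, $S_2$, $S_3$ (given in closed form in Lemma~\ref{lemma:exact-expression-for-K-up-to-order_-3}), and each $V_l(t)$ is the explicit combination of multiple polylogarithms from~\eqref{eq:Vs}. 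Thus $t^{-1}d_k(z,t)V_l(t)$ is a holomorphic function of $t$ on $\mathbb{D}$ (the apparent $1/t$ singularity is cancelled because every $V_l$ vanishes at $0$), so the contour integral from $0$ to $z$ is well-defined and path-independent; I would integrate along the straight segment $\{sz : s\in[0,1]\}$, turning the quantity into $\real\big(z\int_0^1 (sz)^{-1} d_k(z,sz)V_l(sz)\,ds\big)$, a function of the single real parameter $\theta$ after setting $z=e^{i\theta}$.

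The key steps, in order, are: (i) record that each $V_l$ and each $S_n$ admits a rigorous enclosure on $\mathbb{D}$ via the polylogarithm implementations of Appendix~\ref{sec:polylogs}, so the whole integrand can be enclosed as an analytic function; (ii) for fixed $\theta$, compute a rigorous enclosure of the $s$-integral using the validated quadrature of~\cite{Johansson2018numerical}; (iii) enclose the maximum over $\theta\in[0,\pi]$ of the absolute value of the resulting real-valued function by subdividing $[0,\pi]$ adaptively and bounding $f$ on each subinterval, as in Appendix~\ref{sec:enclosing-extrema}; (iv) read off that the enclosed maximum lies below the stated constant $C_{I,k,l}$ in each of the finitely many cases. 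For the $K_4$-integrals one additionally needs the bounds and enclosures of the residue function $K_4(N,z,t)$ collected in Appendix~\ref{app:bounds-for-K4}, which hold uniformly for $N\geq N_0$; feeding those enclosures (uniform in $N$) into the same quadrature-plus-extremum pipeline yields the constants $C_{I,K,l}$.

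The main obstacle is controlling the multiple polylogarithms and the factors $\log^j(t/z)$ near the boundary point $t=z$ (i.e.\ $s=1$), where $\Li_1$ and hence the $V_l$'s have a logarithmic singularity and $\log(t/z)\to 0$ but the quadrature must still resolve the integrable singularity rigorously; this is handled by isolating a neighborhood of $s=1$, using the series/functional-equation representations of the polylogs there to get a priori integrable majorants, and applying the singularity-aware mode of the validated integrator on that piece while using ordinary adaptive quadrature away from it. A secondary difficulty is that the constants for $d_k V_l$ with $k=3$ or $l=4$ (and all the $K_4$ terms) are fairly large, reflecting genuine growth of these integrands; here one only needs a crude-but-rigorous bound, so coarse enclosures suffice and no delicate cancellation has to be extracted — the margin between the true maxima and the quoted $C_{I,k,l}$ is comfortable, so the computation is robust.
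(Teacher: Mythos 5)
Your overall plan coincides with the paper's: reduce to $z=e^{i\theta}$, $\theta\in[0,\pi]$ by conjugate symmetry, parametrize the integral as a one-variable $s$-integral along the ray to $z$, apply rigorous quadrature and the extremum-enclosure machinery, and isolate the singular endpoint near $s=1$ (where $V_l$ blows up through $\Li_1(t)=-\log(1-t)$) for separate treatment with explicit log-majorants. For the $K_4$ case you correctly point to the uniform-in-$N$ bounds from Appendix~\ref{app:bounds-for-K4}.

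There is, however, a genuine omission: you assert that $t^{-1}d_k(z,t)V_l(t)$ is holomorphic on $\mathbb{D}$ because $V_l(0)=0$ kills the $1/t$, and you propose handling the whole interval except a neighborhood of $s=1$ by ordinary adaptive quadrature. But for $k\geq 1$ the kernel $d_k(z,t)$ is a polynomial in $\log(t/z)$, which is unbounded as $t\to 0$; so the integrand has an integrable but unbounded singularity at $s=0$ as well, and the validated integrator of~\cite{Johansson2018numerical} (which requires analyticity near the contour) cannot be run across it directly. The paper's proof splits off $[0,az]$, uses the boundedness of $V_l(t)/t$ (Lemma~\ref{lemma:V_log_bounds}) to factor it out, and then bounds $\int_0^a |d_k(z,sz)|\,ds$ explicitly via Lemma~\ref{lemma:integral_d}; this is precisely the mirror of the $s=1$ split you already propose (there it is $d_k(z,t)/t$ that is the bounded factor and $|V_l|$ that is integrated against explicit $\log$-powers, Lemma~\ref{lemma:basic-integrals}). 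Your argument fails as literally stated at $s=0$, but the repair uses exactly the same split-and-majorize pattern you invoke near $s=1$, so this is an oversight rather than a flaw in the strategy.
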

\begin{proof}
  The proof is similar to
  Lemma~\ref{lemma:bounds_and_splitting:first_term_J0:upper-bounds-for-terms-from-g-function}.
  The functions are conjugate symmetric, so it suffices to bound them
  for \(z = e^{i\theta}\) with \(\theta \in [0, \pi]\). The maximum is
  then enclosed using the algorithm discussed in
  Appendix~\ref{sec:enclosing-extrema}. We now discuss the specific
  technicalities of the computation of enclosures of the integrals.

  In the error term \(E_{K}\), some of the integrals appear with a
  factor \(\frac{1}{N}\) to some power in front of them, for these
  integrals we don't need as precise bounds since the factor
  \(\frac{1}{N}\) helps making them small in either case. For
  \(C_{I,k,l}\) this is the case when \(k + l > 5\) and for
  \(C_{I,K,l}\) when \(l > 1\). We comment further on the adjustments
  we make in these cases below.

  Let us start with how to compute an enclosure of
  \begin{equation*}
    I_{k,l}(z) := \int_{0}^{z} \frac{1}{t} d_{k}(z, t) V_{l}(t)\ dt.
  \end{equation*}
  The integrand has a (integrable) singularity at \(t = 0\), to handle
  this we split the integral as
  \begin{equation*}
    I_{k,l}(z) = \int_{0}^{az} \frac{1}{t} d_{k}(z, t) V_{l}(t)\ dt
    + \int_{az}^{z} \frac{1}{t} d_{k}(z, t) V_{l}(t)\ dt
    =: I_{k,l,1}(z) + I_{k,l,2}(z),
  \end{equation*}
  for some \(0 < a < 1\). When \(k + l = 5\) we take
  \(a = \codenumber{10^{-4}}\) in the computations\footnote{In reality
    the code uses an exact floating point number close to this value.
    This is true for other similar cases in the paper where we split
    an integral like this.} and we expect \(I_{k,l,2}(z)\) to be the
  main contribution to the integral and \(I_{k,l,1}(z)\) to be an
  error term. For \(k + l > 5\) we take \(a = 0.5\), this leads to
  significant overestimations but is good enough for the rough bounds
  that we need in this case.

  For \(I_{k,l,1}\) we note that \(V_{l}(t) / t\) is bounded near
  \(t = 0\). Using Lemma~\ref{lemma:V_log_bounds} we obtain that
  \begin{equation*}
    \left|V_{l}(t) / t\right| \leq D_{l,a},
  \end{equation*}
  hence
  \begin{equation*}
    |I_{k,l,1}(z)| \leq D_{l,a} |z| \int_{0}^{a} |d_{k}(z, s z)|\ ds.
  \end{equation*}
  The remaining integral can now be explicitly computed, we refer to
  Lemma~\ref{lemma:integral_d} for the details.

  For \(I_{k,l,2}\) the integrand is in general finite on the interval
  of integration, except for when \(z = 1\) in which case the
  integrand has a (integrable) singularity at \(t = 1\). When our
  interval enclosure of \(z\) does not contain \(1\) we compute the
  entire integral using the techniques for rigorous numerical
  integration discussed in Appendix~\ref{sec:rigorous-integration}.
  For computing the polylogarithms occurring in \(d_{k}\) and
  \(V_{l}\) we proceed as discussed in Appendix~\ref{sec:polylogs}.
  Otherwise, when the interval of \(z\) contains \(1\), we further
  split the integral as
  \begin{equation*}
    I_{k,l,2}(z) = \int_{az}^{bz} \frac{1}{t} d_{k}(z, t) V_{l}(t)\ dt
    + \int_{bz}^{z} \frac{1}{t} d_{k}(z, t) V_{l}(t)\ dt
    =: I_{k,l,2,1}(z) + I_{k,l,2,2}(z),
  \end{equation*}
  for some \(a < b < 1\) close to \(1\) (in the computations we take
  \(b = \codenumber{0.999}\) when \(k + l = 5\) and
  \(b = \codenumber{0.99}\) when \(k + l > 5\)). In this case we
  expect \(I_{k,l,2,1}(z)\) to be the main contribution to the
  integral and \(I_{k,l,2,2}(z)\) to be an error term. For
  \(I_{k,l,2,1}\) the integrand is now bounded on the interval of
  integration, and we compute it using the same techniques for
  rigorous integration as before.

  For \(I_{k,l,2,2}\) the approach is similar to for \(I_{k,l,1}\),
  except that in this case we use that \(d_{k}(z, t) / t\) is bounded
  near \(z = t = 1\). We then get
  \begin{equation*}
    |I_{k,l,2,2}(z)| \leq \left(\max_{s \in [b, 1]} \left|\frac{d_{k}(z, s z)}{s z}\right|\right) |z|
    \int_{b}^{1} |V_{l}(s z)|\ ds,
  \end{equation*}
  where the maximum can be directly computed with interval arithmetic.
  To compute the remaining integral we make use of
  Lemma~\ref{lemma:V_log_bounds}, from which we have the bound
  \begin{equation*}
    |V_{l}(t)| \leq \sum_{j = 1}^{l} C_{V,l,j}\frac{|\log(1 - |t|)|^{j}}{j!}.
  \end{equation*}
  This gives us
  \begin{equation*}
    \int_{b}^{1} |V_{l}(sz)|\ ds \leq \sum_{j = 1}^{l}\frac{C_{V,l,j}}{j!}
    \int_{b}^{1}|\log(1 - s|z|)|^{j}\ ds.
  \end{equation*}
  Where for the last integral we note that since \(\log(1 - s|z|)\) has
  constant sign and \(|z| = 1\) we have
  \begin{equation*}
    \int_{b}^{1}|\log(1 - s|z|)|^{j}\ ds
    = \left|\int_{b}^{1}\log(1 - s|z|)^{j}\ ds\right|
    = \left|\int_{b}^{1}\log(1 - s)^{j}\ ds\right|.
  \end{equation*}
  Here the last integral can be computed explicitly, see
  Lemma~\ref{lemma:basic-integrals}.

  Following the above procedure we compute the enclosures
  \begin{equation*}
    \max_{\theta \in [0, \pi]} |I_{1,4}(e^{i\theta})| \in \resultnumber{[14 \pm 0.923]},\quad
    \max_{\theta \in [0, \pi]} |I_{2,3}(e^{i\theta})| \in \resultnumber{[7.848, 9.756]} \text{ and }
    \max_{\theta \in [0, \pi]} |I_{3,2}(e^{i\theta})| \in \resultnumber{[30 \pm 4.12]}
  \end{equation*}
  for the cases when \(k + l = 5\). For the other integrals we get
  the, very rough, enclosures
  \begin{equation*}
    \max_{\theta \in [0, \pi]} |I_{2,4}(e^{i\theta})| \in \resultnumber{[0, 964]},\quad
    \max_{\theta \in [0, \pi]} |I_{3,3}(e^{i\theta})| \in \resultnumber{[0, 340]} \text{ and }
    \max_{\theta \in [0, \pi]} |I_{3,4}(e^{i\theta})| \in \resultnumber{[0, 970]}.
  \end{equation*}
  This implies the required bounds.

  Computing
  \begin{equation*}
    I_{K,l}(z) = \int_{0}^{z}\frac{1}{t}K_{4}(N, z, t)V_{l}(t)\ dt
  \end{equation*}
  follows a similar procedure, though more work is required for
  computing \(K_{4}\). We split the integral as
  \begin{equation*}
    I_{K,l}(z) = \int_{0}^{az}\frac{1}{t}K_{4}(N, z, t)V_{l}(t)\ dt
    + \int_{az}^{z}\frac{1}{t}K_{4}(N, z, t)V_{l}(t)\ dt
    = I_{K,l,1}(z) + I_{K,l,2}(z),
  \end{equation*}
  with \(a = \codenumber{10^{-4}}\) when \(l = 1\) and \(a = 0.25\)
  when \(l > 1\).

  For \(I_{K,l,1}\) we have as before that \(V_{l}(t) / t\) is bounded
  near \(t = 0\), and we have
  \begin{equation*}
    |I_{K,l,1}(z)| \leq D_{l,a} |z| \int_{0}^{a} |K_{4}(N, z, sz)|\ ds.
  \end{equation*}
  To bound the remaining integral we make use of
  Lemma~\ref{lemma:K4-bounds}, from which we have
  \begin{align*}
    |K_{4}(N, z, t)| & \leq L_0(N)
    + L_1(N) \left|\log\left(\frac{t}{z}\right)\right|
    + L_2(N) \left|\log\left(\frac{t}{z}\right)\right|^2
    + L_3(N) \left|\log\left(\frac{t}{z}\right)\right|^3
    + L_4(N) \left|\log\left(\frac{t}{z}\right)\right|^4 \\
    & \quad + L_6(N) \left|\log\left(\frac{t}{z}\right)\right|^6,
  \end{align*}
  with explicit bounds for the coefficients given in
  Lemma~\ref{lemma:K4-bounds-constants}. Since \(0 < t / z < 1\), by changing variables to $s=t/z$ and integrating, we get that the integral is bounded by
  \begin{multline*}
    \int_{0}^{a} |K_{4}(N, z, sz)|\ ds
    \leq L_{0}(N)\int_{0}^{a} \ ds
    + L_{1}(N)\left|\int_{0}^{a} \log\left(s\right)\ ds\right|
    + L_{2}(N)\left|\int_{0}^{a} \log^{2}\left(s\right)\ ds\right|\\
    + L_{3}(N)\left|\int_{0}^{a} \log^{3}\left(s\right)\ ds\right|
    + L_{4}(N)\left|\int_{0}^{a} \log^{4}\left(s\right)\ ds\right|
    + L_{6}(N)\left|\int_{0}^{a} \log^{6}\left(s\right)\ ds\right|.
  \end{multline*}
  These integrals can then be computed explicitly, see
  Lemma~\ref{lemma:basic-integrals}.

  For \(I_{K,l,2}\) the process is exactly the same as for
  \(I_{k,l,2}\), the only difference being that we take
  \(b = \codenumber{1 - 10^{-6}}\) (for all values of \(l\)) and we
  have to compute enclosures of \(K_{4}(N, z, t)\) instead of
  \(d_{k}(z, t)\). These enclosures are computed using Taylor models,
  similar to how \(T_{6}\) was handled in
  Lemma~\ref{lemma:bounds_and_splitting:first_term_J0:upper-bounds-for-terms-from-g-function}.
  Taylor models are discussed in Appendix~\ref{sec:taylor-models},
  which also contains more details about how to compute \(K_{4}\)
  using them.

  With this we get the following enclosure for \(I_{K,l}\)
  \begin{align*}
    \max_{\theta \in [0, \pi]} |I_{K,1}(e^{i\theta})| &\in \resultnumber{[27.039, 39.953]},\\
    \max_{\theta \in [0, \pi]} |I_{K,2}(e^{i\theta})| &\in \resultnumber{[0, 292]},\\
    \max_{\theta \in [0, \pi]} |I_{K,3}(e^{i\theta})| &\in \resultnumber{[0, 620]},\\
    \max_{\theta \in [0, \pi]} |I_{K,4}(e^{i\theta})| &\in \resultnumber{[0, 4900]},
  \end{align*}
  which implies the required bounds.
\end{proof}

As a consequence, we get the following bound for the error:
\begin{corollary}
  \label{cor:bounds_and_splitting:second_term_integral_K:error-term-from-integral}
  We can write
  \begin{equation*}
    \frac{1}{N} \real \int_0^z \frac{1}{t} V(t) K(z,t) dt =
    \frac{1}{N^2} G_2(z) + \frac{1}{N^3} G_3(z) + \frac{1}{N^4} G_4(z) +
    \frac{1}{N^5} G_5(z) + E_K(N, z),
  \end{equation*}
  where for \(|z| = 1\) and \(N \geq N_{0} = \codenumber{64}\) we have
  the bound
  \begin{align*}
    |E_K(N, z)|
    &\leq \frac{1}{N^6}  \left( \left| \real\int_0^z  \frac{1}{t} d_1(z,t)V_4(t) dt \right| + \left|\real \int_0^z  \frac{1}{t} d_2(z,t)V_3(t) dt \right| \right.\\
    &\quad\quad\quad\quad + \left|\real \int_0^z  \frac{1}{t} d_3(z,t)V_2(t) dt \right|  +  \left|\real \int_0^z  \frac{1}{t} V_1(t)K_4(N,z,t) dt \right|   \\
    & \quad\quad\quad\quad + \frac{1}{N} \left(  \left|\real \int_0^z  \frac{1}{t} d_2(z,t) V_4(t) dt \right| + \left|\real \int_0^z  \frac{1}{t} d_3(z,t) V_3(t) dt \right| + \left|\real \int_0^z  \frac{1}{t} V_2(t) K_4(N,z,t)  dt \right| \right) \\
    & \quad\quad\quad\quad + \frac{1}{N^2} \left(  \left|\real \int_0^z  \frac{1}{t} d_3(z,t) V_4(t) dt \right| + \left|\real \int_0^z  \frac{1}{t} V_3(t) K_4(N,z,t)  dt \right| \right) \\
    & \quad\quad\quad\quad \left.+ \frac{1}{N^3} \left(  \left|\real \int_0^z  \frac{1}{t} V_4(t) K_4(N,z,t)  dt \right| \right) \right) \\
    &\leq \frac{1}{N^{6}}\left(
      C_{I,1,4} + C_{I,2,3} + C_{I,3,2} + C_{I,K,1}
      + \frac{C_{I,2,4} + C_{I,3,3} + C_{I,K,2}}{N}
      + \frac{C_{I,3,4} + C_{I,K,3}}{N^{2}}
      + \frac{C_{I,K,4}}{N^{3}}
      \right).
  \end{align*}
\end{corollary}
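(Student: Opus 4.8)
The plan is to read the decomposition straight off the term-by-term expansion of \(V(t)K(z,t)\) displayed immediately before the statement, and then to bound the remainder by the triangle inequality together with Lemma~\ref{lemma:bounds-I_k_l}; all of the genuine analytic content is already contained in that lemma. For the identity: multiplying the expansion of \(V(t)K(z,t)\) by \(1/N\), dividing by \(t\), integrating over \([0,z]\), and taking real parts, the coefficients of \(N^{-2}, N^{-3}, N^{-4}\) and \(N^{-5}\) are, by the definitions of \(G_2, G_3, G_4, G_5\), precisely \(G_2(z), G_3(z), G_4(z), G_5(z)\); the coefficient of \(N^{-5}\) in the \(VK\)-expansion, which acquires the overall \(N^{-6}\) scaling from the \(1/N\) prefactor, together with its \(N^{-1}, N^{-2}, N^{-3}\) subleading pieces, is by definition exactly \(E_K(N,z)\). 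So the first displayed identity holds by construction; the only thing to check is that splitting the integral across the finite sum is legitimate, and this follows because each integrand \(\tfrac{1}{t} d_k(z,t) V_l(t)\) and \(\tfrac{1}{t} K_4(N,z,t) V_l(t)\) has only an integrable singularity at \(t = 0\) (and at \(t = z\) when \(|z| = 1\)), exactly as handled in the proof of Lemma~\ref{lemma:bounds-I_k_l}, so each integral converges absolutely.

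Next I would bound \(E_K(N,z)\). Since \(\real\) and \(f \mapsto \int_0^z \tfrac{f(t)}{t}\,dt\) are linear, the real part of the integral of the sum inside \(E_K\) equals the sum of the real parts of the integrals of the individual products; applying \(|\real(a+b)| \le |\real a| + |\real b|\) repeatedly, while carrying along the explicit \(1/N\), \(1/N^2\) and \(1/N^3\) factors in front of the groups \(\{d_2 V_4, d_3 V_3, K_4 V_2\}\), \(\{d_3 V_4, K_4 V_3\}\) and \(\{K_4 V_4\}\), produces the first inequality in the statement. Then I invoke Lemma~\ref{lemma:bounds-I_k_l} to replace each \(\bigl|\real\int_0^z \tfrac{1}{t} d_k(z,t) V_l(t)\, dt\bigr|\) by \(C_{I,k,l}\) and each \(\bigl|\real\int_0^z \tfrac{1}{t} K_4(N,z,t) V_l(t)\, dt\bigr|\) by \(C_{I,K,l}\); the latter bounds require \(N \ge N_0 = 64\), which is precisely the hypothesis. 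Collecting these constants according to the power of \(1/N\) that multiplies them yields the second inequality, and hence the stated bound on \(|E_K(N,z)|\).

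The only point demanding care --- and the sole ``obstacle'', such as it is --- is the bookkeeping: matching each product \(d_k V_l\) (and \(K_4 V_l\)) in the \(N^{-5}\)-coefficient of the \(VK\)-expansion with the correct constant \(C_{I,k,l}\) (respectively \(C_{I,K,l}\)) and tracking which products carry an additional power of \(1/N\). Everything substantive --- the rigorous enclosures of the eight integrals \(I_{1,4}, I_{2,3}, I_{3,2}, I_{2,4}, I_{3,3}, I_{3,4}\) and \(I_{K,1}, \dots, I_{K,4}\), the treatment of the endpoint singularities, and the Taylor-model enclosures of \(K_4\) --- has already been dealt with in Lemma~\ref{lemma:bounds-I_k_l}.
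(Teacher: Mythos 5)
Your proposal is correct and matches the paper's intent exactly: the paper states this corollary without a separate proof because it is immediate from the definitions of the \(G_j\) and \(E_K\) given just above it together with the triangle inequality and the integral bounds of Lemma~\ref{lemma:bounds-I_k_l}. Your extra remark about integrability of the singularities justifying the term-by-term split is a sensible bit of hygiene, and your bookkeeping of the \(1/N\), \(1/N^2\), \(1/N^3\) factors in front of the later groups is the only non-trivial step, which you handle correctly.
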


Finally, returning to Equation~\eqref{eq:supreme_for_which_we_want_an_upper_bound}, we have that for $|z|=1$
\begin{multline}
\label{eq:upper_bound_error}
    \left | a_0 J_0 \left ( \rho^{1/2}  |F_N(z)| \right )  + \frac{c_N}{N} \real \int_0^{z} V(z) K(z,t) \frac{dt}{t} \right | = \\
    \left |  a_0 \left (  \frac{g'(1) b_2(z)}{N^2} + \frac{g'(1)b_3(z)}{N^3} + \frac{g'(1)b_4(z) + \frac{g''(1)}{2} b_2^2(z)}{N^4} + \frac{g'(1)b_5(z) + g''(1)b_2(z)b_3(z)}{N^5} + E_{J_0}(N, z)  \right ) \right. \\
    \left. +  c_N \left (  \frac{G_2(z)}{N^2} + \frac{G_3(z)}{N^3} + \frac{G_4(z)}{N^4} + \frac{G_5(z)}{N^5} +  E_K(N, z)  \right ) \right | \, .
\end{multline}
Our own implementation of the algorithm described in~\cite{Berghaus-Georgiev-Monien-Radchenko:dirichlet-eigenvalues-regular-polygons} yields that all terms up to order $O(N^{-5})$ cancel out when $|z| = 1$. Thus,
\begin{equation*}
  |v_{app}(z)|
  \leq |a_0| \left | E_{J_0}(N, z)\right | + |c_N|  \left |E_K(N, z)   \right | \, .
\end{equation*}

All that we are left to do is estimate $c_N$ and $a_0$, for which we
have the following lemma:
\begin{lemma}\label{lemma:c_N-monotone}
    The constant $c_N = \sqrt{\dfrac{\Gamma(1-\frac{1}{N})^2 \Gamma(1+\frac{2}{N})}{ \Gamma(1+\frac{1}{N})^2 \Gamma(1-\frac{2}{N}) }}$, is monotonically increasing with respect to $N$, and its limit is $1$ as $N$ tends to infinity.
\end{lemma}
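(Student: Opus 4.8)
The plan is to work with $\log c_N$ and treat $N$ as a continuous real parameter $N \geq 3$ (or to use directly a discrete monotonicity argument — see below). Writing $x = 1/N \in (0, 1/3]$, we have
\begin{equation*}
  2\log c_N = \varphi(x) := 2\log\Gamma(1-x) + \log\Gamma(1+2x) - 2\log\Gamma(1+x) - \log\Gamma(1-2x).
\end{equation*}
The limit claim is immediate: as $N \to \infty$, $x \to 0^+$, and since $\Gamma$ is continuous with $\Gamma(1) = 1$ we get $\varphi(x) \to 0$, hence $c_N \to 1$. For the monotonicity in $N$, it suffices to show $\varphi$ is \emph{decreasing} in $x$ on $(0, 1/3]$ (since $x = 1/N$ is decreasing in $N$), i.e.\ $\varphi'(x) < 0$ there. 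First I would compute
\begin{equation*}
  \varphi'(x) = -2\psi(1-x) + 2\psi(1+2x) - 2\psi(1+x) + 2\psi(1-2x),
\end{equation*}
where $\psi = \Gamma'/\Gamma$ is the digamma function, so the claim reduces to
\begin{equation*}
  \psi(1+2x) + \psi(1-2x) < \psi(1+x) + \psi(1-x) \quad \text{for } x \in (0, 1/3].
\end{equation*}
Equivalently, setting $h(y) := \psi(1+y) + \psi(1-y)$ for $y \in [0,1)$, one must show $h(2x) < h(x)$, which follows if $h$ is strictly decreasing on $(0,1)$. Now $h'(y) = \psi'(1+y) - \psi'(1-y)$, and since the trigamma function $\psi'$ is strictly decreasing on $(0,\infty)$ (its derivative $\psi''$ is negative there — $\psi'(s) = \sum_{n\geq 0}(s+n)^{-2}$ is manifestly decreasing), we have $1+y > 1-y$ implies $\psi'(1+y) < \psi'(1-y)$, hence $h'(y) < 0$ for all $y \in (0,1)$. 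This proves $h$ strictly decreasing, hence $h(2x) < h(x)$ for $x \in (0, 1/2)$, and in particular on $(0, 1/3]$, giving $\varphi'(x) < 0$ and thus $c_N$ strictly increasing in $N$.

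Alternatively — and perhaps cleaner for a purely discrete statement — one can avoid treating $N$ as continuous: using the reflection/recurrence relations for $\Gamma$, rewrite the ratio $c_{N+1}^2 / c_N^2$ and show it exceeds $1$ directly. One identity that helps is $\Gamma(1-x)\Gamma(1+x) = \pi x / \sin(\pi x)$ and $\Gamma(1-2x)\Gamma(1+2x) = 2\pi x / \sin(2\pi x)$, which lets one simplify
\begin{equation*}
  c_N^2 = \frac{\Gamma(1-\frac1N)^2\,\Gamma(1+\frac2N)}{\Gamma(1+\frac1N)^2\,\Gamma(1-\frac2N)}
  = \frac{\Gamma(1-\frac1N)}{\Gamma(1+\frac1N)} \cdot \frac{\Gamma(1-\frac1N)\Gamma(1+\frac1N)}{\Gamma(1+\frac1N)\,\Gamma(1-\frac1N)} \cdots
\end{equation*}
but in fact the Weierstrass product $\log\Gamma(1+s) = -\gamma s + \sum_{k\geq 2} \frac{(-1)^k \zeta(k)}{k} s^k$ is the most transparent route: plugging $s = \pm x, \pm 2x$ into $\varphi(x)$, all odd-power terms and the $\gamma$ term cancel by the symmetry $\Gamma(1-x)^2/\Gamma(1+x)^2$, leaving
\begin{equation*}
  \varphi(x) = \sum_{k \geq 2,\ k \text{ even}} \frac{\zeta(k)}{k}\big(2x^k + (2x)^k\big) - \sum_{k\geq 2,\ k \text{ even}} \frac{\zeta(k)}{k}\big(2x^k + (2x)^k\big)?
\end{equation*}
— here one must track signs carefully; the correct bookkeeping gives $\varphi(x) = \sum_{k\geq 2} \frac{\zeta(k)}{k}\big((-1)^k\big[(2x)^k - 2\cdot(-x)^k\text{-type terms}\big]\big)$, and after the even/odd cancellation every surviving coefficient is positive and strictly increasing in $x$, yielding both $\varphi > 0$ (so $c_N > $ its limit) and $\varphi$ increasing in $x$. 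I would present whichever of these is shortest after checking the signs.

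\textbf{Main obstacle.} The real work is the sign bookkeeping: in the Weierstrass-product approach one must verify that after the cancellation of odd terms the remaining series has all-positive, monotone coefficients; in the digamma approach the only subtlety is justifying the interchange of differentiation and the series $\psi'(s) = \sum_{n\geq 0}(s+n)^{-2}$ and noting $1-2x > 0$ requires $x < 1/2$, which holds since $N \geq 3$. The digamma route is essentially obstruction-free and is the one I would write up: the whole proof collapses to "$\psi'$ is decreasing on $(0,\infty)$, hence $h(y) = \psi(1+y)+\psi(1-y)$ is decreasing on $[0,1)$, hence $h(2/N)$ is increasing in $N$, hence $c_N$ is increasing in $N$; and $c_N \to e^{h\text{-independent}\cdot 0} = 1$."
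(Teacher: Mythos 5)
Your digamma route is correct and starts from the same point as the paper's argument — both differentiate $\log c_N$ and arrive at the linear combination $\psi(1-\tfrac1N)+\psi(1+\tfrac1N)-\psi(1-\tfrac2N)-\psi(1+\tfrac2N)$ — but the endgame is genuinely different. The paper substitutes the integral representation $\psi(z)=\int_0^1\frac{1-t^{z-1}}{1-t}\,dt-\gamma$ and factors the resulting integrand as $t^{-2/N}(t^{1/N}-1)^2(t^{2/N}+t^{1/N}+1)/(1-t)>0$, which is concrete and self-contained. You instead package the digammas into $h(y)=\psi(1+y)+\psi(1-y)$ and reduce to the monotonicity of $h$, which follows from $h'(y)=\psi'(1+y)-\psi'(1-y)<0$ because the trigamma $\psi'(s)=\sum_{n\ge0}(s+n)^{-2}$ is strictly decreasing. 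That is a cleaner and arguably more conceptual finish; it also makes transparent why the domain restriction $x=1/N<1/2$ (guaranteed by $N\ge3$) is all that is needed. Both are correct proofs.

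One caution on your sketched Weierstrass-product alternative: as written the signs are backwards. Carrying out the expansion gives $\varphi(x)=\sum_{k\ge3,\ k\ \mathrm{odd}}\frac{2\zeta(k)(2-2^k)}{k}x^k$, every coefficient of which is \emph{negative}, so $\varphi<0$ and $\varphi$ is \emph{decreasing} on $(0,1/2)$ — consistent with $c_N<1$ increasing to $1$, and with what your digamma route already established, but opposite to the "$\varphi>0$ and increasing" you tentatively wrote. Since you explicitly defer to the digamma route, this does not affect the correctness of what you would submit, but if you ever wrote up the series approach you would need to fix that bookkeeping.
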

\begin{proof}
    We start by writing an expression for $\frac{d}{dN} \log(c_N)$ as follows:
    \begin{equation*}
        \frac{d}{dN} \log(c_N) = \frac{1}{c_N}\frac{dc_N}{dN} = \frac{1}{N^2} \left ( -\psi^0\left(1-\frac{2}{N}\right) + \psi^0\left(1-\frac{1}{N}\right) + \psi^0\left(1+\frac{1}{N}\right) - \psi^0\left(1+\frac{2}{N}\right)  \right ) \, ,
    \end{equation*}
    where $\psi^0(z)$ is the digamma function, i.e. the logarithmic derivative of the gamma function. Using the integral representation of the digamma function, $\psi^0(z) = \int_0^1 \dfrac{1-t^{z-1}}{1-t} dt - \gamma_0$ for $Re(z) > 0$ and where $\gamma_0$ stands for the Euler constant, then
    \begin{equation*}
        \frac{dc_N}{dN} =  \frac{c_N}{N^2} \left ( \int_0^1 \frac{t^{\frac{-2}{N}} - t^{\frac{-1}{N}} - t^{\frac{1}{N}} + t^{\frac{2}{N}}}{1-t} dt  \right ) = \frac{c_N}{N^2}\left ( \int_0^1 \frac{t^{\frac{-2}{N}} (t^{\frac{1}{N}} - 1)^2 (t^{\frac{2}{N}} + t^{\frac{1}{N}} + 1)}{1-t} dt  \right ) > 0 ,
    \end{equation*}
    as $c_N$ is also positive for all $N\geq3$.

    Finally, taking the limit as $N$ goes to infinity,
    $$
    \lim_{N \to \infty} c_N = \lim_{N \to \infty} \sqrt{\dfrac{\Gamma(1-\frac{1}{N})^2 \Gamma(1+\frac{2}{N})}{ \Gamma(1+\frac{1}{N})^2 \Gamma(1-\frac{2}{N}) }} = \sqrt{\dfrac{\Gamma(1)^2 \Gamma(1)}{ \Gamma(1)^2 \Gamma(1) }} = 1 \, .
    $$
\end{proof}

We can now get the following numerical bounds:
\begin{lemma}
  \label{lemma:bounds_and_splitting:bound_uniform_with_N-for_a0_and_cN}
  A bound for all $N \geq 3$ for $c_N$ and $a_0$ is
  \begin{equation*}
    |c_N| \leq 1 \, ,
    |a_0| \leq C_{a_{0}} \, ,
  \end{equation*}
  with \(C_{a_{0}} = \codenumber{0.801}\).
\end{lemma}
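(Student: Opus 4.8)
The plan is to establish the two bounds in turn, using the first to obtain the second. For $c_N$, I would simply invoke Lemma~\ref{lemma:c_N-monotone}: since $c_N$ is strictly increasing in $N$ with $\lim_{N\to\infty} c_N = 1$, we get $0 < c_N < 1$ for every finite $N \geq 3$, and in particular $|c_N| \leq 1$. (Alternatively one could evaluate $c_3$ directly with interval arithmetic, but monotonicity makes this unnecessary and gives the cleanest argument.)

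For $a_0$, recall from Definition~\ref{def:u_app-lambda_app} that $a_0 = c_N / (\lambda^{1/2} J_1(\lambda^{1/2}))$ with $\lambda = j_{0,1}^2$, so $\lambda^{1/2} = j_{0,1}$, the first positive zero of $J_0$. Since $J_1 = -J_0'$ and $J_0$ is decreasing through its first zero, $J_1(j_{0,1}) > 0$; hence $a_0 > 0$ and, using $|c_N| \leq 1$ from the previous step,
\[
  |a_0| \;\leq\; \frac{1}{j_{0,1}\, J_1(j_{0,1})}.
\]
It then remains to verify the purely numerical inequality $j_{0,1}\, J_1(j_{0,1}) \geq 1/C_{a_0}$ with $C_{a_0} = \codenumber{0.801}$, i.e. $j_{0,1}\, J_1(j_{0,1}) \geq 1/0.801 = 1.248439\ldots$. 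Using validated enclosures of the Bessel zero and of $J_1$ (from the routines referenced in the computational-aspects discussion, e.g.~\cite{Johansson2019}), one obtains $j_{0,1}\, J_1(j_{0,1}) \approx 1.24846$ with a rigorous enclosure whose lower endpoint exceeds $1.24844 > 1/0.801$, which yields $|a_0| \leq 0.801$.

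The argument is routine; the only points meriting care are noting the positivity of $J_1(j_{0,1})$ (so that the division is legitimate and the inequality points the right way) and ensuring the enclosure of $j_{0,1}\, J_1(j_{0,1})$ is tight enough, since the margin over $1/0.801$ is only about $2\cdot 10^{-5}$. This is comfortably within the precision of standard validated Bessel-function implementations, so there is no genuine obstacle: the lemma merely packages the uniform-in-$N$ constants $|c_N|\leq 1$ and $|a_0|\leq C_{a_0}$ needed in the defect estimates that follow.
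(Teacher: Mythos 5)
Your proposal matches the paper's proof: both deduce $|c_N| \leq 1$ from Lemma~\ref{lemma:c_N-monotone} and both reduce the bound on $a_0$ to the single validated enclosure of $1/(\lambda^{1/2} J_1(\lambda^{1/2}))$ (the paper obtains $\resultnumber{[0.8009873485 \pm 3.6 \cdot 10^{-11}]}$, consistent with your $j_{0,1}J_1(j_{0,1}) \approx 1.24846$). The additional observations you make about positivity of $J_1(j_{0,1})$ and the size of the numerical margin are correct but just elaborate on the same argument.
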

\begin{proof}
  The bound for $c_N$ follows immediately from the above lemma. The
  bound for $a_0$ follows from the enclosure
  \begin{equation}
    |a_0| \leq \frac{1}{\lambda^{1/2}J_1(\lambda^{1/2})} \in \resultnumber{[0.8009873485 \pm 3.6 \cdot 10^{-11}]}.
  \end{equation}
\end{proof}

Putting all results from this subsection together, we have the following Lemma:
\begin{lemma}
  \label{lemma:upper-bound-on-the-defect_in-the-boundary}
  Let $u_{app}$ be the function defined in
  Definition~\ref{def:u_app-lambda_app}, then for
  \(N \geq N_{0} = \codenumber{64}\) we have the bound
  \begin{equation*}
    \sup_{x \in \partial \mathcal{P}_N} |u_{app}(x)| \leq \varepsilon(N),
  \end{equation*}
  with
  \begin{multline}\label{eq:varepsilon-N}
    \varepsilon(N) = C_{a_{0}}\frac{1}{N^6} \left(
      |g'(1)|C_{T,6}
      + \frac{|g''(1)|}{2}C_{b,3}^{2}
      + |g''(1)|C_{b,2}C_{T,4}
      + \frac{1}{6}C_{T,2}^{3}C_{g'''}
      + \frac{|g''(1)|}{N}C_{b,3}C_{T,4}
      + \frac{|g''(1)|}{2 N^2}C_{T,4}^{2}
    \right)\\
    + \frac{1}{N^{6}}\left(
      C_{I,1,4} + C_{I,2,3} + C_{I,3,2} + C_{I,K,1}
      + \frac{C_{I,2,4} + C_{I,3,3} + C_{I,K,2}}{N}
      + \frac{C_{I,3,4} + C_{I,K,3}}{N^{2}}
      + \frac{C_{I,K,4}}{N^{3}}
    \right).
  \end{multline}
\end{lemma}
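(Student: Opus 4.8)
The plan is to collect the estimates established earlier in this subsection into a single inequality. First I would reduce the statement on $\partial\mathcal{P}_N$ to one on $\partial\mathbb{D}$: since $f_N$ maps $\partial\mathbb{D}$ conformally onto $\partial\mathcal{P}_N$ and $v_{app}(z)=u_{app}(f_N(z^{1/N}))$, we have $\sup_{x\in\partial\mathcal{P}_N}|u_{app}(x)|=\sup_{|z|=1}|v_{app}(z)|$, so it suffices to bound the right-hand side by $\varepsilon(N)$. Using the representation \eqref{eq:candidate-on-disk-v_app} I would split $v_{app}$ into the Bessel term $a_0 J_0(\rho^{1/2}|F_N(z)|)$ and the integral term $\frac{c_N}{N}\real\int_0^z V(t)K(z,t)\frac{dt}{t}$.

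For the Bessel term I would insert the second-order Taylor expansion \eqref{expansion_g} of $g(w)=J_0(w\sqrt{\lambda})$ at $w=1$ together with the expansion of the argument deviation from Lemma~\ref{lemma:bounds_and_splitting:first-term-J0:expansion-of-argument-inside-g-function}; collecting powers of $1/N$ (exactly as done right before the definition of $E_{J_0}$) yields the main terms $a_0\bigl(g'(1)b_2(z)/N^2+g'(1)b_3(z)/N^3+(g'(1)b_4(z)+\tfrac{g''(1)}{2}b_2^2(z))/N^4+(g'(1)b_5(z)+g''(1)b_2(z)b_3(z))/N^5\bigr)$ plus the remainder $a_0 E_{J_0}(N,z)$. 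For the integral term I would use the expansion of $K(z,t)$ from Lemma~\ref{lemma:exact-expression-for-K-up-to-order_-3} and the term-by-term expansion of $V(t)K(z,t)$ carried out before Lemma~\ref{lemma:bounds-I_k_l}, which produces the main terms $c_N\bigl(G_2(z)/N^2+G_3(z)/N^3+G_4(z)/N^4+G_5(z)/N^5\bigr)$ plus the remainder $c_N E_K(N,z)$. This is precisely the decomposition appearing in \eqref{eq:upper_bound_error}.

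The key point is then the cancellation of all terms of order $N^{-2}$ through $N^{-5}$. Precisely, with $\lambda_{app}$ and the functions $V_1,\dots,V_4$ chosen as in Definition~\ref{def:u_app-lambda_app} — which is the entire purpose of importing the asymptotic construction of \cite{Berghaus-Georgiev-Monien-Radchenko:dirichlet-eigenvalues-regular-polygons} — one has, for $|z|=1$, the identity $a_0 g'(1) b_2(z)+c_N G_2(z)=0$ and its analogues at orders $N^{-3}$, $N^{-4}$ and $N^{-5}$. These are symbolic identities among (multiple) polylogarithms restricted to the unit circle, which we verify with an independent reimplementation of that construction. Granting them, \eqref{eq:upper_bound_error} collapses to $|v_{app}(z)|\le |a_0|\,|E_{J_0}(N,z)|+|c_N|\,|E_K(N,z)|$ for $|z|=1$.

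Finally I would substitute the numerical inputs: $|c_N|\le 1$ and $|a_0|\le C_{a_0}$ from Lemma~\ref{lemma:bounds_and_splitting:bound_uniform_with_N-for_a0_and_cN}, the bound on $|E_{J_0}(N,z)|$ from Corollary~\ref{cor:bounds_and_splitting:first_term_J0:error-bound-for-g-function}, and the bound on $|E_K(N,z)|$ from Corollary~\ref{cor:bounds_and_splitting:second_term_integral_K:error-term-from-integral}, all valid for $N\ge N_0=64$; adding the two contributions reproduces verbatim the formula \eqref{eq:varepsilon-N} for $\varepsilon(N)$, and taking the supremum over $|z|=1$ finishes the proof. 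The only step that is not pure bookkeeping is the order-by-order cancellation: everything hinges on the borrowed construction genuinely annihilating all terms through $N^{-5}$ on the unit circle, so I expect that verification — together with the care needed to match every coefficient, including the subtle partial cancellations coming from $c_N$ and the $N^{-5}$ term of $\lambda_{app}$ — to be the main obstacle and the place most prone to error.
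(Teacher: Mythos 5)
Your proposal is correct and follows essentially the same route as the paper: pass to $v_{app}$ on the unit circle via the Schwarz--Christoffel map, use the decomposition into the Bessel and integral parts from \eqref{eq:upper_bound_error}, invoke the computer-verified cancellation of orders $N^{-2}$ through $N^{-5}$, and then sum the bounds from Corollaries~\ref{cor:bounds_and_splitting:first_term_J0:error-bound-for-g-function} and~\ref{cor:bounds_and_splitting:second_term_integral_K:error-term-from-integral} together with Lemma~\ref{lemma:bounds_and_splitting:bound_uniform_with_N-for_a0_and_cN}. Your writeup simply makes explicit the bookkeeping that the paper delegates to the preceding subsection, and your remark that the order-by-order cancellation is the load-bearing (and computer-checked) ingredient matches the paper's own emphasis.
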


\begin{proof}
  We have
  \begin{equation*}
    \sup_{x \in \partial \mathcal{P}_N} |u_{app}(x)|
    = \sup_{z \in \partial \mathbb{D}} |v_{app}(z)|
    \leq \sup_{z \in \partial \mathbb{D}} |a_0||E_{J_0}(N, z)| + |c_N||E_K(N, z)|.
  \end{equation*}
  The bound is then a direct consequence of
  Corollaries~\ref{cor:bounds_and_splitting:first_term_J0:error-bound-for-g-function}
  and~\ref{cor:bounds_and_splitting:second_term_integral_K:error-term-from-integral}
  and
  Lemma~\ref{lemma:bounds_and_splitting:bound_uniform_with_N-for_a0_and_cN}.
\end{proof}

\subsection{Lower bounds on $\|u_{app}\|_{L^2}$}
\label{subsec:lower_bounds_on_L2_norm}
The goal of this subsection is to compute a lower bound for $\|u_{app}\|_{L^2(\mathcal{P}_N)}^2$.
Recall the expression for $u_{app}$ given in Definition~\ref{def:u_app-lambda_app}, from which we get
\begin{equation*}
    \begin{split}
    u_{app}(x)^2
    =&  \,  a_0^2 J_0^2\left(\sqrt{\lambda_{app}} |x|\right) + \\
    &     \left( \real \int_0^x U(t) J_0\left(\sqrt{\lambda_{app} \bar{x} (x-t)}\right) dt \right) \cdot
    \left ( 2 a_0 J_0\left(\sqrt{\lambda_{app}} |x|\right) +  \real \int_0^x U(t) J_0\left(\sqrt{\lambda_{app} \bar{x} (x-t)}\right) dt \right )
    \end{split}
\end{equation*}
We will treat the first term as the main one and the second one as an error. Therefore, we want to bound
\begin{multline}
\label{eq:lower-bound-inequality-for-|u_app|^2}
    |u_{app}(x)|^2 \geq \, a_0^2 J_0^2\left(\sqrt{\lambda_{app}} |x|\right)\\
    - \left | \real \int_0^x U(t) J_0\left(\sqrt{\lambda_{app} \bar{x} (x-t)}\right) dt  \right |
    \cdot \left |  2 a_0 J_0\left(\sqrt{\lambda_{app}}|x|\right) + \real \int_0^x U(t) J_0\left(\sqrt{\lambda_{app} \bar{x} (x-t)}\right) dt \right | \, .
\end{multline}

Integrating over a polygon leads to more difficult estimates due to their dependence with $N$. At the expense of a worse estimate, we will lower bound the norm on the polygon by the norm on a disk contained in all $N$-sided polygons for $N \geq N_0$, simplifying the integral due to the radiality of the main term.
Explicit bounds for the radii of the inscribed and circumscribed circles are found in the following Lemma, illustrated in Figure~\ref{fig:1st-quadrant-28-gon}.

\begin{lemma}
  \label{lemma-disk}
  Let $N \geq N_0 = \codenumber{64}$. Then the regular $N$-sided polygon
  of area $\pi$ contains a disk of radius
  \(R_{\text{inner}} = \codenumber{0.95}\) and is contained in a disk of
  radius \(R_{\text{outer}} = \codenumber{1.01}\).
  \begin{figure}[h]
    \centering
    \begin{tikzpicture}[scale=6]
      \def\n{28}
      \def\nn{7}
      \def\R{1.004212173}
      \begin{scope}
        \clip (0,0) rectangle (1.3,1.3);

        \draw[blue] (0,0) circle (1.01);

        \draw[black, thin]
        ({\R*cos(0)}, {\R*sin(0)})
        \foreach \i in {1,2,...,\n} {
          -- ({\R*cos(\i*360/\n)}, {\R*sin(\i*360/\n)})
        } -- cycle;

        \fill[gray!15, opacity=0.7]
        ({\R*cos(0)}, {\R*sin(0)})
        \foreach \i in {1,2,...,\n} {
          -- ({\R*cos(\i*360/\n)}, {\R*sin(\i*360/\n)})
        } -- cycle;

        \draw[red, very thin] (0,0) circle (0.95);
      \end{scope}

      \foreach \i in {0,1,...,\nn} {
        \fill[black] ({\R*cos(\i*360/\n)}, {\R*sin(\i*360/\n)}) circle (0.008);
      }

      \fill[black] (0,0) circle (0.008);

      \draw[->] (-0.1,0) -- (1.2,0) node[right] {$x$};
      \draw[->] (0,-0.1) -- (0,1.2) node[above] {$y$};

      \node[blue] at (0.7,0.9) {$r = \codenumber{1.01}$};
      \node[red] at (0.7,0.45) {$r = \codenumber{0.95}$};
    \end{tikzpicture}
    \caption{A regular $28$-gon of area $\pi$ centered at the origin shown only in the first quadrant. The red circle of radius $\codenumber{0.95}$ lies entirely inside the polygon, and the blue circle of radius $\codenumber{1.01}$ fully contains the polygon. Neither circle touches the polygon.
    }
    \label{fig:1st-quadrant-28-gon}
  \end{figure}
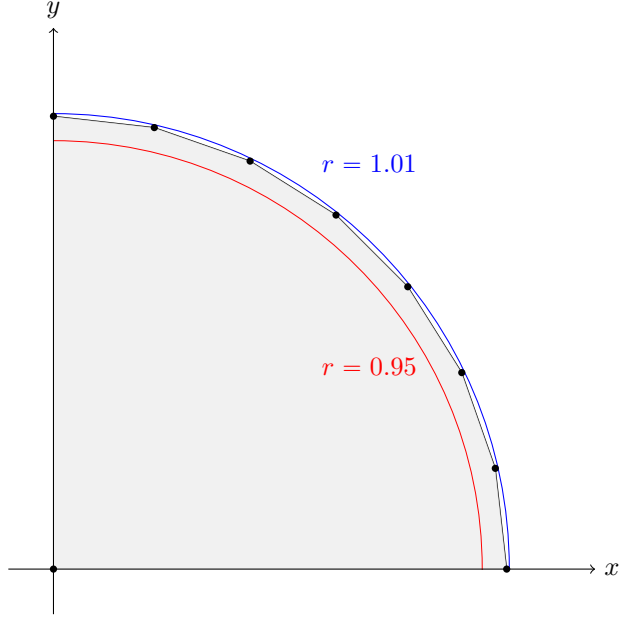
\end{lemma}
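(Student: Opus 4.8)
The plan is to reduce the statement to two elementary one-variable inequalities, using the explicit formulas for the circumradius and the inradius (apothem) of a regular $N$-gon of area $\pi$. Decomposing $\mathcal{P}_N$ into $N$ congruent isosceles triangles with apex angle $2\pi/N$ at the centre, its circumradius $R_N$ (distance from the centre to a vertex) satisfies $\tfrac{N}{2}R_N^2\sin(2\pi/N)=\pi$, and the inradius equals $r_N=R_N\cos(\pi/N)$; hence
\[
  R_N^2=\frac{2\pi}{N\sin(2\pi/N)},\qquad
  r_N^2=R_N^2\cos^2(\pi/N)=\frac{\pi}{N\tan(\pi/N)}.
\]
Since $\mathcal{P}_N$ is convex, it contains the open disk of radius $r_N$ about its centre and is contained in the open disk of radius $R_N$, so it suffices to prove $r_N>R_{\text{inner}}=0.95$ and $R_N<R_{\text{outer}}=1.01$ for every $N\ge N_0=64$; the strictness will also yield the non-touching property stated in the figure caption.

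Next I would establish monotonicity in $N$. On $(0,\pi)$ the map $x\mapsto \sin(x)/x$ is decreasing, while on $(0,\pi/2)$ the map $x\mapsto \tan(x)/x$ is increasing (both are routine via one derivative, e.g. $(\tan x/x)'=(x-\tfrac12\sin 2x)/(x^2\cos^2 x)>0$). Writing $N\sin(2\pi/N)=2\pi\cdot\frac{\sin(2\pi/N)}{2\pi/N}$ and $N\tan(\pi/N)=\pi\cdot\frac{\tan(\pi/N)}{\pi/N}$, and noting that $2\pi/N$ and $\pi/N$ decrease as $N$ increases, we conclude that $N\sin(2\pi/N)$ is increasing and $N\tan(\pi/N)$ is decreasing for $N\ge3$. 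Therefore $R_N$ is decreasing and $r_N$ is increasing in $N$, so $\sup_{N\ge N_0}R_N=R_{N_0}$ and $\inf_{N\ge N_0}r_N=r_{N_0}$, and it remains to verify the two inequalities at the single value $N=64$.

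Finally I would bound $R_{64}$ and $r_{64}$ rigorously. From $\sin x\ge x-x^3/6$ (valid for $x\ge0$) with $x=2\pi/64=\pi/32$ one gets $64\sin(2\pi/64)\ge 2\pi\bigl(1-\tfrac{(\pi/32)^2}{6}\bigr)$, hence $R_{64}^2\le\bigl(1-\tfrac{(\pi/32)^2}{6}\bigr)^{-1}<1.002$ and $R_{64}<1.001<1.01$. From $\tan x<x/\cos x$ and $\cos x>1-x^2/2$ with $x=\pi/64$ one gets $64\tan(\pi/64)<\pi\bigl(1-\tfrac{(\pi/64)^2}{2}\bigr)^{-1}$, hence $r_{64}^2>1-\tfrac{(\pi/64)^2}{2}>0.998$ and $r_{64}>0.999>0.95$. (Alternatively, these quantities are explicit combinations of $\pi$ and trigonometric values, so one can simply certify $R_{64}$ and $r_{64}$ with interval arithmetic.) I do not expect a genuine obstacle here; the only point needing care is getting the directions of the monotonicity right, and this can be sidestepped altogether by applying the same $\sin$/$\cos$ estimates with $x=2\pi/N\le\pi/32$ and $x=\pi/N\le\pi/64$ uniformly over all $N\ge64$.
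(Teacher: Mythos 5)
Your proof is correct and follows essentially the same route as the paper: express the inradius $I_N=\sqrt{\pi/(N\tan(\pi/N))}$ and circumradius $C_N=\sqrt{2\pi/(N\sin(2\pi/N))}$ from the area formula, observe that $I_N$ is increasing and $C_N$ decreasing in $N$, and then check the two inequalities at $N=N_0=64$. The only differences are cosmetic: the paper states the two monotonicity claims without justification and verifies the endpoint values by computer-assisted interval arithmetic (obtaining $I_{64}\approx 0.99960$ and $C_{64}\approx 1.0008$), whereas you supply a short derivative argument for the monotonicity and replace the interval-arithmetic check by the elementary bounds $\sin x\ge x-x^3/6$ and $\cos x\ge 1-x^2/2$, which is slightly more self-contained but yields the same conclusion; you also note the interval-arithmetic alternative yourself. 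Both versions are fine, and your numbers ($R_{64}<1.001$, $r_{64}>0.999$) are consistent with the paper's enclosures.
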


\begin{proof}
  The area, \(A_{N}\), and the inradius, \(I_{N}\), of a regular
  polygon with \(N\) sides is related as
  \begin{equation*}
    A_{N} = NI_{N}^2\tan\left(\pi/N\right).
  \end{equation*}
  In our case \(A_{N} = \pi\), giving us
  \begin{equation*}
    I_{N} = \sqrt{\frac{\pi}{N \tan \left(\pi/N\right)}}.
  \end{equation*}
  The function $I_N$ is increasing in $N$.
  Thus, it suffices to
  verify that \(I_{N_{0}} > R_{\text{inner}}\), which follows from the
  enclosure
  \begin{equation*}
    I_{N_{0}} \in \resultnumber{[0.99960 \pm 1.75 \cdot 10^{-6}]}.
  \end{equation*}

  In a similar way, the area and the circumradius, \(C_{N}\), is
  related by
  \begin{equation*}
    A_{N} = \frac{N}{2} \sin\left(\frac{2\pi}{N}\right) C_{N}^2,
  \end{equation*}
  giving (with \(A_{N} = \pi\))
  \begin{equation*}
    C_{N} = \sqrt{\frac{\pi}{\frac{N}{2} \sin\left(\frac{2\pi}{N}\right)}}.
  \end{equation*}
  In this case, the function $C_N$ is decreasing in $N$.
  Thus, it suffices to verify \(C_{N_{0}} < R_{\text{outer}}\), which follows
  from the enclosure
  \begin{equation*}
    C_{N_{0}} \in \resultnumber{[1.0008 \pm 3.78 \cdot 10^{-6}]}.
  \end{equation*}

  Figure~\ref{fig:8-polygon-radius-inradius} illustrates how the previous formulas are derived: by decomposing the polygon into triangles formed by connecting the center to the point of contact of the circumferences.

  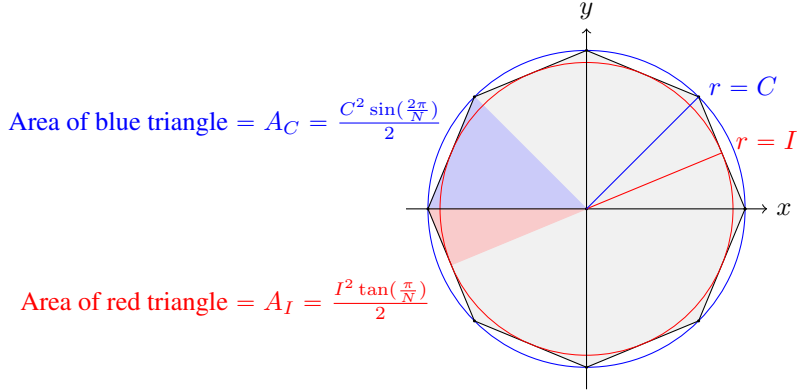
\begin{figure}[h]
    \centering
    \begin{tikzpicture}[scale=2]
      \def\n{8}
      \def\I{0.9736834439}
      \def\R{1.053907365}
      \def\C{1.053907365}
      \begin{scope}
        \fill[gray!15, opacity=0.7]
        ({\R*cos(0)}, {\R*sin(0)})
        \foreach \i in {1,2,...,\n} {
          -- ({\R*cos(\i*360/\n)}, {\R*sin(\i*360/\n)})
        } -- cycle;

        \filldraw[blue!30, opacity=0.6]
        (0,0) --
        ({\C*cos(180)}, {\C*sin(180)}) --
        ({\C*cos(180-360/\n)}, {\C*sin(180-360/\n)}) -- cycle;
        \node[blue] at (-2.4,0.6)
        {$\text{Area of blue triangle} = A_C = \frac{C^2 \sin(\frac{2\pi}{N})}{2}$};

        \filldraw[red!30, opacity=0.6]
        (0,0) --
        ({\C*cos(180)}, {\C*sin(180)}) --
        ({\I*cos(180+180/\n)}, {\I*sin(180+180/\n)}) -- cycle;
        \node[red] at (-2.4,-0.6)
        {$\text{Area of red triangle} = A_I = \frac{I^2 \tan(\frac{\pi}{N})}{2}$};

        \draw[blue] (0,0) circle (\C);

        \draw[black, thin]
        ({\R*cos(0)}, {\R*sin(0)})
        \foreach \i in {1,2,...,\n} {
          -- ({\R*cos(\i*360/\n)}, {\R*sin(\i*360/\n)})
        } -- cycle;

        \draw[red, very thin] (0,0) circle (\I);
      \end{scope}

      \foreach \i in {0,1,...,\n} {
        \fill[black] ({\R*cos(\i*360/\n)}, {\R*sin(\i*360/\n)}) circle (0.01);
      }

      \fill[black] (0,0) circle (0.01);

      \draw[->] (-1.2,0) -- (1.2,0) node[right] {$x$};
      \draw[->] (0,-1.2) -- (0,1.2) node[above] {$y$};

      \draw[red, thin] (0,0) -- ({\I*cos(180/\n)}, {\I*sin(180/\n)});

      \draw[blue, thin] (0,0) -- ({\C*cos(360/\n)}, {\C*sin(360/\n)});

      \node[blue, right] at (0.75,0.82) {$r = C$};
      \node[red, right] at (0.93,0.46) {$r = I$};
    \end{tikzpicture}
    \caption{An octagon ($8$-gon) of area $\pi$ centered at the origin. The polygon contains an inscribed disk of radius $I$ (red circle) and is contained within a circumscribed disk of radius $C$ (blue circle). The figure also highlights the areas of the triangles, into which the polygon is decomposed, to derive two equivalent formulas for its total area.}
    \label{fig:8-polygon-radius-inradius}
  \end{figure}
\end{proof}

By virtue of Lemma~\ref{lemma-disk}, we can obtain a lower bound of $\|u_{app}\|_{L^2(\mathcal{P}_N)}$ by computing a lower bound in $L^2(\mathbb{D}_{R_{\text{inner}}})$. We start by bounding the error term $ | \real \int_0^x U(t) J_0(\sqrt{\lambda_{app} \bar{x} (x-t)}) dt|$ in $L^{\infty}(\mathbb{D}_{R_{\text{inner}}})$.

\begin{align*}
\left | \real \int_0^x U(t) J_0 \left(\sqrt{\lambda_{app} \bar{x} (x-t)} \right) dt  \right |
& \leq
\left | \int_0^x U(t) J_0 \left(\sqrt{\lambda_{app} \bar{x} (x-t)} \right) dt  \right |
=
\left | \int_0^1 U(s x) J_0 \left(\sqrt{\lambda_{app} \bar{x} (x-s x)} \right) x ds  \right |
\\
& \leq
\int_0^1 | U(s x) | \left |J_0 \left(\sqrt{\lambda_{app} \bar{x} (x-s x)} \right) \right |  |x| ds
\leq
|x| \int_0^1 | U(s x) |  ds \, ,
\end{align*}
where in the last step we have used that the argument inside the Bessel function is real, and thus, the Bessel function can be bounded by $1$.

We use the relationship between $U(t)$ and $V(s)$ stated in Definition~\ref{def:u_app-lambda_app}, giving us $ V(s^N) c_N = U(f_N(s)) s f_N'(s)$. From this we get
\begin{equation*}
  |x| \int_0^1 | U(s x) |  ds
  =
  |x| \int_0^1 \left | \frac{V((f_N^{-1}(s x))^N) c_N}{f_N'(f_N^{-1}(s x)) f_N^{-1}(s x)} \right |  ds
  \leq R_{\text{inner}} \left\| \frac{V((f_N^{-1}( x))^N) c_N}{f_N'(f_N^{-1}( x)) f_N^{-1}( x)} \right\|_{L^{\infty}\left(\mathbb{D}_{R_{\text{inner}}}\right)}
  ,
\end{equation*}
where $f_N^{-1}:\mathcal{P}_N\to\mathbb{D}$ denotes the analytic inverse of the Schwarz Christoffel map $f_N:\mathbb{D}\to\mathcal{P}_N$, which is biholomorphic (i.e. bijective holomorphic function whose inverse is also holomorphic).
To further reduce the problem, note that the $V(z)$ function can be divided into its smaller parts, giving rise to the following Lemma:

\begin{lemma}
  \label{lemma:bounds_and_splitting:lower_bound:bounds-independent-of-beta-x_in_D-0.95-N-for-V_i}
  Let $V_1(z), V_2(z), V_3(z), V_4(z)$ be the functions defined in
  Definition~\ref{def:u_app-lambda_app}. Then, for all
  $x \in \mathbb{D}_{R_{\text{inner}}}$ and
  $N \geq N_0 = \codenumber{64}$, we have the following bounds:
  \begin{align*}
    \left | \frac{V_1\left(f_N^{-1}(x)^N\right) c_N}{f_N'(f_N^{-1}(x)) f_N^{-1}(x)} \right | \leq C_{V,1} \, , \quad &
    \left | \frac{V_2\left(f_N^{-1}(x)^N\right) c_N}{f_N'(f_N^{-1}(x)) f_N^{-1}(x)} \right | \leq C_{V,2} \, , \\
    \left | \frac{V_3\left(f_N^{-1}(x)^N\right) c_N}{f_N'(f_N^{-1}(x)) f_N^{-1}(x)} \right | \leq C_{V,3} \, , \quad &
    \left | \frac{V_4\left(f_N^{-1}(x)^N\right) c_N}{f_N'(f_N^{-1}(x)) f_N^{-1}(x)} \right | \leq C_{V,4} \, ,
  \end{align*}
  where
  \begin{equation*}
    C_{V,1} = \codenumber{0.1},\quad
    C_{V,2} = \codenumber{0.05},\quad
    C_{V,3} = \codenumber{0.08},\quad
    C_{V,4} = \codenumber{1}.
  \end{equation*}
\end{lemma}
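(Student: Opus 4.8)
The plan is to transport the estimate through the Schwarz--Christoffel map and reduce it to a bound for one explicit holomorphic function on a fixed disk. Setting $z = f_N^{-1}(x) \in \mathbb{D}$ and using $f_N(z) = c_N z\, F_N(z^N)$ together with $f_N'(z) = c_N (1 - z^N)^{-2/N}$ (from~\eqref{eq:Schwarz-Christoffel_map} and~\eqref{eq:FNz_integral}), the quantity to be bounded collapses to
\[
  \frac{V_j\!\left(f_N^{-1}(x)^N\right) c_N}{f_N'(f_N^{-1}(x))\, f_N^{-1}(x)}
  = \frac{V_j(z^N)(1 - z^N)^{2/N}}{z} =: h_j(z).
\]
Since each $V_j$ is holomorphic with $V_j(0) = 0$, the factor $V_j(z^N)/z = z^{N-1}\bigl(V_j(z^N)/z^N\bigr)$ is holomorphic on $\mathbb{D}$, and so is $h_j$ (the branch of $(1 - z^N)^{2/N}$ being the principal one, which is holomorphic and nonvanishing on $\mathbb{D}$). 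It therefore suffices to bound $|h_j|$ on $\Omega_N := f_N^{-1}(\mathbb{D}_{R_{\text{inner}}}) \subseteq \mathbb{D}$.

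The first step would be to locate $\Omega_N$ inside a fixed disk: to exhibit an explicit $\rho < 1$ with $\overline{\Omega_N} \subseteq \overline{\mathbb{D}_\rho}$ for all $N \geq N_0$. Because $f_N$ is injective and $f_N(0) = 0$, the function $\log|f_N(z)| = \log c_N + \log|z| + \log|F_N(z^N)|$ is harmonic on $\mathbb{D} \setminus \{0\}$ and continuous on $\overline{\mathbb{D}}$, so by the minimum principle on the closed annulus $\{\rho \leq |z| \leq 1\}$,
\[
  \min_{\rho \leq |z| \leq 1}|f_N(z)| = \min\left(\min_{|z| = \rho}|f_N(z)|,\ \min_{|z| = 1}|f_N(z)|\right).
\]
On $|z| = 1$ the image lies on $\partial\mathcal{P}_N$, so $|f_N(z)| \geq I_N \geq I_{N_0} > R_{\text{inner}}$ by Lemma~\ref{lemma-disk}. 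On $|z| = \rho$ we have $|z^N| = \rho^N \leq \rho^{N_0}$, so $|F_N(z^N) - 1|$ is small by the integral representation~\eqref{eq:FNz_integral}, and combining this with $c_{N_0} \leq c_N < 1$ from Lemma~\ref{lemma:c_N-monotone} one checks that $\min_{|z| = \rho}|f_N(z)| = c_N \rho\, \min_{|w| = \rho^N}|F_N(w)| > R_{\text{inner}}$ for a suitable $\rho$ (numerically one may take $\rho$ only slightly larger than $R_{\text{inner}} = 0.95$). Consequently $|f_N(z)| \leq R_{\text{inner}}$ forces $|z| \leq \rho$, which gives $\overline{\Omega_N} \subseteq \overline{\mathbb{D}_\rho}$.

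For the second step, since $h_j$ is holomorphic on $\mathbb{D}$ the maximum modulus principle gives
\[
  \max_{z \in \overline{\mathbb{D}_\rho}}|h_j(z)| = \max_{|z| = \rho}|h_j(z)|
  \leq \frac{1}{\rho}\left(\sup_{|w| = \rho^N}|V_j(w)|\right)(1 + \rho^N)^{2/N},
\]
and by Lemma~\ref{lemma:V_log_bounds}, $|V_j(w)| \leq \sum_{l=1}^{j} \frac{C_{V,j,l}}{l!}|\log(1 - |w|)|^l$; using $\rho^N \leq \rho^{N_0}$, the monotonicity of $r \mapsto -\log(1 - r)$, and $(1 + \rho^N)^{2/N} \leq (1 + \rho^{N_0})^{2/N_0}$, this yields the closed-form bound
\[
  \left|\frac{V_j(z^N)(1 - z^N)^{2/N}}{z}\right|
  \leq \frac{(1 + \rho^{N_0})^{2/N_0}}{\rho}\sum_{l=1}^{j}\frac{C_{V,j,l}}{l!}\bigl(-\log(1 - \rho^{N_0})\bigr)^l =: C_{V,j},
\]
and one checks numerically that these stay below the stated values $0.1,\ 0.05,\ 0.08,\ 1$. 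The hard part will be the interplay between the two steps: the final estimate, with the target value $0.1$ for $j = 1$ being nearly sharp, forces $\rho$ close to $R_{\text{inner}}$, while the lower bound $\min_{|z| = \rho}|f_N(z)| > R_{\text{inner}}$ simultaneously requires $\rho$ not too small and rests on quantitative control of the small deviations $c_N \to 1$ and $F_N(z^N) \to 1$ for $N \geq N_0$; reconciling these is precisely where the monotonicity of $c_N$ and of $I_N$, together with the computer-assisted bounds on $F_N - 1$, enter.
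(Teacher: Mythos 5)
Your proof is correct and follows essentially the same route as the paper's: reduce to bounding $V_j(z^N)(1-z^N)^{2/N}/z$ on a fixed disk $\mathbb{D}_\rho$ with $\rho$ slightly above $R_{\text{inner}}$, verify $f_N^{-1}(\mathbb{D}_{R_{\text{inner}}}) \subseteq \mathbb{D}_\rho$ by checking $|f_N| > R_{\text{inner}}$ on $|z|=\rho$, and then control the two factors via Lemma~\ref{lemma:V_log_bounds} and the elementary bound $|(1-z^N)^{2/N}|\leq(1+\rho^{N_0})^{2/N_0}$. The only cosmetic deviations are that you invoke the harmonic minimum principle for $\log|f_N|$ (plus the inradius bound on $|z|=1$) and the maximum modulus principle for $h_j$, where the paper instead argues the inclusion topologically and bounds the two factors directly on all of $\mathbb{D}_\rho$; your final closed-form expression is identical to the paper's $D_{l,R^{N_0}}R^{N_0-1}(1+R^{N_0})^{2/N_0}$.
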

\begin{proof}
  We start by giving bounds for \(f_N^{-1}(x)\). The goal is to find a
  radius \(R < 1\) such that for
  \(x \in \mathbb{D}_{R_{\text{inner}}}\) we have
  \(f_N^{-1}(x) \in \mathbb{D}_{R}\). Since \(f_N\) is continuous it
  suffices to find \(R\) such that the boundary of \(\mathbb{D}_{R}\)
  is mapped by \(f_N\) to a curve lying strictly outside of
  \(\mathbb{D}_{R_{\text{inner}}}\). It then follows from continuity
  that the image of \(\mathbb{D}_{R_{\text{inner}}}\) under \(f_N^{-1}\)
  lies strictly inside \(\mathbb{D}_{R}\). We claim that taking
  \(R = \codenumber{0.951}\) works, i.e that for
  \(x \in \mathbb{R}_{R_{\text{inner}}}\) we have
  \(f_N^{-1}(x) \in \mathbb{D}_{R} = \mathbb{D}_{\codenumber{0.951}}\).
  To prove this, it suffices to show that for \(|z| = R\) we have
  \begin{equation*}
    |f_N(z)| > R_{\text{inner}} = \codenumber{0.95}.
  \end{equation*}
  Recall that
  \begin{equation*}
    |f_N(z)| = c_{N}R\left|{}_2F_1 \left(\frac{2}{N}, \frac{1}{N}, 1 + \frac{1}{N}; z^N\right)\right|.
  \end{equation*}
  For \(N \geq N_{0}\) we have \(N^{-1} \in (0, N_{0}^{-1}]\) and
  \(z^{N} \in \mathbb{D}_{R^{N_{0}}}\), enclosing the \({}_{2}F_{1}\)
  function in this set gives us
  \begin{equation*}
    \left|{}_2F_1 \left(\frac{2}{N}, \frac{1}{N}, 1 + \frac{1}{N}; z^{N}\right)\right|
    \in \resultnumber{[1.000 \pm 2.01 \cdot 10^{-5}]},
  \end{equation*}
  valid for all \(N \geq N_{0}\) and \(|z| = R\). Combining this with
  Lemma~\ref{lemma:c_N-monotone} gives us that
  \begin{equation*}
    |f_N(z)|
    \geq c_{N_{0}}R\left|{}_2F_1 \left(\frac{2}{N}, \frac{1}{N}, 1 + \frac{1}{N}; z^N\right)\right|
    \in \resultnumber{[0.9510 \pm 2.78 \cdot 10^{-5}]} > \codenumber{0.95} = R_{\text{inner}},
  \end{equation*}
  which is exactly what we needed to verify.

  By the above it hence suffices to bound
  \begin{equation*}
    \left|\frac{V_l\left(z^N\right) c_N}{z f_N'(z)}\right|
  \end{equation*}
  for \(z \in \mathbb{D}_{R}\). Using that
  \(f_N'(z) = c_N (1-z^N)^{-2/N}\) we can write it as
  \begin{equation*}
    \left|\frac{V_l\left(z^N\right) c_N}{z f_N'(z)}\right| = \left|\frac{V_l\left(z^N\right)}{z}(1-z^N)^{2/N}\right|
  \end{equation*}
  For the second factor we get the upper bound
  \begin{equation*}
    \left|(1-z^N)^{2/N}\right| \leq (1 + R^{N})^{2/N} \leq (1 + R^{N_0})^{2/N_0}.
  \end{equation*}
  From Lemma~\ref{lemma:V_log_bounds} in Appendix~\ref{app:aux_lemmas}
  we have
  \begin{equation*}
    \left|\frac{V_{l}(z^{N})}{z}\right| \leq D_{l,R^{N_{0}}}|z^{N-1}| \leq D_{l,R^{N_{0}}}R^{N_{0}-1}.
  \end{equation*}
  This gives us
  \begin{equation*}
    \left|\frac{V_{l}(z) c_N}{z f_N'(z)}\right| \leq D_{l,R^{N_{0}}}R^{N_{0}-1}(1+R^{N_0})^{2/N_0}.
  \end{equation*}
  With this we can compute the bounds
  \begin{align*}
    \left|\frac{V_{1}(z) c_N}{z f_N'(z)}\right| &\leq D_{1,R^{N_{0}}}R^{N_{0}-1}(1+R^{N_0})^{2/N_0} \in \resultnumber{[0.086258 \pm 4.86 \cdot 10^{-7}]},\\
    \left|\frac{V_{2}(z) c_N}{z f_N'(z)}\right| &\leq D_{2,R^{N_{0}}}R^{N_{0}-1}(1+R^{N_0})^{2/N_0} \in \resultnumber{[0.041987 \pm 3.53 \cdot 10^{-7}]},\\
    \left|\frac{V_{3}(z) c_N}{z f_N'(z)}\right| &\leq D_{3,R^{N_{0}}}R^{N_{0}-1}(1+R^{N_0})^{2/N_0} \in \resultnumber{[0.079409 \pm 3.14 \cdot 10^{-7}]},\\
    \left|\frac{V_{4}(z) c_N}{z f_N'(z)}\right| &\leq D_{4,R^{N_{0}}}R^{N_{0}-1}(1+R^{N_0})^{2/N_0} \in \resultnumber{[0.95572 \pm 3.87 \cdot 10^{-7}]},
  \end{align*}
  which gives us the result.
\end{proof}

Therefore, we can conclude the following $L^{\infty}(\mathbb{D}_{R_{\text{inner}}})$-norm bound for the integral part
\begin{align*}
    \left | \real \int_0^x U(t) J_0 \left(\sqrt{\lambda_{app} \bar{x} (x-t)} \right) dt  \right |
    & \leq \frac{C_{V,1}}{N} + \frac{C_{V,2}}{N^2} + \frac{C_{V,3}}{N^3} + \frac{C_{V,4}}{N^4} =: E_I(N) \, .
\end{align*}
And thus,
\begin{equation}
\label{eq:lower-bound-u_app^2}
    |u_{app}(x)|^2 \geq a_0^2J_0^2\left(\sqrt{\lambda_{app}}|x|\right) - 2 a_0E_I(N) \left|J_0\left(\sqrt{\lambda_{app}}|x|\right)\right| - E_I(N)^2 \, .
\end{equation}
By integrating over the disk $\mathbb{D}_{R_{\text{inner}}}$ and applying a change of variables to polar coordinates, we can compute the following two integrals:
\begin{lemma}
\label{lemma:Bessel-integrals}
  The following integral identities hold for $R>0$ and $R < \dfrac{j_{0,1}}{\sqrt{\lambda_{app}}}$:
  \begin{equation*}
    \int_0^R J_0\left(\sqrt{\lambda_{app}}\, r\right)^2r dr =
    \frac{R^2}{2} \left(
      J_0\left(R \sqrt{\lambda_{app}}\right)^2
      + J_1\left(R \sqrt{\lambda_{app}}\right)^2
    \right) \, ,
  \end{equation*}
  \begin{equation*}
    \int_0^R J_0(\sqrt{\lambda_{app}}\, r) \, r dr =
    \frac{R}{\sqrt{\lambda_{app}}} J_1(R\sqrt{\lambda_{app}}) \, .
  \end{equation*}
\end{lemma}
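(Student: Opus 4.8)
The plan is to derive both identities from the standard recurrence and differential relations for Bessel functions; the arguments are elementary. Throughout I write $k = \sqrt{\lambda_{app}}$ for brevity.

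For the second identity I would use $\frac{d}{dx}\bigl(x J_1(x)\bigr) = x J_0(x)$, which is the special case $\nu = 1$ of $\frac{d}{dx}\bigl(x^{\nu} J_{\nu}(x)\bigr) = x^{\nu} J_{\nu-1}(x)$. Changing variables via $x = kr$ gives $\int_0^R J_0(kr)\, r\, dr = k^{-2}\int_0^{kR} x J_0(x)\, dx = k^{-2}\bigl[x J_1(x)\bigr]_0^{kR} = \frac{R}{k} J_1(kR)$, using that $x J_1(x) \to 0$ as $x \to 0$. With $k = \sqrt{\lambda_{app}}$ this is precisely the claimed formula.

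For the first identity (a Lommel-type integral) I would use that $y(r) := J_0(kr)$ solves the Bessel equation in Sturm--Liouville form, $(r y')' + k^2 r y = 0$. Multiplying by $2 r y'$ and using $2 r y'(r y')' = \frac{d}{dr}\bigl((r y')^2\bigr)$ together with $2 k^2 r^2 y y' = \frac{d}{dr}\bigl(k^2 r^2 y^2\bigr) - 2 k^2 r y^2$, one obtains $\frac{d}{dr}\bigl((r y')^2 + k^2 r^2 y^2\bigr) = 2 k^2 r y^2$. Integrating over $[0, R]$ and noting that the boundary contributions at $r = 0$ vanish (both $r y'$ and $r y$ tend to $0$ there) yields $2 k^2 \int_0^R r\, y(r)^2\, dr = \bigl(R y'(R)\bigr)^2 + k^2 R^2 y(R)^2$. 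Since $J_0' = -J_1$ gives $y'(R) = -k J_1(kR)$, the right-hand side equals $k^2 R^2\bigl(J_0(kR)^2 + J_1(kR)^2\bigr)$; dividing by $2k^2$ gives the stated identity.

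The argument is routine and I do not anticipate any real obstacle. One remark worth making: the hypothesis $R < j_{0,1}/\sqrt{\lambda_{app}}$ is not actually used in either identity, since the integrands are smooth for every $R > 0$; it is recorded only because that is the regime in which the lemma is later applied (there $J_0(\sqrt{\lambda_{app}}\, r)$ keeps a constant sign on $[0, R]$, which is what matters for the subsequent lower-bound estimate on $\|u_{app}\|_{L^2}$).
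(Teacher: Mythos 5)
Your proof is correct. The paper itself states this lemma without proof, treating both identities as standard facts about Bessel functions, so there is no "paper proof" to compare against; your derivations are the canonical ones. For the second identity, the substitution $x = \sqrt{\lambda_{app}}\, r$ together with $\frac{d}{dx}\bigl(x J_1(x)\bigr) = x J_0(x)$ and $x J_1(x) \to 0$ as $x \to 0$ works exactly as you say. For the first, your Sturm--Liouville manipulation $\frac{d}{dr}\bigl((r y')^2 + k^2 r^2 y^2\bigr) = 2k^2 r y^2$ for $y(r) = J_0(kr)$, with vanishing boundary terms at the origin (since $y'(0) = 0$) and $y'(R) = -k J_1(kR)$, is the standard Lommel-integral argument and gives the stated formula after dividing by $2k^2$. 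Your closing remark is also accurate and worth making explicit: neither identity uses the hypothesis $R < j_{0,1}/\sqrt{\lambda_{app}}$; that condition is invoked only in the proof of Corollary~\ref{cor:lower-bound-for-L2-norm-k(N)}, where it guarantees $J_0(\sqrt{\lambda_{app}}\, r) \geq 0$ on $[0, R_{\text{inner}}]$ so that $|J_0|$ can be replaced by $J_0$ before applying the second identity.
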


We can now get a lower bound for $L^2$ norm of $u$.
\begin{corollary}
\label{cor:lower-bound-for-L2-norm-k(N)}
  For \(N \geq N_{0} = 64\) we have
  \begin{equation*}
    \|u_{app}\|_{L^2(\mathcal{P}_N)}^2 \geq \eta(N),
  \end{equation*}
  with
  \begin{multline}\label{eq:eta-N}
    \eta(N) =\pi a_0^2 R_{\text{inner}}^2 \left(
      J_0\left(R_{\text{inner}} \sqrt{\lambda_{app}}\right)^2
      + J_1\left(R_{\text{inner}} \sqrt{\lambda_{app}}\right)^2
    \right)\\
    - 4\pi a_0E_I(N) \frac{R_{\text{inner}}}{\sqrt{\lambda_{app}}}
      J_1(R_{\text{inner}}\sqrt{\lambda_{app}})
    - \pi R_{\text{inner}}^2 E_I(N)^2.
  \end{multline}
\end{corollary}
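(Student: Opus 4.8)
The plan is to combine the pointwise lower bound~\eqref{eq:lower-bound-u_app^2} for $|u_{app}(x)|^2$ with the geometric containment from Lemma~\ref{lemma-disk} and then evaluate the resulting radial integrals in closed form via Lemma~\ref{lemma:Bessel-integrals}. Since $u_{app}^2 \geq 0$ pointwise and, for $N \geq N_0$, Lemma~\ref{lemma-disk} gives $\mathbb{D}_{R_{\text{inner}}} \subseteq \mathcal{P}_N$, I would first write
\[
  \|u_{app}\|_{L^2(\mathcal{P}_N)}^2 \geq \int_{\mathbb{D}_{R_{\text{inner}}}} |u_{app}(x)|^2\, dx
  \geq \int_{\mathbb{D}_{R_{\text{inner}}}} \left( a_0^2 J_0^2(\sqrt{\lambda_{app}}|x|) - 2a_0 E_I(N)\,\bigl|J_0(\sqrt{\lambda_{app}}|x|)\bigr| - E_I(N)^2 \right) dx,
\]
the second inequality being~\eqref{eq:lower-bound-u_app^2} integrated termwise over the disk.

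Next I would pass to polar coordinates: the first two integrands are radial, so $\int_{\mathbb{D}_{R_{\text{inner}}}}(\cdot)\, dx = 2\pi\int_0^{R_{\text{inner}}}(\cdot)\, r\, dr$, while $\int_{\mathbb{D}_{R_{\text{inner}}}} dx = \pi R_{\text{inner}}^2$. Before invoking Lemma~\ref{lemma:Bessel-integrals} I must check its hypothesis $R_{\text{inner}} < j_{0,1}/\sqrt{\lambda_{app}}$, equivalently $R_{\text{inner}}\sqrt{\lambda_{app}} < j_{0,1}$; this holds because $\lambda_{app}$ equals $\lambda = j_{0,1}^2$ up to a correction of order $N^{-3}$ which is negligible for $N \geq N_0$, and $R_{\text{inner}} = 0.95 < 1$, so a cheap interval enclosure of $R_{\text{inner}}\sqrt{\lambda_{app}}$ keeps it strictly below $j_{0,1}$. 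The same inequality forces $J_0(\sqrt{\lambda_{app}} r) \geq 0$ on $[0, R_{\text{inner}}]$, so the absolute value in the middle integrand may be dropped, and then Lemma~\ref{lemma:Bessel-integrals} gives
\[
  2\pi\int_0^{R_{\text{inner}}} J_0^2(\sqrt{\lambda_{app}} r)\, r\, dr
  = \pi R_{\text{inner}}^2\left( J_0(R_{\text{inner}}\sqrt{\lambda_{app}})^2 + J_1(R_{\text{inner}}\sqrt{\lambda_{app}})^2 \right),
\]
\[
  2\pi\int_0^{R_{\text{inner}}} J_0(\sqrt{\lambda_{app}} r)\, r\, dr
  = \frac{2\pi R_{\text{inner}}}{\sqrt{\lambda_{app}}}\, J_1(R_{\text{inner}}\sqrt{\lambda_{app}}).
\]
Substituting these and $\int_{\mathbb{D}_{R_{\text{inner}}}} dx = \pi R_{\text{inner}}^2$ into the lower bound above reproduces exactly the expression $\eta(N)$.

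The computation is essentially bookkeeping once the preceding results are in hand — the uniform $L^\infty$ bound $E_I(N)$ on the integral remainder, the inradius estimate of Lemma~\ref{lemma-disk}, and the Bessel integral identities. The only genuine point requiring care is the inequality $R_{\text{inner}}\sqrt{\lambda_{app}} < j_{0,1}$, which simultaneously validates the hypothesis of Lemma~\ref{lemma:Bessel-integrals} and licenses removing the absolute value; and since additionally $a_0 > 0$ (because $c_N > 0$ and $J_1(j_{0,1}) > 0$) and $E_I(N) > 0$, the signs of all three contributions are as written in $\eta(N)$, so no further care about inequality directions is required. I therefore do not anticipate any serious obstacle in finishing the proof.
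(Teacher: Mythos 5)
Your proposal matches the paper's proof essentially step for step: restrict to $\mathbb{D}_{R_{\text{inner}}} \subseteq \mathcal{P}_N$ via Lemma~\ref{lemma-disk}, integrate the pointwise bound~\eqref{eq:lower-bound-u_app^2} in polar coordinates, apply Lemma~\ref{lemma:Bessel-integrals}, and verify $R_{\text{inner}}\sqrt{\lambda_{app}} < j_{0,1}$ by an interval enclosure so $J_0$ stays nonnegative and the absolute value can be dropped. No differences worth remarking on; your explicit note that $a_0 > 0$ and $E_I(N) > 0$ (so the signs in $\eta(N)$ are as written) is a minor clarification the paper leaves implicit.
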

\begin{proof}
    This lower bound on the $L^2$-norm of $u_{app}$ comes from
    \begin{align*}
        \|u_{app}\|_{L^2(\mathcal{P}_N)}^2 & \geq \|u_{app}\|_{L^2(\mathbb{D}_{R_{\text{inner}}})}^2 = \int_{\mathbb{D}_{R_{\text{inner}}}} \left|u_{app}(x)\right|^2 dx \\
        & \geq
          \int_{\mathbb{D}_{R_{\text{inner}}}} a_0^2 J_0\left(\sqrt{\lambda_{app}}|x|\right)^2 - 2 a_0E_I(N) \left|J_0\left(\sqrt{\lambda_{app}}|x|\right)\right| - E_I(N)^2 dx \\
        & = 2\pi a_0^2 \int_0^{R_{\text{inner}}}  J_0\left(\sqrt{\lambda_{app}}r\right)^2 r dr - 4\pi a_0E_I(N) \int_0^{R_{\text{inner}}} \left|J_0\left(\sqrt{\lambda_{app}}r\right)\right| r dr -  2\pi E_I(N)^2\int_0^{R_{\text{inner}}} r dr \\
        & = \eta(N),
    \end{align*}
    using Equation~\eqref{eq:lower-bound-u_app^2} and Lemma~\ref{lemma:Bessel-integrals} after changing to polar coordinates.
    To apply the Lemma, we need to prove that $\sqrt{\lambda_{app}} R_{\text{inner}} < j_{0,1}$, so the value of the Bessel function is positive for $0<r<R_{\text{inner}} < j_{0,1}$ as $J_0$ is decreasing in this range. This follows from the enclosures
    \begin{equation*}
      \sqrt{\lambda_{app}} R_{\text{inner}} \in \resultnumber{[2.2846 \pm 3.15 \cdot 10^{-5}]} < \resultnumber{[2.404825558 \pm 3.05 \cdot 10^{-10}]} \ni j_{0,1},
    \end{equation*}
    valid for \(N \geq N_{0}\).
\end{proof}

Note that the limit as $N\to\infty$ of $\eta(N)$ does not vanish, as the following Lemma shows:

\begin{lemma}
\label{lemma:expansion-for-eta-N}
    There exists a finite constant $C_{\eta}$ such that
    \begin{equation*}
        \left\| \eta(N) - \pi R_{\text{inner}}^2\frac{J_0\left(\sqrt{\lambda} R_{\text{inner}}\right)^2 + J_1\left(\sqrt{\lambda} R_{\text{inner}}\right)^2}{\lambda J_1\left(\sqrt{\lambda}\right)^2} \right\| \leq \frac{C_{\eta}}{N}
    \end{equation*}
\end{lemma}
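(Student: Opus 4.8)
The plan is to expand $\eta(N)$ about its $N\to\infty$ limit and estimate the remainder term by term. Write
\[
  M := \pi R_{\text{inner}}^2\,\frac{J_0(\sqrt{\lambda}\,R_{\text{inner}})^2 + J_1(\sqrt{\lambda}\,R_{\text{inner}})^2}{\lambda\, J_1(\sqrt{\lambda})^2}
\]
for the quantity subtracted from $\eta(N)$ in the statement (both $\eta(N)$ and $M$ are real numbers, so the norm there is just absolute value). This $M$ is a finite positive number: since $\sqrt{\lambda} = j_{0,1}$ is a zero of $J_0$ and the positive zeros of $J_0$ and $J_1$ interlace, $J_1(\sqrt{\lambda})\neq 0$. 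Using $a_0 = c_N/(\sqrt{\lambda}\,J_1(\sqrt{\lambda}))$ from Definition~\ref{def:u_app-lambda_app}, so that $a_0^2 = c_N^2/(\lambda\, J_1(\sqrt{\lambda})^2)$, I would split $\eta(N) = A(N) - B(N) - C(N)$ with
\[
  A(N) = \frac{\pi R_{\text{inner}}^2\, c_N^2}{\lambda\, J_1(\sqrt{\lambda})^2}\bigl(J_0(R_{\text{inner}}\sqrt{\lambda_{app}})^2 + J_1(R_{\text{inner}}\sqrt{\lambda_{app}})^2\bigr),
\]
\[
  B(N) = \frac{4\pi a_0 E_I(N)\,R_{\text{inner}}}{\sqrt{\lambda_{app}}}\,J_1(R_{\text{inner}}\sqrt{\lambda_{app}}), \qquad C(N) = \pi R_{\text{inner}}^2 E_I(N)^2,
\]
and bound $|\eta(N) - M| \le |A(N) - M| + |B(N)| + |C(N)|$.

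The key observation is that each ``small'' ingredient is $O(N^{-1})$ while every remaining factor is uniformly bounded for $N\ge N_0$. From \eqref{eq:expansion_lambda_N}, $\lambda_{app} = \lambda + O(N^{-3})$, hence $\sqrt{\lambda_{app}} = \sqrt{\lambda} + O(N^{-3})$ and $\sqrt{\lambda_{app}}$ stays in a fixed compact interval; from Lemma~\ref{lemma:c_N-monotone} one has $c_{N_0}\le c_N\le 1$, and moreover $1 - c_N = O(N^{-3})$ (which follows from the integral formula for $\frac{d}{dN}\log c_N$ in its proof upon Taylor expanding the digamma function at $1$, and is anyway far more than needed); finally $E_I(N) \le (C_{V,1}+C_{V,2}+C_{V,3}+C_{V,4})/N$ for all $N\ge 1$, $|a_0| \le C_{a_{0}}$ by Lemma~\ref{lemma:bounds_and_splitting:bound_uniform_with_N-for_a0_and_cN}, and $J_0,J_1$ are bounded by $1$ on $[0,\infty)$. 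Feeding these into the three pieces gives $|C(N)| = \pi R_{\text{inner}}^2 E_I(N)^2 = O(N^{-2})$; every factor of $B(N)$ other than $E_I(N)$ is uniformly bounded, so $|B(N)| = O(N^{-1})$; and, using $|c_N^2 - 1| = O(N^{-3})$ together with the Lipschitz continuity of $s\mapsto J_0(R_{\text{inner}}s)^2 + J_1(R_{\text{inner}}s)^2$ on the relevant compact interval and $|\sqrt{\lambda_{app}} - \sqrt{\lambda}| = O(N^{-3})$, one gets $|A(N) - M| = O(N^{-3})$. Summing the three bounds yields $|\eta(N) - M| = O(N^{-1})$, which is the claim, with a constant $C_\eta$ fully explicit in terms of the constants already fixed in this section.

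I do not anticipate any genuine obstacle: the work is purely bookkeeping of orders in $1/N$ against bounds already established. The one point worth stressing is that the $1/N$ rate is sharp for this estimate and cannot be improved by this argument, which is why the statement is phrased with $1/N$ rather than a higher power: the dominant error is $B(N)$, and since $E_I(N) = C_{V,1}/N + O(N^{-2})$ with $C_{V,1} = \codenumber{0.1} > 0$ we have $B(N) \sim 4\pi a_0 (C_{V,1}/N)\,R_{\text{inner}}\,\lambda^{-1/2}\,J_1(R_{\text{inner}}\sqrt{\lambda}) \neq 0$, so $\eta(N) - M$ decays exactly like $1/N$. In particular the lemma records that $\eta(N)$ converges to the strictly positive constant $M$, which is precisely what makes $\eta(N)$ a useful \emph{uniform} lower bound for $\|u_{app}\|_{L^2(\mathcal{P}_N)}^2$ when applying Lemma~\ref{thm:FoxHenriciMoler} in the proof of Proposition~\ref{prop:N_large}.
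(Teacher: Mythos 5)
Your proof is correct and follows essentially the same decomposition as the paper's: treat the two terms in $\eta(N)$ carrying a factor $E_I(N)$ as $O(1/N)$ errors using the boundedness of the remaining factors, and Taylor-expand the leading term in $\sqrt{\lambda_{app}/\lambda}$ and $c_N$ around their common limit $1$. Your write-up is somewhat more explicit (and correctly notes that $1-c_N = O(N^{-3})$ is available, and that the $1/N$ rate in the lemma is sharp because $B(N)$ dominates), but the underlying argument matches the paper's.
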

\begin{proof}
  Using that $\dfrac{J_1(R_{\text{inner}}\sqrt{\lambda_{app}})}{R_{\text{inner}}\sqrt{\lambda_{app}}}$ is bounded in $N$, the terms from $\eta(N)$ containing a factor $E_I(N)$ are trivially $O(1/N)$ or better. For the first term, define $g_0(w) = J_0(w R_{\text{inner}} \sqrt{\lambda})^2$ and $g_1(w) = J_1(w R_{\text{inner}} \sqrt{\lambda})^2$. Then, the subtraction becomes
  \begin{equation*}
      \pi a_0^2 R_{\text{inner}}^2 \left( g_0\left( \frac{\sqrt{\lambda_{app}}}{\sqrt{\lambda}} \right) - g_0(1) + g_1\left( \frac{\sqrt{\lambda_{app}}}{\sqrt{\lambda}} \right) - g_1(1) \right)  ,
  \end{equation*}
  which is bounded by the Taylor expansion of the functions $g_0$ and $g_1$.
  Moreover, as $a_0 = \frac{1}{\sqrt{\lambda} J_1\left(\sqrt{\lambda}\right)} c_N$ and $c_N = 1 + O(1/N)$ from Lemma~\ref{lemma:c_N-monotone}, the result follows.
\end{proof}

\subsection{Conclusion}

To avoid any confusion, in this section we explicitly write out the dependency on $N$ of $\lambda_{app}$ and $u_{app}$ from Definition~\ref{def:u_app-lambda_app}, i.e. $\lambda_{app}(N) = \lambda_{app}$ and $u_{app}(N) = u_{app}$.

  In the previous subsections we have constructed an approximate
  solution $u_{app}(N): \mathcal{P}_N \to \mathbb{R}$ in
  $\mathcal{C}^2(\Omega) \cap \mathcal{C}(\overline{\Omega})$ that
  solves $\Delta u_{app}(N) + \lambda_{app}(N) u_{app}(N) = 0$ in
  $\mathcal{P}_N$ for a suitable $\lambda_{app}(N)$. Using Lemma~\ref{thm:FoxHenriciMoler} together with the bounds from
  Lemma~\ref{lemma:upper-bound-on-the-defect_in-the-boundary} and Corollary~\ref{cor:lower-bound-for-L2-norm-k(N)}
  we conclude that there exists a Dirichlet
  eigenvalue $\lambda'$ of $\mathcal{P}_N$ such that
  \begin{equation}\label{eq:lambda_app_error_bound}
    |\lambda_{app}(N) - \lambda'| \leq \lambda' \sqrt{\pi}\frac{\varepsilon(N)}{\sqrt{\eta(N)}} =: \lambda' \hat{\varepsilon}(N) .
  \end{equation}

  \begin{lemma}
  \label{lemma:asymptotics-of-epsilon-hat(N)}
      The term $\hat{\varepsilon}(N)$ has the following asymptotic expansion:
      $$\hat{\varepsilon}(N) = O\left(\frac{1}{N^6}\right) .$$
  \end{lemma}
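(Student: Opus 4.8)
The plan is to trace how each ingredient in the definitions of $\varepsilon(N)$ and $\eta(N)$ scales with $N$ and then combine them through the formula $\hat\varepsilon(N) = \sqrt{\pi}\,\varepsilon(N)/\sqrt{\eta(N)}$. First I would recall from Lemma~\ref{lemma:upper-bound-on-the-defect_in-the-boundary} (Equation~\eqref{eq:varepsilon-N}) that $\varepsilon(N)$ is an explicit expression of the form $N^{-6}$ times a sum of terms, each of which is either a constant (the $C_{b,\cdot}$, $C_{T,\cdot}$, $C_{I,\cdot}$, $C_{g'''}$, $C_{a_0}$, and $|g'(1)|$, $|g''(1)|$) or such a constant divided by a further positive power of $N$. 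Since all these constants are finite and $N \geq N_0$, the bracketed factors are bounded above (and below, away from zero) uniformly in $N$, so $\varepsilon(N) = \Theta(N^{-6})$; in particular $\varepsilon(N) = O(N^{-6})$.

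Next I would handle the denominator. By Corollary~\ref{cor:lower-bound-for-L2-norm-k(N)} we have $\|u_{app}\|_{L^2(\mathcal P_N)}^2 \geq \eta(N)$, and by Lemma~\ref{lemma:expansion-for-eta-N} the quantity $\eta(N)$ converges as $N \to \infty$ to the strictly positive constant
\begin{equation*}
  \eta_\infty := \pi R_{\text{inner}}^2\,\frac{J_0(\sqrt{\lambda}\,R_{\text{inner}})^2 + J_1(\sqrt{\lambda}\,R_{\text{inner}})^2}{\lambda\,J_1(\sqrt{\lambda})^2},
\end{equation*}
with error $O(1/N)$. Positivity of $\eta_\infty$ is clear since $R_{\text{inner}} = 0.95 < j_{0,1}/\sqrt{\lambda} = 1$, so $J_0(\sqrt\lambda R_{\text{inner}}) \neq 0$, and $J_1(\sqrt\lambda) \neq 0$ (it is $-J_0'(j_{0,1}) \neq 0$ by simplicity of the Bessel zero). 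Hence for $N$ large enough $\eta(N) \geq \tfrac12 \eta_\infty > 0$, and in fact one can take this already at $N = N_0$ after checking the enclosure numerically; thus $1/\sqrt{\eta(N)}$ is bounded uniformly in $N \geq N_0$.

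Finally I would combine the two bounds: $\hat\varepsilon(N) = \sqrt\pi\,\varepsilon(N)/\sqrt{\eta(N)} \leq \sqrt\pi \cdot (\text{const}\cdot N^{-6}) \cdot (\text{const}) = O(N^{-6})$, which is the claim. The factor $\lambda'$ appearing in~\eqref{eq:lambda_app_error_bound} does not enter $\hat\varepsilon(N)$ itself; it is part of the relative bound and is in any case comparable to $\lambda = j_{0,1}^2$ for large $N$, so it would not affect the order anyway. I do not anticipate a genuine obstacle here: the statement is essentially bookkeeping, and the only mild subtlety is making sure $\eta(N)$ stays bounded away from zero, which is exactly what Lemma~\ref{lemma:expansion-for-eta-N} together with the strict inequality $R_{\text{inner}} < j_{0,1}/\sqrt{\lambda}$ (and $R_{\text{inner}} < j_{0,1}/\sqrt{\lambda_{app}}$, already verified in Corollary~\ref{cor:lower-bound-for-L2-norm-k(N)}) guarantees.
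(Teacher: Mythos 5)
Your proof is correct and follows exactly the same route as the paper: observe from the explicit formula~\eqref{eq:varepsilon-N} that $\varepsilon(N) = O(N^{-6})$, use Lemma~\ref{lemma:expansion-for-eta-N} to see that $\eta(N)$ stays bounded away from zero, and divide. The extra detail you supply (why the limiting constant $\eta_\infty$ is strictly positive, via $R_{\text{inner}} < 1 = j_{0,1}/\sqrt{\lambda}$ and simplicity of the Bessel zero) is a welcome elaboration of what the paper leaves implicit, but it is the same argument.
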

  \begin{proof}
      This result is a direct consequence of the asymptotic properties of $\varepsilon(N)$ and $\eta(N)$. By construction, $\varepsilon(N) = O(N^{-6})$, and Lemma~\ref{lemma:expansion-for-eta-N} establishes that $\eta(N)$ doesn't vanish at infinity. Thus, the expansion of $\hat{\varepsilon}(N)$ follows from~\eqref{eq:lambda_app_error_bound}.
  \end{proof}

  The previous result give us the existence of an eigenvalue, but with a priori no guarantee of it being the first $\lambda^{(N)} = \lambda_1(\mathcal{P}_N)$. We now prove that:
  \begin{lemma}
    Let \(N \geq N_{0} = \codenumber{64}\). We have the following chain of
    inequalities:
    \begin{align*}
      \lambda'
      \leq \frac{\lambda_{app}(N)}{1 - \hat{\varepsilon}(N)}
      < \lambda_{2}(\mathbb{D}_{R_{\text{outer}}})
      \leq \lambda_2(\mathcal{P}_N).
    \end{align*}
  \end{lemma}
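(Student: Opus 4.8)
The plan is to verify the three inequalities of the chain in turn, the middle one being the only one with any content. For the first, $\lambda'\le \lambda_{app}(N)/(1-\hat\varepsilon(N))$, I would simply rearrange the a posteriori estimate \eqref{eq:lambda_app_error_bound}: it gives $\lambda_{app}(N)-\lambda'\ge -\lambda'\hat\varepsilon(N)$, i.e. $\lambda'(1-\hat\varepsilon(N))\le\lambda_{app}(N)$, and dividing by $1-\hat\varepsilon(N)$ is legitimate because $\hat\varepsilon(N)=O(N^{-6})$ by Lemma~\ref{lemma:asymptotics-of-epsilon-hat(N)}, so $0<1-\hat\varepsilon(N)$ for $N\ge N_0$ (an explicit evaluation of $\varepsilon(N_0)$ from \eqref{eq:varepsilon-N} together with a lower bound for $\eta(N_0)$ makes this concrete). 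For the last inequality, $\lambda_2(\mathbb{D}_{R_{\text{outer}}})\le\lambda_2(\mathcal{P}_N)$, I would invoke domain monotonicity of Dirichlet eigenvalues: by Lemma~\ref{lemma-disk} we have $\mathcal{P}_N\subseteq\mathbb{D}_{R_{\text{outer}}}$ for $N\ge N_0$, and extending test functions from $\mathcal{P}_N$ by zero to $\mathbb{D}_{R_{\text{outer}}}$, the Courant--Fischer min-max principle yields $\lambda_k(\mathbb{D}_{R_{\text{outer}}})\le\lambda_k(\mathcal{P}_N)$ for every $k$, in particular $k=2$.

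It remains to prove the middle inequality $\lambda_{app}(N)/(1-\hat\varepsilon(N))<\lambda_2(\mathbb{D}_{R_{\text{outer}}})$, which is just the comparison of two explicit quantities that turn out to be very well separated. On the right, the Dirichlet spectrum of the disk of radius $R$ consists of the numbers $(j_{m,k}/R)^2$, so $\lambda_2(\mathbb{D}_{R_{\text{outer}}})=(j_{1,1}/R_{\text{outer}})^2$ with $j_{1,1}$ the first positive zero of $J_1$, and a rigorous enclosure gives $\lambda_2(\mathbb{D}_{R_{\text{outer}}})>14$. On the left, $\lambda_{app}(N)$ in \eqref{eq:expansion_lambda_N} has both correction coefficients positive, because $12-2\lambda>0$ (since $\lambda=j_{0,1}^2<6$); hence $N\mapsto\lambda_{app}(N)$ is strictly decreasing and $\lambda<\lambda_{app}(N)\le\lambda_{app}(N_0)$, with a rigorous enclosure giving $\lambda_{app}(N_0)<5.79$. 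Together with a uniform bound $\hat\varepsilon(N)\le\hat\varepsilon(N_0)\ll 1$ for $N\ge N_0$, this yields $\lambda_{app}(N)/(1-\hat\varepsilon(N))<6<14<\lambda_2(\mathbb{D}_{R_{\text{outer}}})$.

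The only step needing a little care is the uniform bound on $\hat\varepsilon(N)=\sqrt{\pi}\,\varepsilon(N)/\sqrt{\eta(N)}$ for $N\ge N_0$. The numerator $\varepsilon(N)$ from \eqref{eq:varepsilon-N} is manifestly decreasing in $N$, so $\varepsilon(N)\le\varepsilon(N_0)$; the subtlety is the denominator, since Lemma~\ref{lemma:expansion-for-eta-N} only guarantees $\eta(N)$ stays away from $0$ up to an unquantified $O(1/N)$ error. I would therefore enclose $\eta(N)$ directly from \eqref{eq:eta-N} using that $\lambda_{app}(N)$ and $E_I(N)$ lie in explicit intervals for $N\ge N_0$, obtaining a concrete lower bound $\eta(N)\ge\eta_{\min}>0$ and hence $\hat\varepsilon(N)\le\sqrt{\pi}\,\varepsilon(N_0)/\sqrt{\eta_{\min}}$, which a numerical evaluation shows to be tiny (of order $10^{-10}$ or smaller). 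This is the main obstacle; everything else reduces to comparing two well-separated constants ($\lambda_{app}\approx 5.78$ against $\lambda_2(\mathbb{D}_{R_{\text{outer}}})\approx 14.4$) and is routine.
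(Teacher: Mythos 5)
Your proposal is correct and takes essentially the same approach as the paper: the first inequality is read off from \eqref{eq:lambda_app_error_bound}, the last follows from domain monotonicity together with Lemma~\ref{lemma-disk}, and the middle inequality is a comparison of two well-separated, rigorously enclosed constants (the paper gets $\frac{\lambda_{app}(N)}{1-\hat\varepsilon(N)} \in [5.783 \pm 2.93\cdot10^{-4}]$ uniformly for $N\ge N_0$ versus $\lambda_2(\mathbb{D}_{R_{\text{outer}}}) \in [14.393\pm 3.23\cdot10^{-4}]$). Your extra observations — that $\lambda_{app}(N)$ and $\varepsilon(N)$ are monotone in $N$ and that $\eta(N)$ should be bounded below directly from \eqref{eq:eta-N} rather than via the unquantified $C_\eta$ of Lemma~\ref{lemma:expansion-for-eta-N} — are precisely the ingredients that make the paper's uniform-in-$N$ enclosure work.
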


  \begin{proof}
    The first inequality follows from \eqref{eq:lambda_app_error_bound}. For the second inequality
    we compute the enclosures
    \begin{equation*}
      \lambda_{2}(\mathbb{D}_{R_{\text{outer}}})
      = \frac{j_{1,1}^2}{R_{\text{outer}}^2} \in \resultnumber{[14.393 \pm 3.23 \cdot 10^{-4}]}
    \end{equation*}
    and
    \begin{equation*}
      \frac{\lambda_{app}(N)}{1 - \hat{\varepsilon}(N)} \in \resultnumber{[5.783 \pm 2.93 \cdot 10^{-4}]},
    \end{equation*}
    where the last enclosure is valid for \(N \geq N_{0}\). For the
    third inequality we note that by Lemma~\ref{lemma-disk},
    \(\mathcal{P}_{N} \subsetneq \mathbb{D}_{R_{\text{outer}}}\). Hence the
    inequality follows from monotonicity of eigenvalues with respect
    to the domain.

  \end{proof}

  This shows that $\lambda' = \lambda^{(N)}$. In particular, we can
  bound
  $\lambda_{inf}^{(N)} \leq \lambda^{(N)} \leq \lambda_{sup}^{(N)}$
  for $N \geq N_0$, where
  \begin{align*}
    \lambda_{inf}^{(N)} = \dfrac{\lambda_{app}(N)}{1 + \hat{\varepsilon}(N)} \, , \quad
    \lambda_{sup}^{(N)} = \dfrac{\lambda_{app}(N)}{1 - \hat{\varepsilon}(N)} \, .
  \end{align*}
  Which further gives us
  $q_{inf}^{(N)} \leq q_N = \frac{\lambda^{(N)}}{\lambda^{(N+1)}} \leq
  q_{sup}^{(N)}$ for $N \geq N_0$, where
  \begin{align*}
    q_{inf}^{(N)} = \dfrac{\lambda_{inf}^{(N)}}{\lambda_{sup}^{(N+1)}} \, , \quad
    q_{sup}^{(N)} = \dfrac{\lambda_{sup}^{(N)}}{\lambda_{inf}^{(N+1)}} \, .
  \end{align*}

  The proof of Proposition~\ref{prop:N_large} concludes by virtue of the following Lemma:

  \begin{lemma}\label{lemma:final-inequalities}
    For all $N \geq N_0 = \codenumber{64}$ it holds that
    $\lambda_{inf}^{(N)} > \lambda_{sup}^{(N+1)}$ and
    $q_{inf}^{(N)} > q_{sup}^{(N+1)}$. Thus, for all $N \geq N_0$ we
    obtain that
    \begin{equation*}
      \lambda_{sup}^{(N)} \geq \lambda^{(N)} \geq \lambda_{inf}^{(N)} > \lambda_{sup}^{(N+1)} \geq \lambda^{(N+1)} \geq \lambda_{inf}^{(N+1)} ,
    \end{equation*}
    \begin{equation*}
      q_{sup}^{(N)} \geq q^{(N)} \geq q_{inf}^{(N)} > q_{sup}^{(N+1)} \geq q^{(N+1)} \geq q_{inf}^{(N+1)} .
    \end{equation*}
  \end{lemma}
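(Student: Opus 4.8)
The plan is to reduce each of the two strict inequalities to a comparison between an explicit \emph{main term}, positive and of a definite order in $1/N$, and an \emph{error term} controlled by $\hat\varepsilon(N) = O(N^{-6})$ (Lemma~\ref{lemma:asymptotics-of-epsilon-hat(N)}) with an explicit constant coming from the closed forms~\eqref{eq:varepsilon-N} and~\eqref{eq:eta-N} of $\varepsilon(N)$ and $\eta(N)$ together with the lower bound $\eta(N) \ge \eta_{\infty} - C_{\eta}/N_{0} > 0$ from Lemma~\ref{lemma:expansion-for-eta-N} (here $\eta_{\infty}$ denotes the positive limit appearing there). Once the two strict inequalities are in hand the displayed chains are immediate: the outer bounds $\lambda_{sup}^{(N)} \ge \lambda^{(N)} \ge \lambda_{inf}^{(N)}$ and $q_{sup}^{(N)} \ge q^{(N)} \ge q_{inf}^{(N)}$ are exactly~\eqref{eq:lambda_app_error_bound} and the two lines preceding the lemma, and $\lambda_{inf}^{(N)} > \lambda_{sup}^{(N+1)}$, $q_{inf}^{(N)} > q_{sup}^{(N+1)}$ stitch consecutive windows together.

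For the eigenvalue inequality I would write $\lambda_{app}(N) = \lambda\big(1 + a N^{-3} + b N^{-5}\big)$ with $a = 4\zeta(3) > 0$ and $b = (12 - 2\lambda)\zeta(5) > 0$ (the latter positive since $\lambda = j_{0,1}^2 < 6$). Clearing the positive denominators $1 + \hat\varepsilon(N)$ and $1 - \hat\varepsilon(N+1)$, the inequality $\lambda_{inf}^{(N)} > \lambda_{sup}^{(N+1)}$ becomes
\[
  \lambda_{app}(N) - \lambda_{app}(N+1) > \lambda_{app}(N)\,\hat\varepsilon(N+1) + \lambda_{app}(N+1)\,\hat\varepsilon(N).
\]
Since $a,b>0$ and $x\mapsto x^{-3},x^{-5}$ are convex and decreasing, the left-hand side is positive and bounded below by $3\lambda a\,(N+1)^{-4}$, while the right-hand side, using that $\lambda_{app}$ is decreasing and $\hat\varepsilon(N) \le \hat C N^{-6}$ for $N\ge N_0$, is bounded above by $2\lambda_{app}(N_{0})\hat C N^{-6}$. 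Comparing orders, the inequality holds as soon as $N^{2}$ exceeds an explicit constant, hence for all $N \ge N_{0} = 64$. In practice I would certify the cleared-denominator inequality directly with interval arithmetic, either by checking it at $N = N_{0}$ together with the monotonicity of its two sides, or after the substitution $t = 1/N \in (0, 1/N_{0}]$ using the enclosure routine of Appendix~\ref{sec:enclosing-extrema}.

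For the quotient inequality, $q_{inf}^{(N)} > q_{sup}^{(N+1)}$ is equivalent (all factors positive) to $\lambda_{inf}^{(N)}\lambda_{inf}^{(N+2)} > \big(\lambda_{sup}^{(N+1)}\big)^{2}$, i.e.\ to
\[
  \lambda_{app}(N)\lambda_{app}(N+2)\,(1 - \hat\varepsilon(N+1))^{2} > \lambda_{app}(N+1)^{2}\,(1 + \hat\varepsilon(N))(1 + \hat\varepsilon(N+2)).
\]
Here the main term is $\lambda_{app}(N)\lambda_{app}(N+2) - \lambda_{app}(N+1)^{2} = \lambda^{2}\big(f(N)f(N+2) - f(N+1)^{2}\big)$ with $f(N) = 1 + aN^{-3} + bN^{-5}$; it is positive because $f$ is log-convex (being $1$ plus a sum of positive log-convex functions), and a short computation gives a lower bound of the form $c\,\lambda^{2}N^{-5}$ with $c$ explicit (it dominates the second finite difference of $\lambda^{2}a\,x^{-3}$ on $[N, N+2]$). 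The terms involving the $\hat\varepsilon$ factors are $O(\hat\varepsilon) = O(N^{-6})$ with explicit constants as above, so the inequality again reduces to $N$ exceeding an explicit number. An alternative route is to first show that $q_{app}(N) := \lambda_{app}(N)/\lambda_{app}(N+1)$ is strictly decreasing with gap $\gtrsim N^{-5}$ (a pen-and-paper computation from the explicit formula, i.e.\ the discrete log-convexity of $\lambda_{app}$), and then observe that $q_{inf}^{(N)}$ and $q_{sup}^{(N)}$ differ from $q_{app}(N)$ by $O(N^{-6})$.

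The one delicate point is the tightness of this last comparison: for the quotients the main term is only $\Theta(N^{-5})$ while the error is $\Theta(N^{-6})$, so the two become comparable precisely around $N = N_{0}$ — this is what forces the choice $N_{0} = 64$ and what the computer-assisted verification ultimately certifies. Accordingly, the real content is not any single step but ensuring the constant in $\hat\varepsilon(N) = O(N^{-6})$ — hence the bounds feeding $\varepsilon(N)$ in Lemmas~\ref{lemma:bounds_and_splitting:first_term_J0:upper-bounds-for-terms-from-g-function} and~\ref{lemma:bounds-I_k_l} — is sharp enough to clear the threshold at $N_{0} = 64$. I would finish by enclosing, as a function of $t = 1/N$ on $(0, 1/N_{0}]$, the cleared-denominator difference in each of the two inequalities and certifying its positivity with the tools of Appendix~\ref{sec:enclosing-extrema}.
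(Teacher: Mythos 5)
Your overall strategy is the same as the paper's: clear the positive denominators, reduce each strict inequality to showing that a difference $P_1(N)$ (for the eigenvalues) and $P_2(N)$ (for the quotients) is positive, observe that $P_1 = O(N^{-4})$ and $P_2 = O(N^{-5})$ while the $\hat\varepsilon$-contribution is $O(N^{-6})$, and note that the quotient case is tight precisely near $N_0 = 64$. So you have identified the right structure and the right threshold.

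Where your certification plan has a genuine gap is in the last step. After substituting $\nu = 1/N$, the functions $p_1(\nu)$ and $p_2(\nu)$ vanish to order $4$ and $5$ respectively at $\nu = 0$, so their minimum on the closed interval $[0, 1/N_0]$ is exactly $0$. Directly enclosing that minimum with the routine of Appendix~\ref{sec:enclosing-extrema}, as you suggest, cannot certify strict positivity on $(0, 1/N_0]$; interval arithmetic will always return an interval containing $0$. Likewise, ``check at $N = N_0$ plus monotonicity of the two sides'' does not obviously close the argument, since neither side is claimed to be monotone. The paper's mechanism for handling this degeneracy is to apply Taylor's theorem around $\nu = 0$, writing $p_1(\nu) = \frac{p_1^{(4)}(\xi_1)}{4!}\nu^4$ and $p_2(\nu) = \frac{p_2^{(5)}(\xi_2)}{5!}\nu^5$, and then enclosing the \emph{derivative} $p_1^{(4)}$ (resp.\ $p_2^{(5)}$) on the interval. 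Your proposal does not articulate this factoring-out of the vanishing power, and it is precisely the step that makes the interval computation feasible. There is a further wrinkle you do not anticipate: $p_2^{(5)}$ actually changes sign on $[0, 1/N_0]$ (it is negative at $\nu = 1/N_0$), so the paper must split the interval, using Taylor's theorem on a small initial piece $[0, a]$ and a direct enclosure of $p_2$ itself on $[a, 1/N_0]$ where the degenerate zero is no longer in play.

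Your alternative pen-and-paper route --- lower-bounding $\lambda_{app}(N) - \lambda_{app}(N+1) \geq 3\lambda a (N+1)^{-4}$ and upper-bounding the error by $2\lambda_{app}(N_0)\hat C N^{-6}$ --- is sound in outline and would sidestep the vanishing issue, but it hinges on an explicit numeric value for the constant $\hat C$ in $\hat\varepsilon(N) \leq \hat C N^{-6}$, which you leave abstract. Since the whole point of the lemma is that the constants work out exactly at $N_0 = 64$, the comparison has to be carried through with the actual numbers fed in from~\eqref{eq:varepsilon-N} and~\eqref{eq:eta-N}; as written, it is a plausibility argument rather than a proof.
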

  \begin{proof}
    We need
    to check the following conditions for $\lambda$ and $q$:
    \begin{equation*}
      \frac{\lambda_{app}(N)}{1 + \hat{\varepsilon}(N)} > \frac{\lambda_{app}(N+1)}{1 - \hat{\varepsilon}(N+1)}
      \quad \text{and} \quad
      \frac{\lambda_{app}(N)(1-\hat{\varepsilon}(N+1))}{(1+\hat{\varepsilon}(N))\lambda_{app}(N+1)} > \frac{\lambda_{app}(N+1)(1+\hat{\varepsilon}(N+2))}{(1-\hat{\varepsilon}(N+1))\lambda_{app}(N+2)} \, .
    \end{equation*}

      Let us define the functions $P_1(N)$ and $P_2(N)$ as the differences between the left and right-hand side of these inequalities, respectively. Our goal is to show that $P_1(N)>0$ and $P_2(N)>0$ for $N\geq N_0 = \codenumber{64}$.

      The strategy is to analyze these expressions by changing variables to $\nu=1/N$. Let $p_1(\nu) = P_1(1/\nu)$ and $p_2(\nu)=P_2(1/\nu)$, so that $P_1(N)=p_1(1/N)$ and $P_2(N) = p_2(1/N)$. We now aim to prove their positivity for $\nu\in(0,1/N_0]$.

      We rely on the known asymptotic behavior of the terms that appear in the inequality. From Lemma~\ref{lemma:asymptotics-of-epsilon-hat(N)}, we know that $\hat{\varepsilon}(N)$ is of order $O(N^{-6})$. Consequently, the terms $1 \pm \hat{\varepsilon}(N)$ are $1 + O(N^{-6})$. Moreover, from \eqref{eq:expansion_lambda_N}, $\lambda_{app}(N) = \lambda \left (  1 + \frac{4 \zeta(3)}{N^3} + \frac{(12 - 2\lambda) \zeta(5)}{N^5}\right )$.

      Using the Taylor expansion for $(1+u)^{-1}$, we can write
      $$P_1(N)=\lambda_{app}(N)-\lambda_{app}(N+1) + O\left( \frac{1}{N^6} \right) = \lambda 4 \zeta(3) \left( \frac{1}{N^3}-\frac{1}{(N+1)^3} \right) + O\left( \frac{1}{N^5} \right) = O\left( \frac{1}{N^4} \right) \, ,$$
      and
      \begin{multline*}
          P_2(N)
        =\frac{\lambda_{app}(N)}{\lambda_{app}(N+1)}-\frac{\lambda_{app}(N+1)}{\lambda_{app}(N+2)} + O\left( \frac{1}{N^6} \right) = \frac{\lambda_{app}(N)\lambda_{app}(N+2)-\lambda_{app}(N+1)^2}{\lambda_{app}(N+1)\lambda_{app}(N+2)} + O\left( \frac{1}{N^6} \right) \\
        = \left( \frac{4\zeta(3)}{N^3} + \frac{4\zeta(3)}{(N+2)^3}\right) - \left(2\frac{4\zeta(3)}{(N+1)^3}\right) + O\left( \frac{1}{N^5} \right) \\
        = 4 \zeta(3) \left( \frac{2}{N^3} - \frac{6}{N^4} \right)- 4 \zeta(3) \left( 2\frac{1}{N^3} - 2\frac{3}{N^4} \right) + O\left( \frac{1}{N^5} \right) = O\left( \frac{1}{N^5} \right) \, .
      \end{multline*}
      In terms of $\nu=1/N$, this shows that $p_1(\nu) = O(\nu^4)$ and $p_2(\nu) = O(\nu^5)$. By Taylor's theorem, we can then express these functions for some $\xi_{1},\xi_{2}\in(0,1/N_0)$ as
      \begin{equation*}
          p_1(\nu) = \frac{p_1^{(4)}(\xi_{1})}{4!}\nu^4 \, , \quad \text{and} \quad p_2(\nu) = \frac{p_2^{(5)}(\xi_{2})}{5!}\nu^5 \, .
      \end{equation*}
      We compute a lower bound of $p_1^{(4)}(\nu)$ over the interval
      $[0,1/N_0]$, using the methods in
      Appendix~\ref{sec:enclosing-extrema} for enclosing the minimum,
      combined with Taylor arithmetic to compute the derivative
      automatically, this gives us
      \begin{equation*}
        \min_{\nu \in [0, 1 / N_{0}]} p_1^{(4)}(\nu) \in \resultnumber{[1002.2 \pm 0.0471]}.
      \end{equation*}
      Since $p_1^{(4)}(\nu)$ is strictly positive on $[0,1/N_0]$, Taylor's theorem shows that $p_1(\nu)>0$ for all $\nu\in(0,1/N_0]$, hence $P_1(N) = p_1(1/N) > 0$ for all $N\geq N_0$.

      For the function $p_2(\nu)$, we can't do the same since $p_2^{(5)}(\nu)$ is not always positive in the interval, in particular, it is negative at $\nu=1/N_0$.
      More precisely, we have
      \begin{equation*}
         p_2^{(5)}(1 / N_{0}) \in \resultnumber{[-25014.73225 \pm 4.58 \cdot 10^{-6}]}.
      \end{equation*}
      Hence, we split the interval $[0,1/N_0]$ into \([0, a]\) and \([a, 1 / N_{0}]\) with \(a = \codenumber{1 / 1024}\). Following the same approach as for \(p_{1}\) we have the enclosures
      \begin{equation*}
        \min_{\nu \in [0, a]} p_2^{(5)}(\nu) \in \resultnumber{[5060.5 \pm 0.0267]}
      \end{equation*}
      and
      \begin{equation*}
        \min_{\nu \in [a, 1 / N_{0}]} p_2(\nu) \in \resultnumber{[4.8958 \cdot 10^{-14} \pm 2.86 \cdot 10^{-19}]}.
      \end{equation*}
      On the interval $[0,a]$, the Taylor expansion shows $p_2(\nu)>0$ for $\nu\in(0,a]$. For the interval $[a,1/N_0]$, we have computed a direct lower bound for the function $p_2(\nu)$ itself, directly establishing that $p_2(\nu)$ is positive on this part of the interval as well.
      Since $p_2(\nu)$ is positive on both $[0,a]$ and $[a,1/N_0]$, we conclude that $p_2(\nu)>0$ for all $\nu\in[0,1/N_0]$, which proves that $P_2(N)= p_2(1/N) > 0$ for $N\geq N_0$.
  \end{proof}

\section{The small $N$ case}
\label{sec:the-small-N-case}
For the approximate eigenpair \((u_{app}, \lambda_{app})\) that is
used in the above section to be a good approximation, \(N\) needs to
be sufficiently large. For smaller values of \(N\), \(\mathcal{P}_N\)
is far away from the circle and the same approximation produces very
poor results. To tackle this issue, a different type of approximation
is required. In this section we construct approximate solutions
following the classical MPS approach, it allows us to prove the
following proposition:

\begin{proposition}
  \label{prop:N_small}
  For \(N_{0} = \codenumber{64}\) we have
  \begin{equation*}
    \lambda_{1}(\mathcal{P}_3) > \lambda_{1}(\mathcal{P}_4) >
    \dots > \lambda_1(\mathcal{P}_N) > \lambda_1(\mathcal{P}_{N+1}) >
    \dots > \lambda_{1}(\mathcal{P}_{N_{0} - 1}) > \lambda_{1}(\mathcal{P}_{N_{0}}).
  \end{equation*}
  Furthermore, for \(q_N\) as in Theorem~\ref{main_thm}, it also holds
  that
  \begin{equation*}
    q_{3} > q_{4} > \dots > q_N > q_{N+1} > \dots >  q_{N_{0} - 1} > q_{N_{0}}.
  \end{equation*}
\end{proposition}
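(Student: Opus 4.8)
\emph{Overall strategy.} Since Proposition~\ref{prop:N_small} only concerns the finitely many values $3 \le N \le N_0$, the plan is to produce for each relevant $N$ a rigorous enclosure $\lambda_1(\mathcal{P}_N) \in [\underline{\lambda}_N, \overline{\lambda}_N]$ and then verify the two finite families of elementary inequalities
\[
  \underline{\lambda}_N > \overline{\lambda}_{N+1}
  \qquad \text{and} \qquad
  \underline{\lambda}_N\,\underline{\lambda}_{N+2} > \overline{\lambda}_{N+1}^{\,2} ,
\]
for $3 \le N \le N_0-1$, where for the enclosures at $N = N_0$ and $N = N_0+1$ (needed for the last steps) we reuse $[\lambda_{inf}^{(N)}, \lambda_{sup}^{(N)}]$ from Section~\ref{sec:the-large-N-case}. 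The first family yields $\lambda_1(\mathcal{P}_3) > \dots > \lambda_1(\mathcal{P}_{N_0})$, and the second yields $q_N \ge \underline{\lambda}_N/\overline{\lambda}_{N+1} > \overline{\lambda}_{N+1}/\underline{\lambda}_{N+2} \ge q_{N+1}$, i.e.\ $q_3 > \dots > q_{N_0}$. For $N = 3$, and for convenience also $N = 4$, no numerics are needed: the Dirichlet spectra of the equilateral triangle and of the square are explicit (for area $\pi$ one has $\lambda_1(\mathcal{P}_4) = 2\pi$ and an analogous elementary closed form for $\lambda_1(\mathcal{P}_3)$), and can be enclosed to arbitrary precision. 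The cases $5 \le N \le N_0-1$ are handled by the Method of Particular Solutions together with Lemma~\ref{thm:FoxHenriciMoler}.

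\emph{Constructing the approximate eigenpair.} For fixed $N$ in this range I would exploit the full $D_N$ dihedral symmetry and search for the (simple, hence totally symmetric) first eigenfunction on the fundamental sector of opening angle $\pi/N$. As a trial space I would combine interior Helmholtz solutions adapted both to the symmetry and to the corner behaviour: Fourier--Bessel modes $J_{kN}\!\big(\sqrt{\lambda}\,|x|\big)\cos\!\big(kN\arg x\big)$ centred at the centre of $\mathcal{P}_N$, together with functions carrying the correct singular exponent $r^{N/(N-2)}$ at the vertices (fractional-order Bessel functions, or ``lightning''-type particular solutions in the spirit of~\cite{Gopal-Trefethen:new-laplace-solver-pnas} placed near the corners). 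Using the generalized-SVD formulation of the MPS~\cite{Betcke-Trefethen:method-particular-solutions} I would solve, in floating point, for the value $\lambda_{app}$ and the coefficient vector minimizing the boundary defect, obtaining $(u_{app},\lambda_{app})$ with a tiny residual on $\partial\mathcal{P}_N$.

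\emph{Making it rigorous.} Given $(u_{app},\lambda_{app})$, I would --- exactly as in Section~\ref{subsec:upper_bound_boundary} --- compute a rigorous upper bound $\varepsilon(N) \ge \sup_{x\in\partial\mathcal{P}_N}|u_{app}(x)|$; by symmetry this reduces to one edge, and since $u_{app}$ is an explicit finite combination of (fractional-order) Bessel and trigonometric functions the supremum is enclosed with the algorithm of Appendix~\ref{sec:enclosing-extrema}. I would likewise compute a rigorous lower bound $\eta(N) \le \|u_{app}\|_{L^2(\mathcal{P}_N)}^2$; unlike the large-$N$ case there is no delicate cancellation, so a direct rigorous integration over the fundamental sector (Appendix~\ref{sec:rigorous-integration}), or even a crude bound on a small disk about the centre, suffices. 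Lemma~\ref{thm:FoxHenriciMoler}, with $|\Omega| = \pi$ and $\mu(N) := \sqrt{\pi}\,\varepsilon(N)/\sqrt{\eta(N)} < 1$, then produces an eigenvalue $\lambda'$ of $\mathcal{P}_N$ with $\lambda' \in \big[\lambda_{app}/(1+\mu(N)),\ \lambda_{app}/(1-\mu(N))\big] =: [\underline{\lambda}_N, \overline{\lambda}_N]$. To conclude $\lambda' = \lambda_1(\mathcal{P}_N)$ I would check $\overline{\lambda}_N < \lambda_2(\mathcal{P}_N)$: since $\mathcal{P}_N \subsetneq \mathbb{D}_{C_N}$ with circumradius $C_N = \sqrt{2\pi/(N\sin(2\pi/N))}$, domain monotonicity gives $\lambda_2(\mathcal{P}_N) \ge \lambda_2(\mathbb{D}_{C_N}) = j_{1,1}^2/C_N^2$, and one verifies numerically that $\overline{\lambda}_N < j_{1,1}^2/C_N^2$ for all $N \ge 5$ (the inequality fails only for the equilateral triangle, which is precisely why $N = 3$ is taken from the closed-form spectrum).

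\emph{Conclusion and main obstacle.} With the enclosures in hand for all $N$ from $3$ to $N_0$, the two finite families of inequalities above are checked directly, which completes the proof. I expect the principal obstacle to be the spectral-position step rather than the enclosure itself: Lemma~\ref{thm:FoxHenriciMoler} certifies \emph{an} eigenvalue but not which one, and the only cheap lower bound on $\lambda_2(\mathcal{P}_N)$ --- domain monotonicity against the circumscribed disk --- degenerates exactly for the smallest $N$, so those cases must be peeled off and handled with exact spectral data (or, failing that, an auxiliary validated lower bound on $\lambda_2$). A secondary difficulty is quantitative: for $N$ close to $N_0$ the gaps $\lambda_1(\mathcal{P}_N) - \lambda_1(\mathcal{P}_{N+1}) = O(N^{-4})$ and $q_N - q_{N+1} = O(N^{-5})$ are tiny, so the enclosures must be correspondingly tight, which in turn requires a well-conditioned basis and careful rigorous defect bounds near the vertices, where the eigenfunction's corner regularity $r^{N/(N-2)}$ deteriorates towards $C^1$ as $N$ grows.
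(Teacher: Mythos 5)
Your proposal follows the same overall strategy as the paper: the Method of Particular Solutions to produce $(u_{app},\lambda_{app})$, Lemma~\ref{thm:FoxHenriciMoler} to convert defect/norm bounds into rigorous enclosures, and a finite check of the resulting inequalities, with $N=3,4$ handled via their explicit spectra. You also correctly anticipate the fractional-order Bessel functions $J_{k\alpha}$ with $\alpha=N/(N-2)$ at the vertices; the paper's basis (Equations~\eqref{eq:interior-expansion}--\eqref{eq:approximation-small-N}) is exactly two such vertex modes plus a two-term interior Fourier--Bessel expansion, collapsed by $D_N$ symmetry. Where you differ in detail: for the norm lower bound, the paper does not integrate $|u_{app}|^2$ directly but exploits superharmonicity of $u_{app}$ on a scaled-down copy $\Omega$ of $\mathcal{P}_N$ (factor $0.65$), so that $\inf_\Omega u_{app}\ge\inf_{\partial\Omega}u_{app}$, with constancy of sign enforced by a Faber--Krahn argument. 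This reduces a 2D integral bound to a 1D boundary bound and is cheaper than the direct rigorous integration you propose. For the floating-point approximations, the paper only runs the Brent minimization of $\sigma(\lambda)$ for $5\le N\le 11$ and reuses published high-precision values for $N\ge 12$; your uniform generalized-SVD search would also work. Finally, you flag the spectral-position step — that Lemma~\ref{thm:FoxHenriciMoler} certifies \emph{an} eigenvalue, not necessarily $\lambda_1$ — and propose domain monotonicity against the circumscribed disk, correctly observing it fails only at $N=3$, which is anyway taken from its closed form. The paper performs this check explicitly in Section~\ref{sec:the-large-N-case} but does not spell it out in the prose of Section~\ref{sec:the-small-N-case}, so your write-up of that step is a welcome clarification rather than a deviation.
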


Note that the first two eigenvalues are known explicitly and given by
\(\lambda_{1}(\mathcal{P}_3) = \frac{4\pi}{\sqrt{3}}\) and
\(\lambda_{1}(\mathcal{P}_4) = 2\pi\). What remains to prove the
result is to get sufficiently accurate enclosures for the remaining
eigenvalues. Since \(q_{N_{0}}\) depends on
\(\lambda_{1}(\mathcal{P}_{N_{0} + 1})\) we have to compute enclosures
of \(\lambda_{1}(\mathcal{P}_{5})\) to
\(\lambda_{1}(\mathcal{P}_{N_{0} + 1})\).

The version of the MPS that we make use of here is due to Betcke and
Trefethen~\cite{Betcke-Trefethen:method-particular-solutions}, see
also~\cite[Section 4]{Dahne-GomezSerrano-Hou:counterexample-payne}.
The starting point is to write the eigenfunction as a linear
combination of functions \(\{\phi_{i}\}\) that satisfy the equation
\(-\Delta\phi_{i} = \lambda\phi_{i}\) in the domain, but with no
boundary conditions. The coefficients are then chosen to approximate
the boundary condition we want \(u\) to satisfy. In the case of a zero
Dirichlet boundary condition, this amounts to finding a non-zero
linear combination for which the boundary values are as close to zero
as possible. The linear combination is determined by taking \(m_{b}\)
collocation points on the boundary, then choosing the linear
combination to minimize its values on the collocation points in the
least squares sense. This alone is not quite enough, since increasing
the number of elements in the basis leads to the existence of linear
combinations very close to 0 inside the domain. The version by Betcke
and Trefethen handles this by also adding a number \(m_{i}\) of
interior points and taking the linear combination to stay close to
unit norm on these. This is accomplished by considering the two
matrices \(A_{B} = (\phi_{i}(x_{k}))_{k = 1}^{m_{b}}\) and
\(A_{I} = (\phi_{i}(y_{l}))_{l = 1}^{m_{i}}\) which are combined into
a matrix whose \emph{QR} factorization gives an orthonormal basis of
these function evaluations
\begin{equation}
  \label{eq:QR}
  A = \begin{bmatrix} A_{B} \\ A_{I} \end{bmatrix} =
  \begin{bmatrix} Q_{B} \\ Q_{I} \end{bmatrix} R =: Q R.
\end{equation}
The right singular vector \(v\) corresponding to the smallest singular
value \(\sigma = \sigma(\lambda)\) of \(Q_{B}\) for a given
\(\lambda\) is a good candidate for the eigenfunction when
\(\sigma(\lambda)\) is small. We refer to the work by Betcke and
Trefethen~\cite{Betcke-Trefethen:method-particular-solutions} for more
details about why this gives a good approximation.

To get a good numerical approximation, the choice of the basis
functions \(\{\phi_{i}\}\) is essential. In our case we make use of
two types of basis functions, both given in terms of the Bessel
function, \(J\). The first type is the one used in the original
version of MPS, and is responsible for approximating the eigenfunction
near the vertices of the domain. They are centered at the vertices of
the domain, and for a vertex with angle \(\pi/\alpha\) they take the
form
\begin{equation}\label{eq:interior-expansion}
  \phi_{\alpha,k} = J_{k \alpha}\left(r\sqrt{\lambda}\right)\sin k\alpha\theta
\end{equation}
in polar coordinates centered around the vertex and \(\theta = 0\)
being taken along one of the boundary segments. The second type is an
interior expansion, which in our case is placed at the center of the
domain and in polar coordinates given by
\begin{equation}\label{eq:vertex-expansion}
  \phi_0(r, \theta) = J_0\left(r \sqrt{\lambda}\right), \qquad
  \phi_j^\text{c}(r, \theta) = J_j\left(r \sqrt{\lambda}\right) \cos j\theta, \qquad
  \phi_j^\text{s}(r, \theta) = J_j\left(r \sqrt{\lambda}\right) \sin j\theta.
\end{equation}

For our purposes it suffices to take two terms in the expansion at
each vertex and two terms from the interior expansion, for a total of
\(2N + 2\) terms. The symmetry of the domain forces the expansions at
the vertices to all have identical coefficients. For the interior
expansion the symmetry makes only the \(\phi_j^\text{c}\) terms
relevant and also forces \(j\) to be a multiple of \(N\). By
normalizing the expansion so that the coefficient for the first
interior term is one, it can thus be written as
\begin{equation}
  \label{eq:approximation-small-N}
  u_{app}(x, y) = J_{0}\left(r\sqrt{\lambda}\right)
  + a_{2}J_{N}\left(r\sqrt{\lambda}\right)\cos N\theta
  + \sum_{n = 1}^{N} \left(
    b_{1}J_{\alpha}\left(r_{n}\sqrt{\lambda}\right)\sin \alpha\theta_{n}
    + b_{2}J_{2\alpha}\left(r_{n}\sqrt{\lambda}\right)\sin 2\alpha\theta_{n}
  \right).
\end{equation}
Here \(r\) is the distance from \((x, y)\) to the center of the
domain, and \((r_{n}, \theta_{n})\) are the polar coordinates of the
point \((x, y)\) when centered around vertex \(n\) and the orientation
taken so that \(\theta_{n} = 0\) corresponds to the boundary segment
between vertex \(n\) and \(n + 1\).

For the approximations of the eigenvalues we split into two different
cases. For \(N \geq 12\) we use the approximate values computed
in~\cite{Berghaus-Jones-Monien-Radchenko:computation-eigenvalues-2d-shapes},
note that these are computed to much higher precision than needed for
our purposes. For \(5 \leq N \leq 11\) approximations are computed by
searching for \(\lambda\) that minimizes the value of
\(\sigma(\lambda)\). This search is done in the interval between
\(\lambda_{1}(\mathcal{P}_{4}) = 2\pi\) and the approximation for
\(\lambda_{1}(\mathcal{P}_{12})\), and is performed using Brent's
method, a hybrid method which combines bisection with the secant
method and inverse quadratic interpolation to accelerate
convergence~\cite{Brent:algorithms-book}.

Given the approximation of \(\lambda\), we compute the coefficients as
the right singular vector of \(Q_{B}\) in Equation~\eqref{eq:QR}.
After normalizing so that the first coefficient for
\(J_{0}(r\sqrt{\lambda})\) is one, this gives us the coefficients
given in Figure~\ref{fig:coefficients-small-N}. Note that the
magnitudes of the coefficients are decreasing with \(N\), which is
expected since in the limit as \(N \to \infty\) the eigenfunction is
exactly given by \(J_{0}(r\sqrt{\lambda})\).

\begin{figure}
  \centering
  \begin{subfigure}[t]{0.45\textwidth}
    \includegraphics[width=\textwidth,alt={Log-linear plot of the
      magnitude of coefficient a subscript 2 scaled by J subscript N
      of 1 versus N. The data shows a rapid exponential decay
      (appearing linear on the log-linear scale) from around 10 to the
      power of -5 to down to 10 to the power of -35. There is a slight
      oscillation in the trend for the first few points where N <
      10.}]{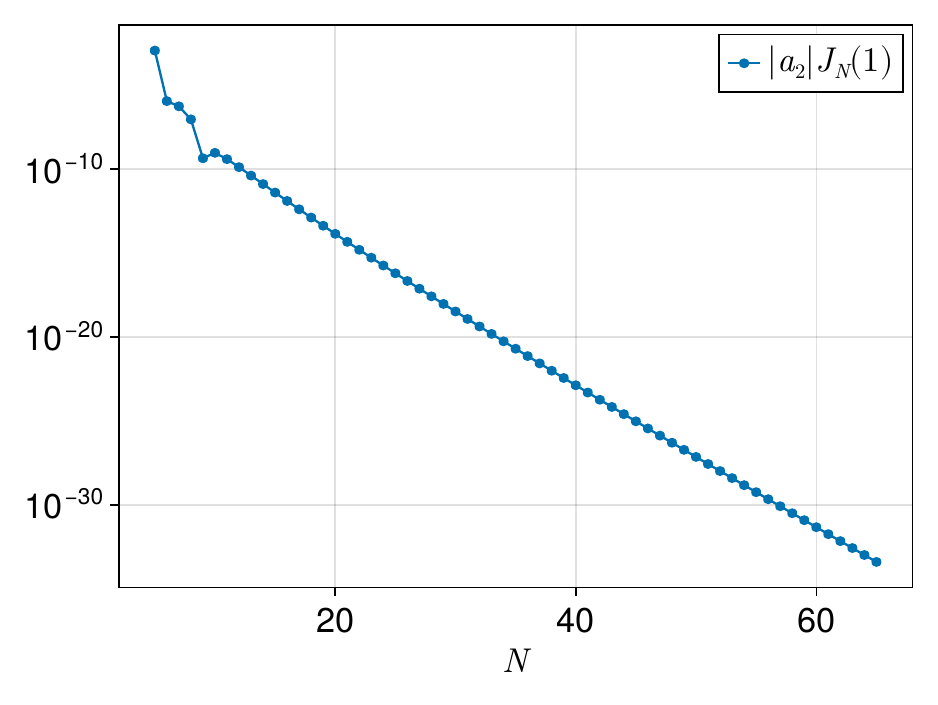}
    \caption{Coefficient \(a_{2}\).}
  \end{subfigure}
  \hspace{0.05\textwidth}
  \begin{subfigure}[t]{0.45\textwidth}
    \includegraphics[width=\textwidth,alt={Log-log plot of
      coefficients negative b subscript 1 (blue circles) and b
      subscript 2 (yellow crosses) against N ranging from 10 to 65.
      Both coefficients follow a power-law decay (appearing linear on
      the log-log scale). The coefficient b subscript 2 is
      consistently larger in magnitude, ranging from approx 10 to the
      power of -0.5 to 10 to the power of -1.2, while b subscript 1 is
      smaller and decays more steeply, ranging from 10 to the power of
      -0.8 down to 10 to the power of
      -2.1.}]{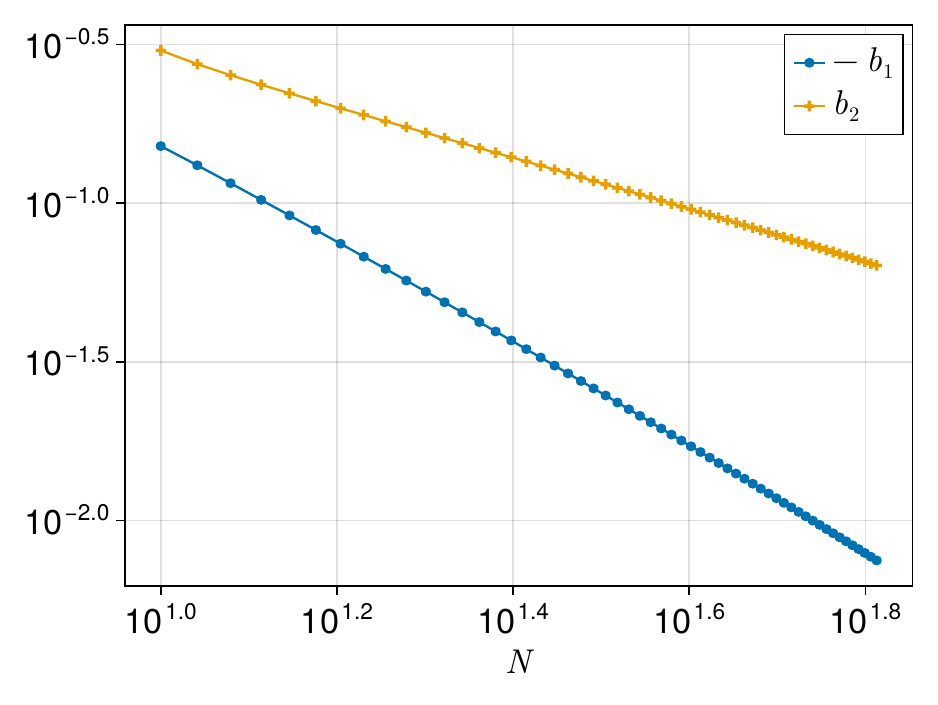}
    \caption{Coefficients \(b_{1}\) and \(b_{2}\).}
  \end{subfigure}
  \caption{Coefficients \(a_{2}\), \(b_{1}\) and \(b_{2}\) used in the
    approximate eigenfunction~\eqref{eq:approximation-small-N} and
    their dependence on \(N\). For \(a_{2}\) we scale the coefficients
    by \(J_{N}(1)\) to take into account the term's dependence on
    \(N\). For low values of \(N\) the domains are relatively
    different from each other and there are more oscillations in the
    coefficients, to better see the asymptotic behavior for \(b_{1}\)
    and \(b_{2}\) the graph therefore starts at \(N = 10\).}
  \label{fig:coefficients-small-N}
\end{figure}

To apply Lemma~\ref{thm:FoxHenriciMoler} we need to compute upper
bounds of the defect and lower bounds of the norm. The approach is
largely the same as in
~\cite{Dahne-GomezSerrano-Hou:counterexample-payne}:
\begin{description}
\item[Defect] An upper bound for the defect is computed using the
  approach discussed in Appendix~\ref{sec:enclosing-extrema}. This is
  very similar to the approach used in~\cite[Section
  2.1]{Dahne-Salvy:enclosures-eigenvalues}. The \(N\)-fold symmetry of
  the domain and the approximate eigenfunction means that we only have
  to bound the defect on one of the sides.
\item[Norm] For lower bounding the norm we use the same procedure as
  in~\cite[Section
  4]{GomezSerrano-Orriols:negative-hearing-shape-triangle}. Consider a
  subset of the domain, \(\Omega \subset \mathcal{P}_{N}\). If
  \(u_{app}\) does not vanish anywhere in \(\Omega\) then, without
  loss of generality, it can be assumed to be positive there. Then,
  since \(-\Delta u_{app} = \lambda u_{app} > 0\), \(u_{app}\) is
  superharmonic in \(\Omega\) and satisfies
  \(\inf_{\Omega} u_{app} \geq \inf_{\partial\Omega} u_{app}\). Thus a
  lower bound for \(|u_{app}|\) on \(\partial\Omega\) yields a lower
  bound for \(|u_{app}|\) inside \(\Omega\). To determine that
  \(u_{app}\) does not vanish on \(\Omega\), a lower bound for
  \(|u_{app}|\) on \(\partial\Omega\) is computed with the same
  techniques as when upper bounding the maximum. Once it is determined
  that \(u_{app}\) has a fixed sign on \(\partial\Omega\) (which we
  assume to be positive) the key observation is that \(u_{app}\)
  cannot be negative inside \(\Omega\) if \(\Omega\) is small enough.
  Indeed, if \(\Omega' \subset \Omega\) is a maximal domain where
  \(u < 0\), then \(u_{app} = 0\) on \(\partial\Omega'\) and thus
  \(\lambda_{app}\) is an eigenvalue for \(\Omega'\). If the area of
  \(\Omega\) is small enough this scenario can be ruled out using the
  Faber-Krahn inequality. More precisely, if \(\mathbb{D}_{*}\) is the
  disk with the same area as \(\Omega\), then
  \(\lambda_{app} < \lambda_{1}(\mathbb{D}_{*})\) ensures us that
  \(u_{app}\) has a constant sign inside \(\Omega\). We take
  \(\Omega\) to be \(\mathcal{P}_{N}\) scaled down by a factor
  \(0.65\), since \(\mathcal{P}_{N}\) has area \(\pi\) this gives
  \begin{equation*}
    \lambda_{1}(\mathbb{D}_{*}) = \lambda_{1}(\mathbb{D}_{0.65})
    = \frac{\lambda_{1}(\mathbb{D}_{1})}{0.65^{2}}
    = \frac{j_{0,1}^{2}}{0.65^{2}}.
  \end{equation*}
\end{description}

With upper bounds for the defects and lower bounds for the norms we
can compute enclosures of the eigenvalues using
Lemma~\ref{thm:FoxHenriciMoler}. For example, this gives us the
enclosures
\(\lambda_{5} \in \resultnumber{[6.022138 \pm 3.95 \cdot 10^{-7}]}\),
\(\lambda_{6} \in \resultnumber{[5.917418 \pm 3.53 \cdot 10^{-7}]}\),
\(\lambda_{63} \in \resultnumber{[5.7832972 \pm 3.37 \cdot 10^{-8}]}\)
and
\(\lambda_{64} \in \resultnumber{[5.7832920 \pm 5.35 \cdot
  10^{-8}]}\). From all these enclosures, the condition
\begin{equation*}
  \lambda_{1}(\mathcal{P}_3) > \lambda_{1}(\mathcal{P}_4) >
  \cdots > \lambda_{1}(\mathcal{P}_{N_{0} - 1}) > \lambda_{1}(\mathcal{P}_{N_{0}})
\end{equation*}
is straightforward to verify. See
Figure~\ref{fig:difference-eigenvalues-small-N} for how the distance
between the eigenvalues compares with the accuracy of the enclosures.
We can also compute enclosures of \(q_{N}\) and verify the
monotonicity for them, see Figure~\ref{fig:difference-q-N-small-N}.
This concludes the proof of Proposition~\ref{prop:N_small}.

\begin{figure}
  \centering
  \begin{subfigure}[t]{0.45\textwidth}
    \includegraphics[width=\textwidth,alt={Log-linear plot comparing
      the difference between successive eigenvalue approximations
      (blue circles) versus the error bound (yellow crosses) as N
      increases from 5 to 65. The difference decays from 1 down to
      approximately 10 to the power of −5. The error bound is
      consistently lower than the difference, mostly staying below 10
      to the power of −6 except for a brief spike around N equal to
      8.}]{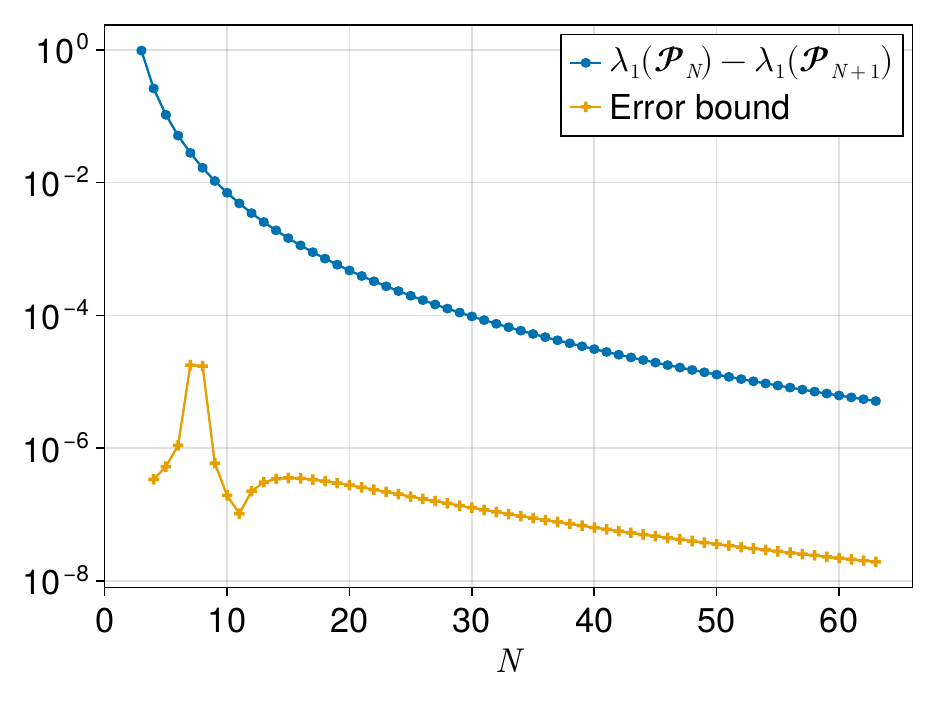}
    \caption{Approximations of eigenvalues.}
    \label{fig:difference-eigenvalues-small-N}
  \end{subfigure}
  \hspace{0.05\textwidth}
  \begin{subfigure}[t]{0.45\textwidth}
    \includegraphics[width=\textwidth,alt={Log-linear plot comparing
      the difference between successive approximations of q subscript
      N (blue circles) versus the error bound (yellow crosses). The
      difference curve decays from roughly 10 to the power of −1 to 10
      to the power of −7. The error bound remains strictly below the
      difference curve throughout the entire
      range.}]{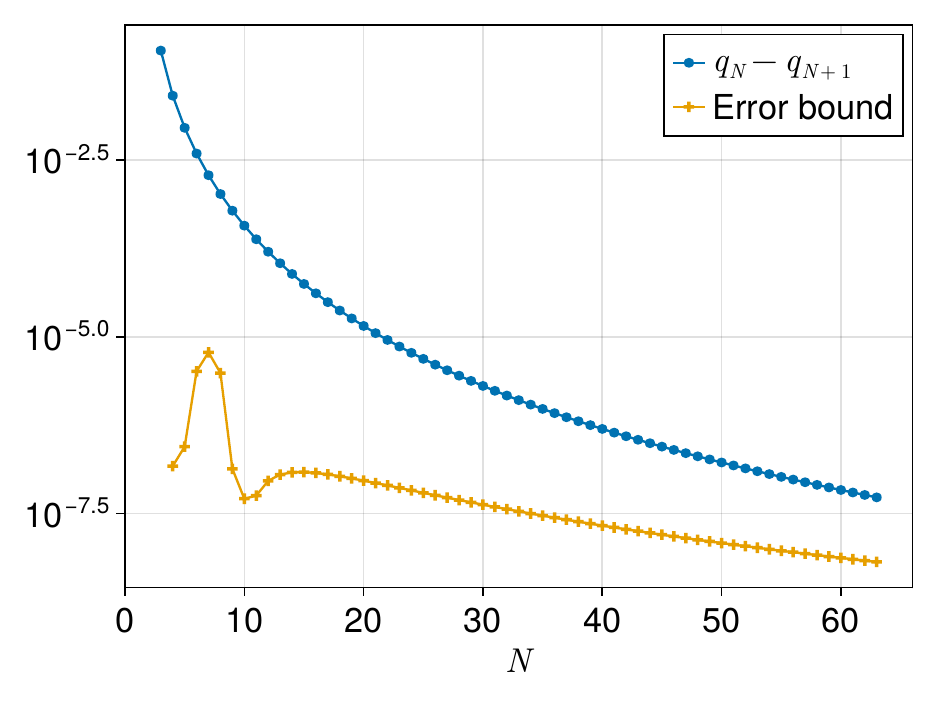}
    \caption{Approximations of \(q_{N}\).}
    \label{fig:difference-q-N-small-N}
  \end{subfigure}
  \caption{Distance between successive approximations and the computed
    error bounds. As long as the error is smaller than the distance,
    the monotonicity can be verified. For the eigenvalues the error
    bounds are computed as the sum of the radii for the enclosures of
    \(\lambda_{1}(\mathcal{P}_{N})\) and
    \(\lambda_{1}(\mathcal{P}_{N + 1})\), for the \(q_{N}\)'s it is
    the sum of the radii for the enclosures for \(q_{N}\) and
    \(q_{N + 1}\).}
\end{figure}

\appendix

\section{Auxiliary Lemmas}
\label{app:aux_lemmas}

This appendix collects several auxiliary results for bounds of polylogarithms and useful expressions required throughout the main text. We begin by establishing bounds for multiple polylogarithms $\Li_{m_1,\dots,m_k}(z)$, which appear in the functions $V_l(z)$.

\begin{lemma}\label{lemma:polylog_bound}
  Let \(|z| \leq a < 1\), for any positive integers
  \(m_{1}, \dots, m_{k}\) we have the bound
  \begin{equation*}
    |\Li_{m_{1},\dots,m_{k}}(z)| \leq \Li_{1,\dots,1}(|z|) \leq \Li_{1,\dots,1}(a),
  \end{equation*}
  where the number of \(1\)'s for \(\Li_{1,\dots,1}\) is \(k\).
\end{lemma}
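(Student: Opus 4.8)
The plan is to bound the multiple polylogarithm directly from its series definition~\eqref{eq:multiple-polylogs-series} by a term-by-term comparison. First I would apply the triangle inequality: since the series $\sum_{0 < n_1 < \dots < n_k} z^{n_k}/(n_1^{m_1} \cdots n_k^{m_k})$ converges absolutely for $|z| \leq a < 1$ (indeed $\Li_{1,\dots,1}(a) = (-\log(1-a))^k/k! < \infty$ dominates it, as we are about to see), we may pass the absolute value inside the sum to get
\begin{equation*}
  |\Li_{m_1,\dots,m_k}(z)| \leq \sum_{0 < n_1 < \dots < n_k} \frac{|z|^{n_k}}{n_1^{m_1} n_2^{m_2} \cdots n_k^{m_k}}.
\end{equation*}

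Next, since each exponent satisfies $m_i \geq 1$ and each index satisfies $n_i \geq 1$, we have $n_i^{m_i} \geq n_i$, hence $n_i^{-m_i} \leq n_i^{-1}$ for every $i$. Multiplying these inequalities over $i = 1, \dots, k$ and comparing the two series term by term gives
\begin{equation*}
  \sum_{0 < n_1 < \dots < n_k} \frac{|z|^{n_k}}{n_1^{m_1} \cdots n_k^{m_k}}
  \leq \sum_{0 < n_1 < \dots < n_k} \frac{|z|^{n_k}}{n_1 \cdots n_k}
  = \Li_{1,\dots,1}(|z|),
\end{equation*}
which is exactly the first claimed inequality. For the second inequality, I would observe that $\Li_{1,\dots,1}$ has a power series at the origin with nonnegative coefficients (equivalently, it equals $(-\log(1-z))^k/k!$ on $[0,1)$), so it is nondecreasing on $[0, 1)$; since $|z| \leq a < 1$, this yields $\Li_{1,\dots,1}(|z|) \leq \Li_{1,\dots,1}(a)$.

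There is no substantive obstacle here; the only point requiring a word of care is the justification of interchanging the absolute value with the (infinite, multiply-indexed) summation, which is legitimate because the dominating series $\Li_{1,\dots,1}(a)$ converges, making the original series absolutely convergent and all rearrangements and term-by-term comparisons valid.
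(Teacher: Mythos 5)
Your proof is correct and follows essentially the same route as the paper's: term-by-term comparison of the series using $n_i^{m_i} \geq n_i$ for the first inequality, and monotonicity of $\Li_{1,\dots,1}$ on $[0,1)$ from positivity of its series coefficients for the second. You are a bit more explicit about the triangle inequality and absolute convergence, which the paper leaves implicit, but the substance is identical.
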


\begin{proof}
  The first inequality follows directly from the series expansion
  \begin{equation*}
    \Li_{m_1,\dots,m_k}(z) = \sum_{0<n_1<n_2<\dots<n_k} \frac{z^{n_k}}{n_1^{m_1} n_2^{m_2} \cdots n_k^{m_k}},
  \end{equation*}
  given that for fixed $n_1,\dots,n_k$, the term coming from the
  $\Li_{m_1,\dots,m_k}(z)$ series expansion is less than or equal to
  the term arising from $\Li_{1,\dots,1}(|z|)$. The second inequality
  follows from that \(\Li_{1,\dots,1}(|z|)\) is increasing for
  \(|z| \in [0, 1)\), which is a consequence of the terms in the
  series expansion all being positive.
\end{proof}

The following lemma gives bounds for \(V_{l}(z)\) in terms of
$\log(1-|z|)$.
\begin{lemma}\label{lemma:V_log_bounds}
  Let \(|z| < 1\), for the \(V_{l}\)'s in Equation~\eqref{eq:Vs} we
  have the bound
  \begin{equation*}
    \left|V_{l}(z)\right| \leq \sum_{j = 1}^{l} C_{V,l,j} \frac{|\log(1 - |z|)|^{j}}{j!},
  \end{equation*}
  with
  \begin{equation*}
    C_{V,1,1} = 2,
  \end{equation*}
  \begin{equation*}
    C_{V,2,1} = |\lambda / 2 - 2|,\quad
    C_{V,2,2} = 4,
  \end{equation*}
  \begin{equation*}
    C_{V,3,1} = |\lambda^{2} / 16 - \lambda + 2|,\quad
    C_{V,3,2} = |3\lambda - 12| + |\lambda - 4|,\quad
    C_{V,3,3} = 8,
  \end{equation*}
  as well as
  \begin{equation*}
    C_{V,4,1} = |\lambda^{3} / 192 - \lambda^{2} / 8 - \lambda / 2 - 2| + |2\lambda\zeta(3)|,\quad
    C_{V,4,2} = |\lambda^{2} / 8 - 2\lambda + 4|
    + |\lambda^{2} / 4 - 4\lambda + 12|
    + |5\lambda^{2} / 8 - 8\lambda + 28|
  \end{equation*}
  \begin{equation*}
    C_{V,4,3} = |2\lambda - 8| + |6\lambda - 24| + |14\lambda - 56|,\quad
    C_{V,4,4} = 16.
  \end{equation*}
  Moreover, for \(|z| \leq a < 1\) we have the bound
  \begin{equation*}
    \left|V_{l}(z)\right| \leq D_{l,a}|z|,
  \end{equation*}
  with
  \begin{equation*}
    D_{l,a} = \sum_{j = 1}^{l} C_{V,l,j} \frac{|\log(1 - a)|^{j}}{a \cdot j!}.
  \end{equation*}
\end{lemma}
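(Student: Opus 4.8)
The plan is to reduce the whole statement to the single scalar estimate supplied by Lemma~\ref{lemma:polylog_bound}, namely $|\Li_{m_{1},\dots,m_{k}}(z)| \le \Li_{1,\dots,1}(|z|)$ with $k$ ones on the right, combined with the classical closed form
\[
  \Li_{\underbrace{1,\dots,1}_{k}}(z) = \frac{(-\log(1-z))^{k}}{k!}.
\]
I would verify this identity by induction on $k$: the case $k=1$ is $\Li_{1}(z) = -\log(1-z)$, and the inductive step uses the differential equation $(1-z)\frac{d}{dz}\Li_{1,\dots,1}(z) = \Li_{1,\dots,1}(z)$ from~\eqref{eq:multiple-polylogs-differential-equations} (with one fewer $1$ on the right) together with the initial condition $\Li_{1,\dots,1}(0) = 0$, since $\frac{d}{dz}\frac{(-\log(1-z))^{k}}{k!} = \frac{(-\log(1-z))^{k-1}}{(k-1)!\,(1-z)}$. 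Because $-\log(1-r) \ge 0$ for $r \in [0,1)$, these two facts give, for every multiple polylogarithm of depth $k$,
\[
  |\Li_{m_{1},\dots,m_{k}}(z)| \le \frac{|\log(1-|z|)|^{k}}{k!}.
\]

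With this in hand the first bound is pure bookkeeping. Reading off the structure of $V_{l}$ from~\eqref{eq:Vs}, it is a $\lambda$-polynomial linear combination of multiple polylogarithms whose depths $j$ range from $1$ to $l$ (with the extra $2\lambda\zeta(3)\Li_{1}$ term in $V_{4}$ having depth $1$). Grouping the summands of $V_{l}$ by their depth $j$, applying the triangle inequality within each block, and invoking the depth-$k$ estimate above, each depth-$j$ block is bounded by $\bigl(\sum|\text{coeff}|\bigr)\,|\log(1-|z|)|^{j}/j!$; by the definition of the constants, $\sum|\text{coeff}|$ over the depth-$j$ block equals exactly $C_{V,l,j}$. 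Summing over $j$ yields $|V_{l}(z)| \le \sum_{j=1}^{l} C_{V,l,j}\,|\log(1-|z|)|^{j}/j!$. Checking that the displayed $C_{V,l,j}$ match the depth-$j$ coefficient sums (e.g.\ that the depth-$1$ block of $V_{4}$ is the $\Li_{4}$ term plus the $2\lambda\zeta(3)\Li_{1}$ term, giving $C_{V,4,1} = |\lambda^{3}/192 - \lambda^{2}/8 - \lambda/2 - 2| + |2\lambda\zeta(3)|$) is a short direct verification for each $l \in \{1,2,3,4\}$.

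For the refined bound $|V_{l}(z)| \le D_{l,a}|z|$ on $|z| \le a < 1$, I would note that $\Li_{1,\dots,1}(r) = (-\log(1-r))^{k}/k!$ has a power series in $r$ with nonnegative coefficients whose lowest-order term is $r^{k}$, so $r \mapsto \Li_{1,\dots,1}(r)/r$ is nonnegative and nondecreasing on $[0,1)$. Hence for $0 < |z| \le a$,
\[
  \frac{|\Li_{m_{1},\dots,m_{k}}(z)|}{|z|} \le \frac{\Li_{1,\dots,1}(|z|)}{|z|} \le \frac{\Li_{1,\dots,1}(a)}{a} = \frac{|\log(1-a)|^{k}}{a\,k!},
\]
and multiplying by $|z|$ and summing the depth-$j$ blocks of $V_{l}$ exactly as before gives $|V_{l}(z)| \le |z|\sum_{j=1}^{l} C_{V,l,j}\,|\log(1-a)|^{j}/(a\,j!) = D_{l,a}|z|$ (the case $z = 0$ being trivial since $V_{l}(0) = 0$). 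I do not anticipate a real obstacle here: the only slightly delicate ingredients are the closed form for $\Li_{1,\dots,1}$ and the monotonicity of $\Li_{1,\dots,1}(r)/r$, both elementary, while everything else is the explicit accounting of coefficients that produces the constants $C_{V,l,j}$ and $D_{l,a}$.
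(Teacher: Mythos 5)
Your proposal is correct and follows essentially the same route as the paper's proof: reduce to Lemma~\ref{lemma:polylog_bound} plus the closed form $\Li_{1,\dots,1}(z) = (-\log(1-z))^{k}/k!$, group the $V_{l}$ terms by depth, and for the refined bound use that $\Li_{1,\dots,1}(r)/r$ is nondecreasing on $[0,1)$ because its power series has nonnegative coefficients. The only addition you make is spelling out the induction that justifies the $\Li_{1,\dots,1}$ closed form, which the paper simply cites as a known identity.
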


\begin{proof}
  The polylogarithms in the definition of the $V_l$'s can be bounded
  by \(\Li_{1,\dots,1}(|z|)\) using Lemma~\ref{lemma:polylog_bound}.
  The first bound then follows using the identity
  $\Li_{1,\dots,1}(z) = \frac{1}{k!} (-\log(1-z))^k = \frac{1}{k!}
  |\log(1-z)|^k$, for \(\Li_{1,\dots,1}\) with the number of \(1\)'s
  given by \(k\), and choosing the appropriate $C_{V,l,j}$ constants
  using the definition of the $V_l$'s from Equation~\eqref{eq:Vs}.

  For the second bound we note that the first inequality gives us
  \begin{equation*}
    \left|V_{l}(z)\right| \leq |z|\sum_{j = 1}^{l} C_{V,l,j} \frac{|\log(1 - |z|)|^{j}}{|z| \cdot j!}.
  \end{equation*}
  The result follows from that
  \begin{equation*}
    \frac{|\log(1 - |z|)|^{j}}{|z|} = \frac{\Li_{1,\dots,1}(|z|)}{|z|}
  \end{equation*}
  is increasing for \(|z|\) in the interval \([0, 1)\). The fact that
  \(\frac{\Li_{1,\dots,1}(|z|)}{|z|}\) is increasing in \(|z|\)
  follows, similar to in the proof of Lemma~\ref{lemma:polylog_bound},
  from that the series expansion has only positive coefficients.
\end{proof}

The following lemma provides bounds for the integrals of the $d_l(z,t)$ kernels appearing in Lemma~\ref{lemma:bounds-I_k_l}.

\begin{lemma}\label{lemma:integral_d}
  Let $0<a<1$. For \(d_{0}\) and \(d_{1}\) in
  Lemma~\ref{lemma:exact-expression-for-K-up-to-order_-3}
  we have
  \begin{equation*}
    \int_{0}^{a} |d_{0}(z, s z)|\ ds = a
  \end{equation*}
  and
  \begin{equation*}
    \int_{0}^{a} |d_{1}(z, s z)|\ ds = \frac{\lambda}{4} \int_{0}^{a} \left|\log(s)\right|\ ds.
  \end{equation*}
  For \(d_{2}\) we have
  \begin{equation*}
    \int_{0}^{a} |d_{2}(z, s z)|\ ds \leq
    \left(\frac{\lambda^{2}}{64} + \frac{\lambda}{8}\right)\int_{0}^{a}\left|\log^{2}(s)\right|\ ds
    + \frac{\lambda}{4}a\left|S_{2}(\xi_{0}z) - S_{2}(z)\right|
  \end{equation*}
  for some \(\xi_{0} \in (0, a)\). For \(d_{3}\) we have
  \begin{multline*}
    \int_{0}^{a} |d_{3}(z, s z)|\ ds \leq
    \left(\frac{\lambda^{3}}{2304} + \frac{\lambda^{2}}{64} + \frac{\lambda}{24}\right)\int_{0}^{a}\left|\log^{3}(s)\right|\ ds\\
    + \left(
    \frac{\lambda^{2}}{32} \left|S_{2}(\xi_{1}z) - S_{2}(z)\right|+
    \frac{\lambda}{4} \left|S_{2}(\xi_{2}z) + S_{2}(\bar{z})\right|
    \right)\int_{0}^{a}\left|\log(s)\right|\ ds
    + \frac{\lambda}{4}a\left|S_{3}(\xi_{0}z) - S_{3}(z)\right|
  \end{multline*}
  for some \(\xi_{0}\), \(\xi_{1}\) and  \(\xi_{2}\) in $(0, a)$.

  The remaining integrals can be explicitly computed from
  \begin{equation*}               \int_{0}^{a}\left|\log^{m}(s)\right|\ ds = \left| \int_{0}^{a}\log^{m}(s) ds \right|\
    = \left|  (-1)^m a \sum_{n = 0}^{m} (-1)^{n} \frac{m!}{n!}\log(a)^{n} \right|.
  \end{equation*}
\end{lemma}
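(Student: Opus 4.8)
The plan is to substitute $t = sz$ into the closed-form expressions for $d_0,d_1,d_2,d_3$ from Lemma~\ref{lemma:exact-expression-for-K-up-to-order_-3} and then integrate the resulting real-variable expression in $s$. Since $0 < s < a < 1$, the argument $t/z = s$ is a real number in $(0,1)$, so $\log(t/z) = \log s < 0$ and $|\log^m s| = |\log s|^m$; moreover every coefficient that appears ($\lambda/4$, $\lambda^2/64 + \lambda/8$, $\lambda^3/2304 + \lambda^2/64 + \lambda/24$, and so on) is positive because $\lambda = j_{0,1}^2 > 0$. The identity for $d_0 \equiv 1$ is then immediate, and for $d_1(z,sz) = \tfrac{\lambda}{4}\log s$ the claimed identity follows by taking absolute values and integrating.

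For $d_2$ and $d_3$ I would first regroup $d_k(z,sz)$ by $\log$-degree: a single ``pure logarithm'' term equal to a positive real multiple of $\log^m s$, plus cross terms of the form (real constant)$\,\cdot\,\log^j s \,\cdot\, (S_n(sz) \pm S_n(z))$, where for $d_3$ the stray $\log s \cdot S_2(\bar z)$ contribution is absorbed together with the matching $\log s \cdot S_2(sz)$ term. Concretely this gives $d_2(z,sz) = (\lambda^2/64 + \lambda/8)\log^2 s + \tfrac{\lambda}{4}(S_2(sz) - S_2(z))$, and for $d_3$ a sum of a $(\lambda^3/2304 + \lambda^2/64 + \lambda/24)\log^3 s$ term, a $\tfrac{\lambda^2}{32}\log s\,(S_2(sz) - S_2(z))$ term, a $\tfrac{\lambda}{4}\log s\,(S_2(sz) + S_2(\bar z))$ term, and a $\tfrac{\lambda}{4}(S_3(sz) - S_3(z))$ term. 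Applying the triangle inequality inside the integrand and integrating term by term, the pure-log parts reproduce exactly the integrals $\int_0^a |\log^m s|\,ds$ appearing on the right-hand sides, while each cross term produces an integral of the form $\int_0^a |\log s|^j\,|S_n(sz) \pm S_n(z)|\,ds$.

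The only step with any real content is converting those last integrals into $|S_n(\xi z) \pm S_n(z)|\int_0^a |\log s|^j\,ds$. I would use the weighted mean value theorem for integrals: on $(0,a)$ the weight $|\log s|^j$ is nonnegative and integrable, while $s \mapsto |S_n(sz) \pm S_n(z)|$ is continuous (the Nielsen polylogs $S_n$ being holomorphic on $\mathbb{D}$), so there exists $\xi \in (0,a)$ with $\int_0^a |\log s|^j |S_n(sz) \pm S_n(z)|\,ds = |S_n(\xi z) \pm S_n(z)|\int_0^a |\log s|^j\,ds$. Taking $j = 0$ gives the factor $a$ multiplying $|S_2(\xi_0 z) - S_2(z)|$ in the $d_2$ bound and $|S_3(\xi_0 z) - S_3(z)|$ in the $d_3$ bound; taking $j = 1$ gives the common factor $\int_0^a |\log s|\,ds$ multiplying the two middle terms of the $d_3$ bound, with $\xi_1, \xi_2$ the points furnished by the two applications. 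Reading off the coefficients ($\tfrac{\lambda}{4}$ for the unweighted terms, $\tfrac{\lambda^2}{32}$ and $\tfrac{\lambda}{4}$ for the $\log s$-weighted ones) from the regrouped expressions then yields the stated inequalities.

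Finally, for the closing evaluation: since $\log s$ has constant sign on $(0,a)$ when $a < 1$, one has $|\log^m s| = (-1)^m \log^m s$, hence $\int_0^a |\log^m s|\,ds = \left|\int_0^a \log^m s\,ds\right|$. The antiderivative $\int \log^m s\,ds = s\sum_{n=0}^m (-1)^{m-n}\tfrac{m!}{n!}\log^n s$ follows by a short induction on $m$ via integration by parts, with the lower endpoint contributing $0$ since $s\log^n s \to 0$ as $s \to 0^+$; evaluating at $s = a$ and factoring out $(-1)^m$ gives the displayed closed form. The only genuine difficulty is bookkeeping: regrouping the many terms of $d_3$ correctly before applying the triangle inequality and the mean value theorem, and checking that the relevant scalar coefficients are all positive so that their absolute values may be dropped.
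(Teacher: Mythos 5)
Your proposal is correct and follows essentially the same route as the paper: substitute $t=sz$ (so $\log(t/z)=\log s$ is real and of constant sign on $(0,a)$), regroup $d_2$ and $d_3$ by $\log$-degree, apply the triangle inequality to the integrand, and use the generalized mean value theorem for integrals to pull out $|S_n(\xi z)\pm S_n(z)|$ with $\xi\in(0,a)$, finally handling the pure-logarithm integrals by integration by parts (the paper delegates this last step to Lemma~\ref{lemma:basic-integrals}, which contains the same antiderivative).
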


\begin{proof}
  The expressions for the integral of $d_0$ and $d_1$ are a direct consequence of their expression in Lemma~\ref{lemma:exact-expression-for-K-up-to-order_-3}. For $d_2$ and $d_3$ we need to further show that for $l=1,2$:
  \begin{align*}
      \int_0^a |S_l(sz) - S_l(z)| ds &= a |S_l(\xi_0z) -S_l(z)| \,  \text{ for some $\xi_0\in(0,a)$}, \\
      \int_0^{a} \left|\log(s)\right| |S_2(sz) - S_2(z)| ds &= |S_2(\xi_1 z) - S_2(z)| \int_0^{a} \left|\log(s)\right| ds \, \text{ for some $\xi_1\in(0,a)$, and} \\
      \int_0^{a} \left|\log(s)\right| |S_2(sz) + S_2(\bar{z})| ds &= |S_2(\xi_2 z) + S_2(\bar{z})| \int_0^{a} \left|\log(s)\right| ds \, \text{ for some $\xi_2\in(0,a)$.}
  \end{align*}
  These follow from the generalized mean value theorem for integrals. The functions $s \mapsto |S_l(sz) - S_l(z)|$ and $s \mapsto |S_l(sz) + S_l(\bar{z})|$ are continuous and real-valued, and both $s \mapsto 1$ and $s \mapsto |\log(s)|$ are real-valued integrable functions that do not change sign on $(0,a)$ for $a<1$, so the theorem can be applied to obtain the desired result.

  For the last remaining integrals involving only logarithms, since $0<a<1$ the logarithms do not change sign so the absolute value can be written outside the integral. Then, the last equality holds from Lemma~\ref{lemma:basic-integrals}.
\end{proof}

Finally, we explicitly compute some basic logarithmic integrals, which
are used in different parts of the paper.
\begin{lemma}\label{lemma:basic-integrals}
  Let \(0 < a, b < 1\) and \(m \in \mathbb{Z}_{\geq 0}\). We have
  \begin{equation*}
    \int_{0}^{a} \log^{m}\left(s\right)\ ds
    = (-1)^{m}a \sum_{n = 0}^{m} (-1)^{n}\frac{m!}{n!}\log(a)^{n}.
  \end{equation*}
  Moreover, we have
  \begin{equation*}
    \int_{b}^{1}\log^{m}(1 - sz)\ ds = L_{1}(1) - L_{1}(b),
  \end{equation*}
  and
  \begin{equation*}
    \int_{b}^{1}\log^{m}(1 - sz)(1 - sz)^{y}\ ds = L_{2}(1) - L_{2}(b),
  \end{equation*}
  where
  \begin{equation*}
    L_{1}(s) = (-1)^mm!\left(
      s - \frac{1}{z}\sum_{n = 1}^{m} \frac{(-1)^n}{n!}\log(1 - sz)^n(1 - sz)
    \right)
  \end{equation*}
  and
  \begin{equation*}
    L_{2}(s) = \frac{(-1)^{m + 1}m!}{z}\left(
      \sum_{n = 0}^{m} \frac{(-1)^{n}}{n!}(1 + y)^{n - m - 1}\log(1 - sz)^{n}(1 - sz)^{1 + y}
    \right).
  \end{equation*}
  Note that for \(z = 1\), the functions \(L_{1}(s)\) and \(L_{2}(s)\)
  both have removable singularities at \(s = 1\) coming from the
  \(\log(1 - sz)^n(1 - sz)\) factors.
\end{lemma}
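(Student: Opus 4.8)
The three identities each assert that an explicitly given function is an antiderivative of the corresponding integrand, so the plan is simply to verify them by the Fundamental Theorem of Calculus: differentiate the claimed primitive, check that it reproduces the integrand, and evaluate at the endpoints. The cases $m = 0$ are immediate (all the sums are empty or reduce to a single term), so I would assume $m \geq 1$ throughout; in each case the only real work is a telescoping cancellation after a shift of summation index.

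For the first identity, set $P_m(s) = (-1)^m s \sum_{n=0}^m (-1)^n \tfrac{m!}{n!}\log(s)^n$. The product rule gives
\[
  P_m'(s) = (-1)^m \sum_{n=0}^m (-1)^n \tfrac{m!}{n!}\log(s)^n + (-1)^m \sum_{n=1}^m (-1)^n \tfrac{m!}{(n-1)!}\log(s)^{n-1},
\]
and the substitution $n \mapsto n+1$ rewrites the second sum as $-(-1)^m\sum_{k=0}^{m-1}(-1)^k \tfrac{m!}{k!}\log(s)^k$, which cancels all but the $n = m$ term of the first sum; hence $P_m'(s) = \log(s)^m$. Since the integrand is integrable near $0$ and $P_m(s) \to 0$ as $s \to 0^+$ (the factor $s$ dominates every power of $\log s$), we obtain $\int_0^a \log^m(s)\,ds = P_m(a)$, which is the stated expression. (One could equally well argue by induction on $m$ using a single integration by parts.)

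For the remaining two identities I would put $u = u(s) = 1 - sz$, so $u' = -z$, and record the two elementary derivatives
\[
  \tfrac{d}{ds}\bigl(\log(u)^n u\bigr) = -z\bigl(n\log(u)^{n-1} + \log(u)^n\bigr), \qquad
  \tfrac{d}{ds}\bigl(\log(u)^n u^{1+y}\bigr) = -z\,u^{y}\bigl(n\log(u)^{n-1} + (1+y)\log(u)^n\bigr),
\]
valid for $z \neq 0$ (which always holds in our applications, where $|z| = 1$). Substituting the first into $L_1$ and the second into $L_2$, the factor $-z$ cancels the $1/z$ in front; in each case the result is a sum over $n$ of a ``$\log^{n-1}$ piece'' plus a ``$\log^{n}$ piece'', and after the index shift on the $\log^{n-1}$ pieces the two families telescope, killing every power of $\log(u)$ except $\log(u)^m$ with exactly the right constant. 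This yields $L_1'(s) = \log(1-sz)^m$ and $L_2'(s) = \log(1-sz)^m(1-sz)^y$, so that $\int_b^1 \log^m(1-sz)\,ds = L_1(1) - L_1(b)$ and $\int_b^1 \log^m(1-sz)(1-sz)^y\,ds = L_2(1) - L_2(b)$ by the Fundamental Theorem of Calculus.

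The only point requiring care is the behaviour at $z = 1$: there $L_1$ and $L_2$ involve the factors $\log(1-s)^n(1-s)$ and $\log(1-s)^n(1-s)^{1+y}$, which tend to $0$ as $s \to 1^-$ (using $\real(1+y) > 0$, true in the applications since $y = \pm 2/N$), so $L_1$ and $L_2$ extend continuously to $s = 1$ with removable singularities, while the integrands $\log^m(1-s)$ and $\log^m(1-s)(1-s)^y$ remain integrable up to $s = 1$; hence the improper integrals still equal the differences of boundary values. For $z$ on the unit circle with $z \neq 1$ one checks that the segment $\{1 - sz : s \in [b,1]\}$ stays off the origin and the negative real axis, so the principal branch of $\log$ is analytic along the path and the computation above applies verbatim. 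I do not expect any genuine obstacle: the whole proof is a verification by differentiation, and the one thing to get right is the index-shift-and-telescope step in each of the three computations.
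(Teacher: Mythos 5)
Your proof is correct and complete. The paper states this lemma without proof, treating all three identities as routine; your verification (differentiate the claimed primitive, shift the summation index, telescope, and invoke the fundamental theorem of calculus, noting that $s\log^n(s) \to 0$ as $s\to 0^+$ for the first integral and handling the removable singularity at $s=1$ for the other two when $z=1$) is exactly the natural argument one would supply. The remark about the $m=0$ case, the remark that $\real(1+y)>0$ in the applications, and the observation that for $|z|=1$, $z\neq 1$ the segment $1-sz$ avoids the branch cut are all apt. No gaps.
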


\section{Implementation details}
\label{app:implementation}
The full code on which the computer-assisted parts of the proofs are
based, as well as notebooks presenting the results and figures, are
available at the repository~\cite{SpectralRegularPolygon.jl}. The code
is implemented in Julia~\cite{Bezanson-Edelman-Karpinski-Shah:julia}.
For the rigorous numerics it uses FLINT~\cite{Flint}/Arb\footnote{In
  2023 Arb was merged with the FLINT library}~\cite{Johansson:Arb}
through the Julia wrapper \textit{Arblib.jl}~\cite{Arblib.jl}, with
many of the basic rigorous numerical algorithms, such as isolating
roots or enclosing maximum values of functions, implemented in a
separate package, \textit{ArbExtras.jl}~\cite{ArbExtras.jl}.

The FLINT library implements ball (intervals represented as a midpoint
and a radius) arithmetic, which allows for computation of rigorous
enclosures of functions. It has built-in support for many of the
special functions that are required, including many hypergeometric
functions such as Bessel functions, \({}_{0}F_{1}\) and
\({}_{2}F_{1}\)~\cite{Johansson2019}. There are efficient
implementations of Taylor arithmetic (see
e.g.~\cite{Johansson2015reversion}), which are important for the
automatic computation of high order derivatives. Moreover, it supports
rigorous numerical integration~\cite{Johansson2018numerical}, which we
comment more on in Section~\ref{sec:rigorous-integration} below.

\subsection{Enclosing extrema}
\label{sec:enclosing-extrema}
In several cases we have to compute enclosures (or at least
lower/upper bounds) of expressions of the form
\begin{equation*}
  \min_{x \in I} f(x) \quad \text{or} \quad \max_{x \in I} f(x),
\end{equation*}
for a finite interval \(I\). For example this occurs in the proofs for
lemmas~\ref{lemma:bounds_and_splitting:first_term_J0:upper-bounds-for-terms-from-g-function},
\ref{lemma:g_function_derivatives}, \ref{lemma:bounds-I_k_l}
and~\ref{lemma:final-inequalities} as well as when upper bounding the
defects and lower bounding the norms of the approximate eigenfunctions
in Section~\ref{sec:the-small-N-case}.

Methods for computing such enclosures are implemented in the
\textit{ArbExtras.jl} package (under the names
\textit{minimum\_enclosure} and \textit{maximum\_enclosure}). The
procedure is the same as discussed in e.g.~\cite[Section
2.1]{Dahne-Salvy:enclosures-eigenvalues} and~\cite[Section
7.1]{Dahne:highest-cusped-waves-fkdv} (see
also~\cite{Dahne-GomezSerrano:highest-wave-burgershilbert}) and we
therefore only briefly describe it here.

Let us consider the case of enclosing the maximum, with the minimum
being similar. The main idea is to iteratively bisect the interval
\(I\) into smaller and smaller subintervals. At every iteration we
compute an enclosure of \(f\) on each subinterval. From these
enclosures a lower bound of the maximum can be computed. We then
discard all subintervals for which the enclosure is less than the
lower bound of the maximum, as the maximum cannot be attained there.
For the remaining subintervals we check if their enclosure satisfies
some prespecified tolerance, in that case we don't bisect them
further. If there are any subintervals left, we bisect them and
continue with the next iteration. In the end, either when there are no
subintervals left to bisect or we have reached some maximum number of
iterations (to guarantee that the procedure terminates), we return the
maximum of all subintervals that were not discarded. This is
guaranteed to give an enclosure of the maximum of \(f\) on the
interval.

If we are able to compute Taylor expansions of the function \(f\) we
can improve the performance of the procedure significantly. In our
case this is primarily relevant in Section~\ref{sec:the-small-N-case},
where Taylor expanding the approximate eigenfunctions is
straightforward. In Section~\ref{sec:the-large-N-case}, most of the
bounds are for functions given by the absolute value of some complex
valued function, and are hence in general not differentiable.

In the case that we are able to compute Taylor expansions of \(f\), we
make the following adjustments to the above method. Consider a
subinterval \(I_{i}\), instead of directly computing an enclosure of
\(f(I_{i})\) we compute a Taylor polynomial \(P\) (usually of degree
\(8\)) at the midpoint and an enclosure \(R\) of the remainder term
such that \(f(x) \in P(x) + R\) for \(x \in I_{i}\). We then have
\begin{equation}
  \label{eq:taylor-bound}
  \max_{x \in I_{i}} f(x) \in \max_{x \in I_{i}} P(x) + R.
\end{equation}
To compute \(\sup_{x \in I_{i}} P(x)\) we isolate the roots of \(P'\)
(using an adaptive bisection approach) on \(I_{i}\) and evaluate \(P\)
on the roots as well as the endpoints of the interval. In practice the
computation of \(R\) involves computing an enclosure of the Taylor
expansion of \(f\) on the full interval \(I_{i}\). Since this includes
the derivative we can as an extra optimization check if the derivative
is non-zero, in which case \(f\) is monotone, and it is enough to
evaluate \(f\) on either the left or the right endpoint of \(I_{i}\),
depending on the sign of the derivative.

\subsection{Rigorous integration}
\label{sec:rigorous-integration}
For the computation of the generalized polylogarithms as well as the
integrals in Lemma~\ref{lemma:bounds-I_k_l} we need to compute
rigorous enclosures of integrals. For this we make use of the
integrator implemented in FLINT~\cite{Johansson2018numerical}, which
allows for efficient evaluation of integrals of functions that are
analytic in a neighborhood of the path of integration. The method
automatically handles the computation of error bounds by bounding the
integrand on an ellipse enclosing the path of integration. For the
error bounds to be valid the function needs to be analytic on this
ellipse, ensuring that this is the case is up to the user. In our
case, what we have to check for is branch cuts that intersect this
ellipse. Below we go through the two different types of integrands
that appear in the paper and analyze their branch cuts. Checks for
these branch cuts are then implemented in the integration code.

The first type of integrands comes from the evaluation of the
generalized polylogarithms and is of the form
\begin{equation*}
  \frac{d^{n}}{d\nu^{n}} \nu t^{\nu}((1 - tz)^{-2\nu} - 1)\frac{1}{t},
\end{equation*}
with \(0 \leq \nu \leq 1 / N_{0}\) and \(n \geq 2\). The factor
\(t^{\nu}\) has a branch cut along the negative real axis and the term
\((1 - tz)^{-2\nu}\) has a branch cut whenever \(1 - tz\) lies on the
negative real axis. When required to verify analyticity we therefore
check that \(t\) and \(1 - tz\) do not overlap the negative real axis.
An exception to this is the case when \(n = 2\) and \(\nu = 0\), in
this case the integrand is given by
\begin{equation*}
  -4\frac{\log(1 - tz)}{t},
\end{equation*}
which has a removable singularity at \(t = 0\) and no branch cut for
\(t\) lying on the negative real axis.

The second type of integrands comes from
Lemma~\ref{lemma:bounds-I_k_l}, where the integrands that occur are
\begin{equation*}
  d_k(z,t) V_l(t) \text{ with } (k, l) = (1, 4),\ (2, 3),\ (2, 4),\ (3, 2),\ (3, 3) \text{ and } (3, 4)
\end{equation*}
as well as \(K_4(N,z,t) V_l(t)\) with \(l = 1,\ 2,\ 3\) and \(4\). The
functions \(d_{1}(z, t)\), \(d_{2}(z, t)\) and \(d_{3}(z, t)\) all
have branch cuts when \(t / z\) lies on the negative real axis, due to
the \(\log(t / z)\) factors. In addition to this, \(d_{2}(z, t)\) and
\(d_{3}(z, t)\) have branch cuts along the real axis from \(t = 1\) to
\(t = +\infty\) coming from \(S_{2}(t)\) and \(S_{3}(t)\). Note that
the branch cuts in \(z\) are not important since we are only
integrating in \(t\). For the function \(K_{4}(N, z, t)\), the
situation is the same. In this case the branch cuts come from
\((t / z)^{1/N}\) and \(F_{N}(t)\) appearing in the expression for
\(K(z, t)\) in the proof of
Lemma~\ref{lemma:exact-expression-for-K-up-to-order_-3}. For the
\(V_{l}\) functions, the only branch cut is along the real axis from
\(t = 1\) to \(t = +\infty\).

The FLINT integrator does not perform well if the enclosures of the
endpoints of the integration are not very precise. If the endpoints
are wide (complex) intervals, then the result will be significantly
worse. This is relevant in Lemma~\ref{lemma:bounds-I_k_l} where we are
computing integrals with endpoints that depend on \(z = e^{i\theta}\),
where \(\theta\) is represented by wide intervals. To improve the
performance in this case we integrate to the midpoint of the interval
that represents \(z\) and add the resulting error term separately.
More precisely, consider the problem of computing an enclosure of the
integral
\begin{equation*}
  \int_{\inter{a}}^{\inter{b}} f(t)\ dt,
\end{equation*}
for an analytic function \(f\) and with \(\inter{a}\) and
\(\inter{b}\) representing two (complex) intervals. If we let
\(\midint(\inter{a})\) and \(\midint(\inter{b})\) denote the midpoints
of \(\inter{a}\) and \(\inter{b}\) respectively, then we use that
\begin{equation*}
  \int_{\inter{a}}^{\inter{b}} f(t)\ dt
  \subseteq \int_{\midint(\inter{a})}^{\midint(\inter{b})} f(t)\ dt
  + (\inter{a} - \midint(\inter{a}))f(\inter{a})
  + (\inter{b} - \midint(\inter{b}))f(\inter{b}).
\end{equation*}
Here \(f(\inter{a})\) and \(f(\inter{b})\) are complex intervals
enclosing the sets \(\{f(a): a \in \inter{a}\}\) and
\(\{f(b): b \in \inter{b}\}\) respectively, which are straightforward
to compute using interval arithmetic.

\subsection{Computing generalized polylogarithms}
\label{sec:polylogs}
Several of the computer-assisted lemmas in
Section~\ref{sec:the-large-N-case} require computing enclosures of
certain generalized polylogarithms. While FLINT has an implementation
of the standard polylogarithm, \(L_{s}(z)\), it does not implement the
more general versions that we require here. This section contains the
details for how we compute these.

Let us start with the multiple polylogarithms, that occur in the
expressions for the \(V\)'s in Equation~\eqref{eq:Vs}. The ones that
occur are
\begin{equation*}
  \Li_{1,1},\quad
  \Li_{1,2},\quad
  \Li_{1,3},\quad
  \Li_{2,1},\quad
  \Li_{2,2},\quad
  \Li_{3,1},\quad
  \Li_{1,1,1},\quad
  \Li_{1,2,1},\quad
  \Li_{2,1,1},\quad\text{and}\quad
  \Li_{1,1,1,1}.
\end{equation*}
The ones with the parameters given by only \(1\)'s are given by
\begin{equation*}
  \Li_{1,\dots,1}(z) = (-1)^{n}\frac{\log^{n}(1 - z)}{n!},
\end{equation*}
where \(n\) is the number of \(1\)'s. Expressions for the other ones
are given in Table~\ref{table:multiple-polylog-expressions}. For
\(\Li_{1,3}\) we have used that it is equal to the Nielsen generalized
polylogarithm \(S_{2,2}\), combined with the expression
from~\cite[Proposition 5]{Charlton2021}. For all the other entries in
the table we have computed their iterative integral representation
following the recursive differential equations in
\eqref{eq:multiple-polylogs-differential-equations}. Note that these
expressions are in general not suitable for evaluation near \(z = 0\),
in this case bounds are computed using
Lemma~\ref{lemma:polylog_bound}. Also observe that the branch cuts of
the functions appearing in the expressions from
Table~\ref{table:multiple-polylog-expressions} cancel out when the
primary branch is taken, i.e. the expressions for the multiple
polylogarithms are single-valued in \(\mathbb{D}\) even though they
are written in terms of multivalued functions like the $\log$.

\begin{table}[h!]
  \centering
  \begin{tabular}{r|r}
    \hline
    $\Li_{1,2}$ & $\frac{1}{2} \log^2(1 - z) \log(z) + \log(1 - z)\Li_2(1 - z) - \Li_3(1 - z) + \zeta(3)$ \\
    \hline
    $\Li_{1,3}$ & $\begin{array}{r}
      -\Li_{4}(1 - z)
      + \Li_{4}(z)
      + \Li_{4}((1 - 1/z)^{-1})
      - \Li_{3}(z)\log(1 - z) \\ [0.5em]
      + \frac{1}{4!}\log^{4}(1 - z)
      - \frac{1}{3!}\log(z)\log^{3}(1 - z)
      + \frac{\zeta(2)}{2!}\log^{2}(1 - z)
      + \zeta(3)\log(1 - z)
      + \zeta(4)
    \end{array}$ \\
    \hline
    $\Li_{2,1}$ & $-\frac{1}{6} \log(1-z) (\pi^2 + 6\Li_2(1-z)) + 2\Li_3(1-z) - 2\zeta(3)$ \\
    \hline
    $\Li_{2,2}$ &  $\begin{array}{r}
      - \log^3(1 - z) \log(z)
      + \frac{1}{6} \log^2(1 - z) \left( \pi^2 + 9 \log^2(z)\right) \\ [0.5em]
      - \frac{1}{6} \log(1 - z) \left(\pi^2 \log(z) + 6 \log^3(z) \right)
      + \frac{1}{4} \log^4(z) \\ [0.5em]
      - \frac{1}{2} \Li_2^2(1 - z)
      + \log^2\left( -1 + \frac{1}{z} \right) \Li_2\left( 1 - \frac{1}{z} \right) \\ [0.5em]
      - \left( \log^2\left( -1 + \frac{1}{z} \right) + \log(1 - z) \log(z) \right) \Li_2(z)
      - 2 \log\left( -1 + \frac{1}{z} \right) \Li_3(1 - z) \\ [0.5em]
      - 2 \log\left( -1 + \frac{1}{z} \right) \Li_3\left( 1 - \frac{1}{z} \right)
      + 2 \log(z) \Li_3(z) \\ [0.5em]
      + 2 \Li_4(1 - z)
      + 2 \Li_4\left( 1 - \frac{1}{z} \right) \\ [0.5em]
      - 2 \Li_4(z)
      + \Li_2^2(1 - z)
      + \frac{\pi^2}{6} \Li_2(z)
      - 2 \log(z) \zeta(3)
      + \frac{\pi^4}{360}
    \end{array}$ \\
    \hline
    $\Li_{3,1}$ & $-\log(1 - z) - \frac{1}{2}\Li_2^2(z)\Li_3(z)$ \\
    \hline
    $\Li_{1,1,2}$ & $-\frac{1}{6} \log^3(1 - z) \log z  - \frac{1}{2} \log^2(1 - z)\Li_2(1 - z) + \log(1 - z)\Li_3(1 - z) - \Li_4(1 - z) + \frac{\pi^4}{90}$ \\
    \hline
    $\Li_{1,2,1}$ & $\frac{1}{2} \log^2(1 - z)\Li_2(1 - z) - \log(1 - z) \left(2\Li_3(1 - z) + \zeta(3)\right) + 3\Li_4(1 - z) - \frac{\pi^4}{30}$ \\
    \hline
    $\Li_{2,1,1}$ & $\frac{\pi^2}{12}\log^2(1 - z) + \log(1 - z) (\Li_3(1 - z) + 2\zeta(3)) - 3\Li_4(1 - z) + \frac{\pi^4}{30}$ \\
    \hline
    \end{tabular}
    \caption{Expressions of the multiple polylogarithms \(\Li_{1,2}\),
      \(\Li_{1,3}\), \(\Li_{2,1}\), \(\Li_{2,2}\), \(\Li_{3,1}\),
      \(\Li_{1,2,1}\) and \(\Li_{2,1,1}\) that are used for
      computations.}
    \label{table:multiple-polylog-expressions}
\end{table}

The other type of generalized polylogarithms we have to deal with are
the functions \(S_{n}(z)\) coming from the expansion of \(F_{N}\) in
Equation~\eqref{eq:FNz_expansion_powers_1/N}. These functions can be
written in terms of finite sums of Nielsen generalized polylogarithms.
For computational purposes we have, however, found it better to
compute the \(S_{n}\) functions directly through their integral
formula. If we combine Equation~\eqref{eq:FNz_integral}
and~\eqref{eq:FNz_expansion_powers_1/N} and let \(\nu = 1 / N\) we
have
\begin{equation*}
  S_{n}(z) = \frac{1}{n!}\int_{0}^{1}\frac{d^{n}}{d\nu^{n}} \nu t^{\nu}((1 - tz)^{-2\nu} - 1)\bigg|_{\nu = 0}\frac{dt}{t}.
\end{equation*}
We will compute the functions \(S_{n}\) by computing an enclosure of
this integral. The approach for this is similar to how the integrals
in Lemma~\ref{lemma:bounds-I_k_l} are bounded. We are interested in
the cases \(n = 2, 3, 4, 5\), for which we can explicitly compute
\begin{align*}
  \frac{d^{2}}{d\nu^{2}} \nu t^{\nu}((1 - tz)^{-2\nu} - 1)\bigg|_{\nu = 0}
  &= -4\log(1 - tz)\frac{1}{t},\\
  \frac{d^{3}}{d\nu^{3}} \nu t^{\nu}((1 - tz)^{-2\nu} - 1)\bigg|_{\nu = 0}
  &= \left(-12\log(t)\log(1 - tz) + 12\log^{2}(1 - tz)\right)\frac{1}{t},\\
  \frac{d^{4}}{d\nu^{4}} \nu t^{\nu}((1 - tz)^{-2\nu} - 1)\bigg|_{\nu = 0}
  &= \left(-24\log^{2}(t)\log(1 - tz) + 48\log(t)\log^{2}(1 - tz) - 32\log^{3}(1 - tz)\right)\frac{1}{t},\\
  \frac{d^{5}}{d\nu^{5}} \nu t^{\nu}((1 - tz)^{-2\nu} - 1)\bigg|_{\nu = 0}
  &= \Big(-40\log^{3}(t)\log(1 - tz) + 120\log^{2}(t)\log^{2}(1 - tz)\\
  &\qquad- 160\log(t)\log^{3}(1 - tz) + 80\log^{4}(1 - tz)\Big)\frac{1}{t},\\
\end{align*}

In general the integral has an integrable singularity at \(t = 0\), to
handle this we split the integral as
\begin{equation*}
  S_{n}(z) = \frac{1}{n!}\int_{0}^{a}\frac{d^{n}}{d\nu^{n}} \nu t^{\nu}((1 - tz)^{-2\nu} - 1)\bigg|_{\nu = 0}\frac{dt}{t}
  + \frac{1}{n!}\int_{a}^{1}\frac{d^{n}}{d\nu^{n}} \nu t^{\nu}((1 - tz)^{-2\nu} - 1)\bigg|_{\nu = 0}\frac{dt}{t}
  = \frac{1}{n!}S_{n,1}(z) + \frac{1}{n!}S_{n,2}(z),
\end{equation*}
with \(0 < a < 1\) (in the computations we take
\(a = \codenumber{10^{-8}}\)). In the special case \(n = 2\) the
singularity at \(t = 0\) is removable, and we can take \(a = 0\).

To compute \(S_{n,1}\) we use that \(\log(1 - tz) / t\) and
\(\log(1 - tz)\) are both bounded near zero. We then apply the
generalized mean value theorem for integrals applied to the real and
imaginary parts separately. For \(n = 3\) this gives us that
\begin{multline*}
  S_{3,1}(z) = -12\left(
    \real\left(\frac{\log(1 - \xi_{1} z)}{\xi_{1}}\right)
    + i\imag\left(\frac{\log(1 - \xi_{2} z)}{\xi_{2}}\right)
  \right)\int_{0}^{a}\log t\ dt\\
  + 12\left(
    \real\left(\frac{\log(1 - \xi_{3} z)}{\xi_{3}}\log(1 - \xi_{3}z)\right)
    + i\imag\left(\frac{\log(1 - \xi_{4} z)}{\xi_{4}}\log(1 - \xi_{4}z)\right)
  \right)\int_{0}^{a}\ dt,
\end{multline*}
for some \(0 < \xi_{1}, \xi_{2}, \xi_{3}, \xi_{4} < a\). Since FLINT
represents complex intervals using rectangles it is possible to write
this in a simpler way using interval arithmetic notation. If we let
\(f_{1}(t) = \frac{\log(1 - tz)}{t}\) and \(f_{2}(t) = \log(1 - tz)\), then we have
\begin{equation*}
  S_{3,1}(z) \in f_{1}([0, a])\left(
    -12\int_{0}^{a}\log t\ dt
    + 12f_{2}([0, a])\int_{0}^{a}\ dt
  \right),
\end{equation*}
where \(f_{1}([0, a])\) and \(f_{2}([0, a])\) will be complex
rectangles that enclose the images of \(f_{1}\) and \(f_{2}\) on the
interval \([0, a]\). For \(f_{2}\) it is straightforward to compute an
enclosure using regular interval arithmetic, for \(f_{1}\) an
enclosure can be computed by handling the removable singularity. For
\(S_{4,1}\) and \(S_{5,1}\) we similarly get
\begin{align*}
  S_{4,1}(z) &\in f_{1}([0, a])\left(
               -24 \int_{0}^{a}\log^{2}(t)\ dt
               + 48f_{2}([0, a])\int_{0}^{a}\log(t)\ dt
               - 32f_{2}([0, a])^{2}\int_{0}^{a}\ dt
               \right),\\
  S_{5,1}(z) &\in f_{1}([0, a])\Bigg(
               -40 \int_{0}^{a}\log^{3}(t)\ dt
               + 120f_{2}([0, a])\int_{0}^{a}\log^{2}(t)\ dt\\
             &\qquad\qquad\qquad\qquad- 160f_{2}([0, a])^{2}\int_{0}^{a}\log(t)\ dt
               + 80f_{2}([0, a])^{3}\int_{0}^{a}\ dt
               \Bigg).
\end{align*}
The remaining integrals can be computed explicitly using
Lemma~\ref{lemma:basic-integrals}.

For \(S_{n,2}\) the integrand is in general finite on the interval of
integration, the exception is when \(z = 1\) in which case the
integrand has an integrable singularity at \(t = 1\). When our
interval enclosure of \(z\) does not contain \(1\) we compute the
entire integral using the techniques for rigorous numerical
integration discussed in Appendix~\ref{sec:rigorous-integration}. If
\(z\) overlaps \(1\) then we further split \(S_{n,2}\) as
\begin{equation*}
  S_{n,2} = \int_{a}^{b}\frac{d^{n}}{d\nu^{n}} \nu t^{\nu}((1 - tz)^{-2\nu} - 1)\bigg|_{\nu = 0}\frac{dt}{t}
  + \int_{b}^{1}\frac{d^{n}}{d\nu^{n}} \nu t^{\nu}((1 - tz)^{-2\nu} - 1)\bigg|_{\nu = 0}\frac{dt}{t}
  = S_{n,2,1}(z) + S_{n,2,2}(z),
\end{equation*}
with \(a < b < 1\) (in the computations we take
\(b = \codenumber{1 - 10^{-8}}\)). The integral \(S_{n,2,1}\) is
computed directly.

For \(S_{n,2,2}\) the idea is to use that \(\log(t)\) and \(1 / t\)
are bounded near \(t = 1\) to factor them out from the integrals. For
this we want to split into real and imaginary parts and use the
generalized mean value theorem for integrals. Assuming that the real
and imaginary parts of \(\log(1 - tz)\) do not change sign, then for
\(S_{2,2,2}\) we have that
\begin{equation*}
  S_{2,2,2}(z) = -4 \frac{1}{\xi_{1}} \int_{b}^{1} \real(\log(1 - tz))\ dt
  + -4 \frac{1}{\xi_{2}} \int_{b}^{1} \imag(\log(1 - tz))\ dt,
\end{equation*}
for some \(b < \xi_{1}, \xi_{2} < 1\). If we let
\(g_{1}(t) = \frac{1}{t}\) then we can write this in interval
arithmetic notation as
\begin{equation*}
  S_{2,2,2}(z) \in g_{1}([b, 1])\left(
    -4 \int_{b}^{1} \real(\log(1 - tz))\ dt
    + -4 \int_{b}^{1} \imag(\log(1 - tz))\ dt,
  \right)
  = -4g_{1}([b, 1])\int_{b}^{1} \log(1 - tz)\ dt.
\end{equation*}
We can apply the same idea for \(n = 3, 4, 5\) if we assume that the
real and imaginary parts of \(\log^{m}(1 - tz)\) do not change sign
for \(m = 1, 2, 3, 4\). If we let \(g_{2}(t) = \log(t)\) we have
\begin{align*}
  S_{3,2,2}(z) &\in g_{1}([b, 1])\left(
                 -12g_{2}([b, 1])\int_{b}^{1} \log(1 - tz)\ dt
                 + 12\int_{b}^{1} \log^{2}(1 - tz)\ dt
                 \right),\\
  S_{4,2,2}(z) &\in g_{1}([b, 1])\left(
                 -24g_{2}([b, 1])^{2}\int_{b}^{1} \log(1 - tz)\ dt
                 + 48g_{2}([b, 1])\int_{b}^{1} \log^{2}(1 - tz)\ dt
                 - 32\int_{b}^{1} \log^{3}(1 - tz)\ dt
                 \right),\\
  S_{5,2,2}(z) &\in g_{1}([b, 1])\Bigg(
                 -40g_{2}([b, 1])^{3}\int_{b}^{1} \log(1 - tz)\ dt
                 + 120g_{2}([b, 1])^{2}\int_{b}^{1} \log^{2}(1 - tz)\ dt\\
               &\qquad\qquad\qquad- 160g_{2}([b, 1])\int_{b}^{1} \log^{3}(1 - tz)\ dt
                 + 80\int_{b}^{1} \log^{4}(1 - tz)\ dt
                 \Bigg).
\end{align*}
The remaining integrals can then be computed using
Lemma~\ref{lemma:basic-integrals}.

To make use of this we need to verify that the real and imaginary
parts of \(\log^{m}(1 - tz)\) do not change sign for \(t \in (b, 1)\).
We formulate the result in the following lemma.

\begin{lemma}\label{lemma:log-m-signs}
  Let \(z \in \mathbb{C}\) and \(0 < b < 1\) and assume that for all
  \(t \in [b, 1]\) we have \(|1 - tz| \leq C\) for some \(C < 1\). Let
  \(\theta = \arctan\left(\pi/ |\log(C)|\right)\), if \(m \geq 1\) is
  an integer such that \(m\theta \leq \pi/2\) then the real and
  imaginary parts of \(\log^{m}(1 - tz)\) do not change sign for all
  \(t \in [b, 1]\).
\end{lemma}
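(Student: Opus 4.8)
The plan is to pin down where $\log(1-tz)$ sits in the complex plane, argue that as $t$ runs over $[b,1]$ it sweeps out only a thin sector around the negative real axis, and then raise to the $m$-th power and read off the signs of the real and imaginary parts.

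First I would show that $\zeta(t) := \log(1-tz)$ (principal branch) stays in a fixed closed quadrant. From $|1-tz| \le C < 1$ we get $\real\zeta(t) = \log|1-tz| \le \log C < 0$, so in particular $|\real\zeta(t)| \ge |\log C|$. For the imaginary part, note that $\imag(1-tz) = -t\,\imag(z)$, while $\imag(1-tz) = |1-tz|\sin(\imag\zeta(t))$; hence $\sin(\imag\zeta(t))\,|1-tz| = -t\,\imag(z)$ has a $t$-independent sign on $[b,1]$, so $\imag\zeta(t)$ keeps a constant sign there, and of course $|\imag\zeta(t)| \le \pi$. Since $\log(1-t\bar z) = \overline{\log(1-tz)}$ away from the negative real axis (and this conjugates $\log^m(1-tz)$ as well), I may replace $z$ by $\bar z$ if necessary and assume $\imag\zeta(t) \ge 0$, i.e. $\zeta(t)$ lies in the closed second quadrant with $\real\zeta(t) \le \log C$.

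Next I would bound $\arg\zeta(t)$. Writing $\zeta(t) = -|\real\zeta(t)| + i\,\imag\zeta(t)$ with $|\real\zeta(t)| \ge |\log C|$ and $0 \le \imag\zeta(t) \le \pi$ gives
\[
  \arg\zeta(t) = \pi - \arctan\!\left(\frac{\imag\zeta(t)}{|\real\zeta(t)|}\right) \in \left[\,\pi - \arctan\!\left(\tfrac{\pi}{|\log C|}\right),\ \pi\,\right] = [\,\pi-\theta,\ \pi\,].
\]
Therefore $\log^m(1-tz) = \zeta(t)^m = |\zeta(t)|^m\big(\cos(m\arg\zeta(t)) + i\sin(m\arg\zeta(t))\big)$ with $m\arg\zeta(t) \in [\,m\pi - m\theta,\ m\pi\,] \subseteq [\,m\pi - \tfrac{\pi}{2},\ m\pi\,]$ by the hypothesis $m\theta \le \pi/2$. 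Finally, parametrising a point of $[\,m\pi-\pi/2,\ m\pi\,]$ as $m\pi - \varepsilon$ with $\varepsilon \in [0,\pi/2]$ and using $\cos(m\pi-\varepsilon) = (-1)^m\cos\varepsilon$ and $\sin(m\pi-\varepsilon) = (-1)^{m+1}\sin\varepsilon$ together with $\cos\varepsilon,\sin\varepsilon \ge 0$ on $[0,\pi/2]$, I conclude that $\cos(m\arg\zeta(t))$ and $\sin(m\arg\zeta(t))$ — hence $\real(\log^m(1-tz))$ and $\imag(\log^m(1-tz))$ — each keep a constant sign on $[b,1]$, which is the claim.

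The trigonometry of the last step is routine; the one place needing care is the degenerate case in which $z$ is real, so that $1-tz$ may lie on (and even cross, through $0$) the real axis. There $\imag\zeta(t)$ is identically $0$ (if $1-tz>0$) or identically $\pi$ (if $1-tz<0$, or, after removing the possible zero, on each subinterval), and the same computation shows $\imag(\log^m(1-tz))$ is either identically $0$ or of constant sign, so the conclusion is unaffected; likewise an isolated vanishing of the real or imaginary part at a boundary value such as $\arg\zeta = \pi-\theta$ with $m\theta = \pi/2$ is not a sign change. I therefore expect no real obstacle beyond this bookkeeping and keeping track of the branch of $\log$.
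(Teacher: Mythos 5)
Your proof takes essentially the same approach as the paper: confine $\log(1-tz)$ to the semi-infinite strip $(-\infty, \log C] \times i[0,\pi]$ (after a conjugation to fix the sign of $\imag z$), observe that this strip subtends an angle $\theta = \arctan(\pi/|\log C|)$ at the origin off the negative real axis so $\arg\log(1-tz) \in [\pi-\theta,\pi]$, and then raise to the $m$-th power so that $m\theta \le \pi/2$ keeps the resulting sector inside a single quadrant. You spell out the final trigonometric bookkeeping and the real-$z$ degeneracies slightly more explicitly than the paper does, but the geometric idea, the key bound on the argument, and the use of the hypothesis are identical.
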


\begin{proof}
  Let us start with the case when \(\imag z \leq 0\). Geometrically,
  showing that the real and imaginary parts of \(\log^{m}(1 - tz)\) do
  not change sign is equivalent to showing that the values of the
  function are confined to a single quadrant in the complex plane.

  Consider the base term \(\log(1 - tz)\). Decomposing this into real
  and imaginary parts, we have
  \begin{equation*}
    \log(1 - tz) = \log|1 - tz| + i\arg(1 - tz).
  \end{equation*}
  From the assumption that \(|1 - tz| < C < 1\) we get that
  \(\log|1 - tz| \in (-\infty, \log(C)]\), with \(\log(C) < 0\). Using
  that \(\imag z \leq 0\) we get that \(\imag(1 - tz) \geq 0\) for all
  \(t \in [b, 1]\), hence \(\arg(1 - tz) \in [0, \pi]\). It follows
  that the values of \(\log(1 - tz)\) are contained within the
  semi-infinite strip
  \begin{equation*}
    (-\infty, \log(C)] \times i[0, \pi].
  \end{equation*}
  As illustrated in Figure~\ref{fig:log-sector}, this strip is
  subtended by an angle \(\theta\) at the origin, measured from the
  negative real axis. By simple trigonometry on the vertex of the
  strip \((\log(C), \pi)\), we get that this angle is
  \begin{equation*}
    \theta = \arctan\left(\frac{\pi}{|\log(C)|}\right).
  \end{equation*}
  The argument of \(\log(1 - tz)\) is therefore restricted to the
  interval \([\pi - \theta, \pi]\). Extending this to the \(m\)-th
  power, the argument of \(\log^m(1 - tz)\) lies in the interval
  \([m\pi - m\theta, m\pi]\). For this sector to fit entirely within a
  single quadrant, the total opening angle must satisfy
  \(m\theta \leq \frac{\pi}{2}\), which is precisely the condition on
  \(m\) in the statement.

  The case \(\imag z > 0\) follows in the same way.

  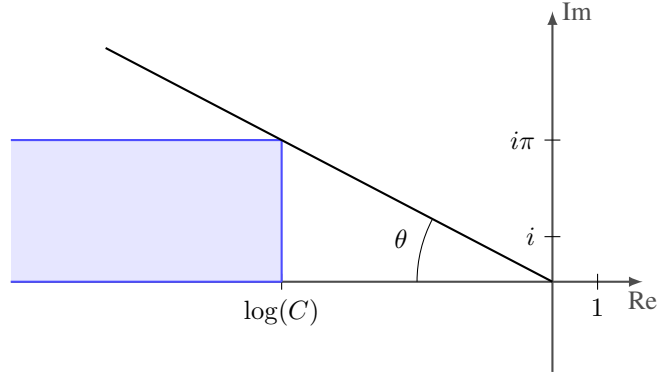
\begin{figure}[h]
    \centering
    \begin{tikzpicture}[
      scale = 0.6,
      >=latex, 
      ]

      \def\logC{-6}
      \def\thetadeg{27.6364} 
      \def\PiVal{3.14159} 

      \def\xmin{-12}
      \def\xmax{2}
      \def\ymin{-2}
      \def\ymax{6}

      \draw[thick,black!70,->] (\xmin,0) -- (\xmax,0) node[below] {Re};
      \draw[thick,black!70,->] (0,\ymin) -- (0,\ymax) node[right] {Im};

      \draw (1,5pt) -- (1,-5pt) node[below] {$1$};
      \draw (\logC,5pt) -- (\logC,-5pt) node[below] {$\log(C)$};
      \draw (5pt,1) -- (-5pt,1) node[left] {$i$};
      \draw (5pt,\PiVal) -- (-5pt,\PiVal) node[left] {$i\pi$};

      \draw[fill=blue!10, draw=none] (\xmin, 0) rectangle (\logC, \PiVal);


      \draw[thick, blue!70] (\logC, 0) -- (\logC, \PiVal);
      \draw[thick, blue!70] (\xmin, 0) -- (\logC, 0);
      \draw[thick, blue!70] (\xmin, \PiVal) -- (\logC, \PiVal);

      \draw[thick] (0, 0) -- (1.65 * \logC, 1.65 * \PiVal);

      \draw (0.5*\logC, 0) arc (180:180-\thetadeg:-0.5*\logC);
      \draw (0.5 * \logC, 0.6 * 0.5 * \PiVal) node[left] {\(\theta\)};
    \end{tikzpicture}
    \caption{Geometric bounds for \(\log(1 - tz)\) when
      \(|1 - tz| \leq C < 1\) and \(\imag z \leq 0\). The values are
      confined to the blue semi-infinite strip
      \((-\infty, \log(C)] \times i[0, \pi]\). This region is enclosed
      by a cone of angle \(\theta\) relative to the negative real
      axis, where \(\theta = \arctan(\pi / |\log C|)\).}
    \label{fig:log-sector}
  \end{figure}
\end{proof}

\subsection{Taylor models}
\label{sec:taylor-models}
On several occurrences we expand functions in terms of \(N^{-1}\) up
to some finite degree and need a bound on the remainder term that is
valid for all \(N \geq N_{0}\) for some \(N_{0}\). Examples of such
remainder terms are the functions \(T_{6}(N, z)\) and \(K_{4}(N, z, t)\)
from
Lemmas~\ref{lemma:bounds_and_splitting:first-term-J0:expansion-of-argument-inside-g-function}
and~\ref{lemma:exact-expression-for-K-up-to-order_-3} respectively, as
well as the function bounded by \(C_{F_{N},4}\) in
Lemma~\ref{lemma:K4-bounds-preliminary-constants}. The main tool to
bound such remainder terms will be the use of Taylor models. We here
give a brief introduction to Taylor models, for a more thorough
introduction we refer to~\cite{Joldes2011}, and discuss the details on
how to use them to bound the relevant remainder terms.

With a Taylor model we mean what in~\cite{Joldes2011} is referred to
as a \emph{Taylor model with relative error}. For a real valued
function \(f\) we use the following definition (compare
with~\cite[Definition 2.3.2]{Joldes2011}).
\begin{definition}
  A Taylor model \(M = (p, \Delta)\) of degree \(n\) for a function
  \(f\) on an interval \(I\) centered at a point \(x_{0} \in I\) is a
  polynomial \(p\) of degree \(n\) together with an interval
  \(\Delta\), satisfying that for all \(x \in I\) there is
  \(\delta \in \Delta\) such that
  \begin{equation*}
    f(x) - p(x - x_{0}) = \delta (x - x_{0})^{n + 1}.
  \end{equation*}
\end{definition}
Many of the functions we make use of are complex valued, for this case
we have the following definition. Note that even if the function is
complex valued its domain is still real (in our case the domain will
correspond to \(N^{-1}\)).
\begin{definition}
  A complex valued Taylor model \(M = (p, \Delta)\) of degree \(n\)
  for a complex valued function \(f\) on a real interval \(I\)
  centered at a point \(x_{0} \in I\) is a (complex) polynomial \(p\)
  of degree \(n\) together with a complex rectangle
  \(\Delta = \Delta_{\real} \times i\Delta_{\imag}\), satisfying that
  for all \(x \in I\) there is
  \(\delta = \delta_{\real} + i\delta_{\imag} \in \Delta_{\real}
  \times i\Delta_{\imag}\) such that
  \begin{equation*}
    f(x) - p(x - x_{0}) = \delta (x - x_{0})^{n + 1}.
  \end{equation*}
\end{definition}

In the case that \(f\) is an \(n + 1\) times differentiable real
function, the polynomial \(p\) is the Taylor polynomial of degree
\(n\) of \(f\) centered at \(x_{0}\)~\cite[Lemma 2.3.3]{Joldes2011}. A
Taylor model is thus given by a truncated Taylor expansion plus a
bound on the remainder term valid on some interval \(I\). From
Taylor's theorem we see that we can take \(\Delta\) to be an enclosure
of \(\frac{f^{n + 1}(x)}{(n + 1)!}\) on the interval \(I\). The
situation is the same for a complex valued function \(f\), which can
be seen by considering the real and imaginary parts separately. Note
that in this case \(\Delta_{\real}\) and \(\Delta_{\imag}\) are
enclosures of \(\frac{\real(f^{n + 1}(x))}{(n + 1)!}\) and
\(\frac{\imag(f^{n + 1}(x))}{(n + 1)!}\) respectively. In case \(f\)
is holomorphic one can use more powerful tools, but these ones
suffices for our purposes.

We can perform arithmetic on Taylor models. Given two functions \(f\)
and \(g\) with corresponding Taylor models
\(M_{f} = (p_{f}, \Delta_{f})\) and \(M_{g} = (p_{g} + \Delta_{g})\)
we can compute a Taylor model of \(f + g\) as
\(M_{f + g} = (p_{f} + p_{g}, \Delta_{f} + \Delta_{g})\), similarly
for \(f - g\). With slightly more work we can compute a Taylor model
of \(f \cdot g\), see~\cite[Algorithm 2.3.6]{Joldes2011}, as well as
of \(f / g\), see~\cite[Algorithm 2.3.12]{Joldes2011}. We can also
compose Taylor models with arbitrary functions, given a Taylor model
\(M_{f}\) of \(f\) and a function \(g\) we can compute a Taylor model
\(M_{g \circ f}\) of \(g \circ f\), see~\cite[Algorithm
2.3.8]{Joldes2011}. Note that while the algorithms
in~\cite{Joldes2011} are described for real valued Taylor models, they
work equally well for complex valued Taylor models.

One particular case that will be relevant for us is to compute a (real
valued) Taylor model of \(|f|\) from a complex valued Taylor model of
\(f\). If \(M_{f} = (p_{f}, \Delta_{f})\) is a complex valued Taylor
model for the function \(f\), then we want to find a Taylor model
\(M_{|f|} = (p_{|f|}, \Delta_{|f|})\) for the function \(|f|\). For
this we use that
\begin{equation}\label{eq:taylor-model-abs}
  |f(x)| = \sqrt{f(x)\overline{f(x)}}.
\end{equation}
Since \(x \in I\) is real valued, we have
\begin{equation*}
  \overline{f(x)} = \overline{p_{f}(x) + \delta (x - x_{0})^{n + 1}}
  = \overline{p_{f}}(x) + \bar{\delta}(x - x_{0})^{n + 1},
\end{equation*}
where \(\overline{p_{f}}\) denotes the conjugate polynomial of
\(p_{f}\). It follows that
\(M_{\bar{f}} = (\overline{p_{f}}, \bar{\Delta})\) is a Taylor model
for \(\bar{f}\). We can get a Taylor model for \(f(x)\overline{f(x)}\)
by simplify multiplying the Taylor models \(M_{f}\) and
\(M_{\bar{f}}\), note further that this Taylor model is real valued.
Finally, we can get a Taylor model for \(|f|\) composing the product
of \(M_{f}\) and \(M_{\bar{f}}\) with the square-root function.

\subsubsection{Taylor model for \(F_N(z)\)}
\label{sec:taylor-model-F_N}
For many functions we can automatically compute Taylor models using
the tools for Taylor arithmetic that are available in FLINT. For
\(F_{N}(z)\), no such tools are however available and computation of
the Taylor model requires a separate implementation.

For a fixed \(z\) we want to compute a Taylor model of \(F_{N}(z)\) in
the variable \(\nu = N^{-1}\) that is centered at \(\nu = 0\) and
valid on the interval \([0, 1 / N_{0}]\) for some \(N_{0}\) (in
practice \(N_{0} = \codenumber{64}\)). For our purposes it suffices
with a Taylor model of degree \(5\). From
\eqref{eq:FNz_expansion_powers_1/N} we immediately get that
the polynomial for the Taylor model is
\begin{equation*}
  p(\nu) = 1 + \sum_{n = 1}^{5} S_{n}(z)\nu^{n}.
\end{equation*}
The only thing that remains is to compute an enclosure for \(\Delta\).
For this we want an enclosure of
\begin{equation*}
  \frac{d^{6}}{d\nu^{6}} F_{\nu^{-1}}(z)
\end{equation*}
for \(\nu \in [0, 1 / N_{0}]\). From Equation~\eqref{eq:FNz_integral}
we have
\begin{equation*}
  \frac{d^{6}}{d\nu^{6}} F_{\nu^{-1}}(z)
  = \int_{0}^{1} \frac{d^{6}}{d\nu^{6}}  \nu t^{\nu}((1 - tz)^{-2\nu} - 1)\frac{dt}{t}.
\end{equation*}
Let us denote the integral by \(R_{F}(\nu, z)\). To compute an
enclosure of \(R_{F}(\nu, z)\) we will follow the same approach as
when computing \(S_{n}(z)\) in Appendix~\ref{sec:polylogs}. That is,
we write
\begin{multline*}
  R_{F}(\nu, z) =
  \int_{0}^{a} \frac{d^{6}}{d\nu^{6}}  \nu t^{\nu}((1 - tz)^{-2\nu} - 1)\frac{dt}{t}
  + \int_{a}^{b} \frac{d^{6}}{d\nu^{6}}  \nu t^{\nu}((1 - tz)^{-2\nu} - 1)\frac{dt}{t}\\
  + \int_{b}^{1} \frac{d^{6}}{d\nu^{6}}  \nu t^{\nu}((1 - tz)^{-2\nu} - 1)\frac{dt}{t}
  = R_{F,1}(\nu, z) + R_{F,2,1}(\nu, z) + R_{F,2,2}(\nu, z),
\end{multline*}
where \(0 < a < b \leq 1\). In the computations we take
\(a = \codenumber{10^{-8}}\) and if \(z\) doesn't overlap \(1\) we
take \(b = 1\) (so \(R_{F,2,2}\) is zero), otherwise we take
\(b = 1 - \codenumber{10^{-8}}\). To enclose \(R_{F,2,1}(\nu, z)\) we
use the rigorous numerical integrator discussed in
Appendix~\ref{sec:rigorous-integration}, with the sixth order
derivative in the integrator computed automatically using Taylor
arithmetic.

To enclose \(R_{F,1}\) we note that the integrand can be written as
\begin{align*}
  \frac{d^{6}}{d\nu^{6}}  \nu t^{\nu}((1 - tz)^{-2\nu} - 1)\frac{1}{t}
  &= t^{\nu}\Bigg(\nu\frac{(1 - tz)^{-2\nu} - 1}{t}\log^{6}(t)\\
  &\qquad\qquad- 6\left(2\nu(1 - tz)^{-2\nu}\frac{\log(1 - tz)}{t} - \frac{(1 - tz)^{-2\nu} - 1}{t}\right)\log^{5}(t)\\
  &\qquad\qquad+ 60(\nu\log(1 - tz) - 1)(1 - tz)^{-2\nu}\frac{\log(1 - tz)}{t}\log^{4}(t)\\
  &\qquad\qquad- 80(2\nu\log(1 - tz) - 3)\log(1 - tz)(1 - tz)^{-2\nu}\frac{\log(1 - tz)}{t}\log^{3}(t)\\
  &\qquad\qquad+ 240(\nu\log(1 - tz) - 2)\log^{2}(1 - tz)(1 - tz)^{-2\nu}\frac{\log(1 - tz)}{t}\log^{2}(t)\\
  &\qquad\qquad- 96(2\nu\log(1 - tz) - 5)\log^{3}(1 - tz)(1 - tz)^{-2\nu}\frac{\log(1 - tz)}{t}\log(t)\\
  &\qquad\qquad+ 64(\nu\log(1 - tz) - 3)\log^{4}(1 - tz)(1 - tz)^{-2\nu}\frac{\log(1 - tz)}{t}\Bigg).
\end{align*}
At \(t = 0\) the terms \(\frac{(1 - tz)^{-2\nu} - 1}{t}\) and
\(\frac{\log(1 - tz)}{t}\) have removable singularities that can be
handled, and the factor \(t^{\nu}\) is contained in \([0, 1]\). All
other parts except the \(\log(t)\) factors are also bounded near
\(t = 0\). To enclose the integral we thus follow the same procedure
as for enclosing \(S_{n,1}\) in Appendix~\ref{sec:polylogs}.

For \(R_{F,2,2}\) we instead write the derivative as
\begin{align*}
  \frac{d^{6}}{d\nu^{6}}  \nu t^{\nu}((1 - tz)^{-2\nu} - 1)\frac{1}{t}
  &= t^{\nu - 1}\Big(64\nu \log^{6}(1 - tz)(1 - tz)^{-2\nu}\\
  &\qquad\qquad- 192(\nu\log(t) + 1)\log^{5}(1 - tz)(1 - tz)^{-2\nu}\\
  &\qquad\qquad+ 240(\nu\log(t) + 2)\log(t)\log^{4}(1 - tz)(1 - tz)^{-2\nu}\\
  &\qquad\qquad- 160(\nu\log(t) + 3)\log^{2}(t)\log^{3}(1 - tz)(1 - tz)^{-2\nu}\\
  &\qquad\qquad+ 60(\nu\log(t) + 4)\log^{3}(t)\log^{2}(1 - tz)(1 - tz)^{-2\nu}\\
  &\qquad\qquad- 12(\nu\log(t) + 5)\log^{4}(t)\log(1 - tz)(1 - tz)^{-2\nu}\\
  &\qquad\qquad+ (\nu\log(t) + 6)\log^{5}(t)((1 - tz)^{-2\nu} - 1)\Big).
\end{align*}
We integrate this termwise and factor out everything except the
factors \(\log^{m}(1 - tz)(1 - tz)^{-2\nu}\). The remaining integrals
are of the form
\begin{equation*}
  \int_{b}^{1} \log^{m}(1 - tz)(1 - tz)^{-2\nu}\ dt,
\end{equation*}
and can be computed using Lemma~\ref{lemma:basic-integrals}.

For this to be valid we must, similar to for \(S_{n,2,2}\) in
Appendix~\ref{sec:polylogs}, ensure that the real and imaginary parts
of \(\log^{m}(1 - tz)(1 - tz)^{-2\nu}\) do not change sign. For this
we have the following lemma, that builds upon
Lemma~\ref{lemma:log-m-signs}.

\begin{lemma}
  Let \(z \in \mathbb{C}\), \(\nu \in [0, 1 / N_{0}]\), \(0 < b < 1\) and
  assume that for all \(t \in [b, 1]\) we have \(|1 - tz| \leq C\) for
  some \(C < 1\). Let \(\theta = \arctan\left(\pi/ |\log(C)|\right)\),
  if \(m \geq 1\) is an integer such that
  \(m\theta + 2\pi / N_{0} \leq \pi/2\) then the real and imaginary
  parts of \(\log^{m}(1 - tz)(1 - tz)^{-2\nu}\) do not change sign for
  all \(t \in [b, 1]\).
\end{lemma}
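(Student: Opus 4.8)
The plan is to deduce this from Lemma~\ref{lemma:log-m-signs} by showing that multiplying by $(1 - tz)^{-2\nu}$ only perturbs the argument by a controlled amount, at most $2\pi/N_0$. If $z = 0$ the claim is trivial (the function is identically $0$ since $m \ge 1$), so assume $z \ne 0$ and, as in Lemma~\ref{lemma:log-m-signs}, treat first the case $\imag z \le 0$, the case $\imag z > 0$ being obtained by reflecting every argument interval below across the real axis.

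First I would recall the geometric picture from the proof of Lemma~\ref{lemma:log-m-signs}: for $t \in [b, 1]$ we have $\imag(1 - tz) = -t\,\imag z \ge 0$, so $1 - tz$ stays in the closed upper half-plane and never vanishes; hence $t \mapsto \arg(1 - tz)$ is continuous with values in $[0, \pi]$, and combined with $|1 - tz| \le C < 1$ this confines $\log(1 - tz)$ to the strip $(-\infty, \log C] \times i[0, \pi]$, giving $\arg(\log(1 - tz)) \in [\pi - \theta, \pi]$ with $\theta = \arctan(\pi/|\log C|)$. Raising to the $m$-th power, $\arg(\log^m(1 - tz)) \in [m\pi - m\theta, m\pi]$ (tracked as a continuous function of $t$, so no branch reduction is needed). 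For the extra factor I would use $(1 - tz)^{-2\nu} = \exp(-2\nu\log(1 - tz))$ on the principal branch, whose argument is $-2\nu\arg(1 - tz) \in [-2\pi/N_0, 0]$ since $\nu \in [0, 1/N_0]$ and $\arg(1 - tz) \in [0, \pi]$. Since arguments add under multiplication, $\arg\bigl(\log^m(1 - tz)(1 - tz)^{-2\nu}\bigr)$ lies, continuously in $t$, in the interval $[m\pi - m\theta - 2\pi/N_0,\ m\pi]$.

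Finally I would note that this interval has length $m\theta + 2\pi/N_0 \le \pi/2$ by hypothesis and its right endpoint $m\pi$ is an integer multiple of $\pi/2$; hence it is contained in a single closed quadrant $[k\pi/2, (k+1)\pi/2]$ of the plane, so $\log^m(1 - tz)(1 - tz)^{-2\nu}$ takes values in one closed quadrant of $\mathbb{C}$ as $t$ ranges over $[b, 1]$, and therefore neither its real part nor its imaginary part changes sign. The only point requiring care — as already in Lemma~\ref{lemma:log-m-signs} — is the bookkeeping of branches: one must verify that $\arg(1 - tz)$, and hence all the arguments above, vary continuously with $t$ without a jump, which holds because $1 - tz$ remains in the closed upper half-plane and stays away from $0$. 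This is the main, though modest, obstacle; everything else is immediate.
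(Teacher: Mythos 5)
Your proposal matches the paper's proof essentially verbatim: both invoke the argument interval $[m\pi - m\theta, m\pi]$ for $\log^m(1-tz)$ from the proof of Lemma~\ref{lemma:log-m-signs}, add the contribution $[-2\nu\pi, 0] \supseteq [-2\pi/N_0, 0]$ from $(1-tz)^{-2\nu}$, and conclude from the hypothesis $m\theta + 2\pi/N_0 \le \pi/2$ that the combined argument interval fits in a single quadrant. Your extra remarks on continuity of $\arg(1-tz)$ and on the right endpoint $m\pi$ being a multiple of $\pi/2$ make explicit what the paper leaves implicit, but the route is the same.
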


\begin{proof}
  Similar to in Lemma~\ref{lemma:log-m-signs} we give a proof for
  \(\imag z \leq 0\), with the case \(\imag z > 0\) following in a
  similar way.

  From the proof of Lemma~\ref{lemma:log-m-signs} we have that the
  argument of \(\log^m(1 - tz)\) lies in the interval
  \([m\pi - m\theta, m\pi]\). For the second factor we note that
  \(\imag z \leq 0\) means that the argument of \(1 - tz\) lies in the
  interval \([0, \pi]\), hence the argument of \((1 - tz)^{-2\nu}\)
  lies in \([-2\nu\pi, 0]\). It follows that
  \begin{equation*}
    \arg(\log^{m}(1 - tz)(1 - tz)^{-2\nu}) \in [m\pi - m\theta - 2\nu\pi, m\pi],
  \end{equation*}
  and for this to lie in a single quadrant we must have
  \(m\theta + 2\nu\pi \leq \frac{\pi}{2}\). Since
  \(\nu \in [0, 1 / N_{0}]\), the result follows.
\end{proof}

\subsubsection{Bounding remainder terms using Taylor models}
\label{sec:taylor-models-remainders}
With an implementation of Taylor models for \(F_{N}\) it is relatively
straightforward to compute enclosures of the required remainder terms.
They are all given by the remainder term of a Taylor model of the
associated function.

The function \(T_{6}(N, z)\) is given in Equation~\eqref{eq:T_246}.
Consider a degree 5 Taylor model \(M_{z} = (p_{z}, \Delta_{z})\) of
the function
\begin{equation*}
  \frac{\sqrt{\lambda_{app}}}{\sqrt{\lambda}} c_N |F_N(z)|
\end{equation*}
in the variable \(\nu = N^{-1}\) that is valid on the interval
\([0, 1 / N_{0}]\). By definition of \(\Delta_{z}\) we then have
\begin{equation*}
  T_{6}(N, z) \in \Delta_{z}
\end{equation*}
for any \(\nu \in [0, 1 / N_{0}]\). For \(K_{4}(N, z, t)\) the approach
is similar, in this case it is enclosed by the remainder term of a
degree 3 Taylor model for
\begin{equation*}
  K(z, t) = J_0 \left(\rho^{1/2} |z|^{1/N} \sqrt{F_N(\bar{z}) \left( F_N(z) - (t/z)^{1/N} F_N(t) \right)}\right) \, ,
\end{equation*}
given in Equation~\eqref{eq:definition-of-K(z,t)}. Using that
\(J_{0}(z) = {}_0\tilde{F}_1(1; -z^{2} / 4)\) we can write this as
\begin{equation*}
  K(z, t) = {}_0\tilde{F}_1\left(1; -\frac{1}{4}\rho |z|^{2/N} \left(F_N(\bar{z}) \left( F_N(z) - (t/z)^{1/N} F_N(t) \right)\right)\right) \, .
\end{equation*}
To bound \(C_{F_{N},4}\) in
Lemma~\ref{lemma:K4-bounds-preliminary-constants} we need to compute
a degree 3 Taylor model of
\begin{equation*}
  \frac{\rho^{1/2}}{\lambda^{1/2}}F_N(t).
\end{equation*}

In all cases we compute Taylor models of \(F_{N}\) using the approach
discussed in the above subsection. For the rest, Taylor models can be
computed automatically using Taylor arithmetic and composition of
Taylor models. In a couple of cases we initially compute higher degree
Taylor models which are then truncated to the appropriate degree, this
generally improves the quality of the enclosure of the remainder term.

\section{Bounds for $K_4(N,z,t)$}
\label{app:bounds-for-K4}

This section is devoted to computing bounds on the residue function
\begin{equation*}
  K_4(N,z,t) = \left(K(z,t) - d_0(z,t) - \frac{d_1(z,t)}{N} -
  \frac{d_2(z,t)}{N^2} - \frac{d_3(z,t)}{N^3}\right)N^{4}
\end{equation*}
from
Lemma~\ref{lemma:exact-expression-for-K-up-to-order_-3}
that are valid for \(t\) close to zero.

We first begin by establishing bounds on some functions that will
appear later.
\begin{lemma}
  \label{lemma:K4-bounds-preliminary-constants}
  Let \(N \geq N_{0} = \codenumber{64}\). We have the enclosure
  \begin{equation}\label{eq:rho-lambda-enclosure}
    0 \leq 1 - \frac{\rho^{1/2}}{\lambda^{1/2}} \leq \frac{C_{\rho,\lambda}}{N^5},
  \end{equation}
  for
  \begin{equation*}
    C_{\rho,\lambda} = \codenumber{7}.
  \end{equation*}
  For \(|t| \leq 1\) we have the bounds
  \begin{equation*}
    |S_2(t)| \leq C_{S_2},\quad
    |S_3(t)| \leq C_{S_3}
  \end{equation*}
  with
  \begin{equation*}
    C_{S_{2}} = \codenumber{3.3},\quad
    C_{S_{3}} = \codenumber{2.5}.
  \end{equation*}
  Moreover, we have the bounds
  \begin{align*}
    \left|\frac{\rho^{1/2}}{\lambda^{1/2}}F_N(t)\right| &\leq C_{F_N,0},\\
    \left|\frac{\rho^{1/2}}{\lambda^{1/2}}F_N(t) - 1\right| &\leq \frac{C_{F_N,2}}{N^2},\\
    \left| \frac{\rho^{1/2}}{\lambda^{1/2}} \left(F_N(t) - \frac{S_2(t)}{N^2}\right) - 1 \right| &\leq \frac{C_{F_N,3}}{N^3},\\
    \left| \frac{\rho^{1/2}}{\lambda^{1/2}} \left(F_N(t) - \frac{S_2(t)}{N^2} - \frac{S_3(t)}{N^3} \right) - 1 \right| &\leq \frac{C_{F_N,4}}{N^4}
  \end{align*}
  with
  \begin{equation*}
    C_{F_{N},0} = \codenumber{1.001},\quad
    C_{F_{N},2} = \codenumber{3.5},\quad
    C_{F_{N},3} = \codenumber{3},\quad
    C_{F_{N},4} = \codenumber{11}.
  \end{equation*}
  Finally, we have the bound
  \begin{equation*}
    \|J_{0}^{(8)}\|_{L^{\infty}([0, \infty))} \leq C_{J_{0},8}
  \end{equation*}
  with \(C_{J_{0},8} = \codenumber{0.2734376}\).
\end{lemma}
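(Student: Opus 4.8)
The statement bundles four essentially independent estimates, and I would prove them separately. The one carrying real structural content is the two–sided bound~\eqref{eq:rho-lambda-enclosure} on $\rho^{1/2}/\lambda^{1/2}$: it records the fact that the normalisation constant $c_N$ and the approximate eigenvalue $\lambda_{app}$ are built precisely so that $\rho=c_N^2\lambda_{app}$ agrees with $\lambda$ up to order $N^{-5}$, which forces an exact cancellation of the $N^{-1},\dots,N^{-4}$ terms rather than a mere numerical near–coincidence. This is the hard part; the bounds on $S_2,S_3$, on the combinations of $F_N$, and on $\|J_0^{(8)}\|_\infty$ are routine once the right reduction (maximum modulus, Taylor models, splitting $[0,\infty)$) is in place.

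For~\eqref{eq:rho-lambda-enclosure}, since $\lambda_{app}/\lambda = 1 + 4\zeta(3)N^{-3} + (12-2\lambda)\zeta(5)N^{-5}$ exactly and $12-2\lambda>0$, the plan is to control $\log(c_N^2) = 2\log\Gamma(1-\tfrac{1}{N}) + \log\Gamma(1+\tfrac{2}{N}) - 2\log\Gamma(1+\tfrac{1}{N}) - \log\Gamma(1-\tfrac{2}{N})$ via the classical expansion $\log\Gamma(1+x) = -\gamma x + \sum_{k\ge2}\tfrac{(-1)^k\zeta(k)}{k}x^k$, convergent since $2/N\le 2/N_0<1$. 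A short computation shows that all even powers of $1/N$ cancel and the coefficient of $N^{-k}$ for odd $k\ge3$ is $\tfrac{\zeta(k)}{k}(4-2^{k+1})<0$; hence $\log(c_N^2) = -4\zeta(3)N^{-3} - 12\zeta(5)N^{-5} - R(N)$ with $0<R(N)\le\zeta(7)\tfrac{(2/N)^7}{1-2/N}$, negligible for $N\ge N_0$. Using $\log(1+u)<u$ for the positive correction $u$ of $\lambda_{app}/\lambda$ gives $\log(\rho/\lambda)<-2\lambda\zeta(5)N^{-5}<0$, hence $\rho<\lambda$, which is the lower bound in~\eqref{eq:rho-lambda-enclosure}; using in addition $\log(1+u)\ge u-u^2$ and $1-e^t\le -t$ yields $0\le 1-\rho^{1/2}/\lambda^{1/2}\le\tfrac12|\log(\rho/\lambda)| \le(\lambda\zeta(5)+o(1))N^{-5}\le 7N^{-5}$ since $\lambda\zeta(5)<6$. (An alternative is to compute a degree–$5$ Taylor model of $\sqrt{c_N^2\lambda_{app}/\lambda}$ in $\nu=1/N$ on $[0,1/N_0]$ using FLINT's Taylor arithmetic for $\Gamma$, but the sign of $1-\rho^{1/2}/\lambda^{1/2}$ is cleanest from the $\log$ argument.)

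For $|S_2(t)|,|S_3(t)|$ on $|t|\le1$: each $S_n$ is holomorphic on $\mathbb{D}$ and, its defining integral being absolutely convergent up to the boundary, extends continuously to $\overline{\mathbb{D}}$, so by the maximum modulus principle $\max_{|t|\le1}|S_n(t)| = \max_{\theta\in[0,\pi]}|S_n(e^{i\theta})|$ after using conjugate symmetry. I would enclose this with the extremum–enclosing algorithm of Appendix~\ref{sec:enclosing-extrema}, evaluating $S_n$ directly through its integral formula as in Appendix~\ref{sec:polylogs}; this automatically realises the cancellation $S_3 = 4S_{1,2}-2\,\mathrm{Li}_3$ that makes a naive triangle–inequality estimate too weak. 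The maxima sit near $t=1$, where $S_2(1)=\pi^2/3<3.3$ and $S_3(1)=2\zeta(3)<2.5$. The same reduction yields the cruder bounds on $|S_4|$ and $|S_5|$ over $\overline{\mathbb{D}}$ needed below.

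For the $F_N$–bounds I would start from the degree–$5$ Taylor model of $\nu\mapsto F_N(t)$ on $[0,1/N_0]$ from Appendix~\ref{sec:taylor-model-F_N}, whose polynomial part is $1+S_2(t)\nu^2+S_3(t)\nu^3+S_4(t)\nu^4+S_5(t)\nu^5$ and whose remainder is an explicit enclosure of $\tfrac{1}{6!}\partial_\nu^6 F_{1/\nu}(t)$. Truncating this model to the relevant degree, combining with~\eqref{eq:rho-lambda-enclosure} (which gives $0\le\rho^{1/2}/\lambda^{1/2}\le1$) through the triangle inequality, and taking $\sup_{|t|\le1}$ by maximum modulus and the extremum–enclosing algorithm reads off $C_{F_N,0}=1.001$ (from $|F_N(t)|\le1+|S_2(t)|N_0^{-2}+\cdots$), $C_{F_N,2}=3.5$, $C_{F_N,3}=3$ and $C_{F_N,4}=11$, the dominant contributions being $|S_2|$, $|S_3|$, $|S_4|$ respectively, plus the negligible $O(N^{-5})$ error from $\rho^{1/2}/\lambda^{1/2}$. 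Finally, for $\|J_0^{(8)}\|_\infty$: from $J_0'=-J_1$ and $J_\nu'=\tfrac12(J_{\nu-1}-J_{\nu+1})$ one gets $J_0^{(8)} = 2^{-7}\bigl(35J_0-56J_2+28J_4-8J_6+J_8\bigr)$; on $[0,M]$ with $M=25$ I would enclose the maximum with the algorithm of Appendix~\ref{sec:enclosing-extrema} (the maximum is $J_0^{(8)}(0)=8!/(2^8(4!)^2)=35/128=0.2734375<C_{J_0,8}$), and on $(M,\infty)$ the Landau–type bound $|J_\nu(x)|\le0.7858\,x^{-1/3}$ from~\cite{fungrim-7f3485}, together with $2^{-7}(35+56+28+8+1)=1$, gives $|J_0^{(8)}(x)|\le0.7858\,x^{-1/3}<C_{J_0,8}$ there. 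The main obstacle throughout remains~\eqref{eq:rho-lambda-enclosure}: the vanishing of the $N^{-1},\dots,N^{-4}$ terms in $\rho/\lambda-1$ is an identity that must be extracted from the $\log\Gamma$ expansion (or from the algebraic structure of the Taylor models), not from interval evaluation alone.
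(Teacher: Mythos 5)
Your proposal is essentially correct, and for the bounds on $S_2$, $S_3$, the $F_N$-expressions and $\|J_0^{(8)}\|_\infty$ it mirrors the paper's approach: conjugate symmetry plus the extremum-enclosing algorithm of Appendix~\ref{sec:enclosing-extrema}, Taylor models of $F_N$ in $\nu = 1/N$, and for $J_0^{(8)}$ the explicit linear combination of $J_0, J_2, J_4, J_6, J_8$ split between a compact interval and a $x^{-1/3}$ tail bound. (The choice of split point, $25$ vs.\ the paper's $50$, is immaterial as long as the tail bound sits below $35/128$.) One minor deviation is that the paper proves only the $C_{F_N,4}$ bound by Taylor models and then derives $C_{F_N,0}, C_{F_N,2}, C_{F_N,3}$ from it by triangle-inequality chains using $C_{S_2}, C_{S_3}$; you suggest bounding each directly, which also works but does not need $|S_4|$, $|S_5|$ over $\overline{\mathbb{D}}$.

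The genuinely different and interesting part is your treatment of~\eqref{eq:rho-lambda-enclosure}. The paper relies on Taylor's theorem in the variable $\nu=1/N$ together with an interval enclosure of $\tfrac{1}{5!}\partial_\nu^5\bigl(1-\rho(1/\nu)^{1/2}/\lambda^{1/2}\bigr)$ on $[0,1/N_0]$, computed by automatic Taylor arithmetic, which simultaneously certifies the sign (hence the lower bound $0$) and the upper bound. You instead derive the cancellation analytically from the $\log\Gamma$ series: the identity $\log(c_N^2) = \sum_{k\ \text{odd},\,k\ge3}\tfrac{\zeta(k)}{k}(4-2^{k+1})N^{-k}$ (the even terms cancel because $(2-2^k)(1-(-1)^k)=0$ there) gives $-4\zeta(3)N^{-3}-12\zeta(5)N^{-5}-R(N)$ with $R(N)>0$, and combining with $\log(1+u)$ for $u=\lambda_{app}/\lambda-1>0$ yields $\log(\rho/\lambda)<-2\lambda\zeta(5)N^{-5}<0$, hence the sign; the elementary bounds $\log(1+u)\ge u-u^2$ and $1-e^{t}\le -t$ then give the upper bound. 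This is cleaner conceptually, since it \emph{explains} why the $N^{-1},\dots,N^{-4}$ coefficients vanish rather than observing it numerically; its only cost is that to match the paper's rigor you must still explicitly bound $R(N)$ by the tail $\sum_{k\ge7\ \text{odd}}\tfrac{\zeta(k)}{k}(2^{k+1}-4)N^{-k}$ and $u^2\le (4\zeta(3)N^{-3}+\cdots)^2$, which you gesture at ("negligible for $N\ge N_0$") but would need to make quantitative; for $N_0=64$ both are comfortably below $1/N^5$, so the constant $C_{\rho,\lambda}=7\approx\lambda\zeta(5)+1$ goes through. Either route is valid; the paper's buys uniformity with the rest of its Taylor-model machinery, yours buys transparency.
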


\begin{proof}
  Let us start with the bound involving \(C_{\rho,\lambda}\).
  Recall that \(\rho = \rho(N) = c_{N}^{2}\lambda_{app}(N)\). Let us
  denote \(\nu = N^{-1}\), we are then interested in controlling
  \(\rho(1 / \nu)\) for \(\nu \in [0, 1 / N_{0}]\). By construction,
  \(\lambda_{app}\) is taken such that for the Taylor expansion at
  \(\nu = 0\), all terms up to \(\nu^{4}\) in
  \begin{equation*}
    1 - \frac{\rho(1 / \nu)^{1/2}}{\lambda^{1/2}}
  \end{equation*}
  vanish. To control \(1 - \frac{\rho(1 / \nu)^{1/2}}{\lambda^{1/2}}\),
  it therefore suffices to compute an enclosure of the fifth term in
  the Taylor expansion that is valid for \(\nu \in [0, 1 / N_{0}]\).
  More precisely we have for \(\nu \in [0, 1 / N_{0}]\) that
  \begin{equation*}
    1 - \frac{\rho(1 / \nu)^{1/2}}{\lambda^{1/2}} = \frac{1}{5!}\frac{d^{5}}{d\nu^{5}}\left(1 - \frac{\rho(1 / \nu)^{1/2}}{\lambda^{1/2}}\right)\Big|_{\nu = \xi}\nu^{5},
  \end{equation*}
  for some \(\xi \in [0, 1 / N_{0}]\). We enclose the derivative on
  the interval \([0, 1 / N_{0}]\) using the approach from
  \eqref{eq:taylor-bound} combined with automatic Taylor
  expansions, giving
  \begin{equation*}
    \frac{1}{5!}\frac{d^{5}}{d\nu^{5}}\left(1 - \frac{\rho(1 / \nu)^{1/2}}{\lambda^{1/2}}\right) \in \resultnumber{[6 \pm 0.633]}.
  \end{equation*}
  Since the enclosure is positive we get that
  \(1 - \frac{\rho^{1/2}}{\lambda^{1/2}}\) is lower bounded by zero.
  Since it is upper bounded by \(C_{\rho,\lambda}\) we get that
  \begin{equation*}
    1 - \frac{\rho^{1/2}}{\lambda^{1/2}} \leq C_{\rho,\lambda}\nu^{5} = \frac{C_{\rho,\lambda}}{N^{5}}.
  \end{equation*}

  Next we look at the bounds for \(S_{2}(t)\) and \(S_{3}(t)\). Since
  both of these functions are analytic on the unit disk it suffices to
  bound the modulus on the boundary of the disk. Moreover, they are
  conjugate symmetric and it hence suffices to bound them for
  \(t = e^{i\theta}\) with \(\theta \in [0, \pi]\). We make use of the
  algorithm discussed in Appendix~\ref{sec:enclosing-extrema} for
  enclosing the maximum on this interval. The procedure to compute
  enclosures of \(S_{2}\) and \(S_{3}\) is discussed in
  Appendix~\ref{sec:polylogs}. From this we get the enclosures
  \begin{equation*}
    \max_{\theta \in [0, \pi]} |S_{2}(e^{i\theta})| \in \resultnumber{[3.29 \pm 1.28 \cdot 10^{-3}]},\quad
    \max_{\theta \in [0, \pi]} |S_{3}(e^{i\theta})| \in \resultnumber{[2.44 \pm 1.98 \cdot 10^{-3}]},
  \end{equation*}
  which implies the proposed bounds.

  For the bounds related to \(F_{N}\) we start by noting that it
  suffices to prove the bound with \(C_{F_{N},4}\), the other bounds
  then follow from
  \begin{align*}
    \left|\frac{\rho^{1/2}}{\lambda^{1/2}}F_N(t)\right|
    &\leq 1 + \left|\frac{\rho^{1/2}}{\lambda^{1/2}}\frac{S_2(t)}{N^3}\right| + \left|\frac{\rho^{1/2}}{\lambda^{1/2}}\frac{S_3(t)}{N^3}\right| + \frac{C_{F_N,4}}{N^4}
      \leq 1 + \frac{C_{S_{2}}}{N_{0}^{2}} + \frac{C_{S_{3}}}{N_{0}^{3}} + \frac{C_{F_N,4}}{N_{0}^{4}}
      \leq C_{F_{N},0},\\
    \left|\frac{\rho^{1/2}}{\lambda^{1/2}}F_N(t) - 1\right|
    &\leq \left|\frac{\rho^{1/2}}{\lambda^{1/2}}\frac{S_2(t)}{N^3}\right| + \left|\frac{\rho^{1/2}}{\lambda^{1/2}}\frac{S_3(t)}{N^3}\right| + \frac{C_{F_N,4}}{N^4}
      \leq \left(C_{S_{2}} + \frac{C_{S_{3}}}{N_{0}} + \frac{C_{F_N,4}}{N_{0}^{2}}\right)\frac{1}{N^{2}}
      \leq \frac{C_{F_{N},2}}{N^{2}},\\
    \left| \frac{\rho^{1/2}}{\lambda^{1/2}} \left(F_N(t) - \frac{S_2(t)}{N^2} \right) - 1 \right|
    & \leq \left|\frac{\rho^{1/2}}{\lambda^{1/2}}\frac{S_3(t)}{N^3}\right| + \frac{C_{F_N,4}}{N^4}
      \leq \left(C_{S_{3}} + \frac{C_{F_N,4}}{N_{0}}\right)\frac{1}{N^{3}}
      \leq \frac{C_{F_{N},3}}{N^{3}}.
  \end{align*}
  Where we have used the bound
  \(\left|\frac{\rho^{1/2}}{\lambda^{1/2}}\right| \leq 1\), which
  follows from~\eqref{eq:rho-lambda-enclosure}.

  We next move to establishing the bound with \(C_{F_{N},4}\). Similar
  to for \(S_{2}\) and \(S_{3}\) we start by noticing that
  \begin{equation}\label{eq:scaled-F_N-remainder}
    \frac{\rho^{1/2}}{\lambda^{1/2}} \left(F_N(t) - \frac{S_2(t)}{N^2} - \frac{S_3(t)}{N^3} \right) - 1
  \end{equation}
  is analytic and conjugate symmetric. It hence suffices to bound it
  for \(t = e^{i\theta}\) with \(\theta \in [0, \pi]\). We make use of
  the algorithm discussed in Appendix~\ref{sec:enclosing-extrema} for
  enclosing the maximum on this interval. To evaluate the function we
  make use of Taylor models. Taylor models are discussed in
  Appendix~\ref{sec:taylor-models}, which also contains more details
  about how to compute the function in question using them. This gives
  us the enclosure
  \begin{equation*}
    \max_{\theta \in [0, \pi]} \left|\frac{\rho^{1/2}}{\lambda^{1/2}} \left(F_N(t) - \frac{S_2(t)}{N^2} - \frac{S_3(t)}{N^3} \right) - 1\right|
    \in \resultnumber{[10 \pm 0.298]},
  \end{equation*}
  which implies the required bound.

  Finally, what remains is establishing the bound for \(J_{0}^{(8)}\).
  We start by noticing that the derivative can be explicitly computed
  to be
  \begin{equation*}
    J_{0}^{(8)}(x) = \frac{35J_{0}(x) - 56J_{2}(x) + 28J_{4}(x) - 8J_{6}(x) + J_{8}(x)}{128}.
  \end{equation*}
  To bound it on \([0, \infty)\) we split the interval into
  \([0, 50]\) and \((50, \infty)\). For \(x \in [0, 50]\) we make use
  of the algorithm discussed in Appendix~\ref{sec:enclosing-extrema}
  for enclosing the maximum, giving us:
  \begin{equation*}
    \max_{x \in [0, 50]} |J_{0}^{(8)}| \in \resultnumber{[0.2734375000 \pm 1.75 \cdot 10^{-11}]} < C_{J_{0},8}.
  \end{equation*}
  For \(x \in (50, \infty)\) we use the bound
  \begin{equation*}
    |J_{\nu}(x)| \leq 0.7858 x^{-1 / 3},
  \end{equation*}
  from~\cite{fungrim-7f3485}, giving
  \begin{equation*}
    |J_{0}^{(8)}(x)| \leq 0.7858x^{-1 / 3}
    \leq 0.7858 \cdot 50^{-1 / 3}
    = 0.213298... < C_{J_{0},8}.
  \end{equation*}
\end{proof}

With these preliminary constants established, we can now analyze the structure of $K_4(N,z,t)$. Let us denote by $l_0(N,z,t), l_1(N,z,t), l_2(N,z,t)$ and $l_3(N,z,t)$ the terms arising from the polynomial in $\lambda$ from $\sum_{j=0}^3 \frac{d_j(z,t)}{N^j}$, i.e.
\begin{align*}
    l_0(N,z,t) &= 1 \\
    l_1(N,z,t) &= \frac{\lambda  \log(\frac{t}{z}) }{4 N} + \frac{1}{8 N^2} \lambda \left( \log^2\left(\frac{t}{z}\right) +2 S_2(t) - 2S_2(z) \right) \\
        & + \frac{1}{24 N^3} \lambda \left( \log^3\left(\frac{t}{z}\right) + 6 \log\left(\frac{t}{z}\right) S_2(t) + 6 \log\left(\frac{t}{z}\right)S_2(\bar{z}) + 6S_3(t) - 6 S_3(z) \right) \\
    l_2(N,z,t) &= \frac{1}{64 N^2}\lambda^2 \log^2\left(\frac{t}{z}\right) + \frac{1}{64 N^3} \lambda^2 \log\left(\frac{t}{z}\right) \left( \log^2\left(\frac{t}{z}\right) + 2 S_2(t) - 2 S_2(z) \right) \\
    l_3(N,z,t) &= \frac{\log^3\left(\frac{t}{z}\right)}{2304 N^3}
\end{align*}
Thus, we can rewrite $K_4(N,z,t) = N^4 \left[ K(z,t) - l_0(N,z,t) - \lambda l_1(N,z,t) - \lambda^2 l_2(N,z,t) - \lambda^3 l_3(N,z,t) \right]$,
and expand $K(z,t)$ using the Taylor series of the Bessel function at the origin as in the proof of Lemma~\ref{lemma:exact-expression-for-K-up-to-order_-3}.
This decomposition allows us to express a bound for the remainder $K_4(N,z,t)$ as a polynomial in $\log(t/z)$, as shown in the following lemma.

\begin{lemma}
    \label{lemma:K4-bounds}
    Let $|z|=1$ and $t=az$ with $0<a<1$.
    The residue function $K_4(N,z,t) = N^4 [ K(z,t) - d_0(z,t) - \frac{d_1(z,t)}{N} - \frac{d_2(z,t)}{N^2} - \frac{d_3(z,t)}{N^3} ] = N^4 \left[ K(z,t) - l_0(N,z,t) - \lambda l_1(N,z,t) - \lambda^2 l_2(N,z,t) - \lambda^3 l_3(N,z,t) \right]$ can be bounded on $|z|=1$ as:
    \begin{align*}
    |K_4(N,z,t)|
        &= N^4 \left| \sum_{n=0}^{3} \left[ \left(-\frac{\rho}{4}\right)^n \frac{F_N(\bar{z})^n (F_N(z) -(t/z)^{1/N} F_N(t) )^n}{(n!)^2}  - \lambda^n l_n(N,z,t) \right] \right. \\
        & \quad \left. + \frac{J_0^{(8)}(\xi)}{8!} \left(\rho  F_N(\bar{z}) \left(F_N(z) -\left(\frac{t}{z}\right)^{1/N} F_N(t) \right) \right)^{4} \right| \\
        &= N^4 \left| \sum_{n=0}^{3} T_{K,n}(N,z,t)  +  \frac{J_0^{(8)}(\xi)}{8!} \left(\rho  F_N(\bar{z}) \left(F_N(z) -\left(\frac{t}{z}\right)^{1/N} F_N(t) \right) \right)^{4}  \right| \\
        &\leq |T_{K,1}(N,z,t)| + |T_{K,2}(N,z,t)| + |T_{K,3}(N,z,t)| + \frac{|J_0^{(8)}(\xi)|}{8!} \left| \rho  F_N(\bar{z}) \left(F_N(z) -\left(\frac{t}{z}\right)^{1/N} F_N(t) \right) \right|^{4} N^4 \\
        &\leq |T_{K,1}(N,z,t)| + |T_{K,2}(N,z,t)| + |T_{K,3}(N,z,t)|
        + \frac{C_{J_0,8}}{8!}\lambda^{4}C_{F_{N},0}^4 \left(2\frac{C_{F_N,2}}{N} + C_{F_{N},0}\left|\log\left(\frac{t}{z}\right)\right|\right)^4  .
    \end{align*}
    Moreover, the bound can be expressed as a polynomial in $\log\left(\frac{t}{z}\right)$, i.e.
    \begin{align*}
        |K_4(N,z,t)|
        &\leq L_0(N) + L_1(N) \left|\log\left(\frac{t}{z}\right)\right| + L_2(N) \log\left(\frac{t}{z}\right)^2 + L_3(N)\left|\log\left(\frac{t}{z}\right)\right|^3 + L_4(N) \log\left(\frac{t}{z}\right)^4 + L_6(N) \log\left(\frac{t}{z}\right)^6
    \end{align*}
\end{lemma}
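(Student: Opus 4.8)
The plan is to run the same Taylor-expansion argument as in the proof of Lemma~\ref{lemma:exact-expression-for-K-up-to-order_-3}, but this time retaining the Bessel remainder instead of discarding it. First, since $|z|=1$ we have $|z|^{1/N}=1$, so $K(z,t)=J_0\!\left(\rho^{1/2}\sqrt{F_N(\bar z)(F_N(z)-(t/z)^{1/N}F_N(t))}\,\right)$; expanding $J_0$ by Taylor's theorem at the origin and truncating the (even) series after the $n=3$ term, the Lagrange remainder is $\frac{J_0^{(8)}(\xi)}{8!}\left(\rho F_N(\bar z)(F_N(z)-(t/z)^{1/N}F_N(t))\right)^4$ for some $\xi$ in the segment from $0$ to the argument, which is exactly the identity displayed in the lemma. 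Next, use the regrouping $\sum_{j=0}^3 d_j(z,t)/N^j=l_0+\lambda l_1+\lambda^2 l_2+\lambda^3 l_3$ (a rearrangement of the explicit $d_j$ of Lemma~\ref{lemma:exact-expression-for-K-up-to-order_-3} by homogeneity degree in $\lambda$, as set up just before the lemma) together with $K_4=N^4\bigl(K-\sum_{j=0}^3 d_j/N^j\bigr)$ to obtain $K_4=N^4\bigl(\sum_{n=0}^3 T_{K,n}+(\text{Bessel remainder})\bigr)$, where $T_{K,n}:=\frac{(-\rho/4)^n}{(n!)^2}F_N(\bar z)^n\bigl(F_N(z)-(t/z)^{1/N}F_N(t)\bigr)^n-\lambda^n l_n$; one checks at once that $T_{K,0}=1-l_0=0$. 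The triangle inequality now yields the second displayed line.

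The next step is to bound the Bessel remainder. I would use $\|J_0^{(8)}\|_{L^\infty([0,\infty))}\leq C_{J_0,8}$, the inequality $\rho\leq\lambda$ (i.e. $1-\rho^{1/2}/\lambda^{1/2}\geq 0$) and $\bigl|\tfrac{\rho^{1/2}}{\lambda^{1/2}}F_N(\bar z)\bigr|\leq C_{F_N,0}$, all from Lemma~\ref{lemma:K4-bounds-preliminary-constants}, together with the elementary fact that, since $t/z=a\in(0,1)$, the factor $(t/z)^{1/N}=a^{1/N}$ is real with $0<1-a^{1/N}<|\log a|/N=|\log(t/z)|/N$. Splitting $N\bigl(F_N(z)-(t/z)^{1/N}F_N(t)\bigr)=N(F_N(z)-1)-(t/z)^{1/N}N(F_N(t)-1)+N(1-(t/z)^{1/N})$ and using the $C_{F_N,2}$ bound on $|F_N(z)-1|$ and $|F_N(t)-1|$ yields $\bigl|N(F_N(z)-(t/z)^{1/N}F_N(t))\bigr|\leq\tfrac{2C_{F_N,2}}{N}+C_{F_N,0}|\log(t/z)|$; raising this to the fourth power and cancelling the $N^4$ against the $N^{-4}$ coming from $\bigl(F_N(z)-(t/z)^{1/N}F_N(t)\bigr)^4$ gives the stated bound $\frac{C_{J_0,8}}{8!}\lambda^4 C_{F_N,0}^4\bigl(\tfrac{2C_{F_N,2}}{N}+C_{F_N,0}|\log(t/z)|\bigr)^4$, a polynomial in $|\log(t/z)|$ of degree $4$.

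Finally, each $N^4|T_{K,n}|$ for $n=1,2,3$ must be recast (for $N\geq N_0$) as a polynomial in $|\log(t/z)|$. The crucial structural fact is that the $l_n$ were defined precisely so that the terms of orders $N^0,\dots,N^{-3}$ of the $n$-th Bessel term cancel against $\lambda^n l_n$; this uses $\rho=\lambda+O(N^{-5})$ (the $N^{-3}$ term in $\rho$ having been annihilated by the choice of $\lambda_{app}$ and $c_N$) together with the expansions $(t/z)^{1/N}=e^{\log(t/z)/N}=\sum_{k\geq0}\frac{\log^k(t/z)}{k!\,N^k}$ and $F_N=1+\sum_{m\geq2}S_m/N^m$. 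After this cancellation the remaining part of $N^4 T_{K,n}$ is bounded, via the constants $C_{S_2},C_{S_3},C_{F_N,0},C_{F_N,2},C_{F_N,3},C_{F_N,4},C_{\rho,\lambda}$ of Lemma~\ref{lemma:K4-bounds-preliminary-constants} (and the sharpened estimate $|1-a^{1/N}+\log(t/z)/N|\leq\log^2(t/z)/(2N^2)$), by a polynomial in $|\log(t/z)|$ whose powers lie in $\{0,1,2,3,4,6\}$, the degree-$6$ term arising from the cubic cross-terms in $\bigl(F_N(z)-(t/z)^{1/N}F_N(t)\bigr)^3$ inside the $n=3$ Bessel term. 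Adding the Bessel-remainder bound then gives the claimed polynomial estimate. I expect this last step to be the main obstacle: carefully isolating the $1/N$-cancellations baked into the $l_n$ and then doing the lengthy bookkeeping of the powers of $\log(t/z)$ and of the auxiliary constants; the explicit values of $L_0(N),\dots,L_4(N),L_6(N)$ are obtained with computer assistance and recorded in Lemma~\ref{lemma:K4-bounds-constants}.
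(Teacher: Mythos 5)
Your proposal follows the same route as the paper's proof: Taylor-expand $J_0$ at the origin with the degree-8 Lagrange remainder, regroup the subtracted $\sum_j d_j/N^j$ by $\lambda$-degree into the $l_n$, observe that $T_{K,0}=0$, apply the triangle inequality, and then show each $N^4T_{K,n}$ ($n=1,2,3$) and the Bessel remainder are bounded by polynomials in $|\log(t/z)|$ via the constants of Lemma~\ref{lemma:K4-bounds-preliminary-constants}. You also correctly identify that the real labour lies in the cancellation bookkeeping for the $T_{K,n}$, which the paper carries out at length.

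One detail in the Bessel-remainder step does not quite go through as you wrote it. Your split
\[
N\bigl(F_N(z)-(t/z)^{1/N}F_N(t)\bigr)=N(F_N(z)-1)-(t/z)^{1/N}N(F_N(t)-1)+N\bigl(1-(t/z)^{1/N}\bigr)
\]
produces $|\log(t/z)|$, not $C_{F_N,0}|\log(t/z)|$, from the last piece, and Lemma~\ref{lemma:K4-bounds-preliminary-constants} does not in fact give a direct $C_{F_N,2}/N^2$ bound on $|F_N(z)-1|$ — it bounds $\bigl|\tfrac{\rho^{1/2}}{\lambda^{1/2}}F_N(z)-1\bigr|$. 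So your derivation yields a slightly different right-hand side than the one you (and the lemma) claim. The clean way to hit the stated constant is to keep the $\tfrac{\rho^{1/2}}{\lambda^{1/2}}$ inside and factor it out of $(1-(t/z)^{1/N})$, i.e.\
\[
\frac{\rho^{1/2}}{\lambda^{1/2}}\bigl(F_N(z)-(t/z)^{1/N}F_N(t)\bigr)
=\Bigl[\frac{\rho^{1/2}}{\lambda^{1/2}}F_N(z)-1\Bigr]
-\Bigl[\frac{\rho^{1/2}}{\lambda^{1/2}}F_N(t)-1\Bigr]
+\bigl(1-(t/z)^{1/N}\bigr)\frac{\rho^{1/2}}{\lambda^{1/2}}F_N(t),
\]
so that the two bracketed terms are bounded by $C_{F_N,2}/N^2$ each and the last factor by $C_{F_N,0}|\log(t/z)|/N$, giving exactly $\tfrac{2C_{F_N,2}}{N}+C_{F_N,0}|\log(t/z)|$ after multiplying by $N$; combined with $\bigl|\tfrac{\rho^{1/2}}{\lambda^{1/2}}F_N(\bar z)\bigr|\le C_{F_N,0}$ this reproduces the lemma's bound with the correct powers of $C_{F_N,0}$. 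This is a local fix, not a change of strategy; otherwise the approach is the same as the paper's.
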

\begin{proof}
    We begin with a detailed expansion of $T_{K,n}(N,z,t) = N^4 \left[ \left(-\frac{\rho}{4}\right)^n \frac{F_N(\bar{z})^n(F_N(z) -(t/z)^{1/N} F_N(t) )^n}{(n!)^2}  - \lambda^n l_n(N,z,t) \right]$ for $n=1,2,3$ in order to show that the multiplying factor $N^{4}$ is canceled by the internal structure of $T_{K,n}(N,z,t)$, resulting in an expression which is $O(1)$ in $N$.

\begin{enumerate}
    \item \textbf{Analysis of $T_{K,1}(N,z,t)$:}
    \begin{align*}
        & T_{K,1}(N,z,t) = N^4 \left[ \left(-\frac{\rho}{4}\right) F_N(\bar{z}) \left( {F}_N(z) - \left(\frac{t}{z}\right)^{1/N} {F}_N(t) \right) - \frac{\lambda}{4N} \log\left(\frac{t}{z}\right) - \frac{\lambda}{4} \left( \frac{\log^2(t/z)}{2N^2} + \frac{S_2(t) - S_2(z)}{N^2} \right) \right. \\
        & \quad \left. - \frac{\lambda}{4N^3} \left( \frac{\log^3(t/z)}{6} + \log\left(\frac{t}{z}\right) S_2(t) + \log\left(\frac{t}{z}\right) S_2(\bar{z}) + S_3(t) - S_3(z) \right) \right] \\
        & = N^4 \left[ -\frac{1}{4} \rho^{1/2}{F}_N(\bar{z}) \left( \rho^{1/2}\left( {F}_N(z) - \frac{S_2(z)}{N^2} - \frac{S_3(z)}{N^3} \right) \right. \right. \\
        & \qquad \left. - \left( \left(\frac{t}{z}\right)^{1/N} - \frac{1}{N} \log\left(\frac{t}{z}\right) - \frac{1}{2N^2} \log^2\left(\frac{t}{z}\right) - \frac{1}{6N^3} \log^3\left(\frac{t}{z}\right) \right)    \rho^{1/2}\left( {F}_N(t) - \frac{S_2(t)}{N^2} - \frac{S_3(t)}{N^3} \right) \right)  \\
        & \quad+ \left( \frac{\rho^{1/2}}{\lambda^{1/2}}{F}_N(\bar{z}) - 1 \right) \frac{\lambda}{4N^2} \left( \frac{\rho^{1/2}}{\lambda^{1/2}}\left(-S_2(z) - \frac{S_3(z)}{N}\right) \right. \\
        & \quad \left. + \frac{\rho^{1/2}}{\lambda^{1/2}}\left( S_2(t) + \frac{S_3(t)}{N} \right) \left( \left(\frac{t}{z}\right)^{1/N} - \frac{\log\left(\frac{t}{z}\right)}{N}  - \frac{\log^2\left(\frac{t}{z}\right)}{2N^2}  - \frac{\log^3\left(\frac{t}{z}\right)}{6N^3}  \right) \right) \\
        & \quad + \frac{\lambda}{4N^2} \frac{\rho^{1/2}}{\lambda^{1/2}} \left( S_2(t) + \frac{S_3(t)}{N} \right) \left[ \left(\frac{t}{z}\right)^{1/N} - 1 - \frac{1}{N} \log\left(\frac{t}{z}\right) - \frac{1}{2N^2} \log^2\left(\frac{t}{z}\right) - \frac{1}{6N^3} \log^3\left(\frac{t}{z}\right) \right] \\
        & \quad + \left(\frac{\rho^{1/2}}{\lambda^{1/2}} - 1\right) \frac{\lambda}{4N^2} \left( S_2(t) + \frac{S_3(t)}{N} \right) + \left(\frac{\rho^{1/2}}{\lambda^{1/2}} - 1\right) \left(-S_2(z) - \frac{S_3(z)}{N}\right) \frac{\lambda}{4N^2} \\
        & \quad + \left( \frac{\rho}{\lambda}{F}_N(\bar{z}) {F}_N(t) - 1 \right) \frac{\lambda}{4} \left( \frac{1}{2N^2} \log^2\left(\frac{t}{z}\right) + \frac{1}{6N^3} \log^3\left(\frac{t}{z}\right) \right) \\
        & \quad + \left( \frac{\rho}{\lambda}\left( {F}_N(\bar{z}) - \frac{S_2(\bar{z})}{N^2} \right) \left( {F}_N(t) - \frac{S_2(t)}{N^2} \right) - 1 \right) \frac{\lambda}{4} \cdot \frac{1}{N} \log\left(\frac{t}{z}\right) \\
        & \quad + \left( \frac{\rho^{1/2}}{\lambda^{1/2}}{F}_N(\bar{z}) - 1 \right) \frac{\rho^{1/2}}{\lambda^{1/2}} \frac{S_2(t)}{N^2} \frac{\lambda}{4} \frac{1}{N} \log\left(\frac{t}{z}\right)
        + \left( \frac{\rho^{1/2}}{\lambda^{1/2}}{F}_N(t) - 1 \right) \frac{\rho^{1/2}}{\lambda^{1/2}} \frac{S_2(\bar{z})}{N^2} \frac{\lambda}{4} \frac{1}{N} \log\left(\frac{t}{z}\right) \\
        & \quad \left. - \frac{\rho}{\lambda} \frac{S_2(\bar{z}) S_2(t)}{N^4} \frac{\lambda}{4} \frac{1}{N} \log\left(\frac{t}{z}\right)
        + \left( \frac{\rho^{1/2}}{\lambda^{1/2}} - 1 \right) \frac{S_2(t) + S_2(\bar{z})}{N^2} \frac{\lambda}{4} \frac{1}{N} \log\left(\frac{t}{z}\right)  \right]
    \end{align*}
    This expression for $T_{K,1}(N,z,t)$ enables us to obtain bounds for this function only depending on $\log\left(\frac{t}{z}\right)$, using the bounds from Lemma~\ref{lemma:K4-bounds-preliminary-constants}.
    \begin{align*}
        |T_{K,1}(N,z,t)| &\leq \frac{\lambda}{4} C_{F_{N},0} \left( 2C_{F_N,4} + \frac{1}{24} \log\left(\frac{t}{z}\right)^4 \left( 1 + \frac{C_{F_N,4}}{N^4} \right) \right) \\
        & \quad + C_{F_N,2} \frac{\lambda}{4} \left( C_{S_2} + \frac{C_{S_3}}{N} + \left( C_{S_2} + \frac{C_{S_3}}{N} \right) \left(1 + \frac{1}{24N^4}\log\left(\frac{t}{z}\right)^4 \right) \right)  \\
        & \quad + \frac{\lambda}{4} \left( C_{S_2} + \frac{C_{S_3}}{N} \right) \left( \frac{1}{24N^2}\log\left(\frac{t}{z}\right)^4 \right)
        + \frac{\lambda}{2}\frac{C_{\rho,\lambda}}{N^3}\left( C_{S_2} + \frac{C_{S_3}}{N} \right)  \\
        &\quad + \left( \left( 1 + C_{F_{N},0} \right) C_{F_N,2} \right) \frac{\lambda}{4}  \left( \frac{1}{2}\log\left(\frac{t}{z}\right)^2 + \frac{1}{6N}\left|\log\left(\frac{t}{z}\right)\right|^3 \right) \\
        &\quad + \left( \left( 1 + C_{F_{N},0} + \frac{C_{S_2}}{N^2} \right) C_{F_N,3} \right) \frac{\lambda}{4} \left|\log\left(\frac{t}{z}\right)\right|
        + 2 C_{F_N,2} C_{S_2} \frac{\lambda}{4N} \left|\log\left(\frac{t}{z}\right)\right|
        \\
        & \quad + C_{S_2}^2 \frac{\lambda}{4N} \left|\log\left(\frac{t}{z}\right)\right|
        + \frac{\lambda}{2} C_{S_2} \frac{C_{\rho,\lambda}}{N^4} \left|\log\left(\frac{t}{z}\right)\right|  \\
        & = \lambda\left(T_{K,1,0}(N) + T_{K,1,1}(N) \left|\log\left(\frac{t}{z}\right)\right| + T_{K,1,2}(N) \log\left(\frac{t}{z}\right)^2 + T_{K,1,3}(N) \left|\log\left(\frac{t}{z}\right)\right|^3 \right. \\
        & \qquad \left. + T_{K,1,4}(N) \log\left(\frac{t}{z}\right)^4\right)
    \end{align*}
    with
    \begin{align*}
        T_{K,1,0}(N)
            &= \frac{1}{2} C_{F_{N},0}C_{F_N,4}
            + \frac{1}{2}C_{F_N,2}\left(C_{S_2} + \frac{C_{S_3}}{N}\right)
            + \frac{1}{2}\frac{C_{\rho,\lambda}}{N^3}\left( C_{S_2} + \frac{C_{S_3}}{N} \right),
            \\
        T_{K,1,1}(N)
            &= \frac{1}{4}\left(1 + C_{F_{N},0} + \frac{C_{S_2}}{N^2}\right)C_{F_N,3}
            + \frac{1}{2N}C_{F_N,2} C_{S_2}
            + \frac{1}{4N}C_{S_2}^2 + \frac{1}{2 N^4} C_{S_2} C_{\rho,\lambda},
            \\
        T_{K,1,2}(N) &= \frac{1}{8}(1 + C_{F_{N},0})C_{F_N,2},
            \\
        T_{K,1,3}(N)
            &= \frac{1}{24 N} (1 + C_{F_{N},0})C_{F_N,2},
            \\
        T_{K,1,4}(N)
            &= \frac{1}{96}C_{F_{N},0}\left(1 + \frac{C_{F_N,4}}{N^4}\right)
            + \frac{1}{96N^{2}}C_{F_N,0}\left(C_{S_2} + \frac{C_{S_3}}{N}\right).
    \end{align*}

    \item \textbf{Analysis of $T_{K,2}(N,z,t)$:}
    \begin{align*}
        & T_{K,2}(N,z,t)  = N^4 \left[ \left( -\frac{\rho}{4} \right)^2 \frac{{F}_N(\bar{z})^2}{4} \left( {F}_N(z) - \left(\frac{t}{z}\right)^{1/N} {F}_N(t) \right)^2 - \frac{\lambda^2 \log^2(t/z)}{64 N^2} - \frac{\lambda^2}{64} \frac{\log^3\left(\frac{t}{z}\right)}{N^3} \right. \\
        & \quad \left.   - \frac{\lambda^2}{32} \frac{\log\left(\frac{t}{z}\right)}{N^3} \left( S_2(t) - S_2(z) \right) \right] \\
        & = N^4 \left[ \frac{\rho}{16} \cdot \frac{{F}_N(\bar{z})^2}{4} \left( \rho^{1/2}{F}_N(z) - \left[ \left(\frac{t}{z}\right)^{1/N} - \frac{1}{N} \log\left(\frac{t}{z}\right) \right] \rho^{1/2}{F}_N(t) \right)^2 \right. \\
        & \quad - \frac{\rho^2}{32} {F}_N(\bar{z})^2 F_N(t) \frac{1}{N} \log\left(\frac{t}{z}\right) \left( {F}_N(z) - \frac{S_2(z)}{N^2} - \left[ \left(\frac{t}{z}\right)^{1/N} - \frac{1}{N} \log\left(\frac{t}{z}\right) - \frac{1}{2N^2} \log\left(\frac{t}{z}\right)^2  \right] \left( {F}_N(t) - \frac{S_2(t)}{N^2} \right) \right) \\
        & \quad + \frac{\lambda^2}{64 N^2} \left( \frac{\rho^2}{\lambda^2} F_N(\bar{z})^2 F_N(t)^2 - 1 \right) \log\left(\frac{t}{z}\right)^2  +  \frac{\lambda^2}{32} \left( \frac{\rho^2}{\lambda^2} F_N(\bar{z})^2 F_N(t)^2 - 1  \right) \frac{1}{2N^3} \log\left(\frac{t}{z}\right)^3 \\
        & \quad + \frac{\lambda^2}{32 N^3}  \log\left(\frac{t}{z}\right) \left( \frac{\rho^{3/2}}{\lambda^{3/2}} F_N(\bar{z})^2 F_N(t) - 1 \right) \frac{\rho^{1/2}}{\lambda^{1/2}} (-S_2(z))
        + \frac{\lambda^2}{32N^3} \left( \frac{\rho^{1/2}}{\lambda^{1/2}} - 1 \right) \log\left(\frac{t}{z}\right) \left( -S_2(z) \right)  \\
        & \quad +  \frac{\lambda^2}{32} \left(\frac{\rho^{3/2}}{\lambda^{3/2}} F_N(\bar{z})^2 F_N(t) - 1 \right) \frac{1}{N} \log\left(\frac{t}{z}\right) \frac{\rho^{1/2}}{\lambda^{1/2}} \frac{S_2(t)}{N^2} \left( \left(\frac{t}{z}\right)^{1/N}  - \frac{1}{N}\log\left(\frac{t}{z}\right) - \frac{1}{2N^2}\log\left(\frac{t}{z}\right)^2 \right)  \\
        & \quad \left.  + \frac{\lambda^2}{32N} \log\left(\frac{t}{z}\right) \frac{\rho^{1/2}}{\lambda^{1/2}} \frac{S_2(t)}{N^2} \left( \left(\frac{t}{z}\right)^{1/N}  - 1 - \frac{1}{N}\log\left(\frac{t}{z}\right) - \frac{1}{2N^2}\log\left(\frac{t}{z}\right) \right) + \frac{\lambda^2}{32N} \left( \frac{\rho^{1/2}}{\lambda^{1/2}} - 1 \right) \frac{S_2(t)}{N^2} \log\left(\frac{t}{z}\right) \right]
    \end{align*}

    As before, we apply Lemma~\ref{lemma:K4-bounds-preliminary-constants} to bound $T_{K,2}(N,z,t)$ using only powers of $\log\left(\frac{t}{z}\right)$. This shows that the function $T_{K,2}$ is $O(1)$ in $N$.

    \begin{align*}
        |T_{K,2}(N,z,t)| &\leq \frac{\lambda^2}{64} C_{F_{N},0}^2 \left( 2 C_{F_N,2} + \frac{1}{2}\log\left(\frac{t}{z}\right)^2 C_{F_{N},0} \right)^2 \\
        & \quad+ \frac{\lambda^2}{32} C_{F_{N},0}^3 \left|\log\left(\frac{t}{z}\right)\right| \left( 2 C_{F_N,3} + \frac{1}{6} \left|\log\left(\frac{t}{z}\right)\right|^3 \left( C_{F_{N},0} + \frac{C_{S_2}}{N^2} \right)  \right)  \\
        &\quad+ \frac{\lambda^2}{64} \left( C_{F_{N},0}^2 + 1 \right) \left(1 + C_{F_{N},0}\right) C_{F_N,2} \log\left(\frac{t}{z}\right)^2
        + \frac{\lambda^2}{32} \left( C_{F_{N},0}^2 + 1 \right) \left(1+C_{F_{N},0}\right) \frac{C_{F_N,2}}{N} \frac{1}{2} \log\left(\frac{t}{z}\right)^3 \\
        & \quad+ \frac{\lambda^2}{32} \left|\log\left(\frac{t}{z}\right)\right| \frac{C_{F_N,2}}{N} \left( 1+C_{F_N,0}+C_{F_N,0}^2 \right) C_{S_2} + \frac{\lambda^2}{32} \frac{C_{\rho,\lambda}}{N^4} \left|\log\left(\frac{t}{z}\right)\right| C_{S_2}  \\
        &\quad+ \frac{\lambda^2}{32 N} \left|\log\left(\frac{t}{z}\right)\right|  \left( C_{F_{N},0} \left( 1 + C_{F_{N},0} \right) + 1\right) C_{F_N,2} C_{S_2} \left( 1 + \frac{1}{6N^3} \left|\log\left(\frac{t}{z}\right)\right|^3 \right)  \\
        & \quad+ \frac{\lambda^2}{32} \left|\log\left(\frac{t}{z}\right)\right| C_{S_2} \frac{1}{6N^2} \left|\log\left(\frac{t}{z}\right)\right|^3
        \quad+ \frac{\lambda^2}{32} \frac{C_{\rho,\lambda}}{N^4} C_{S_2} \left|\log\left(\frac{t}{z}\right)\right|
        \\
        & = \lambda^{2}\left(T_{K,2,0}(N) + T_{K,2,1}(N) \left|\log\left(\frac{t}{z}\right)\right| + T_{K,2,2}(N) \log\left(\frac{t}{z}\right)^2 + T_{K,2,3}(N) \left|\log\left(\frac{t}{z}\right)\right|^3 \right. \\
        & \qquad\quad \left. + T_{K,2,4}(N) \log\left(\frac{t}{z}\right)^4\right)
    \end{align*}
    with
    \begin{align*}
      T_{K,2,0}(N)
      &= \frac{1}{16} C_{F_{N},0}^2C_{F_N,2}^2
      \\
      T_{K,2,1}(N)
      &= \frac{1}{16} C_{F_{N},0}^3C_{F_N,3} + \frac{1}{16N^{4}}C_{\rho,\lambda}C_{S_2}
        + \frac{1}{16N} \left(1 + C_{F_{N},0}\left(1 + C_{F_{N},0}\right)\right)C_{F_N,2}C_{S_2}
      \\
      T_{K,2,2}(N)
      &= \frac{1}{32}C_{F_{N},0}^3 C_{F_N,2}
        + \frac{1}{64} (1 + C_{F_{N},0}^2)(1 + C_{F_{N},0})C_{F_N,2}
      \\
      T_{K,2,3}(N)
      &= \frac{1}{64N}(1 + C_{F_{N},0}^2)(1 + C_{F_{N},0})C_{F_N,2}
      \\
      T_{K,2,4}(N)
      &= \frac{1}{256} C_{F_{N},0}^4
        + \frac{1}{192}C_{F_{N},0}^3\left(C_{F_{N},0} + \frac{C_{S_2}}{N^2}\right)
        + \frac{1}{192N^4}(1 + C_{F_{N},0}(1 + C_{F_{N},0}))C_{F_N,2}C_{S_2}
        + \frac{1}{192N^2}C_{S_2}
    \end{align*}

        \item \textbf{Analysis of $T_{K,3}(N,z,t)$:}

    \begin{align*}
        T_{K,3}(N,z,t)
        &= N^4 \left[ \left( -\frac{\rho^3}{64} \right) \frac{{F}_N(\bar{z})^3}{36} \left( {F}_N(z) - \left(\frac{t}{z}\right)^{1/N} {F}_N(t) \right)^3 - \frac{\lambda^3 \log^3(t/z)}{64 \cdot 36 N^3} \right] \\
        & = N^4 \left[ -\frac{1}{64} \cdot \rho^{3/2}\frac{{F}_N(\bar{z})^3}{36} \left( \rho^{1/2}{F}_N(z) - \left[ \left(\frac{t}{z}\right)^{1/N} - \frac{1}{N} \log\left(\frac{t}{z}\right) \right] \rho^{1/2}{F}_N(t) \right)^3 \right. \\
        & \quad+ \left( \frac{\rho^3}{\lambda^3}{F}_N(\bar{z})^3 {F}_N(t)^3 - 1 \right) \frac{\lambda^3 \log^3(t/z)}{64 \cdot 36 \cdot N^3} \\
        & \quad+ 3 \left( \frac{\rho^{3/2} {F}_N(\bar{z})^3}{64 \cdot 36} \right) \left( \rho^{1/2}{F}_N(z) - \left(\frac{t}{z}\right)^{1/N} \rho^{1/2}{F}_N(t) \right) \frac{1}{N} \log\left(\frac{t}{z}\right) \rho^{1/2}{F}_N(t)   \\
        & \qquad \left. \left( \rho^{1/2}{F}_N(z) - \left[ \left(\frac{t}{z}\right)^{1/N} - \frac{\log\left(\frac{t}{z}\right)}{N}  \right] \rho^{1/2}{F}_N(t) \right) \right] \\
    \end{align*}

    Similar as before, using the bounds from Lemma~\ref{lemma:K4-bounds-preliminary-constants}, $T_{K,3}(N,z,t)$ is bounded by powers of $\log\left(\frac{t}{z}\right)$, giving an expression which is $O(1)$ in $N$.

    \begin{align*}
        |T_{K,3}(N,z,t)| &\leq \frac{\lambda^3}{2304} C_{F_{N},0}^3 \left( 2 C_{F_N,2} + \frac{1}{2}\log\left(\frac{t}{z}\right)^2 C_{F_{N},0} \right)^3 \frac{1}{N^2} \\
        &\quad + \frac{\lambda^3}{2304 N} \left|\log\left(\frac{t}{z}\right)\right|^3 \left( C_{F_{N},0}^3 +1 \right) \left( C_{F_{N},0}^2 +  \left( 1 + C_{F_{N},0} \right)  \right) C_{F_N,2} \\
        &\quad + 3\frac{\lambda^3}{2304}\left|\log\left(\frac{t}{z}\right)\right| C_{F_{N},0}^4 \left( 2 \frac{C_{F_N,2}}{N} + \left|\log\left(\frac{t}{z}\right)\right| C_{F_{N},0} \right)  \left( 2 C_{F_N,2} + \frac{1}{2}\log\left(\frac{t}{z}\right)^2 C_{F_{N},0} \right) \\
        & = \lambda^{3}\Bigg(T_{K,3,0}(N) + T_{K,3,1}(N) \left|\log\left(\frac{t}{z}\right)\right| + T_{K,3,2}(N) \log\left(\frac{t}{z}\right)^2 + T_{K,3,3}(N) \left|\log\left(\frac{t}{z}\right)\right|^3  \\
        &\qquad+ T_{K,3,4}(N) \log\left(\frac{t}{z}\right)^4
        + T_{K,3,6}(N) \log\left(\frac{t}{z}\right)^6\Bigg)
    \end{align*}
    with
    \begin{align*}
      T_{K,3,0}(N)
      &= \frac{1}{288N^{2}}C_{F_{N},0}^3C_{F_N,2}^3,
      \\
      T_{K,3,1}(N)
      &= \frac{1}{192}C_{F_{N},0}^4C_{F_N,2}^2,
      \\
      T_{K,3,2}(N)
      &= \frac{1}{384N^{2}}C_{F_{N},0}^4C_{F_N,2}^2
        + \frac{1}{384}C_{F_{N},0}^5C_{F_N,2},
      \\
      T_{K,3,3}(N)
      &= \frac{1}{2304N}(1 + C_{F_{N},0}^3)(1 + C_{F_{N},0} + C_{F_{N},0}^2)C_{F_N,2}
        + \frac{1}{768N}C_{F_{N},0}^5C_{F_{N},2},
      \\
      T_{K,3,4}(N)
      &= \frac{1}{1536N^{2}} C_{F_{N},0}^5C_{F_N,2}
        + \frac{1}{1536}C_{F_{N},0}^6,
      \\
      T_{K,3,6}(N)
      &= \frac{1}{18432N^{2}}C_{F_{N},0}^6.
    \end{align*}
\end{enumerate}
    Combining the bounds for $T_{K,1}(N,z,t), T_{K,2}(N,z,t), T_{K,3}(N,z,t)$, which are already polynomials in $\log\left(\frac{t}{z}\right)$, with the remaining error term that can also be written in terms of only $\log\left(\frac{t}{z}\right)$, we obtain that
    \begin{align*}
        |K_4(N,z,t)| \leq L_0(N) + L_1(N) \left|\log\left(\frac{t}{z}\right)\right| + L_2(N) \log\left(\frac{t}{z}\right)^2 + L_3(N)\left|\log\left(\frac{t}{z}\right)\right|^3 + L_4(N) \log\left(\frac{t}{z}\right)^4 + L_6(N) \log\left(\frac{t}{z}\right)^6 ,
    \end{align*}
    with
    \begin{align*}
      L_0(N)
      &= \lambda T_{K,1,0}(N) + \lambda^{2}T_{K,2,0}(N) + \lambda^{3} T_{K,3,0}(N) + \lambda^{4} \frac{16}{N^{4}}\frac{C_{J_0,8}}{8!}C_{F_{N},0}^4C_{F_N,2}^4,
      \\
      L_1(N)
      &= \lambda T_{K,1,1}(N) + \lambda^{2}T_{K,2,1}(N) + \lambda^{3} T_{K,3,1}(N) + \lambda^{4} \frac{32}{N^{3}}\frac{C_{J_0,8}}{8!}C_{F_{N},0}^5C_{F_N,2}^3,
      \\
      L_2(N)
      &= \lambda T_{K,1,2}(N) + \lambda^{2}T_{K,2,2}(N) + \lambda^{3} T_{K,3,2}(N) + \lambda^{4} \frac{24}{N^{2}}\frac{C_{J_0,8}}{8!}C_{F_{N},0}^6C_{F_N,2}^2,
      \\
      L_3(N)
      &= \lambda T_{K,1,3}(N) + \lambda^{2}T_{K,2,3}(N) + \lambda^{3} T_{K,3,3}(N) + \lambda^{4} \frac{8}{N}\frac{C_{J_0,8}}{8!}C_{F_{N},0}^7C_{F_N,2},
      \\
      L_4(N)
      &= \lambda T_{K,1,4}(N) + \lambda^{2} T_{K,2,4}(N) + \lambda^{3} T_{K,3,4}(N) + \lambda^{4} \frac{C_{J_0,8}}{8!}C_{F_{N},0}^8,
      \\
      L_6(N)
      &= \lambda^{3} T_{K,3,6}(N).
      \\
    \end{align*}
\end{proof}

Finally, the next lemma provides explicit bounds on the constants $L_j(N)$ appearing in the polynomial bound in $\log\left(\frac{t}{z}\right)$ for $K_4(N,z,t)$ stated in Lemma~\ref{lemma:K4-bounds}.

\begin{lemma}\label{lemma:K4-bounds-constants}
  For \(N \geq N_{0} = \codenumber{64}\) the constants \(L_{j}(N)\) in
  Lemma~\ref{lemma:K4-bounds} are bounded by \(L_{j}(N) \leq C_{L_{j}}\)
  with
  \begin{equation*}
    C_{L_{0}} = \codenumber{92},\quad
    C_{L_{1}} = \codenumber{18},\quad
    C_{L_{2}} = \codenumber{18},\quad
    C_{L_{3}} = \codenumber{2},\quad
    C_{L_{4}} = \codenumber{0.6},\quad
    C_{L_{6}} = \codenumber{3 \cdot 10^{-6}}.
  \end{equation*}
\end{lemma}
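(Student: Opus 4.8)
The plan is to observe that Lemma~\ref{lemma:K4-bounds} has already done all the structural work: each $L_j(N)$ is written there as an explicit finite sum of terms of the shape $a_{j,k}/N^{k}$, where every exponent $k$ is a nonnegative integer and every coefficient $a_{j,k}$ is nonnegative, being a product of powers of $\lambda = j_{0,1}^2$, of the numerical constants $C_{F_N,0}, C_{F_N,2}, C_{F_N,3}, C_{F_N,4}, C_{S_2}, C_{S_3}, C_{\rho,\lambda}, C_{J_0,8}$ from Lemma~\ref{lemma:K4-bounds-preliminary-constants}, and of the positive rational prefactors appearing in the $T_{K,i,j}(N)$ and in the Bessel-remainder contribution. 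Since a map $N \mapsto a/N^{k}$ with $a \geq 0$ and $k \geq 0$ is nonincreasing on $(0,\infty)$, each $L_j$ is nonincreasing, hence $L_j(N) \leq L_j(N_0)$ for all $N \geq N_0 = 64$. It therefore suffices to produce a rigorous upper bound for the single value $L_j(N_0)$ for each $j \in \{0,1,2,3,4,6\}$.

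The second step is then a finite interval-arithmetic computation. First I would substitute $N = N_0 = 64$ everywhere, replace $\lambda$ by a tight rigorous enclosure of $j_{0,1}^2$ (available through FLINT as elsewhere in the paper), and insert the stated values of the $C$-constants. Propagating ball enclosures through the finitely many additions, multiplications and divisions defining $T_{K,1,j}, T_{K,2,j}, T_{K,3,j}$ and then $L_0,\dots,L_6$ yields verified enclosures of the $L_j(N_0)$; one checks that the upper endpoint of each lies below the claimed $C_{L_j}$ (for instance $C_{L_0} = 92$, $C_{L_4} = 0.6$, $C_{L_6} = 3\cdot 10^{-6}$). Because the $C_{L_j}$ have been chosen with a generous margin over the true values, the accumulated rounding plays no role. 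This is carried out in the accompanying code~\cite{SpectralRegularPolygon.jl}.

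The only point meriting a sentence of justification is the monotonicity claim, i.e. that no term in any $T_{K,i,j}(N)$ or in the Bessel-remainder terms $\lambda^{4}\tfrac{c}{N^{m}}\tfrac{C_{J_0,8}}{8!}C_{F_N,0}^{8-m}C_{F_N,2}^{m}$ carries a positive power of $N$; inspecting the formulas in Lemma~\ref{lemma:K4-bounds} this is immediate, since every $N$-dependence enters as $1/N$, $1/N^{2}$, $1/N^{3}$ or $1/N^{4}$ with a nonnegative prefactor. So there is no real obstacle here — the lemma is a bookkeeping consequence of the bounds already established. (As an alternative one could evaluate $L_j$ over the whole tail $N \in [N_0,\infty)$ via the change of variable $\nu = 1/N \in (0, 1/N_0]$ together with the extrema-enclosure algorithm of Appendix~\ref{sec:enclosing-extrema}, but the elementary monotonicity argument is both simpler and sufficient.)
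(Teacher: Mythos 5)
Your proposal is correct and coincides with the paper's own proof: the paper also notes that every occurrence of $N$ in the $L_j(N)$ formulas appears as a division $1/N^k$ with nonnegative coefficient, replaces each such factor by $1/N_0^k$ (which is exactly your monotonicity observation), and then verifies the resulting numerical values against the claimed $C_{L_j}$ constants by interval arithmetic.
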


\begin{proof}
  We use the expression for the constants \(L_{j}(N)\) given in the proof
  of Lemma~\ref{lemma:K4-bounds}. Apart from the constants from
  Lemma~\ref{lemma:K4-bounds-preliminary-constants} we only need a
  bound for \(\lambda\), which is easily computed. For all divisions
  by \(N\) that occur in the expressions we replace them with division
  by \(N_{0}\), which still gives an upper bound. The computed values
  are then upper bounded by the \(C_{L_{j}}\) constants.
\end{proof}

\printbibliography

@Preamble{
"\def\cprime{$'$} "
}

@Misc{Aim:open-problems-shape-optimization,
  author = {Henrot, A. and Harrell, E. and Bucur, D. and Steinerberger, S.},
  title  = {AIM Problem Lists: Shape optimization with surface interactions - The Dirichlet Laplacian},
  note   = {\url{http://aimpl.org/shapesurface/2/}},
}

@Article{Antunes-Freitas:new-bounds-dirichlet-eigenvalue,
  author   = {Antunes, Pedro and Freitas, Pedro},
  title    = {New bounds for the principal {D}irichlet eigenvalue of planar regions},
  journal  = {Experiment. Math.},
  year     = {2006},
  volume   = {15},
  number   = {3},
  pages    = {333--342},
  issn     = {1058-6458},
  fjournal = {Experimental Mathematics},
  mrclass  = {35J05 (35P15 58J50)},
  mrnumber = {2264470},
  url      = {http://projecteuclid.org/euclid.em/1175789762},
}

@Article{Antunes-Valtchev:mfs-corners-cracks,
  author     = {Antunes, Pedro R. S. and Valtchev, Svilen S.},
  title      = {A meshfree numerical method for acoustic wave propagation problems in planar domains with corners and cracks},
  journal    = {J. Comput. Appl. Math.},
  year       = {2010},
  volume     = {234},
  number     = {9},
  pages      = {2646--2662},
  issn       = {0377-0427},
  doi        = {10.1016/j.cam.2010.01.031},
  fjournal   = {Journal of Computational and Applied Mathematics},
  mrclass    = {65N80 (35J05 65N15 65N35)},
  mrnumber   = {2652115},
  mrreviewer = {Manfred Dobrowolski},
  url        = {https://doi.org/10.1016/j.cam.2010.01.031},
}

@Article{Barnett-Hassell:quasi-ortogonality-dirichlet-eigenvalues,
  author     = {Barnett, A. H. and Hassell, A.},
  title      = {Boundary quasi-orthogonality and sharp inclusion bounds for large {D}irichlet eigenvalues},
  journal    = {SIAM J. Numer. Anal.},
  year       = {2011},
  volume     = {49},
  number     = {3},
  pages      = {1046--1063},
  issn       = {0036-1429},
  doi        = {10.1137/100796637},
  fjournal   = {SIAM Journal on Numerical Analysis},
  mrclass    = {65N25 (35J05 35P15 47F05)},
  mrnumber   = {2812557},
  mrreviewer = {Pedro R. S. Antunes},
  url        = {https://doi.org/10.1137/100796637},
}

@InCollection{Behnke-Goerisch:inclusions-eigenvalues,
  author     = {Behnke, H. and Goerisch, F.},
  title      = {Inclusions for eigenvalues of selfadjoint problems},
  booktitle  = {Topics in validated computations ({O}ldenburg, 1993)},
  publisher  = {North-Holland, Amsterdam},
  year       = {1994},
  volume     = {5},
  series     = {Stud. Comput. Math.},
  pages      = {277--322},
  doi        = {10.1016/0021-8502(94)90369-7},
  mrclass    = {65J15 (65F15)},
  mrnumber   = {1318957},
  mrreviewer = {Assem Deif},
  url        = {https://doi.org/10.1016/0021-8502(94)90369-7},
}

@article {Berghaus-Georgiev-Monien-Radchenko:dirichlet-eigenvalues-regular-polygons,
    AUTHOR = {Berghaus, David and Georgiev, Bogdan and Monien, Hartmut and
              Radchenko, Danylo},
     TITLE = {On {D}irichlet eigenvalues of regular polygons},
   JOURNAL = {J. Math. Anal. Appl.},
  FJOURNAL = {Journal of Mathematical Analysis and Applications},
    VOLUME = {538},
      YEAR = {2024},
    NUMBER = {2},
     PAGES = {Paper No. 128460, 17},
      ISSN = {0022-247X,1096-0813},
   MRCLASS = {35J05 (33B20 35P15)},
  MRNUMBER = {4742089},
       DOI = {10.1016/j.jmaa.2024.128460},
       URL = {https://doi.org/10.1016/j.jmaa.2024.128460},
}

@article {Berghaus-Jones-Monien-Radchenko:computation-eigenvalues-2d-shapes,
    AUTHOR = {Berghaus, David and Jones, Robert Stephen and Monien, Hartmut
              and Radchenko, Danylo},
     TITLE = {Computation of {L}aplacian eigenvalues of two-dimensional
              shapes with dihedral symmetry},
   JOURNAL = {Adv. Comput. Math.},
  FJOURNAL = {Advances in Computational Mathematics},
    VOLUME = {50},
      YEAR = {2024},
    NUMBER = {3},
     PAGES = {Paper No. 38, 27},
      ISSN = {1019-7168},
   MRCLASS = {65N25 (65N35)},
  MRNUMBER = {4741102},
       DOI = {10.1007/s10444-024-10138-3},
       URL = {https://doi.org/10.1007/s10444-024-10138-3},
}

@Article{Betcke:generalized-svd-mps,
  author     = {Betcke, Timo},
  title      = {The generalized singular value decomposition and the method of particular solutions},
  journal    = {SIAM J. Sci. Comput.},
  year       = {2008},
  volume     = {30},
  number     = {3},
  pages      = {1278--1295},
  issn       = {1064-8275},
  doi        = {10.1137/060651057},
  fjournal   = {SIAM Journal on Scientific Computing},
  mrclass    = {65F15 (65N25)},
  mrnumber   = {2398865},
  mrreviewer = {Nicola Mastronardi},
  url        = {https://doi.org/10.1137/060651057},
}

@Article{Betcke-Trefethen:method-particular-solutions,
  author     = {Betcke, Timo and Trefethen, Lloyd N.},
  title      = {Reviving the method of particular solutions},
  journal    = {SIAM Rev.},
  year       = {2005},
  volume     = {47},
  number     = {3},
  pages      = {469--491},
  issn       = {0036-1445},
  doi        = {10.1137/S0036144503437336},
  fjournal   = {SIAM Review},
  mrclass    = {65N35 (65F22 65N25)},
  mrnumber   = {2178637},
  mrreviewer = {V. L. Makarov},
  url        = {https://doi.org/10.1137/S0036144503437336},
}

@Article{Bezanson-Edelman-Karpinski-Shah:julia,
  author     = {Bezanson, Jeff and Edelman, Alan and Karpinski, Stefan and Shah, Viral B},
  title      = {Julia: A fresh approach to numerical computing},
  journal    = {SIAM {R}eview},
  year       = {2017},
  volume     = {59},
  number     = {1},
  pages      = {65--98},
  bdsk-url-1 = {https://doi.org/10.1137/141000671},
  doi        = {10.1137/141000671},
  publisher  = {SIAM},
}

@book {Boady:calculus-moving-surfaces-thesis,
    AUTHOR = {Boady, Mark W.},
     TITLE = {Applications of {S}ymbolic {C}omputation to the {C}alculus of
              {M}oving {S}urfaces},
      NOTE = {Thesis (Ph.D.)--Drexel University},
 PUBLISHER = {ProQuest LLC, Ann Arbor, MI},
      YEAR = {2016},
     PAGES = {225},
      ISBN = {978-1339-89593-2},
   MRCLASS = {Thesis},
  MRNUMBER = {3553596},
}

@article {Bogosel-Bucur:polygonal-faber-krahn,
    AUTHOR = {Bogosel, Beniamin and Bucur, Dorin},
     TITLE = {On the polygonal {F}aber-{K}rahn inequality},
   JOURNAL = {J. \'{E}c. polytech. Math.},
  FJOURNAL = {Journal de l'\'{E}cole polytechnique. Math\'{e}matiques},
    VOLUME = {11},
      YEAR = {2024},
     PAGES = {19--105},
      ISSN = {2429-7100,2270-518X},
   MRCLASS = {49Q10 (35J05 35P15 49Q12 49R05)},
  MRNUMBER = {4683390},
MRREVIEWER = {Lorenzo\ Brasco},
       DOI = {10.5802/jep.250},
       URL = {https://doi-org.revproxy.brown.edu/10.5802/jep.250},
}

@article{Bogosel-Bucur:polygonal-faber-krahn-validated,
  title={Polygonal {F}aber-{K}rahn inequality: Local minimality via validated computing},
  author={Bogosel, Beniamin and Bucur, Dorin},
  journal={arXiv preprint arXiv:2406.11575},
  year={2024}
}

@Article{Bramble-Payne:upper-lower-bounds-elliptic-equations,
  author     = {Bramble, J. H. and Payne, L. E.},
  title      = {Upper and lower bounds in equations of forced vibration type},
  journal    = {Arch. Rational Mech. Anal.},
  year       = {1963},
  volume     = {14},
  pages      = {153--170},
  issn       = {0003-9527},
  doi        = {10.1007/BF00250697},
  fjournal   = {Archive for Rational Mechanics and Analysis},
  mrclass    = {35.42},
  mrnumber   = {163051},
  mrreviewer = {H. F. Weinberger},
  url        = {https://doi.org/10.1007/BF00250697},
}

@Book{Brent:algorithms-book,
  title     = {Algorithms for Minimization Without Derivatives},
  publisher = {Dover Publications},
  year      = {2002},
  author    = {Brent, R.P.},
  series    = {Dover Books on Mathematics},
  isbn      = {9780486419985},
  lccn      = {01047459},
}

@article{CaoLabora-Fernandez:contractible-schiffer-counterexample,
  title={A contractible Schiffer counterexample on the half-sphere},
  author={Cao-Labora, Gonzalo and Fern\'andez, Antonio J.},
  journal={arXiv preprint arXiv:2510.05732},
  year={2025}
}

@article {Dahne:highest-cusped-waves-fkdv,
    AUTHOR = {Dahne, Joel},
     TITLE = {Highest cusped waves for the fractional {K}d{V} equations},
   JOURNAL = {J. Differential Equations},
  FJOURNAL = {Journal of Differential Equations},
    VOLUME = {401},
      YEAR = {2024},
     PAGES = {550--670},
      ISSN = {0022-0396},
   MRCLASS = {35C07 (35Q53)},
  MRNUMBER = {4748106},
       DOI = {10.1016/j.jde.2024.05.016},
       URL = {https://doi.org/10.1016/j.jde.2024.05.016},
}

@article {Dahne-GomezSerrano:highest-wave-burgershilbert,
    AUTHOR = {Dahne, Joel and G\'{o}mez-Serrano, Javier},
     TITLE = {Highest cusped waves for the {B}urgers-{H}ilbert equation},
   JOURNAL = {Arch. Ration. Mech. Anal.},
  FJOURNAL = {Archive for Rational Mechanics and Analysis},
    VOLUME = {247},
      YEAR = {2023},
    NUMBER = {5},
     PAGES = {Paper No. 74, 55},
      ISSN = {0003-9527},
   MRCLASS = {35Q53 (35R11)},
  MRNUMBER = {4626005},
       DOI = {10.1007/s00205-023-01904-6},
       URL = {https://doi.org/10.1007/s00205-023-01904-6},
}

@Article{Dahne-GomezSerrano-Hou:counterexample-payne,
  author     = {Dahne, Joel and G\'omez-Serrano, Javier and Hou, Kimberly},
  title      = {A counterexample to {Payne's} nodal line conjecture with few holes},
  journal    = {Commun. Nonlinear Sci.},
  year       = {2021},
  volume     = {103},
  pages      = {105957},
  month      = dec,
  issn       = {1007-5704},
  date_added = {Thu Sep 2 01:43:25 2021},
  doi        = {10.1016/j.cnsns.2021.105957},
  publisher  = {Elsevier BV},
  source     = {Crossref},
  url        = {https://doi.org/10.1016/j.cnsns.2021.105957},
}

@Article{Dahne-Salvy:enclosures-eigenvalues,
  author   = {Dahne, Joel and Salvy, Bruno},
  title    = {Computation of tight enclosures for {L}aplacian eigenvalues},
  journal  = {SIAM J. Sci. Comput.},
  year     = {2020},
  volume   = {42},
  number   = {5},
  pages    = {A3210--A3232},
  issn     = {1064-8275},
  doi      = {10.1137/20M1326520},
  fjournal = {SIAM Journal on Scientific Computing},
  mrclass  = {65N25 (05A15 05A16 65G20)},
  mrnumber = {4164074},
  url      = {https://doi.org/10.1137/20M1326520},
}

@InCollection{Fairweather-Karageorghis:mfs-survey,
  author     = {Fairweather, Graeme and Karageorghis, Andreas},
  title      = {The method of fundamental solutions for elliptic boundary value problems},
  year       = {1998},
  volume     = {9},
  number     = {1-2},
  pages      = {69--95},
  note       = {Numerical treatment of boundary integral equations},
  doi        = {10.1023/A:1018981221740},
  fjournal   = {Advances in Computational Mathematics},
  issn       = {1019-7168},
  journal    = {Adv. Comput. Math.},
  mrclass    = {65N99},
  mrnumber   = {1662760},
  mrreviewer = {H. Marcinkowska},
  url        = {https://doi.org/10.1023/A:1018981221740},
}

@article {Filonov-Levitin-Polterovich-Sher:inequalities-polya-aharonov-bohm,
    AUTHOR = {Filonov, Nikolay and Levitin, Michael and Polterovich, Iosif
              and Sher, David A.},
     TITLE = {Inequalities \`a la {P}\'{o}lya for the {A}haronov-{B}ohm
              eigenvalues of the disk},
   JOURNAL = {J. Spectr. Theory},
  FJOURNAL = {Journal of Spectral Theory},
    VOLUME = {14},
      YEAR = {2024},
    NUMBER = {2},
     PAGES = {597--618},
      ISSN = {1664-039X},
   MRCLASS = {35P15 (11P21 33C10 35J10 35P20 81Q10)},
  MRNUMBER = {4757481},
       DOI = {10.4171/jst/506},
       URL = {https://doi.org/10.4171/jst/506},
}

@article {Filonov-Levitin-Polterovich-Sher:polya-conjecture-balls,
    AUTHOR = {Filonov, Nikolay and Levitin, Michael and Polterovich, Iosif
              and Sher, David A.},
     TITLE = {P\'{o}lya's conjecture for {E}uclidean balls},
   JOURNAL = {Invent. Math.},
  FJOURNAL = {Inventiones Mathematicae},
    VOLUME = {234},
      YEAR = {2023},
    NUMBER = {1},
     PAGES = {129--169},
      ISSN = {0020-9910},
   MRCLASS = {35P15 (11P21 35P20)},
  MRNUMBER = {4635832},
MRREVIEWER = {Gianpaolo Piscitelli},
       DOI = {10.1007/s00222-023-01198-1},
       URL = {https://doi.org/10.1007/s00222-023-01198-1},
}

@Article{Fox-Henrici-Moler:approximations-bounds-eigenvalues,
  author     = {Fox, L. and Henrici, P. and Moler, C.},
  title      = {Approximations and bounds for eigenvalues of elliptic operators},
  journal    = {SIAM J. Numer. Anal.},
  year       = {1967},
  volume     = {4},
  pages      = {89--102},
  issn       = {0036-1429},
  doi        = {10.1137/0704008},
  fjournal   = {SIAM Journal on Numerical Analysis},
  mrclass    = {65.66},
  mrnumber   = {0215542},
  mrreviewer = {L. Collatz},
  url        = {https://doi.org/10.1137/0704008},
}

@InCollection{Golberg-Chen:mfs-survey,
  author     = {Golberg, M. A. and Chen, C. S.},
  title      = {The method of fundamental solutions for potential, {H}elmholtz and diffusion problems},
  booktitle  = {Boundary integral methods: numerical and mathematical aspects},
  publisher  = {WIT Press/Comput. Mech. Publ., Boston, MA},
  year       = {1999},
  volume     = {1},
  series     = {Comput. Eng.},
  pages      = {103--176},
  doi        = {10.1007/BF01200068},
  mrclass    = {65M60 (35A08 65N38)},
  mrnumber   = {1690853},
  mrreviewer = {Bernd Heinrich},
  url        = {https://doi.org/10.1007/BF01200068},
}

@Article{GomezSerrano:survey-cap-in-pde,
  author   = {G\'{o}mez-Serrano, Javier},
  title    = {Computer-assisted proofs in {PDE}: a survey},
  journal  = {SeMA J.},
  year     = {2019},
  volume   = {76},
  number   = {3},
  pages    = {459--484},
  issn     = {2254-3902},
  doi      = {10.1007/s40324-019-00186-x},
  fjournal = {SeMA Journal. Boletin de la Sociedad Espan\~{n}ola de Matem\'{a}tica Aplicada},
  mrclass  = {68T15 (35Q35 35R35 65G30)},
  mrnumber = {3990999},
  url      = {https://doi.org/10.1007/s40324-019-00186-x},
}

@Article{GomezSerrano-Orriols:negative-hearing-shape-triangle,
  author   = {G\'{o}mez-Serrano, Javier and Orriols, Gerard},
  title    = {Any three eigenvalues do not determine a triangle},
  journal  = {J. Differential Equations},
  year     = {2021},
  volume   = {275},
  pages    = {920--938},
  issn     = {0022-0396},
  doi      = {10.1016/j.jde.2020.11.002},
  fjournal = {Journal of Differential Equations},
  mrclass  = {58J50 (35P15)},
  mrnumber = {4191346},
  url      = {https://doi.org/10.1016/j.jde.2020.11.002},
}

@Article{Gopal-Trefethen:laplace-solver-detailed,
  author   = {Gopal, Abinand and Trefethen, Lloyd N.},
  title    = {Solving {L}aplace {P}roblems with {C}orner {S}ingularities via {R}ational {F}unctions},
  journal  = {SIAM J. Numer. Anal.},
  year     = {2019},
  volume   = {57},
  number   = {5},
  pages    = {2074--2094},
  issn     = {0036-1429},
  doi      = {10.1137/19M125947X},
  fjournal = {SIAM Journal on Numerical Analysis},
  mrclass  = {65N35 (41A20 65E05)},
  mrnumber = {4000218},
  url      = {https://doi.org/10.1137/19M125947X},
}

@Article{Gopal-Trefethen:new-laplace-solver-pnas,
  author   = {Gopal, Abinand and Trefethen, Lloyd N.},
  title    = {New {L}aplace and {H}elmholtz solvers},
  journal  = {Proc. Natl. Acad. Sci. USA},
  year     = {2019},
  volume   = {116},
  number   = {21},
  pages    = {10223--10225},
  issn     = {0027-8424},
  doi      = {10.1073/pnas.1904139116},
  fjournal = {Proceedings of the National Academy of Sciences of the United States of America},
  mrclass  = {65N35},
  mrnumber = {3956366},
  url      = {https://doi.org/10.1073/pnas.1904139116},
}

@article {Grinfeld-Strang:laplacian-eigenvalues-polygon,
    AUTHOR = {Grinfeld, P. and Strang, G.},
     TITLE = {The {L}aplacian eigenvalues of a polygon},
   JOURNAL = {Comput. Math. Appl.},
  FJOURNAL = {Computers \& Mathematics with Applications. An International
              Journal},
    VOLUME = {48},
      YEAR = {2004},
    NUMBER = {7-8},
     PAGES = {1121--1133},
      ISSN = {0898-1221},
   MRCLASS = {65N25 (35J05 35J25)},
  MRNUMBER = {2107387},
MRREVIEWER = {Hennie De Schepper},
       DOI = {10.1016/j.camwa.2004.10.010},
       URL = {https://doi.org/10.1016/j.camwa.2004.10.010},
}

@article {Grinfeld-Strang:laplace-eigenvalues-regular-polygons-series,
    AUTHOR = {Grinfeld, Pavel and Strang, Gilbert},
     TITLE = {Laplace eigenvalues on regular polygons: a series in {$1/N$}},
   JOURNAL = {J. Math. Anal. Appl.},
  FJOURNAL = {Journal of Mathematical Analysis and Applications},
    VOLUME = {385},
      YEAR = {2012},
    NUMBER = {1},
     PAGES = {135--149},
      ISSN = {0022-247X},
   MRCLASS = {35J05 (35B25 35P05)},
  MRNUMBER = {2832081},
MRREVIEWER = {J. B. Kennedy},
       DOI = {10.1016/j.jmaa.2011.06.035},
       URL = {https://doi.org/10.1016/j.jmaa.2011.06.035},
}

@book{Henrot:book-shape-optimization,
     TITLE = {Shape optimization and spectral theory},
    EDITOR = {Henrot, Antoine},
 PUBLISHER = {De Gruyter Open, Warsaw},
      YEAR = {2017},
     PAGES = {464},
      ISBN = {978-3-11-055088-7; 978-3-11-055085-6},
   MRCLASS = {58J50 (49Q10 58-06)},
  MRNUMBER = {3681143},
}

@Article{Hochman-Leviatan-White:rational-function-laplacian,
  author   = {Hochman, Amit and Leviatan, Yehuda and White, Jacob K.},
  title    = {On the use of rational-function fitting methods for the solution of 2{D} {L}aplace boundary-value problems},
  journal  = {J. Comput. Phys.},
  year     = {2013},
  volume   = {238},
  pages    = {337--358},
  issn     = {0021-9991},
  doi      = {10.1016/j.jcp.2012.08.015},
  fjournal = {Journal of Computational Physics},
  mrclass  = {65N80 (41A20)},
  mrnumber = {3028359},
  url      = {https://doi.org/10.1016/j.jcp.2012.08.015},
}

@article {Indrei:first-eigenvalue-polygons,
    AUTHOR = {Indrei, Emanuel},
     TITLE = {On the first eigenvalue of the {L}aplacian for polygons},
   JOURNAL = {J. Math. Phys.},
  FJOURNAL = {Journal of Mathematical Physics},
    VOLUME = {65},
      YEAR = {2024},
    NUMBER = {4},
     PAGES = {Paper No. 041506, 40},
      ISSN = {0022-2488},
   MRCLASS = {35P05 (35J05 52B60)},
  MRNUMBER = {4729687},
       DOI = {10.1063/5.0179618},
       URL = {https://doi.org/10.1063/5.0179618},
}

@Article{Johansson:Arb,
  author  = {F. Johansson},
  title   = {Arb: efficient arbitrary-precision midpoint-radius interval arithmetic},
  journal = {IEEE Transactions on Computers},
  year    = {2017},
  volume  = {66},
  pages   = {1281--1292},
  doi     = {10.1109/TC.2017.2690633},
  issue   = {8},
}

@Article{Jones:fundamental-laplacian-eigenvalue-regular-polygon-dirichlet,
  author  = {Jones, Robert Stephen},
  title   = {The fundamental {L}aplacian eigenvalue of the regular polygon with {D}irichlet boundary conditions},
  journal = {ArXiv preprint arXiv:1712.06082},
  year    = {2017},
}

@Article{Kato:upper-lower-bounds-eigenvalues,
  author     = {Kato, Tosio},
  title      = {On the upper and lower bounds of eigenvalues},
  journal    = {J. Phys. Soc. Japan},
  year       = {1949},
  volume     = {4},
  pages      = {334--339},
  doi        = {10.1143/JPSJ.4.334},
  mrclass    = {65.0X},
  mrnumber   = {0038738},
  mrreviewer = {N. Aronszajn},
  url        = {https://doi.org/10.1143/JPSJ.4.334},
}

@Article{Lehmann:optimale-eigenwerte,
  author     = {Lehmann, N. J.},
  title      = {Optimale {E}igenwerteinschliessungen},
  journal    = {Numer. Math.},
  year       = {1963},
  volume     = {5},
  pages      = {246--272},
  issn       = {0029-599X},
  doi        = {10.1007/BF01385896},
  fjournal   = {Numerische Mathematik},
  mrclass    = {65.40},
  mrnumber   = {0155425},
  mrreviewer = {W. C. Rheinboldt},
  url        = {https://doi.org/10.1007/BF01385896},
}

@Article{Moler-Payne:bounds-eigenvalues,
  author     = {Moler, C. B. and Payne, L. E.},
  title      = {Bounds for eigenvalues and eigenvectors of symmetric operators},
  journal    = {SIAM J. Numer. Anal.},
  year       = {1968},
  volume     = {5},
  pages      = {64--70},
  issn       = {0036-1429},
  doi        = {10.1137/0705004},
  fjournal   = {SIAM Journal on Numerical Analysis},
  mrclass    = {65.35},
  mrnumber   = {0226833},
  mrreviewer = {James H. Wilkinson},
  url        = {https://doi.org/10.1137/0705004},
}

@article {Molinari:ground-state-regular-polygonal-billiards,
    AUTHOR = {Molinari, Luca},
     TITLE = {On the ground state of regular polygonal billiards},
   JOURNAL = {J. Phys. A},
  FJOURNAL = {Journal of Physics. A. Mathematical and General},
    VOLUME = {30},
      YEAR = {1997},
    NUMBER = {18},
     PAGES = {6517--6524},
      ISSN = {0305-4470},
   MRCLASS = {81Q50 (30C20)},
  MRNUMBER = {1483208},
MRREVIEWER = {P\'{e}ter P\'{a}l L\'{e}vay},
       DOI = {10.1088/0305-4470/30/18/025},
       URL = {https://doi.org/10.1088/0305-4470/30/18/025},
}

@Book{Nakao-Plum-Watanabe:cap-for-pde-book,
  title     = {Numerical verification methods and computer-assisted proofs for partial differential equations},
  publisher = {Springer, Singapore},
  year      = {2019},
  author    = {Nakao, Mitsuhiro T. and Plum, Michael and Watanabe, Yoshitaka},
  volume    = {53},
  series    = {Springer Series in Computational Mathematics},
  doi       = {10.1007/978-981-13-7669-6},
  mrclass   = {65G20 (35-04 65J15)},
  mrnumber  = {3971222},
  pages     = {xiii+467},
  url       = {https://doi.org/10.1007/978-981-13-7669-6},
}

@Article{Nigam-Siudeja-Young:fem-schiffer-pentagon,
  author   = {Nigam, Nilima and Siudeja, Bart\l omiej and Young, Benjamin},
  title    = {A proof via finite elements for {S}chiffer's conjecture on a regular pentagon},
  journal  = {Found. Comput. Math.},
  year     = {2020},
  volume   = {20},
  number   = {6},
  pages    = {1475--1504},
  issn     = {1615-3375},
  doi      = {10.1007/s10208-020-09447-y},
  fjournal = {Foundations of Computational Mathematics. The Journal of the Society for the Foundations of Computational Mathematics},
  mrclass  = {35B05 (35P15 65N25 65N30)},
  mrnumber = {4179950},
  url      = {https://doi.org/10.1007/s10208-020-09447-y},
}

@article {Nitsch:first-dirichlet-eigenvalue-regular-polygons,
    AUTHOR = {Nitsch, Carlo},
     TITLE = {On the first {D}irichlet {L}aplacian eigenvalue of regular
              polygons},
   JOURNAL = {Kodai Math. J.},
  FJOURNAL = {Kodai Mathematical Journal},
    VOLUME = {37},
      YEAR = {2014},
    NUMBER = {3},
     PAGES = {595--607},
      ISSN = {0386-5991},
   MRCLASS = {35P15 (35J05 35J25 49R05)},
  MRNUMBER = {3273886},
MRREVIEWER = {Francesco Della Pietra},
       DOI = {10.2996/kmj/1414674611},
       URL = {https://doi.org/10.2996/kmj/1414674611},
}

@Book{Polya-Szego:isoperimetric-inequalities-book,
  title      = {Isoperimetric {I}nequalities in {M}athematical {P}hysics},
  publisher  = {Princeton University Press, Princeton, N. J.},
  year       = {1951},
  author     = {P\'olya, G. and Szeg\H{o}, G.},
  series     = {Annals of Mathematics Studies, no. 27},
  mrclass    = {52.0X},
  mrnumber   = {0043486},
  mrreviewer = {M. Brelot},
  pages      = {xvi+279},
}

@Article{Read-Sneddon-Bode:series-method-mps,
  author   = {Read, W. W. and Sneddon, G. E. and Bode, L.},
  title    = {A series method for the eigenvalues of the advection diffusion equation},
  journal  = {ANZIAM J.},
  year     = {2003/04},
  volume   = {45},
  number   = {(C)},
  pages    = {C773--C786},
  issn     = {1446-1811},
  fjournal = {The ANZIAM Journal. The Australian \& New Zealand Industrial and Applied Mathematics Journal},
  mrclass  = {65M99},
  mrnumber = {2180336},
}

@Article{Still:computable-bounds-eigenvalues,
  author     = {Still, Georg},
  title      = {Computable bounds for eigenvalues and eigenfunctions of elliptic differential operators},
  journal    = {Numer. Math.},
  year       = {1988},
  volume     = {54},
  number     = {2},
  pages      = {201--223},
  issn       = {0029-599X},
  doi        = {10.1007/BF01396975},
  fjournal   = {Numerische Mathematik},
  mrclass    = {65N25 (35P15)},
  mrnumber   = {965922},
  mrreviewer = {Alan L. Andrew},
  url        = {https://doi.org/10.1007/BF01396975},
}

@Book{Tucker:validated-numerics-book,
  title      = {Validated numerics},
  publisher  = {Princeton University Press},
  year       = {2011},
  author     = {Tucker, Warwick},
  address    = {Princeton, NJ},
  isbn       = {978-0-691-14781-9},
  note       = {A short introduction to rigorous computations},
  mrclass    = {65-02 (65G30)},
  mrnumber   = {2807595 (2012g:65004)},
  mrreviewer = {G. Alefeld},
  pages      = {xii+138},
}

@book {Vekua:methods-elliptic-equations,
    AUTHOR = {Vekua, I. N.},
     TITLE = {New methods for solving elliptic equations},
    SERIES = {North-Holland Series in Applied Mathematics and Mechanics,
              Vol. 1},
      NOTE = {Translated from the Russian by D. E. Brown,
              Translation edited by A. B. Tayler},
 PUBLISHER = {North-Holland Publishing Co., Amsterdam; Interscience
              Publishers John Wiley \& Sons, Inc., New York},
      YEAR = {1967},
     PAGES = {xii+358},
   MRCLASS = {35.42},
  MRNUMBER = {212370},
}

@Article{Weinberger:error-bounds-rayleigh-ritz,
  author     = {Weinberger, H. F.},
  title      = {Error bounds in the {R}ayleigh-{R}itz approximation of eigenvectors},
  journal    = {J. Res. Nat. Bur. Standards Sect. B},
  year       = {1960},
  volume     = {64B},
  pages      = {217--225},
  issn       = {0022-4340},
  fjournal   = {Journal of Research of the National Bureau of Standards. Section B. Mathematics and Mathematical Physics},
  mrclass    = {65.40},
  mrnumber   = {129121},
  mrreviewer = {H. J. Maehly},
}

@software{SpectralRegularPolygon.jl,
  author = {Dahne, Joel},
  title = {{SpectralRegularPolygon.jl}},
  url = {https://github.com/Joel-Dahne/SpectralRegularPolygon.jl},
  version = {1.0.0},
  note = {Commit 8fda937ec6d8d037e65773d6cb608ab9b71799e0},
}

@software{ArbExtras.jl,
  author       = {Joel Dahne},
  title        = {{ArbExtras.jl}},
  url          = {https://github.com/Joel-Dahne/ArbExtras.jl},
  version      = {1.1.0},
}

@manual{Flint,
    key = {FLINT},
    author = {The {FLINT} team},
    title = {{FLINT}: {F}ast {L}ibrary for {N}umber {T}heory},
    year = 2023,
    note = {Version 3.0.0, \url{https://flintlib.org}}
}

@software{Arblib.jl,
  doi = {10.5281/ZENODO.17364163},
  url = {https://zenodo.org/doi/10.5281/zenodo.17364163},
  author = {Joel Dahne,   and Marek Kaluba,   and Sascha Timme,   and nanleij,   and David M\"{u}ller-Widmann,   and Simeon David Schaub,   and Gabriel Wu,   and Katayoun Timme,  },
  title = {kalmarek/Arblib.jl: v1.6.1},
  publisher = {Zenodo},
  year = {2025},
  copyright = {MIT License}
}

@misc{fungrim-7f3485,
  title = {Fungrim entry 7f3485},
  howpublished = {\url{https://fungrim.org/entry/7f3485/}},
  note = {Accessed: 2025-10-21}
}

@article{Johansson2019,
  doi = {10.1145/3328732},
  url = {https://doi.org/10.1145/3328732},
  year = 2019,
  month = aug,
  publisher = {Association for Computing Machinery ({ACM})},
  volume = 45,
  number = 3,
  pages = {1--26},
  author = {Fredrik Johansson},
  title = {Computing Hypergeometric Functions Rigorously},
  journal = {{ACM} Transactions on Mathematical Software}
}

@article{Johansson2015reversion,
 author = {Johansson, Fredrik},
 title = {A fast algorithm for reversion of power series},
 journal = {Math. Comp.},
 volume = 84,
 pages = {475-484},
 year = 2014,
 doi = {10.1090/s0025-5718-2014-02857-3},
 number = 291,
 source = {Crossref},
 url = {https://doi.org/10.1090/s0025-5718-2014-02857-3},
 publisher = {American Mathematical Society (AMS)},
 issn = {0025-5718, 1088-6842},
 month = may,
}

@inproceedings{Johansson2018numerical,
 author = {Johansson, F.},
 title = {Numerical integration in arbitrary-precision ball arithmetic},
 year = 2018,
 booktitle = {Mathematical Software -- ICMS 2018},
 publisher = {Springer Lecture Notes in Computer Science},
 pages = {255--263},
 doi = {10.1007/978-3-319-96418-8_30},
}

@phdthesis{Joldes2011,
  TITLE = {{Rigorous Polynomial Approximations and Applications}},
  AUTHOR = {Joldes, Mioara Maria},
  URL = {https://tel.archives-ouvertes.fr/tel-00657843},
  NUMBER = {2011ENSL0655},
  SCHOOL = {{Ecole normale sup{\'e}rieure de lyon - ENS LYON}},
  YEAR = 2011,
  MONTH = Sep,
  KEYWORDS = {Floating-Point Arithmetic ; Chebyshev Models ; Taylor Models ; Chebyshev Series ; Rigorous Polynomial Approximation ; Approximation Error ; Validated Numerics ; Rigorous Computing ; D-finite functions ; Enclosure Methods ; Function Evaluation ; Hardware Operators ; Field Programmable Gate Arrays ; Op{\'e}rateurs Mat{\'e}riels ; {\'E}valuation des fonctions ; M{\'e}thodes d'encadrement ; Fonctions D-finies ; Arithm{\'e}tique Flottante ; Calcul rigoureux ; Calculs valid{\'e}s ; Erreur d'approximation ; Approximations polynomiales rigoureuses ; Series de Chebyshev ; Mod{\`e}les de Taylor ; Mod{\`e}les de Chebyshev --},
  TYPE = {Theses},
  PDF = {https://tel.archives-ouvertes.fr/tel-00657843v2/file/JOLDES_Mioara_-_Maria_2011_These.pdf},
  HAL_ID = {tel-00657843},
  HAL_VERSION = {v2},
}

@article{Charlton2021,
  title = {On functional equations for Nielsen polylogarithms},
  volume = 15,
  ISSN = {1931-4531},
  url = {http://dx.doi.org/10.4310/cntp.2021.v15.n2.a4},
  DOI = {10.4310/cntp.2021.v15.n2.a4},
  number = 2,
  journal = {Communications in Number Theory and Physics},
  publisher = {International Press of Boston},
  author = {Charlton,  Steven and Gangl,  Herbert and Radchenko,  Danylo},
  year = 2021,
  pages = {363–454}
}

@article {Kolbig1986,
    AUTHOR = {K\"olbig, K. S.},
     TITLE = {Nielsen's generalized polylogarithms},
   JOURNAL = {SIAM J. Math. Anal.},
  FJOURNAL = {SIAM Journal on Mathematical Analysis},
    VOLUME = {17},
      YEAR = {1986},
    NUMBER = {5},
     PAGES = {1232--1258},
      ISSN = {0036-1410},
   MRCLASS = {33A70},
  MRNUMBER = {853527},
MRREVIEWER = {W.\ A.\ Al-Salam},
       DOI = {10.1137/0517086},
       URL = {https://doi.org/10.1137/0517086},
}

\begin{tabular}{l}
  \textbf{Joel Dahne} \\
  {School of Mathematics} \\
  {University of Minnesota} \\
  {127 Vincent Hall, 206 Church St. SE} \\
  {Minneapolis, MN 55455, USA} \\
  {Email: jdahne@umn.edu}  \\ \\
  \textbf{Javier G\'omez-Serrano}\\
  {Department of Mathematics} \\
  {Brown University} \\
  {314 Kassar House, 151 Thayer St.} \\
  {Providence, RI 02912, USA} \\ \\

{\textit{and}} \\ \\

{Institute for Advanced Study}\\
{1 Einstein Drive}\\
{Princeton, NJ 08540, USA} \\
  {Email: javier\_gomez\_serrano@brown.edu}
  \\ \\
  \textbf{Joana Pech-Alberich} \\
  {Department of Mathematics} \\
  {Universitat Politècnica de Catalunya} \\
  {Carrer Pau Gargallo, 14} \\
  {Barcelona, 08030, Spain} \\ \\

{\textit{and}} \\ \\

  {Department of Mathematics} \\
  {Brown University} \\
  {010 Kassar House, 151 Thayer St.} \\
  {Providence, RI 02912, USA} \\
  {Email: joana.pech@estudiantat.upc.edu, joana\_pech\_alberich@brown.edu}
\end{tabular}
\end{document}